\providecommand{\U}[1]{\protect\rule{.1in}{.1in}}
\newtheorem{theorem}{Theorem}
\newtheorem{definition}{Definition}
\newtheorem{proposition}{Proposition}
\newtheorem{remark}{Remark}
\numberwithin{lemma}{section}
\numberwithin{proposition}{section}
\numberwithin{equation}{section}
\numberwithin{remark}{section}
\numberwithin{definition}{section}
\numberwithin{theorem}{section}
\begin{document}

\title{Optimal well-posedness for the inhomogeneous incompressible Navier-Stokes system with general viscosity}
\author{Cosmin Burtea\thanks{ Email address:
cosmin.burtea@u-pec.fr} \thanks{This work was partially supported by a grant of
the Romanian National Authority for Scientific Research and Innovation, CNCS -
UEFISCDI, project number PN-II-RU-TE-2014-4-0320}\\Universit\'e Paris-Est Cr\'eteil, LAMA - CNRS UMR 8050,\\61 Avenue du G\'{e}n\'{e}ral de Gaulle, 94010 Cr\'{e}teil, France}
\maketitle

\begin{abstract}
In this paper we obtain new well-possedness results concerning a linear
inhomogenous Stokes-like system. These results are used to
establish local well-posedness in the critical spaces for initial density
$\rho_{0}$ and velocity $u_{0}$ such that $\rho_{0}-\rho\in\dot{B}%
_{p,1}^{\frac{3}{p}}(\mathbb{R}^{3})$, $u_{0}\in\dot{B}_{p,1}^{\frac{3}{p}%
-1}(\mathbb{R}^{3})$, $p\in\left(  \frac{6}{5},4\right)  $, for the
inhomogeneous incompressible Navier-Stokes system with variable viscosity. To the best of our
knowledge, regarding the $3D$ case, this is the first result in a truly
critical framework for which one does not assume any smallness condition on
the density.
\begin{description}
\item[Keywords] Inhomogeneous Navier-Stokes system; critical regularity; Lagrangian coordinates;
\item[MSC:] 35Q30, 76D05
\end{description}
\end{abstract}
\section{Introduction}

In this paper we deal with the well-posedness of the inhomogeneous,
incompressible Navier-Stokes system:%
\begin{equation}
\left\{
\begin{array}
[c]{r}%
\partial_{t}\rho+\operatorname{div}\left(  \rho u\right)  =0,\\
\partial_{t}\left(  \rho u\right)  +\operatorname{div}\left(  \rho u\otimes
u\right)  -\operatorname{div}\left(  \mu\left(  \rho\right)  D\left(
u\right)  \right)  +\nabla P=0,\\
\operatorname{div}u=0,\\
u_{|t=0}=u_{0}.
\end{array}
\right.  \label{NavierStokes}%
\end{equation}
In the above, $\rho>0$ stands for the density of the fluid, $u\in
\mathbb{R}^{n}$ is the fluid's velocity field while $P$ is the pressure. The
viscosity coefficient $\mu$ is assumed to be a smooth, strictly positive
function of the density while%
\[
D\left(  u\right)  =\nabla u+Du.
\]
is the deformation tensor. This system is used to study fluids obtained as a
mixture of two (or more) incompressible fluids that have different densities:
fluids containing a melted substance, polluted air/water etc. 

There is a very rich literature devoted to the study of the well-posedness of
$\left(  \text{\ref{NavierStokes}}\right)  $ which we will review in the
following lines. Briefly, the question of existence of weak solutions with
finite energy was first considered by Kazhikov in \cite{Kaz2} (see also
\cite{Kaz}) in the case of constant viscosity. The case with a general
viscosity law was treated in \cite{PLions1}. Weak solutions for more regular
data were considered in \cite{Dej}. Recently, weak solutions were investigated
by Huang, Paicu and Zhang in \cite{HuangPaicuZhang3}.

The unique solvability of $\left(  \text{\ref{NavierStokes}}\right)  $ was
first addressed in the seminal work of Lady\v{z}enskaja and Solonnikov in
\cite{LadizSolon}. More precisely, considering $u_{0}\in W^{2-\frac{2}{p}%
,p}\left(  \Omega\right)  $, with $p>2$ , a divergence free vector field that
vanishes on $\partial\Omega$ and $\rho_{0}\in C^{1}\left(  \Omega\right)  $
bounded away from zero, they construct a global strong solution in the $2D$
case respectively a local solution in the $3D$ case. Moreover, if $u_{0}$ is
small in $W^{2-\frac{2}{p},p}\left(  \Omega\right)  $ then global
well-posedness holds true.

The question of weak-strong uniqueness was addressed in \cite{kim} for the
case of sufficiently smooth data with vanishing viscosity.

Over the last thirteen years, efforts were made to obtain well-posedness
results in the so called critical spaces i.e. the spaces which have the same
invariance with respect to time and space dilation as the system itself,
namely%
\[
\left\{
\begin{array}
[c]{r}%
\left(  \rho_{0}\left(  x\right)  ,u_{0}\left(  x\right)  \right)
\rightarrow\left(  \rho_{0}\left(  lx\right)  ,u_{0}\left(  lx\right)
\right)  ,\\
\left(  \rho\left(  t,x\right)  ,u\left(  t,x\right)  \right)  \rightarrow
\left(  \rho\left(  l^{2}t,lx\right)  ,lu\left(  l^{2}t,lx\right)
,l^{2}P\left(  l^{2}t,lx\right)  \right)  .
\end{array}
\right.
\]
For more details and explanations for nowadays a classical approach we refer
to \cite{Dan6} or \cite{Dan3}. In the Besov space context, which includes in
particular the more classical Sobolev spaces, these are
\begin{equation}
\rho_{0}-\bar{\rho}\in\dot{B}_{p_{1},r_{1}}^{\frac{n}{p_{1}}}\text{ and }%
u_{0}\in\dot{B}_{p_{2},r_{2}}^{\frac{n}{p_{2}}-1}. \label{critique}%
\end{equation}
where $\bar{\rho}$ is some constant density state and $n$ is the space
dimension. Working with densities close (in some appropriate norm) to a
constant has led to a rich literature. In \cite{Dan6} local and global
existence results are obtained for the case of constant viscosity and by
taking the initial data%
\[
\rho_{0}-\bar{\rho}\in L^{\infty}\cap\dot{B}_{2,\infty}^{\frac{n}{2}}\text{,
}u_{0}\in\dot{B}_{2,1}^{\frac{n}{2}-1}%
\]
and under the assumption that $\left\Vert \rho_{0}-\bar{\rho}\right\Vert
_{L^{\infty}\cap\dot{B}_{2,\infty}^{\frac{n}{2}}}$ is sufficiently small. The
case with variable viscosity and for initial data
\[
\rho_{0}-\bar{\rho}\in\dot{B}_{p,1}^{\frac{n}{p}}\text{ and }u_{0}\in\dot
{B}_{p,1}^{\frac{n}{p}-1},
\]
$p\in\lbrack1,2n)$, is treated in \cite{Abidi1}. However, uniqueness is
guaranteed once $p\in\lbrack1,n)$. These results where further extended by H.
Abidi and M. Paicu in \cite{AbidiPaicu1} by noticing that $\rho_{0}-\bar{\rho
}$ can be taken in a larger Besov space. In \cite{Haspot1}, B. Haspot
established results in the same spirit as those mentioned above (however, the
results are obtained in the nonhomogeneous framework and thus do not fall into
the critical framework) in the case where the velocity field is not Lipschitz.
In \cite{Dan5}, using the Lagrangian formulation, R. Danchin and P.B. Mucha
establish local and global results for $\left(  \text{\ref{NavierStokes}%
}\right)  $ with constant viscosity when $\rho_{0}-\bar{\rho}\in
\mathcal{M(}\dot{B}_{p,1}^{\frac{n}{p}-1})$, $u_{0}\in\dot{B}_{p,1}^{\frac
{n}{p}-1}$ and under the smallness condition:
\[
\left\Vert \rho_{0}-\bar{\rho}\right\Vert _{\mathcal{M(}\dot{B}_{p,1}%
^{\frac{n}{p}-1})}\ll1,
\]
where $\mathcal{M(}\dot{B}_{p,1}^{\frac{n}{p}-1})$ stands for the multiplier
space of $\dot{B}_{p,1}^{\frac{n}{p}-1}$. In particular, functions with small
jumps enter this framework. Moreover, as a consequence of their approach, the
range of Lebesgue exponents for which uniqueness of solutions holds is
extended to $p\in\lbrack1,2n)$. In \cite{PaicuZhang1}, \cite{HuangPaicuZhang1}%
, \cite{HuangPaicuZhang2}, \cite{HuangPaicuZhang3} the authors improve the
smallness assumptions used in order to obtain global existence. To summarize,
all the previous well-posedness results in critical spaces were established
assuming that the density is close in some sense to a constant state.

When the later assumption is removed, one must impose more regularity on the
data. For the case of constant viscosity, in \cite{Dan7}, R. Danchin obtains
local well posedness respectively global well posedness in dimension $n=2$ for
data drawn from the nonhomogeneous Sobolev spaces: $\left(  \rho_{0}-\bar
{\rho},u_{0}\right)  \in H^{\frac{n}{2}+\alpha}\times H^{\frac{n}{2}-1+\beta}$
with $\alpha,\beta>0$. The same result for the case of general viscosity law
is established in \cite{Abidi1}. For data with non Lipschitz velocity results
were established in \cite{Haspot1}. Concerning rougher densities, in
\cite{Dan9}, considering $\rho_{0}\in L^{\infty}(\mathbb{R}^{d})$ bounded from
below and $u_{0}\in H^{2}\left(  \mathbb{R}^{d}\right)  $ Danchin and Mucha
construct a unique local solution. Again, supposing that the density is close
to some constant state they prove global well-posedness. These results are
generalized in \cite{PaicuZhangZhifei}. Taking the density as above the
authors construct: a global unique solution provided that $u_{0}\in
H^{s}\left(  \mathbb{R}^{2}\right)  $ for any $s>0$ in the $2D$ case
respectively a\ local unique solution in the $3D$ case considering $u_{0}\in
H^{1}\left(  \mathbb{R}^{3}\right)  $. Moreover, assuming that $u_{0}$ is
suitably small the solution constructed is global even in the three
dimensional case.

In critical spaces of the Navier-Stokes system i.e. $\left(
\text{\ref{critique}}\right)  $ there are few well posedness results. Very
recently, in the $2D$ case and allowing variable viscosity, H. Xu, Y. Li and
X. Zhai, \cite{XuLiZhai} constructed a unique local solution to $\left(
\text{\ref{NavierStokes}}\right)  $ provided that the initial data satisfies
$\rho_{0}-\bar{\rho}\in\dot{B}_{p,1}^{\frac{2}{p}}\left(  \mathbb{R}%
^{2}\right)  $ and $u_{0}\in\dot{B}_{p,1}^{\frac{2}{p}-1}\left(
\mathbb{R}^{2}\right)  $. Moreover, if $\rho_{0}-\bar{\rho}\in L^{p}\cap
\dot{B}_{p,1}^{\frac{2}{p}}\left(  \mathbb{R}^{2}\right)  $ and the viscosity
is supposed constant, their solution becomes global. In the $3D$ situation, to
the best of our knowledge, the results that are closest to the critical
regularity are those presented in \cite{AbidiZhang1} and \cite{AbidiZhang2}
(for a similar result in the periodic case one can consult \cite{Poulon}).
More precisely, in $3D$, assuming that%
\[
\rho_{0}-\bar{\rho}\in L^{2}\cap\dot{B}_{2,1}^{\frac{3}{2}}\text{ and }%
u_{0}\in\dot{B}_{2,1}^{\frac{1}{2}}%
\]
and taking constant viscosity, H. Abidi, G. Gui and P.Zhang,
\cite{AbidiZhang1}, show the local well-posedness of system $\left(
\text{\ref{NavierStokes}}\right)  $. Moreover, if the initial velocity is
small then global well-posedness holds true. In \cite{AbidiZhang2} they
establish the same kind of result for initial data%
\[
\rho_{0}-\bar{\rho}\in L^{\lambda}\cap\dot{B}_{\lambda,1}^{\frac{3}{\lambda}%
}\text{ and }u_{0}\in\dot{B}_{p,1}^{\frac{3}{p}-1}%
\]
where $\lambda\in\lbrack1,2]$, $p\in\left[  3,4\right]  $ are such that
$\frac{1}{\lambda}+\frac{1}{p}>\frac{5}{6}$ and $\frac{1}{\lambda}-\frac{1}%
{p}\leq\frac{1}{3}$.

One of the goals of the present paper is to establish local well-posedness in
the critical spaces:%
\[
\rho_{0}-\bar{\rho}\in\dot{B}_{p,1}^{\frac{3}{p}}\left(  \mathbb{R}%
^{3}\right)  \text{, }u_{0}\in\dot{B}_{p,1}^{\frac{3}{p}-1}\left(
\mathbb{R}^{3}\right)  \text{, }p\in\left(  \frac{6}{5},4\right)
\]
for System $\left(  \text{\ref{NavierStokes}}\right)  $

\begin{itemize}
\item with general smooth variable viscosity law,

\item without any smallness assumption on the density,

\item without the extra low frequencies assumption. In particular, we
generalize the local existence and uniqueness result of H. Abidi, G. Gui and
P. Zhang from \cite{AbidiZhang1} thus achieving the critical regularity.
\end{itemize}

As in \cite{Dan5} we will not work directly with system $\left(
\text{\ref{NavierStokes}}\right)  $ instead we will rather use its Lagrangian
formulation. By proceeding so, we are naturally led to consider the following
Stokes problem with time independent, nonconstant coefficients:
\begin{equation}
\left\{
\begin{array}
[c]{r}%
\partial_{t}u-a\operatorname{div}\left(  bD(u)\right)  +a\nabla P=f,\\
\operatorname{div}u=\operatorname{div}R,\\
u_{|t=0}=u_{0}.
\end{array}
\right.  \label{Stokes2}%
\end{equation}
We establish global well-posedness results for System $\left(
\text{\ref{Stokes2}}\right)  $. This can be viewed as a first step towards
generalizing the results of Danchin and Mucha obtained in \cite{Dan3}, Chapter
$4$, for the case of general viscosity and without assuming that the density
is close to a constant state. Let us mention that the estimates that we obtain
for System $\left(  \text{\ref{Stokes2}}\right)  $ have a wider range of
applications: in a forthcoming paper we will investigate the well-posedness
issue of the Navier-Stokes-Korteweg system under optimal regularity assumptions.

To summarize all the above, our main result reads:

\begin{theorem}
\label{NS2}Let us consider $p\in\left(  \frac{6}{5},4\right)  $.
Assume that there exists positive constants $\left(  \bar{\rho},\rho_{\star
},\rho^{\star}\right)  $ such that $\rho_{0}-\bar{\rho}\in\dot{B}_{p,1}%
^{\frac{3}{p}}\left(  \mathbb{R}^{3}\right)  $ and $0<\rho_{\star}<\rho
_{0}<\rho^{\star}$. Furthermore, consider $u_{0}$ a divergence free vector
field with coefficients in $\dot{B}_{p,1}^{\frac{3}{p}-1}\left(
\mathbb{R}^{3}\right)  $. Then, there exists a time $T>0$ and a unique
solution $\left(  \rho,u,\nabla P\right)  $ of system $\left(
\text{\ref{NavierStokes}}\right)  $ with
\[
\rho-\bar{\rho}\in\mathcal{C}_{T}(\dot{B}_{p,1}^{\frac{3}{p}}(\mathbb{R}%
^{3}))\cap L_{T}^{\infty}(\dot{B}_{p,1}^{\frac{3}{p}}(\mathbb{R}^{3}))\text{,
}u\in\mathcal{C}_{T}(\dot{B}_{p,1}^{\frac{3}{p}-1}(\mathbb{R}^{3}))\text{ and
}\left(  \partial_{t}u,\nabla^{2}u,\nabla P\right)  \in L_{T}^{1}(\dot
{B}_{p,1}^{\frac{3}{p}-1}(\mathbb{R}^{3})).
\]

\end{theorem}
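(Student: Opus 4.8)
The plan is to follow the Lagrangian approach of Danchin and Mucha \cite{Dan5}: reformulate $(\ref{NavierStokes})$ as the linear Stokes-type system $(\ref{Stokes2})$ with a nonlinear source, solve it by a fixed-point scheme built on the well-posedness theory just announced for $(\ref{Stokes2})$, and transport the solution back to Eulerian coordinates. \textbf{Step 1 (Lagrangian reformulation).} To a velocity field $u$ associate the flow $X_u(t,y)=y+\int_0^t u(\tau,y)\,d\tau$. Along the particle trajectories the mass equation reads $\rho(t,X_u(t,y))=\rho_0(y)$, so every coefficient depending on $\rho$ becomes time-independent: set $a:=1/\rho_0$ and $b:=\mu(\rho_0)$, both bounded away from $0$ and $\infty$ and, by the composition estimate, belonging to $\dot B_{p,1}^{3/p}$. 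Writing $A(t,y):=\big(D_yX_u(t,y)\big)^{-1}$, which is well defined as long as $\big\|\int_0^t D_yu\big\|_{L^\infty}<1$, the momentum equation together with $\operatorname{div}u=0$ transform --- after multiplying by $a$ and rearranging the terms produced by the substitution $\nabla_x\mapsto{}^{t}A\,\nabla_y$ --- into exactly $(\ref{Stokes2})$:
\[
\partial_tu-a\operatorname{div}(bD(u))+a\nabla P=f(u),\qquad \operatorname{div}u=\operatorname{div}R(u),\qquad u_{|t=0}=u_0,
\]
where the convective term $\operatorname{div}(\rho u\otimes u)$ has simply disappeared and $f(u),R(u)$ gather the remainder terms; these are at least quadratic in $u$, depend analytically on $\int_0^tD_yu$, and may each be put under a divergence, so that they are estimated in $L^1_T(\dot B_{p,1}^{3/p-1})$ by the product law on $\dot B_{p,1}^{3/p}$ and the composition estimates for $z\mapsto\mu(z)$ and $M\mapsto(\mathrm{Id}+M)^{-1}-\mathrm{Id}$.

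\textbf{Step 2 (fixed point for the Lagrangian system).} Set
\[
E_T:=\big\{\,v\in\mathcal C_T(\dot B_{p,1}^{3/p-1})\ :\ \partial_tv,\ \nabla^2v\in L^1_T(\dot B_{p,1}^{3/p-1})\,\big\},
\]
and define $\Phi(v)=u$, where $(u,\nabla P)$ solves the linear problem $(\ref{Stokes2})$ with the fixed coefficients $a,b$ above and right-hand sides $f(v),\operatorname{div}R(v)$; this is precisely where the global well-posedness result for $(\ref{Stokes2})$ --- which crucially does not require $a,b$ to be close to constants --- is invoked. Since $\big\|\int_0^tD_yv\big\|_{L^\infty_T(\dot B_{p,1}^{3/p})}\le\|\nabla^2v\|_{L^1_T(\dot B_{p,1}^{3/p-1})}$ becomes small as $T\to0$ (after absorbing into the linear bound the contribution generated by the solution itself), the nonlinear estimates show that $\Phi$ maps a suitable ball of $E_T$ into itself and is a contraction there for $T$ small; its unique fixed point is the Lagrangian solution $(u,\nabla P)$, the density being simply $\rho_0$ in these coordinates. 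The range $p\in(\tfrac65,4)$ is dictated here: the product $\dot B_{p,1}^{3/p}\cdot\dot B_{p,1}^{3/p-1}\hookrightarrow\dot B_{p,1}^{3/p-1}$ needed for $f,R$ forces $p<6$, whereas $p<4$ and $p>\tfrac65$ (equivalently, the conjugate exponent satisfies $p'<6$) are what make the elliptic estimate for the pressure equation $\operatorname{div}(a\nabla P)=\cdots$ with the genuinely non-small coefficient $a$ --- not accessible by a mere Neumann-series perturbation --- and the bounds for the truly variable viscous term simultaneously valid.

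\textbf{Step 3 (return to Eulerian variables and uniqueness).} For $T$ small, $\big\|\int_0^tD_yu\big\|_{L^\infty}<1$ makes $X_u(t,\cdot)$ a bi-Lipschitz $C^1$-diffeomorphism of $\mathbb R^3$; inverting it, set $\rho(t,x):=\rho_0\big(X_u^{-1}(t,x)\big)$, which solves $\partial_t\rho+u\cdot\nabla\rho=0$ and hence, by the transport estimate in $\dot B_{p,1}^{3/p}$ along the Lipschitz-in-space field $u$ (note $\nabla u\in L^1_T(L^\infty)$ since $u\in L^1_T(\dot B_{p,1}^{3/p+1})$), belongs to $\mathcal C_T(\dot B_{p,1}^{3/p})\cap L^\infty_T(\dot B_{p,1}^{3/p})$; pushing $u$ and $\nabla P$ forward, one recovers a solution of $(\ref{NavierStokes})$ with the stated regularity. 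Uniqueness in the Eulerian formulation reduces to uniqueness of the fixed point of $\Phi$, since two Eulerian solutions with the announced regularity generate, through their own flows, two elements of $E_T$ solving the same Lagrangian system with the same data.

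\textbf{Expected main obstacle.} The heart of the proof is Step 2. Because no smallness whatsoever is imposed on $\rho_0-\bar\rho$, one cannot linearize around the constant-coefficient heat or Stokes flow, and must instead feed the full variable-coefficient maximal-regularity statement for $(\ref{Stokes2})$ into the nonlinear iteration; the genuinely hard point is to control the pressure and the non-constant viscous contributions at all frequencies --- no low-frequency truncation being allowed --- in the scaling-critical space $L^1_T(\dot B_{p,1}^{3/p-1})$, with constants uniform as $T\to0$. The interval $p\in(\tfrac65,4)$ is exactly the set of exponents for which all the product, composition and elliptic-regularity estimates used in that step hold simultaneously.
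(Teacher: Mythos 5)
Your proposal follows essentially the same route as the paper: Lagrangian reformulation making the coefficients $a=1/\rho_0$, $b=\mu(\rho_0)$ time-independent, a fixed-point scheme for the resulting Stokes-like system built on the full variable-coefficient maximal regularity theorem (with no smallness on the density), and transport back to Eulerian variables using the flow diffeomorphism and the uniqueness of the Lagrangian fixed point. The only minor inaccuracy is in the attribution of the exponent bounds: $p>6/5$ indeed arises in the elliptic pressure estimate, but the upper bound $p<4$ is imposed in the paper by the duality-based \emph{uniqueness} argument for the Stokes system (requiring $\dot B_{p,1}^{3/p}$ to be a multiplier of $\dot B_{p',2}^{3/p'-2}$, which needs $p\in(3,4)$ and then an embedding argument for $p\le 3$), rather than by the product law or elliptic estimates directly; the product law itself only forces $p<6$.
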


One salutary feature of the Lagrangian formulation is that the density becomes
independent of time. More precisely, considering $\left(  \rho,u,\nabla
P\right)  $ a solution of $\left(  \text{\ref{NavierStokes}}\right)  $ and
denoting by $X$ the flow associated to the vector field $u$:%
\[
X\left(  t,y\right)  =y+\int_{0}^{t}u\left(  \tau,X(\tau,y)\right)  dy
\]
we introduce the new Lagrangian variables:%
\[
\bar{\rho}\left(  t,y\right)  =\rho\left(  t,X\left(  t,y\right)  \right)
,\text{ }\bar{u}\left(  t,y\right)  =u\left(  t,X\left(  t,y\right)  \right)
\text{ and }\bar{P}\left(  t,y\right)  =P\left(  t,X\left(  t,y\right)
\right)  .
\]
Then, using the chain rule and Proposition \ref{Formule} we gather that
$\bar{\rho}\left(  t,\cdot\right)  =\rho_{0}$ and%

\begin{equation}
\left\{
\begin{array}
[c]{r}%
\rho_{0}\partial_{t}\bar{u}-\operatorname{div}\left(  \mu\left(  \rho
_{0}\right)  A_{\bar{u}}D_{A_{\bar{u}}}\left(  \bar{u}\right)  \right)
+A_{\bar{u}}^{T}\nabla\bar{P}=0,\\
\operatorname{div}\left(  A_{\bar{u}}\bar{u}\right)  =0,\\
\bar{u}_{|t=0}=u_{0}.
\end{array}
\right.  \label{NavierStokes_modif}%
\end{equation}
where $A_{\bar{u}}$ is the inverse of the differential of $X$, and
\[
D_{A}\left(  \bar{u}\right)  =D\bar{u}A_{\bar{u}}+A_{\bar{u}}^{T}\nabla\bar
{u}.
\]
Note that we can give a meaning to $\left(  \text{\ref{NavierStokes_modif}%
}\right)  $ independently of the Eulerian formulation by stating:%
\[
X\left(  t,y\right)  =y+\int_{0}^{t}\bar{u}\left(  \tau,y\right)  d\tau.
\]
Theorem \ref{NS2} will be a consequence of the following result:

\begin{theorem}
\label{NS1}Let us consider $p\in\left(  \frac{6}{5},4\right)  $. Assume that
there exists positive $\left(  \bar{\rho},\rho_{\star},\rho^{\star}\right)  $
such that $\rho_{0}-\bar{\rho}\in\dot{B}_{p,1}^{\frac{3}{p}}\left(
\mathbb{R}^{3}\right)  $ and $0<\rho_{\star}<\rho_{0}<\rho^{\star}$.
Furthermore, consider $u_{0}$ a divergence free vector field with coefficients
in $\dot{B}_{p,1}^{\frac{3}{p}-1}\left(  \mathbb{R}^{3}\right)  $. Then, there
exists a time $T>0$ and a unique solution $\left(  \bar{u},\nabla\bar
{P}\right)  $ of system $\left(  \text{\ref{NavierStokes_modif}}\right)  $
with
\[
\bar{u}\in\mathcal{C}_{T}(\dot{B}_{p,1}^{\frac{3}{p}-1}(\mathbb{R}^{3}))\text{
and }\left(  \partial_{t}\bar{u},\nabla^{2}\bar{u},\nabla\bar{P}\right)  \in
L_{T}^{1}(\dot{B}_{p,1}^{\frac{3}{p}-1}(\mathbb{R}^{3})).
\]
Moreover, there exists a positive constant $C=C\left(  \rho_{0}\right)  $ such
that:
\[
\left\Vert u\right\Vert _{L_{T}^{\infty}(\dot{B}_{p,1}^{\frac{3}{p}-1}%
)}+\left\Vert \left(  \nabla^{2}u,\nabla P\right)  \right\Vert _{L_{T}%
^{1}(\dot{B}_{p,1}^{\frac{3}{p}-1})}\leq\left\Vert u_{0}\right\Vert _{\dot
{B}_{p,1}^{\frac{3}{p}-1}}\exp\left(  CT\right)  .
\]

\end{theorem}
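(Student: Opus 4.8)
The plan is to recast the Lagrangian system \eqref{NavierStokes_modif} as a fixed point problem governed by the linear Stokes-like system \eqref{Stokes2} with the time-independent coefficients $a=1/\rho_0$ and $b=\mu(\rho_0)$, for which we use the global well-posedness and maximal-regularity estimates established earlier in the paper; the terms produced by the Jacobian of the Lagrangian change of variables are then treated as perturbations that are negligible for short times. We work in the space
\[
E_T=\Bigl\{v\ :\ v\in\mathcal C_T(\dot B_{p,1}^{\frac3p-1}),\ (\partial_t v,\nabla^2 v)\in L^1_T(\dot B_{p,1}^{\frac3p-1})\Bigr\},
\]
and, given $v\in E_T$, set $X_v(t,y)=y+\int_0^t v(\tau,y)\,d\tau$, so that $DX_v=\mathrm{Id}+\int_0^t Dv\,d\tau$ and $A_v=(DX_v)^{-1}$. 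Since $\dot B_{p,1}^{3/p}$ is a Banach algebra continuously embedded in $L^\infty$ and $\bigl\|\int_0^t Dv\,d\tau\bigr\|_{L^\infty_T(\dot B_{p,1}^{3/p})}\lesssim\|\nabla^2 v\|_{L^1_T(\dot B_{p,1}^{3/p-1})}$, the Neumann series $A_v=\sum_{k\ge0}\bigl(-\int_0^t Dv\,d\tau\bigr)^{k}$ converges once the latter quantity is $\le 1/2$, yielding $\|A_v-\mathrm{Id}\|_{L^\infty_T(\dot B_{p,1}^{3/p})}\lesssim\|\nabla^2 v\|_{L^1_T(\dot B_{p,1}^{3/p-1})}$, the analogous Lipschitz bound in $v$, and the identity $\partial_t A_v=-A_v(Dv)A_v$.

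\emph{Reformulation.} Writing $A_{\bar u}=\mathrm{Id}+(A_{\bar u}-\mathrm{Id})$ in \eqref{NavierStokes_modif} and dividing by $\rho_0$, the system becomes \eqref{Stokes2} for $(\bar u,\nabla\bar P)$ with
\[
f(\bar u)=a\operatorname{div}\bigl(b\,(A_{\bar u}D_{A_{\bar u}}(\bar u)-D(\bar u))\bigr)-a\,(A_{\bar u}^{T}-\mathrm{Id})\nabla\bar P,\qquad R(\bar u)=-(A_{\bar u}-\mathrm{Id})\,\bar u ,
\]
the choice of $R$ making $\operatorname{div}(A_{\bar u}\bar u)=0$ equivalent to $\operatorname{div}\bar u=\operatorname{div}R(\bar u)$. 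Both $f(\bar u)$ and $R(\bar u)$ are at least quadratic: schematically, products of the $\dot B_{p,1}^{3/p}$-small factor $A_{\bar u}-\mathrm{Id}$ with quantities controlled by the $E_T$-norm of $\bar u$ and by $\|\nabla\bar P\|_{L^1_T(\dot B_{p,1}^{3/p-1})}$.

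\emph{The fixed point.} Let $(u_L,\nabla P_L)$ solve \eqref{Stokes2} with $f=0$, $R=0$, datum $u_0$, provided by the linear theory; it satisfies $\|u_L\|_{E_T}+\|\nabla P_L\|_{L^1_T(\dot B_{p,1}^{3/p-1})}\le C_0\|u_0\|_{\dot B_{p,1}^{3/p-1}}e^{c_0T}$ with $C_0,c_0$ depending only on $\rho_0$, and, crucially, $\|\nabla^2 u_L\|_{L^1_T(\dot B_{p,1}^{3/p-1})}+\|\nabla P_L\|_{L^1_T(\dot B_{p,1}^{3/p-1})}\to 0$ as $T\to0^+$ by absolute continuity of the Lebesgue integral. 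On the ball of radius $\eta$ around $(u_L,\nabla P_L)$ in $E_T\times L^1_T(\dot B_{p,1}^{3/p-1})$ — inside which $\|\nabla^2\bar v\|_{L^1_T(\dot B_{p,1}^{3/p-1})}$, $\|\nabla\bar Q\|_{L^1_T(\dot B_{p,1}^{3/p-1})}$ and hence $\|A_{\bar v}-\mathrm{Id}\|_{L^\infty_T(\dot B_{p,1}^{3/p})}$ are small provided $T$ is small — define $\Psi(\bar v,\nabla\bar Q)=(\bar u,\nabla\bar P)$ as the solution of \eqref{Stokes2} with source $f(\bar v)$, divergence $\operatorname{div}R(\bar v)$ and datum $u_0$. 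Combining the linear estimate for \eqref{Stokes2} with the product law $\dot B_{p,1}^{3/p}\cdot\dot B_{p,1}^{3/p-1}\hookrightarrow\dot B_{p,1}^{3/p-1}$, the composition estimates for $s\mapsto\mu(s)$ and $s\mapsto 1/s$ applied to $\rho_0\in\bar\rho+\dot B_{p,1}^{3/p}$ with $\rho_0$ bounded away from $0$ and $\infty$, the bounds on $A_{\bar v}-\mathrm{Id}$, and (for $\partial_t R(\bar v)$) the formula $\partial_t A_{\bar v}=-A_{\bar v}(D\bar v)A_{\bar v}$, one gets $\|\Psi(\bar v,\nabla\bar Q)-(u_L,\nabla P_L)\|\le C(\rho_0)\,\omega(T)\,(\|u_0\|_{\dot B_{p,1}^{3/p-1}}+\eta)$ with $\omega(T)\to0$; the same estimates on differences, using $A_{\bar v_1}-A_{\bar v_2}=\mathcal O\bigl(\int_0^t D(\bar v_1-\bar v_2)\,d\tau\bigr)$, give a contraction. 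Fixing $\eta$ and then $T$ small, Banach's fixed point theorem produces a solution, and a standard continuation argument upgrades uniqueness to all of $E_T$. The announced exponential estimate then follows either because $(\bar u,\nabla\bar P)=(u_L,\nabla P_L)+\mathcal O(\eta)$ with the correction absorbed by slightly enlarging $c_0$, or by feeding the control of $A_{\bar u}-\mathrm{Id}$ back into the linear estimate for \eqref{Stokes2} on $[0,t]$ and running Gronwall's lemma in $t$.

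The main obstacle is the nonlinear step: since the data are \emph{critical}, every product and composition must land exactly in $\dot B_{p,1}^{3/p-1}$ or $\dot B_{p,1}^{3/p}$, with no regularity to spare; the most delicate term is $(A_{\bar u}^{T}-\mathrm{Id})\nabla\bar P$, where the pressure gradient is available only in $L^1_T(\dot B_{p,1}^{3/p-1})$ and must be multiplied by an $\dot B_{p,1}^{3/p}$-factor without losing regularity or time-integrability. It is the conjunction of these product and composition constraints with the range of validity of the maximal-regularity estimate for \eqref{Stokes2} that dictates the restriction $p\in(6/5,4)$.
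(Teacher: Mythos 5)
Your plan mirrors the paper's in its essentials: rewrite \eqref{NavierStokes_modif} as the Stokes system \eqref{Stokes2} with $a=1/\rho_0$, $b=\mu(\rho_0)$ and lower-order terms $f(\bar v)$, $R(\bar v)$ coming from the Lagrangian Jacobian, invoke the maximal regularity result of Theorem \ref{Teorema_Stokes}, and close by a Banach fixed point near $(u_L,\nabla P_L)$ for small $T$. The estimates you sketch for $f(\bar v)$ and for $\partial_t R(\bar v)$ are on the right track. The gap is in the one hypothesis of Theorem \ref{Teorema_Stokes} that you pass over silently, namely the control of $\nabla\operatorname{div}R(\bar v)$ in $L^1_T(\dot B_{p,1}^{3/p-1})$.

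With $R(\bar v)=(\mathrm{Id}-A_{\bar v})\bar v$, the brute-force expansion of $\nabla\operatorname{div}R(\bar v)$ contains the term $(\nabla^2 A_{\bar v})\cdot\bar v$, carrying \emph{two} derivatives on $A_{\bar v}-\mathrm{Id}\in L^\infty_T(\dot B_{p,1}^{3/p})$ against $\bar v\in L^\infty_T(\dot B_{p,1}^{3/p-1})$. The product rule of Proposition \ref{Produs} with $\nu_1=2$, $\nu_2=1$ requires $\nu_1+\nu_2<\frac{3}{p}+\min\bigl\{\frac{3}{p},\frac{3}{p'}\bigr\}$, which fails for every $p\in\bigl(\frac{6}{5},4\bigr)$; and even where the product made sense, it would land only in $\dot B_{p,1}^{3/p-3}$, two derivatives short of what is needed. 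The rescue is the Piola identity: \emph{if} $\det DX_{\bar v}\equiv 1$, then $\operatorname{div}\bigl((\mathrm{Id}-A_{\bar v})w\bigr)=Dw:(\mathrm{Id}-A_{\bar v})$, so $\nabla\operatorname{div}R(\bar v)$ carries only one derivative of $A_{\bar v}$ and the estimate closes. But your single map $\Psi$ does not propagate $\det DX_{\bar v}=1$: the output $\bar u$ satisfies $\operatorname{div}\bar u=\operatorname{div}R(\bar v)$, which does not give $\operatorname{div}(A_{\bar v}\bar u)=0$, so Proposition \ref{formula_fund} cannot be invoked to conclude $\det DX_{\bar u}=1$, and the next iterate leaves the set on which $\Psi$ is defined. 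This is exactly why the paper's Section \ref{Section3} uses a two-stage construction: for a \emph{fixed} $\bar v$ with $\det DX_{\bar v}=1$ it first solves, via an inner fixed point $\Phi$, the linear system with the implicit constraint $\operatorname{div}(A_{\bar v}\bar u)=0$ involving the unknown $\bar u$, and only then iterates the outer map $S$ over the closed set $\tilde F_T(R)=\{(\tilde v,\nabla\tilde Q):\ \det DX_{(u_L+\tilde v)}=1,\dots\}$; the mixed constraint $\operatorname{div}(A_{(u_L+\tilde v)}(u_L+\tilde u))=0$ combined with Proposition \ref{formula_fund} is what guarantees that $S$ preserves $\tilde F_T(R)$. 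You could repair your argument by adopting this two-stage structure, or by \emph{prescribing} $\operatorname{div}R$ directly as the right-hand side $D\bar v:(\mathrm{Id}-A_{\bar v})$ and checking a posteriori that the fixed point has unit Jacobian; but as written, $\nabla\operatorname{div}R(\bar v)$ is not estimated and $\Psi$ is not well-defined on your ball.
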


$\ $The study of system $\left(  \text{\ref{NavierStokes_modif}}\right)  $
naturally leads to the Stokes-like system $\left(  \text{\ref{Stokes2}%
}\right)  $. In Section \ref{Section2} we establish the global well-posedness
of System $\left(  \text{\ref{Stokes2}}\right)  $. More precisely, we prove:

\begin{theorem}
\label{Teorema_Stokes}Let us consider $n\in\{2,3\}$ and $p\in\left(
1,4\right)  $ if $n=2$ or \ $p\in\left(  \frac{6}{5},4\right)  $ if $n=3$.
Assume there exist positive constants $\left(  a_{\star},b_{\star},a^{\star
},b^{\star},\bar{a},\bar{b}\right)  $ such that $a-\bar{a}\in\dot{B}%
_{p,1}^{\frac{n}{p}}\left(  \mathbb{R}^{n}\right)  $, $b-\bar{b}\in\dot
{B}_{p,1}^{\frac{n}{p}}\left(  \mathbb{R}^{n}\right)  $ and
\begin{align*}
0  &  <a_{\star}\leq a\leq a^{\star},\\
0  &  <b_{\star}\leq b\leq b^{\star}.
\end{align*}
Furthermore, consider the vector fields $u_{0}$ and $f$ with coefficients in
$\dot{B}_{p,1}^{\frac{n}{p}-1}\left(  \mathbb{R}^{n}\right)  $ respectively in
$L_{loc}^{1}(\dot{B}_{p,1}^{\frac{n}{p}-1}\left(  \mathbb{R}^{n}\right)  )$.
Also, let \ us consider the vector field $R\in(\mathcal{S}^{\prime}\left(
\mathbb{R}^{n}\right)  )^{n}$ with\footnote{$\mathcal{P}$ is the Leray
projector over divergence free vector fields, $\mathcal{Q}=Id-\mathcal{P}$}%
\[
\mathcal{Q}R\in\mathcal{C(}[0,\infty);\dot{B}_{p,1}^{\frac{n}{p}-1}\left(
\mathbb{R}^{n}\right)  )\text{ and }\left(  \partial_{t}R,\nabla
\operatorname{div}R\right)  \in L_{loc}^{1}(\dot{B}_{p,1}^{\frac{n}{p}%
-1}\left(  \mathbb{R}^{n}\right)  )
\]
such that:%
\[
\operatorname{div}u_{0}=\operatorname{div}R\left(  0,\cdot\right)  .
\]
Then, system $\left(  \text{\ref{Stokes2}}\right)  $ has a unique solution
$\left(  u,\nabla P\right)  $ with:
\[
u\in C([0,\infty),\dot{B}_{p,1}^{\frac{n}{p}-1}\left(  \mathbb{R}^{n}\right)
)\text{ and }\partial_{t}u,\nabla^{2}u,\nabla P\in L_{loc}^{1}(\dot{B}%
_{p,1}^{\frac{n}{p}-1}\left(  \mathbb{R}^{n}\right)  ).
\]
Moreover, there exists a constant $C=C\left(  a,b\right)  $ such that:%
\begin{gather}
\left\Vert u\right\Vert _{L_{t}^{\infty}(\dot{B}_{p,1}^{\frac{n}{p}-1}%
)}+\left\Vert \left(  \partial_{t}u,\nabla^{2}u,\nabla P\right)  \right\Vert
_{L_{t}^{1}(\dot{B}_{p,1}^{\frac{n}{p}-1})}\nonumber\\
\leq\left(  \left\Vert u_{0}\right\Vert _{\dot{B}_{p,1}^{\frac{n}{p}-1}%
}+\left\Vert \left(  f,\partial_{t}R,\nabla\operatorname{div}R\right)
\right\Vert _{L_{t}^{1}(\dot{B}_{p,1}^{\frac{n}{p}-1})}\right)  \exp\left(
C(t+1)\right)  , \label{Estimare}%
\end{gather}
for all $t\in\lbrack0,\infty)$.
\end{theorem}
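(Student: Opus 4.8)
The plan is to treat the nonconstant-coefficient Stokes system \eqref{Stokes2} perturbatively around the constant-coefficient Stokes operator $\partial_t u - \bar a\bar b\,\Delta u + \bar a\nabla P$, for which maximal $L^1_t$-regularity in the Besov scale $\dot B^{\frac np-1}_{p,1}$ is classical (it is the key linear estimate in Danchin--Mucha \cite{Dan5, Dan3}). First I would reduce to divergence-free data: write $u = v + w$ where $w$ absorbs the $\operatorname{div}u = \operatorname{div}R$ constraint (e.g.\ $w = \mathcal Q R$ up to harmonic corrections), so that $v$ solves a perturbed Stokes system with a genuinely divergence-free velocity and a modified right-hand side still controlled by $\|(f,\partial_t R,\nabla\operatorname{div}R)\|_{L^1_t(\dot B^{\frac np-1}_{p,1})}$; this uses precisely the hypotheses placed on $\mathcal Q R$, $\partial_t R$ and $\nabla\operatorname{div}R$, together with $\operatorname{div}u_0 = \operatorname{div}R(0,\cdot)$ for compatibility at $t=0$.

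Next, rewrite the equation for $v$ as $\partial_t v - \bar a\bar b\,\Delta v + \bar a\nabla P = f + F(v,\nabla P)$, where $F$ collects all the commutator-type terms arising from replacing $\bar a,\bar b$ by $a,b$: schematically $F = (a\bar b - \bar a\bar b - \text{etc.})$ acting on $\nabla^2 v$ and $\nabla P$, i.e.\ products of $(a-\bar a)$ or $(b-\bar b)$ — both in $\dot B^{\frac np}_{p,1}$ — with second-order derivatives of $v$ and with $\nabla P$, which sit in $\dot B^{\frac np-1}_{p,1}$. The product law $\dot B^{\frac np}_{p,1}\cdot\dot B^{\frac np-1}_{p,1}\hookrightarrow\dot B^{\frac np-1}_{p,1}$ (valid for $p<2n$, which is why the hypothesis $p\in(\tfrac65,4)$ in $3D$, resp.\ $p<4$ in $2D$, appears) shows $F$ is bounded on the right function space, but crucially with a \emph{large} constant depending on $\|a-\bar a\|_{\dot B^{\frac np}_{p,1}}$ and $\|b-\bar b\|_{\dot B^{\frac np}_{p,1}}$, not a small one. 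The standard trick to overcome the lack of smallness is a frequency localization / spectral splitting: decompose $a-\bar a = (a-\bar a)_{\text{low}} + (a-\bar a)_{\text{high}}$ where the high-frequency part (frequencies $\gtrsim 2^N$ for $N$ large) has small $\dot B^{\frac np}_{p,1}$ norm and the low-frequency part is smooth and bounded; the high part is treated by absorption into the left-hand side, while the low part, being a bounded smooth multiplier, is treated as a \emph{bounded} perturbation and handled by a Gronwall argument in time, which is exactly what produces the exponential factor $\exp(C(t+1))$ in \eqref{Estimare}.

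The main obstacle — and the technical heart of the section — is making the low-frequency perturbation argument rigorous in a way that yields a \emph{global} a priori estimate with only exponential (not worse) growth in time. Concretely: after the high-frequency absorption one is left with an estimate of the form $\|v\|_{L^\infty_t(\dot B^{\frac np-1}_{p,1})} + \|(\partial_t v,\nabla^2 v,\nabla P)\|_{L^1_t(\dot B^{\frac np-1}_{p,1})} \le C\big(\|u_0\|_{\dot B^{\frac np-1}_{p,1}} + \|f,\dots\|_{L^1_t} + \int_0^t \|\nabla^2 v,\nabla P\|_{\dot B^{\frac np-1}_{p,1}}\big)$, but the last term cannot be closed on a long time interval by a smallness-of-$T$ argument; instead one splits $[0,t]$ into finitely many subintervals of a fixed length (chosen so that the perturbation is controllable on each), propagates the estimate step by step, and sums up, the number of steps being $\sim t$, hence the exponential. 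One must also handle the pressure: taking the divergence of the momentum equation yields an elliptic equation $\operatorname{div}(a\nabla P) = \operatorname{div}(\dots)$ for $P$ with the nonconstant coefficient $a$ bounded above and below, whose solvability and the estimate $\|\nabla P\|_{\dot B^{\frac np-1}_{p,1}} \lesssim \|\text{RHS}\|_{\dot B^{\frac np-1}_{p,1}}$ again requires the product laws and an elliptic-regularity lemma for $\operatorname{div}(a\nabla\cdot)$ that is presumably established earlier in Section \ref{Section2}. Existence of the solution then follows from the a priori estimate by a standard approximation scheme (e.g.\ Friedrichs mollification or iterating on the constant-coefficient Stokes semigroup), and uniqueness follows by applying the same linear estimate to the difference of two solutions — which is legitimate because the estimate is linear and requires no smallness on the difference, only on the fixed coefficients $a,b$ through the high-frequency splitting.
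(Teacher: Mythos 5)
Your high-level skeleton — subtract a constant-coefficient Stokes solution to reduce to divergence-free velocity, do a high/low frequency split of the coefficients, absorb the high-frequency contribution by smallness and Gronwall the rest, and invoke an elliptic estimate for $\operatorname{div}(a\nabla P)$ — is recognizably the same as the paper's, but there is a genuine gap in the pressure treatment that makes the argument as you outline it not close. The pressure has \emph{no time evolution}, so a ``bounded perturbation + Gronwall'' strategy does not apply to it. The estimate you write down,
\[
\|v\|_{L_t^\infty(\dot B^{\frac np-1}_{p,1})}+\|(\partial_t v,\nabla^2 v,\nabla P)\|_{L_t^1(\dot B^{\frac np-1}_{p,1})}\le C\Bigl(\|u_0\|_{\dot B^{\frac np-1}_{p,1}}+\|f,\dots\|_{L_t^1}+\int_0^t \|(\nabla^2 v,\nabla P)\|_{\dot B^{\frac np-1}_{p,1}}\Bigr),
\]
is of the form $X\le C(D+X)$ with a \emph{large} $C$ (the norm $\|a-\bar a\|_{\dot B^{n/p}_{p,1}}$ is not small), and this cannot be closed by Gronwall or by splitting $[0,t]$ into short subintervals, because $\int_0^t\|\nabla P\|$ \emph{is} $\|\nabla P\|_{L^1_t}$ — the same quantity you are trying to bound. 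For the second-order velocity terms the time derivative rescues you (interpolation, parabolic smoothing, Gronwall on $\|u\|_{L^\infty_t}$); for the pressure there is nothing analogous. This is exactly the difficulty singled out in Remark \ref{111}: one must bound $\|a\nabla P\|_{L^1_t(\dot B^{n/p-1}_{p,1})}$ by $\|u\|_{L^\infty_t}^{\beta}\|\nabla^2 u\|_{L^1_t}^{1-\beta}$ with $\beta\in(0,1)$, which a naive product estimate does not give.

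The paper's way out — and the key idea you are missing — is the observation that the solenoidal part of $a\nabla P$ has a commutator structure: since $\mathcal{P}\nabla P=0$,
\[
\mathcal{P}(a\nabla P)=\mathcal{P}\bigl((a-\bar a)\nabla P\bigr)-(a-\bar a)\,\mathcal{P}\nabla P=[\mathcal{P},\,a-\bar a]\nabla P,
\]
which gains a derivative (Propositions \ref{A71} and \ref{A7}). Feeding this gain into the elliptic estimates in the \emph{lower-regularity} space $\dot B^{\frac np-\frac n2}_{p,2}$ (Propositions \ref{Estimare_eliptica1}--\ref{Estimare_eliptica2_2D}) yields a bound on $\nabla P$ in terms of $\|\nabla u\|_{\dot B^{n/p-1/2}_{p,1}}$ — half a derivative below the critical one — so that interpolation $\|\nabla u\|_{\dot B^{n/p-1/2}_{p,1}}\lesssim\|u\|^{1/2}\|\nabla^2 u\|^{1/2}$ produces a small $\eta\|\nabla^2 u\|$ to absorb and a $\|u\|$ contribution to Gronwall; see \eqref{Presiune_3d_1}--\eqref{Presiune_3d_4} and \eqref{Presiune2d}. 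An analogous commutator $\mathcal{Q}(\dot S_m(ab-\bar a\bar b)\Delta\tilde u)=[\mathcal{Q},\dot S_m(ab-\bar a\bar b)]\Delta\tilde u$ is used because $\operatorname{div}\tilde u=0$. A second, subordinate gap is that the intermediate result (Propositions \ref{Propozitia1Stokes}--\ref{Propozitia2Stokes}) actually needs extra low-frequency information on the data, $\left(f,\partial_t R,\nabla\operatorname{div}R\right)\in L^1_{loc}(\dot B^{\frac np-\frac n2}_{p,2})$, because the underlying $L^2$ elliptic theory requires it; in dimension $3$ the paper then has to run a two-step bootstrap (Proposition \ref{Stokes_perturb} to build $(u^1,\nabla P^1)$, then Proposition \ref{Propozitia2Stokes} for the remainder) plus a duality argument via Proposition \ref{Elippp} to reach the regularity stated in the theorem. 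Your proposal does not address either of these points.
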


The difficulty in establishing such a result comes from the fact that the
pressure and velocity are "strongly" coupled as opposed to the case where
$\rho$ is close to a constant see Remark \ref{111} below. The key idea is to
use the high-low frequency splitting technique first used in \cite{Dan8}
combined with the special structure of the "incompressible" part of $a\nabla
P$ i.e.
\begin{align*}
\mathcal{P}(a\nabla P) &  =\mathcal{P}((a-\bar{a})\nabla P)=\mathcal{P}%
((a-\bar{a})\nabla P)-(a-\bar{a})\mathcal{P(}\nabla P)\\
&  :=[\mathcal{P},a-\bar{a}]\nabla P.
\end{align*}
which is, loosely speaking, more regular than $\nabla P$. Let us mention that
a similar principle holds for $u$ which is divergence free\footnote{and thus
$\mathcal{Q}u=0$.}: whenever we estimate some term of the form $\mathcal{Q}%
\left(  bM\left(  D\right)  u\right)  $ where $b$ lies in an appropriate Besov
space and $M\left(  D\right)  $ is some pseudo-differential operator then we
may write it%
\[
\mathcal{Q}\left(  bM\left(  D\right)  u\right)  =[\mathcal{Q},b]M\left(
D\right)  u
\]
and use the fact that the later expression in more regular than $M\left(
D\right)  u$, see Proposition \ref{A7}.

The proof of Theorem \ref{Teorema_Stokes} in the $3$-dimensional case is more
subtle: first we prove a more restrictive result by demanding an extra
low-frequency information on the initial data. Then, using a perturbative
version of Danchin and Mucha's results of \cite{Dan3} we arrive at
constructing a solution with the optimal regularity. The uniqueness is
obtained by a duality method.

Once the estimates of Theorem \ref{Teorema_Stokes} are established, we proceed
with the proof of Theorem \ref{NS1} which is the object of Section
\ref{Section3}. Finally, we show the equivalence between system $\left(
\text{\ref{NavierStokes_modif}}\right)  $ and system $\left(
\text{\ref{NavierStokes}}\right)  $ thus achieving the proof of Theorem
\ref{NS2}. We end this paper with an Appendix where results of
Littlewood-Paley theory used through the text are gathered.

\section{The Stokes system with nonconstant coefficients\label{Section2}}

\subsection{Pressure estimates}

Before handling System $\left(  \text{\ref{Stokes2}}\right)  $ we shall study
the following elliptic equation:%
\begin{equation}
\operatorname{div}\left(  a\nabla P\right)  =\operatorname{div}f. \label{E}%
\end{equation}
For the reader's convenience let us cite the following classical result, a
proof of which can be found, for instance in \cite{Dan10}:

\begin{proposition}
\label{presiuneL2} For all vector field $f$ with coefficients in
$L^{2}\left(  \mathbb{R}^{n}\right)  $, there exists a tempered distribution
$P$ unique up to constant functions such that $\nabla P\in L^{2}\left(
\mathbb{R}^{n}\right)  $ and Equation $\left(  \text{\ref{E}}\right)  $ is
satisfied. In addition, we have:%
\[
a_{\star}\left\Vert \nabla P\right\Vert _{L^{2}}\leq\left\Vert \mathcal{Q}%
f\right\Vert _{L^{2}}.
\]

\end{proposition}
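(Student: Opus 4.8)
The plan is to put the equation in Hilbertian form and invoke the Lax--Milgram theorem, but—rather than work in a homogeneous Sobolev space of functions modulo constants, which is delicate when $n=2$—I would work directly on the space of $L^2$ gradient fields. So I set $G:=\mathcal{Q}\big(L^2(\mathbb{R}^n)^n\big)$, a closed subspace of $L^2$ (since $\mathcal{Q}$ is an orthogonal projector) and hence a Hilbert space for the $L^2$ inner product. Two facts about $G$ will be used repeatedly: first, $\nabla\varphi\in G$ for every $\varphi\in\mathcal{S}(\mathbb{R}^n)$, because $\mathcal{P}\nabla\varphi=0$; second, $\nabla\mathcal{S}$ is dense in $G$, since any $w\in G$ with $\int_{\mathbb{R}^n}w\cdot\nabla\varphi\,dx=0$ for all $\varphi\in\mathcal{S}$ has $\operatorname{div}w=0$, i.e. $w=\mathcal{P}w=0$.

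Then I would introduce the bilinear form $\mathcal{B}(v,w):=\int_{\mathbb{R}^n}a\,v\cdot w\,dx$ on $G$, which the two-sided bound $a_{\star}\le a\le a^{\star}$ makes continuous ($|\mathcal{B}(v,w)|\le a^{\star}\|v\|_{L^2}\|w\|_{L^2}$) and coercive ($\mathcal{B}(v,v)\ge a_{\star}\|v\|_{L^2}^2$), together with the linear form $\ell(w):=\int_{\mathbb{R}^n}f\cdot w\,dx$; note that for $w\in G$ the orthogonality of the Helmholtz decomposition gives $\ell(w)=\int_{\mathbb{R}^n}\mathcal{Q}f\cdot w\,dx$, so $|\ell(w)|\le\|\mathcal{Q}f\|_{L^2}\|w\|_{L^2}$ and $\ell\in G'$. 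Lax--Milgram then produces a unique $v\in G$ with $\mathcal{B}(v,w)=\ell(w)$ for all $w\in G$. Choosing $w=\nabla\varphi$ with $\varphi\in\mathcal{S}$ turns this into $\int a v\cdot\nabla\varphi=\int f\cdot\nabla\varphi$ for all $\varphi\in\mathcal{S}$, that is $\operatorname{div}(av)=\operatorname{div}f$ in $\mathcal{S}'(\mathbb{R}^n)$; and since $v\in L^2$ is a gradient (being an element of $G$), $v=\nabla P$ for a tempered distribution $P$ determined up to an additive constant, which is the claimed solution. The bound is obtained by testing with $w=v$: $a_{\star}\|\nabla P\|_{L^2}^2\le\mathcal{B}(v,v)=\ell(v)\le\|\mathcal{Q}f\|_{L^2}\|\nabla P\|_{L^2}$.

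For uniqueness up to constants, if $\nabla P_1,\nabla P_2\in L^2$ both solve $(\ref{E})$, then $w:=\nabla(P_1-P_2)\in G$ satisfies $\int_{\mathbb{R}^n} a\,w\cdot\nabla\varphi\,dx=0$ for all $\varphi\in\mathcal{S}$; by density of $\nabla\mathcal{S}$ in $G$ this forces $\mathcal{B}(w,\cdot)\equiv0$ on $G$, and coercivity tested against $w$ itself gives $w=0$, so $P_1-P_2$ is constant. The only point requiring genuine care is the choice of functional-analytic framework—the space $G$, which circumvents the fact that $\dot{H}^1(\mathbb{R}^2)$ is not a well-behaved space of tempered distributions modulo constants; everything else is a direct application of Lax--Milgram and of the Helmholtz/Leray decomposition, which is why one may also simply refer to \cite{Dan10}.
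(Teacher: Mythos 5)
The paper quotes this proposition as a classical result and defers entirely to \cite{Dan10}, so there is no in-text proof to compare against; your Lax--Milgram argument on the closed subspace $G=\mathcal{Q}\big(L^2(\mathbb{R}^n)^n\big)$ of $L^2$ gradient fields is correct and is the standard variational route one expects that reference to take. The only step worth spelling out slightly more is the passage from ``$v\in G$'' to ``$v=\nabla P$ for a \emph{tempered} $P$'': for $n=2$ one cannot invoke a homogeneous Sobolev embedding, but a primitive of a curl-free $L^2$ field always has at most polynomial growth in $L^2_{\mathrm{loc}}$-averages (Poincar\'e on dyadic balls) and is therefore tempered; you flag this subtlety and your choice of $G$ keeps the Lax--Milgram part itself entirely untouched by it.
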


Recently, in \cite{XuLiZhai}, in the $2D$ case, H. Xu, Y. Li and X. Zhai
studied the eliptic equation $\left(  \text{\ref{E}}\right)  $ with the data
$\left(  a-\bar{a},f\right)  $ in Besov spaces. Using a different approach, we
obtain estimates in both two dimensional and three dimensional situations. Let
us also mention that our method allows to obtain a wider range of indices than
the one of Proposition $3.1.$ $i)$ of \cite{XuLiZhai}.

We choose to focus on the $3D$ case. We aim at establishing the following result:

\begin{proposition}
\label{Estimare_eliptica1}Let us consider $p\in\left(  \frac{6}{5},2\right)  $
and $q\in\lbrack1,\infty)$ such that $\frac{1}{p}-\frac{1}{q}\leq\frac{1}{2}$.
Assume that there exists positive constants $\left(  \bar{a},a_{\star
},a^{\star}\right)  $ such that $a-\bar{a}\in\dot{B}_{q,1}^{\frac{3}{q}%
}\left(  \mathbb{R}^{3}\right)  $ and $0<a_{\star}\leq a\leq a^{\star}$.
Furthermore, consider $f\in\dot{B}_{p,2}^{\frac{3}{p}-\frac{3}{2}}\left(
\mathbb{R}^{3}\right)  $. Then there exists a tempered distribution $P$ unique
up to constant functions such that $\nabla P\in\dot{B}_{p,2}^{\frac{3}%
{p}-\frac{3}{2}}\left(  \mathbb{R}^{3}\right)  $ and Equation $\left(
\text{\ref{E}}\right)  $ is satisfied. Moreover, the following estimate holds
true:%
\begin{equation}
\left\Vert \nabla P\right\Vert _{\dot{B}_{p,2}^{\frac{3}{p}-\frac{3}{2}}%
}\lesssim\left(  \frac{1}{\bar{a}}+\left\Vert \frac{1}{a}-\frac{1}{\bar{a}%
}\right\Vert _{\dot{B}_{q,1}^{\frac{3}{q}}}\right)  \left(  1+\frac
{1}{a_{\star}}\left\Vert a-\bar{a}\right\Vert _{\dot{B}_{q,1}^{\frac{3}{q}}%
}\right)  \left\Vert \mathcal{Q}f\right\Vert _{\dot{B}_{p,2}^{\frac{3}%
{p}-\frac{3}{2}}}. \label{p_mic}%
\end{equation}

\end{proposition}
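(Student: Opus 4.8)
The plan is to solve the elliptic equation $\operatorname{div}(a\nabla P)=\operatorname{div}f$ by a fixed-point/perturbation argument around the constant-coefficient operator. First I would rewrite $\left(\text{\ref{E}}\right)$ as
\[
\bar a\,\Delta P=\operatorname{div}\bigl(\mathcal{Q}f\bigr)-\operatorname{div}\bigl((a-\bar a)\nabla P\bigr),
\]
so that $\nabla P=\nabla(-\Delta)^{-1}\operatorname{div}\mathcal Q f/\bar a-\tfrac{1}{\bar a}\nabla(-\Delta)^{-1}\operatorname{div}\bigl((a-\bar a)\nabla P\bigr)$. Since $\nabla(-\Delta)^{-1}\operatorname{div}=-\mathcal Q$ is a Calder\'on--Zygmund operator of order $0$, it is bounded on every homogeneous Besov space $\dot B_{p,r}^{s}$; in particular on $\dot B_{p,2}^{\frac3p-\frac32}$. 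The first term is then controlled by $\tfrac1{\bar a}\|\mathcal Q f\|_{\dot B_{p,2}^{\frac3p-\frac32}}$, which already accounts for one factor in $\left(\text{\ref{p_mic}}\right)$.

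The core estimate is the product bound for $(a-\bar a)\nabla P$ in $\dot B_{p,2}^{\frac3p-\frac32}$. Here I would invoke the paracalculus: writing $(a-\bar a)\nabla P=T_{a-\bar a}\nabla P+T_{\nabla P}(a-\bar a)+R(a-\bar a,\nabla P)$, the hypotheses $p\in(\tfrac65,2)$, $q\in[1,\infty)$ with $\tfrac1p-\tfrac1q\le\tfrac12$, and $a-\bar a\in\dot B_{q,1}^{\frac3q}\hookrightarrow L^\infty$ are exactly what is needed so that each paraproduct and the remainder map into $\dot B_{p,2}^{\frac3p-\frac32}$ with the bound
\[
\|(a-\bar a)\nabla P\|_{\dot B_{p,2}^{\frac3p-\frac32}}\lesssim \|a-\bar a\|_{\dot B_{q,1}^{\frac3q}}\,\|\nabla P\|_{\dot B_{p,2}^{\frac3p-\frac32}}.
\]
The constraint $\tfrac1p-\tfrac1q\le\tfrac12$ ensures $\frac3q\ge\frac3p-\frac32$, so that $T_{\nabla P}(a-\bar a)$ lands in the right space, while $p>\tfrac65$ (equivalently $\frac3p-\frac32<\frac32$, or rather the condition guaranteeing a nonnegative sum of regularities in the remainder term) makes $R(a-\bar a,\nabla P)$ well defined and continuous. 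This is the step I expect to be the main technical obstacle, since getting the \emph{sharp} range of indices requires carefully tracking the low-frequency behaviour of the paraproducts in the homogeneous setting.

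Combining the two bounds, the map $\Phi(\nabla P):=-\tfrac1{\bar a}\mathcal Q f-\tfrac1{\bar a}\mathcal Q\bigl((a-\bar a)\nabla P\bigr)$ is affine on $\dot B_{p,2}^{\frac3p-\frac32}$; however, its Lipschitz constant is only $\lesssim\tfrac1{a_\star}\|a-\bar a\|_{\dot B_{q,1}^{\frac3q}}$, which is not assumed small. To handle this I would not contract $\Phi$ directly but instead use the standard low/high-frequency splitting: for frequencies below a threshold $j_0$ chosen so that $\sum_{j\le j_0}2^{j(3/q)}\|\dot\Delta_j(a-\bar a)\|_{L^q}$ is small, contraction works; the finitely many remaining low modes $j\le j_0$ are absorbed because on any fixed ball of frequencies the operator $\nabla P\mapsto \tfrac1{\bar a}\mathcal Q((a-\bar a)\nabla P)$ is a compact perturbation, or more elementarily one closes the estimate by a Gronwall-type summation over dyadic blocks. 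This produces existence and the a priori bound $\left(\text{\ref{p_mic}}\right)$; the extra factor $\bigl(\tfrac1{\bar a}+\|\tfrac1a-\tfrac1{\bar a}\|_{\dot B_{q,1}^{\frac3q}}\bigr)$ appears because, to invert, one actually rewrites the equation using $\tfrac1a$ rather than $\tfrac1{\bar a}$ — namely $\nabla P=\nabla\bigl(\tfrac1a\,(-\operatorname{div})^{-1}\!\cdot\bigr)$-type manipulations, or by estimating $\mathcal Q(\tfrac1a f)$ and then correcting — so that the constant is genuinely governed by $\tfrac1a$. Uniqueness up to constants follows from Proposition~\ref{presiuneL2} applied to the difference of two solutions after checking the difference lies in $L^2$ by an interpolation/localization argument, or directly from the a priori estimate applied with $f=0$.
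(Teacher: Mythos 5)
Your proposal takes a genuinely different route from the paper, and unfortunately it has a gap that I do not think can be repaired along the lines you sketch. The central difficulty is that your fixed-point map $\Phi(\nabla P)=-\tfrac1{\bar a}\mathcal Q f-\tfrac1{\bar a}\mathcal Q\bigl((a-\bar a)\nabla P\bigr)$ has Lipschitz constant of order $\tfrac1{\bar a}\|a-\bar a\|_{\dot B_{q,1}^{3/q}}$, which is \emph{not} small, and the workarounds you mention do not close it. Splitting $a-\bar a=\dot S_m(a-\bar a)+(Id-\dot S_m)(a-\bar a)$ makes the high-frequency part small in $\dot B_{q,1}^{3/q}$, but the low-frequency part $\dot S_m(a-\bar a)$ is still of full size in $L^\infty$; multiplication by it on $\dot B_{p,2}^{3/p-3/2}(\mathbb R^3)$ is not a compact operator (there is no spatial localization, so no Rellich-type compactness), so the Fredholm/compact-perturbation argument does not apply; and a ``Gronwall over dyadic blocks'' does not help because the obstruction is genuinely order one, not a logarithmic accumulation. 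In short, the product estimate $\|(a-\bar a)\nabla P\|_{\dot B_{p,2}^{3/p-3/2}}\lesssim\|a-\bar a\|_{\dot B_{q,1}^{3/q}}\|\nabla P\|_{\dot B_{p,2}^{3/p-3/2}}$ feeds the unknown back into itself with no small parameter, and this circularity is exactly what the paper is at pains to avoid.

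The paper's proof breaks the circularity by anchoring on the $L^2$ theory (Proposition~\ref{presiuneL2}). Since $p<2$ gives $\dot B_{p,2}^{3/p-3/2}\hookrightarrow L^2$, one already knows $\nabla P\in L^2$ with $a_\star\|\nabla P\|_{L^2}\le\|\mathcal Q f\|_{L^2}$, unconditionally. Then one never estimates $(a-\bar a)\nabla P$ directly; instead, using $\mathcal P(\nabla P)=0$ one writes
\[
\mathcal P(a\nabla P)=\mathcal P\bigl(\dot T'_{\nabla P}(a-\bar a)\bigr)+\bigl[\mathcal P,\dot T_{a-\bar a}\bigr]\nabla P,
\]
and both terms are bounded in $\dot B_{p,2}^{3/p-3/2}$ \emph{by} $\|\nabla P\|_{L^2}$, not by $\|\nabla P\|_{\dot B_{p,2}^{3/p-3/2}}$: the first by the remainder estimates of Proposition~\ref{ParaRem}, the second because the commutator $[\mathcal P,\dot T_{a-\bar a}]$ gains a full derivative (Proposition~\ref{A71}). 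Combined with $\mathcal Q(a\nabla P)=\mathcal Q f$, this yields a bound on $\|a\nabla P\|_{\dot B_{p,2}^{3/p-3/2}}$ with no implicit dependence on the unknown, and one finishes by writing $\nabla P=\tfrac1a\,(a\nabla P)$. This also explains the constraint $p>\tfrac65$: it is forced by the commutator estimate (\ref{E4}), where the gain $\nu=\tfrac52-\tfrac3p$ in Proposition~\ref{A71} must be positive — a different reason than the one you give (the remainder's sum of regularities). Your existence-via-$L^2$ and uniqueness observations are fine in spirit and close to the paper's; it is the core a priori estimate that needs the commutator mechanism rather than a fixed point.
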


\begin{remark}
Working in Besov spaces with third index $r=2$ is enough in view of the
applications that we have in mind. However similar estimates do hold true when
the third index is choosen in the interval $\left[  1,2\right]  $.
\end{remark}

\begin{proof}
Because $p<2$, Proposition \ref{Embedding} ensures that $\dot{B}_{p,2}%
^{\frac{3}{p}-\frac{3}{2}}\hookrightarrow L^{2}$ and owing to Proposition
\ref{presiuneL2}, we get the existence of $P\in\mathcal{S}^{\prime}\left(
\mathbb{R}^{3}\right)  $ with $\nabla P\in L^{2}$ and
\begin{equation}
a_{\star}\left\Vert \nabla P\right\Vert _{L^{2}}\leq\left\Vert \mathcal{Q}%
f\right\Vert _{L^{2}}. \label{E1}%
\end{equation}
Moreover, as $\mathcal{Q}$ is a continuous operator on $L^{2}$ we deduce from
$\left(  \text{\ref{E}}\right)  $ that
\begin{equation}
\mathcal{Q}\left(  a\nabla P\right)  =\mathcal{Q}f\text{ .} \label{Eliptic1}%
\end{equation}
Using the Bony decomposition (see Definition \ref{paraprodus_rest} and the
remark that follows) and the fact that $\mathcal{P}\left(  \nabla P\right)
=0$ we write that:
\[
\mathcal{P}\left(  a\nabla P\right)  =\mathcal{P}\left(  \dot{T}_{\nabla
P}^{\prime}(a-\bar{a})\right)  +\left[  \mathcal{P},\dot{T}_{a-\bar{a}%
}\right]  \nabla P.
\]
Using Proposition \ref{ParaRem} along with Proposition \ref{Embedding} and
relation $\left(  \text{\ref{E1}}\right)  $, we get that
\begin{equation}
\left\Vert \mathcal{P}\left(  \dot{T}_{\nabla P}^{\prime}(a-\bar{a})\right)
\right\Vert _{\dot{B}_{p,2}^{\frac{3}{p}-\frac{3}{2}}}\lesssim\left\Vert
\nabla P\right\Vert _{L^{2}}\left\Vert a-\bar{a}\right\Vert _{\dot
{B}_{p^{\star},2}^{\frac{3}{p}-\frac{3}{2}}}\lesssim\frac{1}{a_{\star}%
}\left\Vert \mathcal{Q}f\right\Vert _{L^{2}}\left\Vert a-\bar{a}\right\Vert
_{\dot{B}_{q,1}^{\frac{3}{q}}} \label{E2}%
\end{equation}
where%
\[
\frac{1}{p}=\frac{1}{2}+\frac{1}{p^{\star}}.
\]
Next, proceeding as in Proposition \ref{A71} we get that:%
\begin{equation}
\left\Vert \left[  \mathcal{P},\dot{T}_{a-\bar{a}}\right]  \nabla P\right\Vert
_{\dot{B}_{p,2}^{\frac{3}{p}-\frac{3}{2}}}\lesssim\left\Vert \nabla
a\right\Vert _{\dot{B}_{p^{\star},2}^{\frac{3}{p}-\frac{5}{2}}}\left\Vert
\nabla P\right\Vert _{L^{2}}\lesssim\frac{1}{a_{\star}}\left\Vert
\mathcal{Q}f\right\Vert _{L^{2}}\left\Vert a-\bar{a}\right\Vert _{\dot
{B}_{q,1}^{\frac{3}{q}}}. \label{E4}%
\end{equation}
Putting together relations $\left(  \text{\ref{E2}}\right)  $ and $\left(
\text{\ref{E4}}\right)  $ we get%
\[
\left\Vert \mathcal{P}\left(  a\nabla P\right)  \right\Vert _{\dot{B}%
_{p,2}^{\frac{3}{p}-\frac{3}{2}}}\lesssim\frac{1}{a_{\star}}\left\Vert
\mathcal{Q}f\right\Vert _{L^{2}}\left\Vert a-\bar{a}\right\Vert _{\dot
{B}_{q,1}^{\frac{3}{q}}}.
\]
Combining this with $\left(  \text{\ref{Eliptic1}}\right)  $ and Proposition
\ref{Embedding}, we find that:%
\[
\left\Vert a\nabla P\right\Vert _{\dot{B}_{p,2}^{\frac{3}{p}-\frac{3}{2}}%
}\lesssim\left(  1+\frac{1}{a_{\star}}\left\Vert a-\bar{a}\right\Vert
_{\dot{B}_{q,1}^{\frac{3}{q}}}\right)  \left\Vert \mathcal{Q}f\right\Vert
_{\dot{B}_{p,2}^{\frac{3}{p}-\frac{3}{2}}}.
\]
Of course, writing
\[
\nabla P=\frac{1}{a}a\nabla P
\]
using product rules one gets that:%
\begin{equation}
\left\Vert \nabla P\right\Vert _{\dot{B}_{p,2}^{\frac{3}{p}-\frac{3}{2}}%
}\lesssim\left(  \frac{1}{\bar{a}}+\left\Vert \frac{1}{a}-\frac{1}{\bar{a}%
}\right\Vert _{\dot{B}_{q,1}^{\frac{3}{q}}}\right)  \left(  1+\frac
{1}{a_{\star}}\left\Vert a-\bar{a}\right\Vert _{\dot{B}_{q,1}^{\frac{3}{q}}%
}\right)  \left\Vert \mathcal{Q}f\right\Vert _{\dot{B}_{p,2}^{\frac{3}%
{p}-\frac{3}{2}}} \label{estimP2}%
\end{equation}

\end{proof}
 Applying the same technique as above leads to the $2$ dimensional estimate:

\begin{proposition}
\label{Estimare_eliptica1_2D}Let us consider $p\in\left(  1,2\right)  $ and
$q\in\lbrack1,\infty)$ such that $\frac{1}{p}-\frac{1}{q}\leq\frac{1}{2}$.
Assume that there exists positive constants $\left(  \bar{a},a_{\star
},a^{\star}\right)  $ such that $a-\bar{a}\in\dot{B}_{q,1}^{\frac{2}{q}%
}\left(  \mathbb{R}^{2}\right)  $ and $0<a_{\star}\leq a\leq a^{\star}$.
Furthermore, consider $f\in\dot{B}_{p,2}^{\frac{2}{p}-1}\left(  \mathbb{R}%
^{2}\right)  $. Then there exists a tempered distribution $P$ unique up to
constant functions such that $\nabla P\in\dot{B}_{p,2}^{\frac{2}{p}-1}\left(
\mathbb{R}^{2}\right)  $ and Equation $\left(  \text{\ref{E}}\right)  $ is
satisfied. Moreover, the following estimate holds true:%
\begin{equation}
\left\Vert \nabla P\right\Vert _{\dot{B}_{p,2}^{\frac{2}{p}-1}}\lesssim\left(
\frac{1}{\bar{a}}+\left\Vert \frac{1}{a}-\frac{1}{\bar{a}}\right\Vert
_{\dot{B}_{q,1}^{\frac{2}{q}}}\right)  \left(  1+\frac{1}{a_{\star}}\left\Vert
a-\bar{a}\right\Vert _{\dot{B}_{q,1}^{\frac{2}{q}}}\right)  \left\Vert
\mathcal{Q}f\right\Vert _{\dot{B}_{p,2}^{\frac{2}{p}-1}}. \label{p_mic2D}%
\end{equation}

\end{proposition}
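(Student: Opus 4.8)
The plan is to mimic exactly the proof of Proposition~\ref{Estimare_eliptica1}, replacing the three-dimensional Sobolev embedding by its two-dimensional counterpart and keeping track of the shifted regularity indices. Concretely, in dimension $n=2$ the critical elliptic space $\dot B_{p,2}^{\frac{2}{p}-1}$ plays the role that $\dot B_{p,2}^{\frac{3}{p}-\frac{3}{2}}$ played in $3D$: both are the Besov spaces of regularity $\frac{n}{p}-\frac{n}{2}$, which embed into $L^{2}$ precisely when $p<2$. First I would invoke Proposition~\ref{Embedding} to get $\dot B_{p,2}^{\frac{2}{p}-1}\hookrightarrow L^{2}$ for $p\in(1,2)$, then Proposition~\ref{presiuneL2} to produce $P\in\mathcal S'(\mathbb R^{2})$ with $\nabla P\in L^{2}$, $a_{\star}\|\nabla P\|_{L^{2}}\le\|\mathcal Qf\|_{L^{2}}$, and (since $\mathcal Q$ is $L^{2}$-bounded) the relation $\mathcal Q(a\nabla P)=\mathcal Qf$.

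Next I would split $a\nabla P=\mathcal P(a\nabla P)+\mathcal Q(a\nabla P)$ and, using Bony's decomposition together with $\mathcal P\nabla P=0$, write
\[
\mathcal P(a\nabla P)=\mathcal P\bigl(\dot T'_{\nabla P}(a-\bar a)\bigr)+[\mathcal P,\dot T_{a-\bar a}]\nabla P .
\]
For the first term I would apply Proposition~\ref{ParaRem} and Proposition~\ref{Embedding} with the Hölder-type exponent relation $\frac1p=\frac12+\frac1{p^{\star}}$, bounding it by $\|\nabla P\|_{L^{2}}\,\|a-\bar a\|_{\dot B_{p^{\star},2}^{\frac{2}{p}-1}}$; the embedding $\dot B_{q,1}^{\frac2q}\hookrightarrow\dot B_{p^{\star},2}^{\frac2p-1}$ (valid because $\frac1p-\frac1q\le\frac12$, i.e.\ $\frac1{p^\star}\le\frac1q$, so $\frac2{p^\star}-1\le\frac2q$ modulo the index bookkeeping) then turns this into $\frac1{a_\star}\|\mathcal Qf\|_{L^2}\|a-\bar a\|_{\dot B_{q,1}^{\frac2q}}$. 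For the commutator term I would quote the estimate established in the proof of Proposition~\ref{A71} (or Proposition~\ref{A7}) to get $\|[\mathcal P,\dot T_{a-\bar a}]\nabla P\|_{\dot B_{p,2}^{\frac2p-1}}\lesssim\|\nabla a\|_{\dot B_{p^{\star},2}^{\frac2p-2}}\|\nabla P\|_{L^2}$, and again bound $\|\nabla a\|_{\dot B_{p^{\star},2}^{\frac2p-2}}$ by $\|a-\bar a\|_{\dot B_{q,1}^{\frac2q}}$ via embedding. Adding the two contributions and using $\mathcal Q(a\nabla P)=\mathcal Qf$ with Proposition~\ref{Embedding} yields $\|a\nabla P\|_{\dot B_{p,2}^{\frac2p-1}}\lesssim\bigl(1+\frac1{a_\star}\|a-\bar a\|_{\dot B_{q,1}^{\frac2q}}\bigr)\|\mathcal Qf\|_{\dot B_{p,2}^{\frac2p-1}}$.

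Finally I would recover $\nabla P$ from $a\nabla P$ by writing $\nabla P=\frac1a\,(a\nabla P)$ and applying the product rule in $\dot B_{p,2}^{\frac2p-1}$, using that $\frac1a-\frac1{\bar a}\in\dot B_{q,1}^{\frac2q}$ (which follows from $a-\bar a\in\dot B_{q,1}^{\frac2q}$, the lower bound $a\ge a_\star$, and composition estimates), to land on the stated inequality~\eqref{p_mic2D}. Uniqueness up to constants is inherited from Proposition~\ref{presiuneL2}. I expect the only genuinely delicate point to be the index verification in the two paraproduct/commutator estimates: one must check that with $n=2$ the exponents $\frac2p-1$ and $\frac2p-2$ (for $\nabla a$) together with $p^{\star}$ defined by $\frac1p=\frac12+\frac1{p^{\star}}$ stay within the admissible ranges of Propositions~\ref{ParaRem}, \ref{A71}, and \ref{Embedding}, and that the constraint $\frac1p-\frac1q\le\frac12$ is exactly what is needed for the embedding $\dot B_{q,1}^{\frac2q}\hookrightarrow\dot B_{p^{\star},2}^{\frac2p-1}$ — everything else is a line-by-line transcription of the $3D$ argument.
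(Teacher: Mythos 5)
Your proposal is correct and is precisely what the paper intends: the paper itself dispenses with a separate proof and simply says ``Applying the same technique as above leads to the $2$ dimensional estimate,'' pointing out only that the index $\tfrac{3}{p}-\tfrac{5}{2}<0$ (forcing $p>\tfrac{6}{5}$) is replaced by $\tfrac{2}{p}-2<0$ (forcing $p>1$). You correctly carry out the substitution $\tfrac{3}{p}-\tfrac{3}{2}\to\tfrac{2}{p}-1$, verify the $L^2$ embedding, the paraproduct and commutator index checks with $\tfrac{1}{p^\star}=\tfrac{1}{p}-\tfrac{1}{2}$, and recognize the role of the constraint $\tfrac{1}{p}-\tfrac{1}{q}\le\tfrac{1}{2}$, so this is a faithful line-by-line adaptation of the $3D$ proof.
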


Let us point out that the restriction $p>\frac{6}{5}$ comes from the fact that
we need $\frac{3}{p}-\frac{5}{2}<0$ in relation $\left(  \text{\ref{E4}%
}\right)  $. In $2D$ instead of $\frac{3}{p}-\frac{5}{2}$ we will have
$\frac{2}{p}-2$ which is negative provided that $p>1$.

The next result covers the range of integrability indices larger than $2:$

\begin{proposition}
\label{Estimare_eliptica2}Let us consider $p\in\left(  2,6\right)  $ and
$q\in\lbrack1,\infty)$ such that $\frac{1}{p}+\frac{1}{q}\geq\frac{1}{2}$.
Assume that there exists positive constants $\left(  \bar{a},a_{\star
},a^{\star}\right)  $ such that $a-\bar{a}\in\dot{B}_{q,1}^{\frac{3}{q}%
}\left(  \mathbb{R}^{3}\right)  $ and $0<a_{\star}\leq a\leq a^{\star}$.
Furthermore, consider $f\in\dot{B}_{p,2}^{\frac{3}{p}-\frac{3}{2}}\left(
\mathbb{R}^{3}\right)  $ and a tempered distribution $P$ \ with $\nabla
P\in\dot{B}_{p,2}^{\frac{3}{p}-\frac{3}{2}}\left(  \mathbb{R}^{3}\right)  $
such that Equation $\left(  \text{\ref{E}}\right)  $ is satisfied. Then, the
following estimate holds true:%
\begin{equation}
\left\Vert \nabla P\right\Vert _{\dot{B}_{p,2}^{\frac{3}{p}-\frac{3}{2}}%
}\lesssim\left(  \frac{1}{\bar{a}}+\left\Vert \frac{1}{a}-\frac{1}{\bar{a}%
}\right\Vert _{\dot{B}_{q,1}^{\frac{3}{q}}}\right)  \left(  1+\frac
{1}{a_{\star}}\left\Vert a-\bar{a}\right\Vert _{\dot{B}_{q,1}^{\frac{3}{q}}%
}\right)  \left\Vert \mathcal{Q}f\right\Vert _{\dot{B}_{p,2}^{\frac{3}%
{p}-\frac{3}{2}}}. \label{p_mare}%
\end{equation}

\end{proposition}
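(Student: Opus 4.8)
The plan is to deduce \eqref{p_mare} from the already-proved estimate \eqref{p_mic} of Proposition~\ref{Estimare_eliptica1} by a duality argument based on the formal self-adjointness of the operator $P\mapsto\operatorname{div}(a\nabla P)$. The first step is pure bookkeeping: since $p\in\left(2,6\right)$, the conjugate exponent $p'$ belongs to $\left(\frac65,2\right)$, the constraint $\frac1p+\frac1q\ge\frac12$ is equivalent to $\frac1{p'}-\frac1q\le\frac12$, and $\frac3{p'}-\frac32=-\bigl(\frac3p-\frac32\bigr)$, so that $\dot B_{p',2}^{\frac3{p'}-\frac32}$ is the dual space of $\dot B_{p,2}^{\frac3p-\frac32}$ (both Lebesgue exponents lying in $\left(1,\infty\right)$, both regularity indices being $<\frac3p$ resp.\ $<\frac3{p'}$, so the two spaces are genuine subspaces of $\mathcal S'$). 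In particular Proposition~\ref{Estimare_eliptica1} applies with the couple $\left(p',q\right)$.

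By the duality characterisation of homogeneous Besov norms and the fact that $\nabla P=\mathcal Q\nabla P$, it is enough to estimate $\bigl|\langle\nabla P,g\rangle\bigr|$ for $g=\mathcal Q\phi$, with $\phi\in\mathcal S(\mathbb R^3)$ whose spectrum is bounded away from the origin and $\left\Vert g\right\Vert_{\dot B_{p',2}^{\frac3{p'}-\frac32}}\le1$; such a $g$ is a gradient, $g=\nabla\psi$ with $\psi$ of the same type, and $\mathcal Q g=g$. Given $g$, apply Proposition~\ref{Estimare_eliptica1} (with exponent $p'$) to the equation $\operatorname{div}(a\nabla Q)=\operatorname{div}g$: it provides $Q$, unique up to constants, with $\nabla Q\in\dot B_{p',2}^{\frac3{p'}-\frac32}$ and
\[
\left\Vert\nabla Q\right\Vert_{\dot B_{p',2}^{\frac3{p'}-\frac32}}\lesssim\Bigl(\tfrac1{\bar a}+\bigl\Vert\tfrac1a-\tfrac1{\bar a}\bigr\Vert_{\dot B_{q,1}^{\frac3q}}\Bigr)\Bigl(1+\tfrac1{a_\star}\left\Vert a-\bar a\right\Vert_{\dot B_{q,1}^{\frac3q}}\Bigr)\left\Vert g\right\Vert_{\dot B_{p',2}^{\frac3{p'}-\frac32}}.
\]

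The core of the argument is the identity $\langle\nabla P,g\rangle=\langle\mathcal Q f,\nabla Q\rangle$. One obtains it by testing the weak form of $\left(\text{\ref{E}}\right)$ for $P$ against the function $Q$, and the weak form of $\operatorname{div}(a\nabla Q)=\operatorname{div}g$ against $P$: both are legitimate after a density argument, because $0<a_\star\le a\le a^\star$ together with $a-\bar a\in\dot B_{q,1}^{\frac3q}\hookrightarrow L^\infty$ and the product laws give $a\nabla P\in\dot B_{p,2}^{\frac3p-\frac32}$ and $a\nabla Q\in\dot B_{p',2}^{\frac3{p'}-\frac32}$, while $\{\nabla\phi:\phi\in\mathcal S,\ \widehat\phi=0\text{ near }0\}$ is dense in these spaces. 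Exploiting the symmetry of the form $(u,v)\mapsto\langle a\nabla u,\nabla v\rangle$ (here $a$ is a real multiplier) and the self-adjointness of $\mathcal Q$ one chains
\[
\langle\nabla P,g\rangle=\langle a\nabla Q,\nabla P\rangle=\langle a\nabla P,\nabla Q\rangle=\langle f,\nabla Q\rangle=\langle\mathcal Q f,\nabla Q\rangle.
\]
Hölder duality then yields $\bigl|\langle\nabla P,g\rangle\bigr|\le\left\Vert\mathcal Q f\right\Vert_{\dot B_{p,2}^{\frac3p-\frac32}}\left\Vert\nabla Q\right\Vert_{\dot B_{p',2}^{\frac3{p'}-\frac32}}$; inserting the bound for $\left\Vert\nabla Q\right\Vert$ and taking the supremum over admissible $g$ gives exactly \eqref{p_mare}.

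The step I expect to be the main obstacle is the rigorous justification of the two integrations by parts above inside the homogeneous-Besov/tempered-distribution framework: one must check that the weak formulation of $\left(\text{\ref{E}}\right)$ really extends from Schwartz test functions to the ``test functions'' $P$ and $Q$, which are only controlled through their gradients; that every pairing written down is finite (this is where the product laws and the restriction $\frac1p+\frac1q\ge\frac12$ re-enter, ensuring for instance $(a-\bar a)\nabla Q\in\dot B_{p',2}^{\frac3{p'}-\frac32}$); and that the dual norm of $\nabla P$ is genuinely recovered by testing against gradients. These verifications are routine for the parameter range at hand but have to be carried out with care; the passage through Proposition~\ref{Estimare_eliptica1} itself is immediate once the index bookkeeping of the first step is in place.
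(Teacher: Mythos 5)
Your proposal is correct and follows exactly the paper's own route: reduce to Proposition~\ref{Estimare_eliptica1} applied with the conjugate exponent $p'$, solve $\operatorname{div}(a\nabla P_g)=\operatorname{div}g$ for a test vector field $g$, and chain the pairings $\langle\nabla P,g\rangle=\langle a\nabla P,\nabla P_g\rangle=\langle\mathcal Qf,\nabla P_g\rangle$ before invoking the dual characterisation of the Besov norm. The only difference is that you spell out the density and product-law checks needed to justify the integrations by parts, which the paper leaves implicit; this is a welcome clarification, not a change of method.
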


\begin{proof}
Let us notice that $p^{\prime}$ the conjugate Lebesgue exponent of $p$
satisfies $p^{\prime}\in\left(  \frac{6}{5},2\right)  $ and $\frac
{1}{p^{\prime}}-\frac{1}{q}\leq\frac{1}{2}$. Thus, according to Proposition
\ref{Estimare_eliptica1}, for any $g$ belonging to the unit ball of
$\mathcal{S\cap}\dot{B}_{p^{\prime},2}^{\frac{3}{p^{\prime}}-\frac{3}{2}}$
there exists a $P_{g}\in\mathcal{S}^{\prime}\left(  \mathbb{R}^{3}\right)  $
with $\nabla P_{g}\in\mathcal{S\cap}$ $\dot{B}_{p^{\prime},2}^{\frac
{3}{p^{\prime}}-\frac{3}{2}}$ such that%
\[
\operatorname{div}\left(  a\nabla P_{g}\right)  =\operatorname{div}g
\]
and
\[
\left\Vert \nabla P_{g}\right\Vert _{\dot{B}_{p^{\prime},2}^{\frac
{3}{p^{\prime}}-\frac{3}{2}}}\lesssim\left(  \frac{1}{\bar{a}}+\left\Vert
\frac{1}{a}-\frac{1}{\bar{a}}\right\Vert _{\dot{B}_{q,1}^{\frac{3}{q}}%
}\right)  \left(  1+\frac{1}{a_{\star}}\left\Vert a-\bar{a}\right\Vert
_{\dot{B}_{q,1}^{\frac{3}{q}}}\right)  .
\]
We write that%
\begin{align*}
\left\langle \nabla P,g\right\rangle  &  =\left\langle P,\operatorname{div}%
g\right\rangle =\left\langle P,\operatorname{div}\left(  a\nabla P_{g}\right)
\right\rangle \\
&  =\left\langle \operatorname{div}\mathcal{Q}f,P_{g}\right\rangle
=\left\langle \mathcal{Q}f,\nabla P_{g}\right\rangle ,
\end{align*}
and consequently%
\begin{align*}
\left\vert \left\langle \nabla P,g\right\rangle \right\vert  &  \lesssim
\left\Vert \mathcal{Q}f\right\Vert _{\dot{B}_{p,2}^{\frac{3}{p}-\frac{3}{2}}%
}\left\Vert \nabla P_{g}\right\Vert _{\dot{B}_{p^{\prime},2}^{\frac
{3}{p^{\prime}}-\frac{3}{2}}}\\
&  \lesssim\left(  \frac{1}{\bar{a}}+\left\Vert \frac{1}{a}-\frac{1}{\bar{a}%
}\right\Vert _{\dot{B}_{q,1}^{\frac{3}{q}}}\right)  \left(  1+\frac
{1}{a_{\star}}\left\Vert a-\bar{a}\right\Vert _{\dot{B}_{q,1}^{\frac{3}{q}}%
}\right)  \left\Vert \mathcal{Q}f\right\Vert _{\dot{B}_{p,2}^{\frac{3}%
{p}-\frac{3}{2}}}.
\end{align*}
Using Proposition \ref{Caracterizare} we get that relation $\left(
\text{\ref{p_mare}}\right)  $ holds true.
\end{proof}

As in the previous situation, by applying the same technique we get a similar
result in $2D$:

\begin{proposition}
\label{Estimare_eliptica2_2D}Let us consider $p\in(2,\infty)$ and $q\in
\lbrack1,\infty)$ such that $\frac{1}{p}+\frac{1}{q}\geq\frac{1}{2}$. Assume
that there exists positive constants $\left(  \bar{a},a_{\star},a^{\star
}\right)  $ such that $a-\bar{a}\in\dot{B}_{q,1}^{\frac{2}{q}}\left(
\mathbb{R}^{2}\right)  $ and $0<a_{\star}\leq a\leq a^{\star}$. Furthermore,
consider $f\in\dot{B}_{p,2}^{\frac{2}{p}-1}\left(  \mathbb{R}^{2}\right)  $
and a tempered distribution $P$ \ with $\nabla P\in\dot{B}_{p,2}^{\frac{2}%
{p}-1}\left(  \mathbb{R}^{2}\right)  $ such that Equation $\left(
\text{\ref{E}}\right)  $ is satisfied. Then, following estimate holds true:%
\begin{equation}
\left\Vert \nabla P\right\Vert _{\dot{B}_{p,2}^{\frac{2}{p}-1}}\lesssim\left(
\frac{1}{\bar{a}}+\left\Vert \frac{1}{a}-\frac{1}{\bar{a}}\right\Vert
_{\dot{B}_{q,1}^{\frac{2}{q}}}\right)  \left(  1+\frac{1}{a_{\star}}\left\Vert
a-\bar{a}\right\Vert _{\dot{B}_{q,1}^{\frac{2}{q}}}\right)  \left\Vert
\mathcal{Q}f\right\Vert _{\dot{B}_{p,2}^{\frac{2}{p}-1}}. \label{p_mare2D}%
\end{equation}

\end{proposition}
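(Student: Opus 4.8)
The plan is to repeat, \emph{mutatis mutandis}, the duality argument already used for Proposition \ref{Estimare_eliptica2}, with the three-dimensional indices $\frac{3}{p}-\frac{3}{2}$ replaced everywhere by their two-dimensional counterparts $\frac{2}{p}-1$ and with Proposition \ref{Estimare_eliptica1_2D} playing the role of Proposition \ref{Estimare_eliptica1}. First I would do the bookkeeping on the exponents: if $p\in(2,\infty)$ then its conjugate $p'$ lies in $(1,2)$, and the hypothesis $\frac{1}{p}+\frac{1}{q}\geq\frac{1}{2}$ rewrites as $\frac{1}{p'}-\frac{1}{q}=1-\frac1p-\frac1q\leq\frac12$. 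Hence the pair $(p',q)$ satisfies precisely the assumptions of Proposition \ref{Estimare_eliptica1_2D}; note that, unlike in the $3D$ case, no upper restriction on $p$ is forced, which is why the admissible range here is the full interval $(2,\infty)$.

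Next, for $g$ in the unit ball of $\mathcal{S}(\mathbb{R}^{2})\cap\dot{B}_{p',2}^{\frac{2}{p'}-1}$, Proposition \ref{Estimare_eliptica1_2D} furnishes a tempered distribution $P_{g}$ with $\nabla P_{g}\in\dot{B}_{p',2}^{\frac{2}{p'}-1}$ solving $\operatorname{div}(a\nabla P_{g})=\operatorname{div}g$ together with the bound
\[
\|\nabla P_{g}\|_{\dot{B}_{p',2}^{\frac{2}{p'}-1}}\lesssim\Bigl(\tfrac1{\bar a}+\bigl\|\tfrac1a-\tfrac1{\bar a}\bigr\|_{\dot{B}_{q,1}^{2/q}}\Bigr)\Bigl(1+\tfrac1{a_{\star}}\|a-\bar a\|_{\dot{B}_{q,1}^{2/q}}\Bigr).
\]
Using that $a$ is a scalar, so that $\phi\mapsto\operatorname{div}(a\nabla\phi)$ is formally self-adjoint, together with the equation $\operatorname{div}(a\nabla P)=\operatorname{div}f=\operatorname{div}\mathcal{Q}f$ and the fact that $\nabla P_{g}$ is a gradient, I would write
\[
\langle\nabla P,g\rangle=\langle P,\operatorname{div}g\rangle=\langle P,\operatorname{div}(a\nabla P_{g})\rangle=\langle\operatorname{div}\mathcal{Q}f,P_{g}\rangle=\langle\mathcal{Q}f,\nabla P_{g}\rangle,
\]
and then estimate by the Besov duality between $\dot{B}_{p,2}^{\frac{2}{p}-1}$ and $\dot{B}_{p',2}^{\frac{2}{p'}-1}$:
\[
|\langle\nabla P,g\rangle|\lesssim\|\mathcal{Q}f\|_{\dot{B}_{p,2}^{\frac{2}{p}-1}}\,\|\nabla P_{g}\|_{\dot{B}_{p',2}^{\frac{2}{p'}-1}}\lesssim\Bigl(\tfrac1{\bar a}+\bigl\|\tfrac1a-\tfrac1{\bar a}\bigr\|_{\dot{B}_{q,1}^{2/q}}\Bigr)\Bigl(1+\tfrac1{a_{\star}}\|a-\bar a\|_{\dot{B}_{q,1}^{2/q}}\Bigr)\|\mathcal{Q}f\|_{\dot{B}_{p,2}^{\frac{2}{p}-1}}.
\]
Taking the supremum over all such $g$ and invoking the duality characterization of homogeneous Besov norms, Proposition \ref{Caracterizare}, would then yield $\left( \text{\ref{p_mare2D}}\right)$.

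The only genuinely delicate point is the justification of the chain of integrations by parts above: $P$ is a priori merely a tempered distribution and $\nabla P$ lives in a space of negative regularity, so the pairing $\langle\nabla P,g\rangle$ and the intermediate pairing $\langle P,\operatorname{div}(a\nabla P_{g})\rangle$ must be given a rigorous meaning. Since $\nabla P\in\dot{B}_{p,2}^{\frac{2}{p}-1}$ by hypothesis and $g\in\mathcal{S}$, the outer pairing is well defined, and the manipulations can be legitimized exactly as in the proof of Proposition \ref{Estimare_eliptica2}, by a density argument in which $g$ and $P_{g}$ are first taken with spectrum away from the origin; the multiplication by $a$ is harmless because $a-\bar a\in\dot{B}_{q,1}^{2/q}(\mathbb{R}^{2})\hookrightarrow L^{\infty}$. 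I expect no new difficulty beyond what already appears in the $3D$ argument.
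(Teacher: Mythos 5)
Your argument is exactly the duality proof the paper intends for this proposition: the paper simply states that the $2D$ case follows ``by applying the same technique'' as Proposition~\ref{Estimare_eliptica2}, and your writeup carries that out faithfully, with the correct exponent bookkeeping (here $p'\in(1,2)$ and $\tfrac{1}{p'}-\tfrac1q\le\tfrac12$, so Proposition~\ref{Estimare_eliptica1_2D} applies with no extra upper restriction on $p$). The chain of pairings and the invocation of Proposition~\ref{Caracterizare} match the paper's proof of the $3D$ analogue.
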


\subsection{Some preliminary results}

In this section we derive\ estimates for a Stokes-like problem with time
independent, nonconstant coefficients. Before proceeding to the actual proof,
for the reader's convenience, let us cite the following results pertaining to
the case $a=\bar{a}$, $b=\bar{b}$ constants:

\begin{proposition}
\label{Stokes_constant}Let us consider $u_{0}\in\dot{B}_{p,1}^{\frac{n}{p}-1}$
and $\left(  f,\partial_{t}R,\nabla\operatorname{div}R\right)  \in L_{T}%
^{1}(\dot{B}_{p,1}^{\frac{n}{p}-1})$ with $\mathcal{Q}R\in C_{T}(\dot{B}%
_{p,1}^{\frac{n}{p}-1})$ such that%
\[
\operatorname{div}u_{0}=\operatorname{div}R\left(  0,\cdot\right)  .
\]
Then, system
\[
\left\{
\begin{array}
[c]{r}%
\partial_{t}u-\bar{a}\bar{b}\Delta u+\bar{a}\nabla P=f,\\
\operatorname{div}u=\operatorname{div}R,\\
u_{|t=0}=u_{0},
\end{array}
\right.
\]
has a unique solution $\left(  u,\nabla P\right)  $ with:%
\[
u\in\mathcal{C(}[0,T);\dot{B}_{p,1}^{\frac{n}{p}-1})\text{ and }\partial
_{t}u,\nabla^{2}u,\nabla P\in L_{T}^{1}(\dot{B}_{p,1}^{\frac{n}{p}-1})
\]
and the following estimate is valid:%
\[
\left\Vert u\right\Vert _{L_{T}^{\infty}(\dot{B}_{p,1}^{\frac{n}{p}-1}%
)}+\left\Vert \left(  \partial_{t}u,\bar{a}\bar{b}\nabla^{2}u,\bar{a}\nabla
P\right)  \right\Vert _{L_{T}^{1}(\dot{B}_{p,1}^{\frac{n}{p}-1})}%
\lesssim\left\Vert u_{0}\right\Vert _{\dot{B}_{p,1}^{\frac{n}{p}-1}%
}+\left\Vert \left(  f,\partial_{t}R,\bar{a}\bar{b}\nabla\operatorname{div}%
R\right)  \right\Vert _{L_{T}^{1}(\dot{B}_{p,1}^{\frac{n}{p}-1})}.
\]

\end{proposition}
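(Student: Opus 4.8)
The plan is to reduce this to maximal $L^1$-in-time regularity for the heat semigroup on the homogeneous Besov space $\dot{B}_{p,1}^{\frac{n}{p}-1}$, by peeling off the compressible part of $u$ and projecting the pressure away; it is of course classical and can be quoted from \cite{Dan3}. First I would set $w:=u-\mathcal{Q}R$, which is meaningful since $\mathcal{Q}R=\nabla\Delta^{-1}\operatorname{div}R\in\mathcal{C}_T(\dot{B}_{p,1}^{\frac{n}{p}-1})$ by hypothesis. As $\mathcal{Q}R$ carries the whole divergence, $w$ is divergence free; moreover $\Delta\mathcal{Q}R=\nabla\operatorname{div}R$ and $\partial_t\mathcal{Q}R=\mathcal{Q}\,\partial_tR$, so $(w,\nabla P)$ solves
\[
\partial_t w-\bar{a}\bar{b}\,\Delta w+\bar{a}\nabla P=g,\qquad\operatorname{div}w=0,\qquad w_{|t=0}=\mathcal{P}u_0,
\]
where $g:=f-\mathcal{Q}\,\partial_tR+\bar{a}\bar{b}\,\nabla\operatorname{div}R$ and where the initial datum is $\mathcal{P}u_0$ because the compatibility condition $\operatorname{div}u_0=\operatorname{div}R(0,\cdot)$ forces $\mathcal{Q}u_0=\mathcal{Q}R(0,\cdot)$.

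Applying the Leray projector $\mathcal{P}$ annihilates $\bar{a}\nabla P$, $\mathcal{Q}\,\partial_tR$ and $\nabla\operatorname{div}R$ simultaneously, so that $w$ solves the forced heat equation $\partial_tw-\bar{a}\bar{b}\,\Delta w=\mathcal{P}f$ with $w_{|t=0}=\mathcal{P}u_0$. The main step is then the standard estimate for this equation: writing $w$ via Duhamel's formula, localising on the Littlewood--Paley blocks $\dot{\Delta}_j$, using the smoothing bound $\|\dot{\Delta}_je^{\tau\bar{a}\bar{b}\Delta}h\|_{L^p}\lesssim e^{-c\bar{a}\bar{b}\,2^{2j}\tau}\|\dot{\Delta}_jh\|_{L^p}$ and summing the resulting series in $\ell^1(\mathbb{Z})$ yields continuity of $t\mapsto w(t)$ in $\dot{B}_{p,1}^{\frac{n}{p}-1}$ together with
\[
\|w\|_{L_T^\infty(\dot{B}_{p,1}^{\frac{n}{p}-1})}+\|\partial_tw\|_{L_T^1(\dot{B}_{p,1}^{\frac{n}{p}-1})}+\bar{a}\bar{b}\|\nabla^2w\|_{L_T^1(\dot{B}_{p,1}^{\frac{n}{p}-1})}\lesssim\|\mathcal{P}u_0\|_{\dot{B}_{p,1}^{\frac{n}{p}-1}}+\|\mathcal{P}f\|_{L_T^1(\dot{B}_{p,1}^{\frac{n}{p}-1})}.
\]
The pressure is then recovered algebraically: applying $\mathcal{Q}$ to the $w$-equation and using $\mathcal{Q}w=0$ gives $\bar{a}\nabla P=\mathcal{Q}g=\mathcal{Q}f-\mathcal{Q}\,\partial_tR+\bar{a}\bar{b}\,\nabla\operatorname{div}R$, which places $\nabla P$ in $L_T^1(\dot{B}_{p,1}^{\frac{n}{p}-1})$ with the announced bound. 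Setting $u:=w+\mathcal{Q}R$, one checks directly that $(u,\nabla P)$ solves the constant-coefficient system: the divergence constraint and the initial condition are immediate, while the identity $\partial_t\mathcal{Q}R-\bar{a}\bar{b}\,\Delta\mathcal{Q}R=f-g$ together with $\mathcal{P}g+\mathcal{Q}g=g$ gives back the momentum equation. For the regularity of $u$ one uses $\mathcal{Q}R\in\mathcal{C}_T(\dot{B}_{p,1}^{\frac{n}{p}-1})$, $\partial_t\mathcal{Q}R=\mathcal{Q}\,\partial_tR\in L_T^1(\dot{B}_{p,1}^{\frac{n}{p}-1})$, the fact that $\nabla^2\mathcal{Q}R$ is a zeroth-order Fourier multiplier applied to $\nabla\operatorname{div}R$ and hence lies in $L_T^1(\dot{B}_{p,1}^{\frac{n}{p}-1})$, and the bound $\|\mathcal{Q}R\|_{L_T^\infty(\dot{B}_{p,1}^{\frac{n}{p}-1})}\lesssim\|u_0\|_{\dot{B}_{p,1}^{\frac{n}{p}-1}}+\|\partial_tR\|_{L_T^1(\dot{B}_{p,1}^{\frac{n}{p}-1})}$; combined with the estimate on $w$ this yields the stated inequality.

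Uniqueness is easy: if $(u_1,\nabla P_1)$ and $(u_2,\nabla P_2)$ are two solutions in the prescribed class, their difference $(v,\nabla Q)$ solves the homogeneous problem with zero data; since $\operatorname{div}v=0$ and $\dot{B}_{p,1}^{\frac{n}{p}-1}$ is a genuine space of tempered distributions (the regularity index $\frac{n}{p}-1$ being $<\frac{n}{p}$), we have $v=\mathcal{P}v$, so $\mathcal{P}$ applied to the equation gives $\partial_tv-\bar{a}\bar{b}\,\Delta v=0$ with $v(0)=0$, whence $v\equiv0$ and then $\nabla Q\equiv0$. The only point that requires some care is purely functional-analytic bookkeeping: justifying that $\mathcal{Q}$, $\mathcal{P}$ and $\Delta^{-1}\operatorname{div}$ commute with $\partial_t$ and $\Delta$ at this level of regularity, and that every manipulation above takes place inside $\mathcal{C}_T(\dot{B}_{p,1}^{\frac{n}{p}-1})$; this is routine, and the only substantive ingredient is the heat maximal-regularity estimate, which is completely standard.
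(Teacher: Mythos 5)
The paper does not prove Proposition \ref{Stokes_constant}; it cites it as established by Danchin and Mucha in \cite{Dan4} and \cite{Dan3}. Your proof is correct and is precisely the standard argument underlying those references: subtract $\mathcal{Q}R$ to reduce to a divergence-free unknown solving a forced heat equation, apply maximal $L^1$-in-time regularity for the heat flow in $\dot{B}_{p,1}^{n/p-1}$, recover $\bar{a}\nabla P=\mathcal{Q}f-\mathcal{Q}\partial_tR+\bar{a}\bar{b}\nabla\operatorname{div}R$ by projecting with $\mathcal{Q}$, and add back $\mathcal{Q}R$.
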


As a consequence of the previous result, one can establish via a perturbation argument:

\begin{proposition}
\label{Stokes_perturb}Let us consider $u_{0}\in\dot{B}_{p,1}^{\frac{n}{p}-1}$
and $\left(  f,\partial_{t}R,\nabla\operatorname{div}R\right)  \in L_{T}%
^{1}(\dot{B}_{p,1}^{\frac{n}{p}-1})$ with $\mathcal{Q}R\in C_{T}(\dot{B}%
_{p,1}^{\frac{n}{p}-1})$ such that%
\[
\operatorname{div}u_{0}=\operatorname{div}R\left(  0,\cdot\right)  .
\]
Then, there exists a $\eta=\eta\left(  \bar{a}\right)  $ small enough such
that for all $c\in\dot{B}_{p,1}^{\frac{n}{p}}$ with%
\[
\left\Vert c\right\Vert _{\dot{B}_{p,1}^{\frac{n}{p}}}\leq\eta\text{ ,}%
\]
the system
\[
\left\{
\begin{array}
[c]{r}%
\partial_{t}u-\bar{a}\bar{b}\Delta u+\left(  \bar{a}+c\right)  \nabla P=f,\\
\operatorname{div}u=\operatorname{div}R,\\
u_{|t=0}=u_{0},
\end{array}
\right.
\]
has a unique solution $\left(  u,\nabla P\right)  $ with:%
\[
u\in\mathcal{C(}[0,T);\dot{B}_{p,1}^{\frac{n}{p}-1})\text{ and }\partial
_{t}u,\nabla^{2}u,\nabla P\in L_{T}^{1}(\dot{B}_{p,1}^{\frac{n}{p}-1})
\]
and the following estimate is valid:%
\[
\left\Vert u\right\Vert _{L_{T}^{\infty}(\dot{B}_{p,1}^{\frac{n}{p}-1}%
)}+\left\Vert \left(  \partial_{t}u,\bar{a}\bar{b}\nabla^{2}u,\bar{a}\nabla
P\right)  \right\Vert _{L_{T}^{1}(\dot{B}_{p,1}^{\frac{n}{p}-1})}%
\lesssim\left\Vert u_{0}\right\Vert _{\dot{B}_{p,1}^{\frac{n}{p}-1}%
}+\left\Vert \left(  f,\partial_{t}R,\bar{a}\bar{b}\nabla\operatorname{div}%
R\right)  \right\Vert _{L_{T}^{1}(\dot{B}_{p,1}^{\frac{n}{p}-1})}.
\]

\end{proposition}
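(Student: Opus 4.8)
The plan is to treat the term $c\nabla P$ as a perturbative force and to solve the system by a contraction argument based on Proposition \ref{Stokes_constant}. Rewriting the equation in the form
\[
\partial_{t}u-\bar{a}\bar{b}\Delta u+\bar{a}\nabla P=f-c\nabla P,\qquad
\operatorname{div}u=\operatorname{div}R,\qquad u_{|t=0}=u_{0},
\]
I introduce, for a given $\nabla Q\in L_{T}^{1}(\dot{B}_{p,1}^{\frac{n}{p}-1})$, the pair $(u,\nabla P)=\Phi(\nabla Q)$ defined as the unique solution furnished by Proposition \ref{Stokes_constant} of the constant--coefficient Stokes system with external force $f-c\nabla Q$ and the \emph{same} data $u_{0}$ and $R$; the compatibility relation $\operatorname{div}u_{0}=\operatorname{div}R(0,\cdot)$ is untouched. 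Thus $\Phi$ is a well-defined affine map on the Banach space $X_{T}:=\{\nabla Q\in L_{T}^{1}(\dot{B}_{p,1}^{\frac{n}{p}-1})\}$, and a fixed point of $\Phi$ yields a solution of the perturbed system with the desired regularity, inherited at each step from Proposition \ref{Stokes_constant}.

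The crucial estimate is the product law in Besov spaces: since $p<2n$, one has $-\frac{n}{p}<\frac{n}{p}-1\leq\frac{n}{p}$, so $\dot{B}_{p,1}^{\frac{n}{p}}$ acts by multiplication on $\dot{B}_{p,1}^{\frac{n}{p}-1}$ and there is a universal $C_{0}$ with
\[
\|c\,\nabla Q\|_{\dot{B}_{p,1}^{\frac{n}{p}-1}}\leq C_{0}\,\|c\|_{\dot{B}_{p,1}^{\frac{n}{p}}}\,\|\nabla Q\|_{\dot{B}_{p,1}^{\frac{n}{p}-1}}\leq C_{0}\,\eta\,\|\nabla Q\|_{\dot{B}_{p,1}^{\frac{n}{p}-1}},
\]
hence $\|c\nabla Q\|_{L_{T}^{1}(\dot{B}_{p,1}^{\frac{n}{p}-1})}\leq C_{0}\eta\|\nabla Q\|_{L_{T}^{1}(\dot{B}_{p,1}^{\frac{n}{p}-1})}$ for every $T$. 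For two inputs $\nabla Q_{1},\nabla Q_{2}$, the difference $\Phi(\nabla Q_{1})-\Phi(\nabla Q_{2})$ solves the constant--coefficient system with zero initial datum, $R\equiv0$ and force $-c(\nabla Q_{1}-\nabla Q_{2})$, so Proposition \ref{Stokes_constant} gives
\[
\|\bar{a}(\nabla P_{1}-\nabla P_{2})\|_{L_{T}^{1}(\dot{B}_{p,1}^{\frac{n}{p}-1})}\leq C_{1}\,\|c(\nabla Q_{1}-\nabla Q_{2})\|_{L_{T}^{1}(\dot{B}_{p,1}^{\frac{n}{p}-1})}\leq C_{0}C_{1}\,\eta\,\|\nabla Q_{1}-\nabla Q_{2}\|_{L_{T}^{1}(\dot{B}_{p,1}^{\frac{n}{p}-1})},
\]
where $C_{1}$ is the constant of Proposition \ref{Stokes_constant}, which (after the normalisation built into that statement, namely controlling $\bar a\bar b\nabla^2 u$ and $\bar a\nabla P$) does not depend on $\bar{a},\bar{b}$. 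Choosing $\eta=\eta(\bar{a})$ so small that $C_{0}C_{1}\eta\leq\bar{a}/2$, the map $\Phi$ becomes a $\tfrac12$-contraction on $X_{T}$.

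By the Banach fixed point theorem $\Phi$ has a unique fixed point $\nabla P\in X_{T}$; the associated $(u,\nabla P)$ solves the perturbed Stokes system and lies in $\mathcal{C}([0,T);\dot{B}_{p,1}^{\frac{n}{p}-1})$ with $(\partial_{t}u,\nabla^{2}u,\nabla P)\in L_{T}^{1}(\dot{B}_{p,1}^{\frac{n}{p}-1})$. For uniqueness, the difference of two solutions solves the homogeneous constant--coefficient Stokes system with force $-c\nabla(\delta P)$, and the same computation yields $\|\bar{a}\nabla(\delta P)\|_{L_{T}^{1}}\leq\tfrac12\|\bar{a}\nabla(\delta P)\|_{L_{T}^{1}}$, whence $\nabla(\delta P)=0$ and then $\delta u=0$ by the constant--coefficient uniqueness. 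Finally, feeding the full data into Proposition \ref{Stokes_constant} and using
\[
\|c\nabla P\|_{L_{T}^{1}(\dot{B}_{p,1}^{\frac{n}{p}-1})}\leq C_{0}\eta\|\nabla P\|_{L_{T}^{1}(\dot{B}_{p,1}^{\frac{n}{p}-1})}\leq\frac{C_{0}\eta}{\bar{a}}\|\bar{a}\nabla P\|_{L_{T}^{1}(\dot{B}_{p,1}^{\frac{n}{p}-1})}\leq\tfrac12\|\bar{a}\nabla P\|_{L_{T}^{1}(\dot{B}_{p,1}^{\frac{n}{p}-1})},
\]
the perturbation term is absorbed into the left-hand side, which gives the stated estimate. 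The only points demanding genuine care are verifying that the multiplier property $\dot{B}_{p,1}^{\frac{n}{p}}\cdot\dot{B}_{p,1}^{\frac{n}{p}-1}\hookrightarrow\dot{B}_{p,1}^{\frac{n}{p}-1}$ is available for the whole admissible range (which it is, precisely because $p<2n$), and tracking that the constant in Proposition \ref{Stokes_constant} is independent of $\bar{a},\bar{b}$, so that the smallness threshold $\eta$ depends on $\bar{a}$ alone; everything else is a routine contraction-and-absorption argument.
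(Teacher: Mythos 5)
Your proposal is correct and supplies the perturbation argument that the paper only gestures at: the paper does not prove Proposition \ref{Stokes_perturb} itself, but cites Danchin--Mucha and remarks that it follows from Proposition \ref{Stokes_constant} ``via a perturbation argument,'' and your contraction on the pressure variable is precisely that argument. The two decisive observations --- the product law $\dot{B}_{p,1}^{n/p}\cdot\dot{B}_{p,1}^{n/p-1}\hookrightarrow\dot{B}_{p,1}^{n/p-1}$ under $p<2n$, and the fact that the constant of Proposition \ref{Stokes_constant} is independent of $\bar a,\bar b$ because the estimate is written in the scaled unknowns $\bar a\bar b\nabla^2u$ and $\bar a\nabla P$, so that the smallness threshold scales like $\eta\sim\bar a$ --- are correct and are the heart of the matter. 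One bookkeeping slip: your final display ends with $\tfrac12\Vert\bar a\nabla P\Vert$, which would only absorb if $C_1\le1$; with your earlier choice $C_0C_1\eta\le\bar a/2$ the bound is really $\Vert c\nabla P\Vert_{L_T^1}\le\tfrac{1}{2C_1}\Vert\bar a\nabla P\Vert_{L_T^1}$, and that is what absorbs after the factor $C_1$ reappears from Proposition \ref{Stokes_constant}; this is a presentational nicety, not a gap.
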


The above results were established by Danchin and Mucha in \cite{Dan4} and
\cite{Dan3}.

In all what follows we denote by $E_{loc}$ the space of $\left(  u,\nabla
P\right)  $ such that:
\[
u\in\mathcal{C(}[0,\infty);\dot{B}_{p,1}^{\frac{n}{p}-1})\text{ and }\left(
\nabla^{2}u,\nabla P\right)  \in L_{loc}^{1}(\dot{B}_{p,1}^{\frac{n}{p}%
-1})\times L_{loc}^{1}(\dot{B}_{p,2}^{\frac{n}{p}-\frac{n}{2}}\cap\dot
{B}_{p,1}^{\frac{n}{p}-1}).
\]
Also, let us introduce the space $E_{T}$ of $u\in\mathcal{C}_{T}(\dot{B}%
_{p,1}^{\frac{n}{p}-1})$ with $\nabla^{2}u\in L_{T}^{1}(\dot{B}_{p,1}%
^{\frac{n}{p}-1})$ and $\nabla P\in L_{T}^{1}(\dot{B}_{p,2}^{\frac{n}{p}%
-\frac{n}{2}}\cap\dot{B}_{p,1}^{\frac{n}{p}-1})$ such that:%
\[
\left\Vert \left(  u,\nabla P\right)  \right\Vert _{E_{T}}=\left\Vert
u\right\Vert _{L_{T}^{\infty}(\dot{B}_{p,1}^{\frac{n}{p}-1})}+\left\Vert
\nabla^{2}u\right\Vert _{L_{T}^{1}(\dot{B}_{p,1}^{\frac{n}{p}-1})}+\left\Vert
\nabla P\right\Vert _{L_{T}^{1}(\dot{B}_{p,2}^{\frac{n}{p}-\frac{n}{2}}%
\cap\dot{B}_{p,1}^{\frac{n}{p}-1})}<\infty.
\]
The first ingredient in proving Theorem \ref{Teorema_Stokes} is the following:

\begin{proposition}
\label{Propozitia1Stokes} Let us consider $n\in\{2,3\}$ and
$p\in\left(  1,4\right)  $ if $n=2$ or \ $p\in\left(  \frac{6}{5},4\right)  $
if $n=3$. Assume there exists positive constants $\left(  a_{\star},b_{\star
},a^{\star},b^{\star},\bar{a},\bar{b}\right)  $ such that $a-\bar{a}\in\dot
{B}_{p,1}^{\frac{n}{p}}\left(  \mathbb{R}^{n}\right)  $, $b-\bar{b}\in\dot
{B}_{p,1}^{\frac{n}{p}}\left(  \mathbb{R}^{n}\right)  $ and
\begin{align*}
0  &  <a_{\star}\leq a\leq a^{\star},\\
0  &  <b_{\star}\leq b\leq b^{\star}.
\end{align*}
Furthermore, let us consider $u_{0},f$ vector fields with coefficients in
$\dot{B}_{p,1}^{\frac{n}{p}-1}\left(  \mathbb{R}^{n}\right)  $ respectively in
$L_{loc}^{1}(\dot{B}_{p,2}^{\frac{n}{p}-\frac{n}{2}}\left(  \mathbb{R}%
^{n}\right)  \cap\dot{B}_{p,1}^{\frac{n}{p}-1}\left(  \mathbb{R}^{n}\right)
)$ and a vector field $R\in(\mathcal{S}^{\prime}\left(  \mathbb{R}^{n}\right)
)^{n}$ with $\mathcal{Q}R\in\mathcal{C(}[0,\infty);\dot{B}_{p,1}^{\frac{n}%
{p}-1}\left(  \mathbb{R}^{n}\right)  )$ and $\left(  \partial_{t}%
R,\nabla\operatorname{div}R\right)  \in L_{loc}^{1}(\dot{B}_{p,2}^{\frac{n}%
{p}-\frac{n}{2}}\left(  \mathbb{R}^{n}\right)  \cap\dot{B}_{p,1}^{\frac{n}%
{p}-1}\left(  \mathbb{R}^{n}\right)  )$ such that%
\[
\operatorname{div}u_{0}=\operatorname{div}R\left(  0,\cdot\right)  .
\]
Then, there exists a constant $C_{ab}$ depending on $a$ and $b$ such that any
solution $\left(  u,\nabla P\right)  \in E_{T}$ of the Stokes system $\left(
\text{\ref{Stokes2}}\right)  $ will satisfy:%
\begin{gather}
\left\Vert u\right\Vert _{L_{t}^{\infty}(\dot{B}_{p,1}^{\frac{n}{p}-1}%
)}+\left\Vert \nabla^{2}u\right\Vert _{L_{t}^{1}(\dot{B}_{p,1}^{\frac{n}{p}%
-1})}+\left\Vert \nabla P\right\Vert _{L_{t}^{1}(\dot{B}_{p,2}^{\frac{n}%
{p}-\frac{n}{2}}\cap\dot{B}_{p,1}^{\frac{n}{p}-1})}\nonumber\\
\leq\left(  \left\Vert u_{0}\right\Vert _{\dot{B}_{p,1}^{\frac{n}{p}-1}%
}+\left\Vert \left(  f,\partial_{t}R,\nabla\operatorname{div}R\right)
\right\Vert _{L_{t}^{1}(\dot{B}_{p,2}^{\frac{n}{p}-\frac{n}{2}}\cap\dot
{B}_{p,1}^{\frac{n}{p}-1})}\right)  \exp\left(  C_{ab}(t+1)\right)
\label{Estimare_Prop1}%
\end{gather}
for all $t\in(0,T]$.
\end{proposition}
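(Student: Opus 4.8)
The plan is to treat (\ref{Stokes2}) as a perturbation of the constant-coefficient Stokes system of Proposition \ref{Stokes_constant}, the perturbation being tamed by three devices: the elliptic pressure bounds of Section \ref{Section2}, the commutator structure of the incompressible part of $a\nabla P$ and of the compressible part of the viscous term, and a high-low frequency splitting of the non-constant coefficients in the spirit of \cite{Dan8}. Since the statement is an a priori estimate I may fix $(u,\nabla P)\in E_T$; writing
\[
\Phi(t):=\|u\|_{L_t^\infty(\dot B_{p,1}^{n/p-1})}+\|\nabla^2u\|_{L_t^1(\dot B_{p,1}^{n/p-1})}+\|\nabla P\|_{L_t^1(\dot B_{p,2}^{n/p-n/2}\cap\dot B_{p,1}^{n/p-1})},
\]
the goal is to reach an inequality of the form $\Phi(t)\le C\big(\|u_0\|_{\dot B_{p,1}^{n/p-1}}+\|(f,\partial_tR,\nabla\operatorname{div}R)\|_{L_t^1(\dot B_{p,2}^{n/p-n/2}\cap\dot B_{p,1}^{n/p-1})}\big)+\tfrac12\Phi(t)+C_{ab}\int_0^t\Phi(\tau)\,d\tau$; absorbing $\tfrac12\Phi(t)$ and applying Gronwall's lemma then gives (\ref{Estimare_Prop1}).

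\emph{Pressure.} Taking the divergence of the momentum equation and using $\operatorname{div}u=\operatorname{div}R$ one obtains the elliptic equation $\operatorname{div}(a\nabla P)=\operatorname{div}\big(f-\partial_tR+a\operatorname{div}(bD(u))\big)$, to which I apply Propositions \ref{Estimare_eliptica1}--\ref{Estimare_eliptica2_2D} with $q=p$; this is exactly where the assumptions $p\in(1,4)$, resp. $p\in(6/5,4)$, are used, through $\tfrac1p-\tfrac1q\le\tfrac12$ and $\tfrac1p+\tfrac1q\ge\tfrac12$. One gets $\|\nabla P\|_{\dot B_{p,2}^{n/p-n/2}}\lesssim C(a)\,\|\mathcal Q(f-\partial_tR+a\operatorname{div}(bD(u)))\|_{\dot B_{p,2}^{n/p-n/2}}$, and the only term needing care is the viscous one: decomposing $u=\mathcal Pu+\mathcal QR$ and using $\operatorname{div}D(\mathcal Pu)=\Delta\mathcal Pu$ together with $\mathcal Q\Delta\mathcal Pu=0$, the top-order piece $\mathcal Q(ab\,\Delta\mathcal Pu)$ is rewritten as the commutator $[\mathcal Q,ab-\bar a\bar b]\Delta\mathcal Pu$, which by Proposition \ref{A7} gains one derivative and is therefore controlled by $C(a,b)\|\nabla\mathcal Pu\|$; the remaining contributions are built from $\nabla b$, $f$ and the datum $R$ (for which $\nabla\operatorname{div}R=\Delta\mathcal QR$ supplies the needed second derivatives). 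Interpolating the intermediate norms of $u$ and its first derivatives between the $L_t^\infty$-bound in $\dot B_{p,1}^{n/p-1}$ and the $L_t^1$-bound of $\nabla^2u$ in $\dot B_{p,1}^{n/p-1}$, and using Young's inequality in time --- an argument which again forces $p<4$ --- bounds $\|\nabla P\|_{L_t^1(\dot B_{p,2}^{n/p-n/2})}$ by data $+\,\varepsilon\|\nabla^2u\|_{L_t^1(\dot B_{p,1}^{n/p-1})}+C_{ab}\int_0^t\Phi$; the $\dot B_{p,1}^{n/p-1}$ part of $\|\nabla P\|$ will come from the velocity estimate.

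\emph{Velocity and coupling.} Writing $a\operatorname{div}(bD(u))=\bar a\bar b\,\Delta u+\bar a\bar b\,\nabla\operatorname{div}R+G_{\mathrm{visc}}$ with $G_{\mathrm{visc}}=a\operatorname{div}((b-\bar b)D(u))+(a-\bar a)\bar b\operatorname{div}D(u)$, the momentum equation becomes $\partial_tu-\bar a\bar b\,\Delta u+\bar a\nabla P=f+\bar a\bar b\,\nabla\operatorname{div}R+G_{\mathrm{visc}}-(a-\bar a)\nabla P$, to which Proposition \ref{Stokes_constant} applies; it remains to estimate $G_{\mathrm{visc}}-(a-\bar a)\nabla P$ in $L_t^1(\dot B_{p,1}^{n/p-1})$. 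For the pressure perturbation, $\mathcal P((a-\bar a)\nabla P)=[\mathcal P,a-\bar a]\nabla P$ since $\mathcal P\nabla P=0$, which --- precisely as in the proof of Proposition \ref{Estimare_eliptica1} --- gains a derivative and is bounded by $C(a)\|\nabla P\|_{\dot B_{p,2}^{n/p-n/2}}$, the quantity estimated above, while $\mathcal Q((a-\bar a)\nabla P)$ is reabsorbed into the $\mathcal Q$-projection of the equation, that is, into the elliptic bound. In $G_{\mathrm{visc}}$ every contribution is schematically $c\cdot(\text{second order in }u)$ with $c\in\{a-\bar a,\,b-\bar b\}\subset\dot B_{p,1}^{n/p}$ of non-small norm; I split $c=\dot S_Nc+(\mathrm{Id}-\dot S_N)c$, choose $N=N(a,b)$ so large that $C\|(\mathrm{Id}-\dot S_N)c\|_{\dot B_{p,1}^{n/p}}<\tfrac14$, absorb the high-frequency part into the left-hand side of the Proposition \ref{Stokes_constant} estimate, and keep the low-frequency part on the left as a smooth ($C_N$-controlled) perturbation of the viscous operator, for which a maximal-regularity bound follows from the perturbation techniques of \cite{Dan3}. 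All other viscous terms carry at most one derivative of $u$ and, by the same interpolation/Young argument as above, are $\le\varepsilon\|\nabla^2u\|_{L_t^1(\dot B_{p,1}^{n/p-1})}+C_{ab}\int_0^t\Phi$. Collecting everything, absorbing the $\varepsilon$- and high-frequency terms, and invoking Gronwall's lemma yields (\ref{Estimare_Prop1}) with $C_{ab}$ depending on $a_\star,b_\star,a^\star,b^\star,\bar a,\bar b$, on $\|a-\bar a\|_{\dot B_{p,1}^{n/p}},\|b-\bar b\|_{\dot B_{p,1}^{n/p}}$ and on $N$; the extra ``$+1$'' in $\exp(C_{ab}(t+1))$ absorbs the $O(1)$ cost of the smooth-coefficient parabolic estimate and of closing the $\varepsilon$-absorptions over the initial time layer.

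\emph{Main difficulty.} The crux is that the pressure-velocity coupling $(a-\bar a)\nabla P$ and the top-order viscous coupling $c\cdot\nabla^2u$ are \emph{both} of size $O(1)$, so no naive fixed point closes; they must be defeated by genuinely different mechanisms --- commutator gain together with the \emph{low}-regularity $\dot B_{p,2}^{n/p-n/2}$ elliptic estimate for the first, high-low splitting together with smooth-coefficient maximal regularity for the second --- run simultaneously while keeping every product, commutator and interpolation exponent admissible. It is this bookkeeping that pins down $p\in(6/5,4)$: the lower bound comes from the sign condition $\tfrac3p-\tfrac52<0$ needed in the commutator estimate behind Proposition \ref{Estimare_eliptica1}, the upper bound from $\tfrac1p+\tfrac1q\ge\tfrac12$ with $q=p$ in Proposition \ref{Estimare_eliptica2}.
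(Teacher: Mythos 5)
Your sketch identifies the right ingredients — the elliptic pressure bound in the low-regularity space $\dot B_{p,2}^{n/p-n/2}$, the commutator structure of $\mathcal P(a\nabla P)$ and of $\mathcal Q(\text{coeff}\cdot\Delta(\text{div-free}))$, a high-low splitting of the coefficients, and a Gronwall closure. The main organizational difference from the paper is that you work with $u$ directly (writing $u=\mathcal Pu+\mathcal QR$ when a divergence-free quantity is needed), whereas the paper first subtracts the constant-coefficient linear solution $u_L$ and works with $\tilde u=u-u_L$, which is genuinely divergence-free and has zero initial data, so that all of the "inhomogeneity" of the data is swept into the source $\tilde f$. Your route is viable in principle, but two of your steps are not yet proofs and would need the very devices you skipped.

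First, ``$\mathcal Q((a-\bar a)\nabla P)$ is reabsorbed into the $\mathcal Q$-projection of the equation'' cannot mean that it is estimated by $\|\nabla P\|_{\dot B_{p,2}^{n/p-n/2}}$: the $\mathcal Q$-part of $(a-\bar a)\nabla P$ does \emph{not} gain a derivative, and $\|(a-\bar a)\nabla P\|_{\dot B_{p,1}^{n/p-1}}\lesssim\|a-\bar a\|_{\dot B_{p,1}^{n/p}}\|\nabla P\|_{\dot B_{p,1}^{n/p-1}}$ is $O(1)$ times the quantity you are trying to bound, hence not absorbable. The way out (which is exactly what the paper does) is to never write $\mathcal Q((a-\bar a)\nabla P)$ by itself: apply $\mathcal Q$ to the momentum equation to obtain $\mathcal Q(a\nabla P)$ directly as data plus $\mathcal Q(a\operatorname{div}(bD(u)))$, then split $a\nabla P=\mathcal Q(a\nabla P)+[\mathcal P,a-\bar a]\nabla P$ and finally recover $\nabla P=a^{-1}(a\nabla P)$ by product rules. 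Moreover, the commutator $[\mathcal P,a-\bar a]\nabla P$ does \emph{not} land in $\dot B_{p,1}^{n/p-1}$ with only $\nabla a\in\dot B_{p,1}^{n/p-1}$ — Proposition \ref{A7} needs $\nabla a$ in a space like $\dot B_{p,2}^{n/p-1/2}$, which $\nabla a$ does not lie in; you have to split $a-\bar a=\dot S_m(a-\bar a)+(\operatorname{Id}-\dot S_m)(a-\bar a)$, use the commutator only with the smooth low part, and estimate the high part as a paraproduct with a small factor. You do mention a high-low split, but only for the viscous coefficients; the pressure commutator needs it too.

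Second, keeping the low-frequency viscous coefficient ``on the left'' is the correct idea, but the ``perturbation techniques of \cite{Dan3}'' will not deliver the estimate: those are smallness-based, and $\dot S_N(ab-\bar a\bar b)$ is bounded from below, not small. What makes this step work in the paper is a dyadic-block $L^p$-energy estimate (Lemma 8 of Appendix B of \cite{Dan10}): applying $\dot\Delta_j$, the operator $\partial_t-\operatorname{div}((\bar a\bar b+\dot S_m(ab-\bar a\bar b))\nabla\cdot)$ acting on $\tilde u_j$ produces a good sign and a factor $2^{2j}$ in the $L^p$ pairing, without any smallness on the coefficient, and the price is a commutator $\operatorname{div}[\dot\Delta_j,\dot S_m(ab-\bar a\bar b)]\nabla\tilde u$ handled by Proposition \ref{A66}. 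The paper also does this with $\tilde u$ divergence-free, so that the dyadic-block equation has no pressure term inside it; your set-up (with $\operatorname{div}u=\operatorname{div}R\neq0$) would have to be routed through $\mathcal Pu$ in exactly the same way. In short, the architecture is right but the two absorptions that close the estimate — the pressure coupling and the $O(1)$ viscous coefficient — need the precise algebra and the dyadic-block lemma; as written those are gaps, not details.
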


Before proceeding with the proof, a few remarks are in order:

\begin{remark}
Proposition \ref{Propozitia1Stokes} is different from Theorem
\ref{Teorema_Stokes} when $n=3$. Indeed, in the $3$ dimensional case the
theory is more subtle and thus, as a first step we construct a unique solution
for the case of more regular initial data.
\end{remark}

\begin{remark}
\label{111} The difficulty when dealing with the Stokes system with non
constant coefficients lies in the fact that the pressure and the velocity $u$
are coupled. Indeed, in the constant coefficients case, in view of%
\[
\operatorname{div}u=\operatorname{div}R,
\]
one can apply the divergence operator in the first equation of $\left(
\text{\ref{Stokes2}}\right)  $ in order to obtain the following elliptic
equation verified by the pressure:%
\begin{equation}
a\Delta P=\operatorname{div}\left(  f-\partial_{t}R+2ab\nabla
\operatorname{div}R\right)  . \label{eliptic1}%
\end{equation}
From $\left(  \text{\ref{eliptic1}}\right)  $ we can construct the pressure.
Having built the pressure, the velocity satisfies a classical heat equation.
In the non constant coefficient case, proceeding as above we find that:%
\begin{equation}
\operatorname{div}\left(  a\nabla P\right)  =\operatorname{div}\left(
f-\partial_{t}R+a\operatorname{div}(bD(u))\right)  . \label{eliptic2}%
\end{equation}
such that the strategy used in the previous case is not well-adapted. We will
establish a priori estimate and use a continuity argument like in \cite{Dan2}.
In order to be able to close the estimates on $u$, we have to bound
$\left\Vert a\nabla P\right\Vert _{L_{t}^{1}(\dot{B}_{p,1}^{\frac{n}{p}-1})}$
in terms of $\left\Vert u\right\Vert _{L_{t}^{\infty}(\dot{B}_{p,1}^{\frac
{n}{p}-1})}^{\beta}\left\Vert \nabla^{2}u\right\Vert _{L_{t}^{1}(\dot{B}%
_{p,1}^{\frac{n}{p}-1})}^{1-\beta}$ for some $\beta\in(0,1)$. Thus, the
difficulty is to find estimates for the pressure which do not feature the time
derivative of the velocity.
\end{remark}

In view of Proposition \ref{Stokes_constant}, let us consider $\left(
u_{L},\nabla P_{L}\right)  $ the unique solution of the system
\begin{equation}
\left\{
\begin{array}
[c]{r}%
\partial_{t}u-\bar{a}\operatorname{div}(\bar{b}D(u))+\bar{a}\nabla P=f,\\
\operatorname{div}u=\operatorname{div}R,\\
u_{|t=0}=u_{0},
\end{array}
\right.  \label{LiniarStokes}%
\end{equation}
with%
\[
u_{L}\in\mathcal{C(}[0,\infty);\dot{B}_{p,1}^{\frac{n}{p}-1})\text{ and
}(\partial_{t}u_{L},\nabla^{2}u_{L},\nabla P_{L})\in L_{loc}^{1}(\dot{B}%
_{p,1}^{\frac{n}{p}-1}).
\]
Recall that for any $t\in\lbrack0,\infty)$ we have%
\begin{equation}
\left\Vert u_{L}\right\Vert _{L_{t}^{\infty}(\dot{B}_{p,1}^{\frac{n}{p}-1}%
)}+\left\Vert \left(  \partial_{t}u_{L},\bar{a}\bar{b}\nabla^{2}u_{L},\bar
{a}\nabla P_{L}\right)  \right\Vert _{L_{t}^{1}(\dot{B}_{p,1}^{\frac{n}{p}%
-1})}\leq C(\left\Vert u_{0}\right\Vert _{\dot{B}_{p,1}^{\frac{n}{p}-1}%
}+\left\Vert \left(  f,\partial_{t}R,\bar{a}\bar{b}\nabla\operatorname{div}%
R\right)  \right\Vert _{L_{t}^{1}(\dot{B}_{p,1}^{\frac{n}{p}-1})}).
\label{estimare_uL}%
\end{equation}
In what follows, we will use the notation:
\begin{equation}
\tilde{u}=u-u_{L},\text{ }\nabla\tilde{P}=\nabla P-\nabla P_{L}.
\label{utilde}%
\end{equation}
Obviously, we have%
\begin{equation}
\operatorname{div}\tilde{u}=0\text{.} \label{RelatieImportanta}%
\end{equation}
Thus, the system $\left(  \text{\ref{Stokes2}}\right)  $ is recasted into%
\begin{equation}
\left\{
\begin{array}
[c]{r}%
\partial_{t}\tilde{u}-a\operatorname{div}\left(  bD(\tilde{u})\right)
+a\nabla\tilde{P}=\tilde{f},\\
\operatorname{div}\tilde{u}=0,\\
\tilde{u}_{|t=0}=0,
\end{array}
\right.  \label{StokesModif}%
\end{equation}
where
\[
\tilde{f}=a\operatorname{div}(bD(u_{L}))-\bar{a}\operatorname{div}(\bar
{b}D(u_{L}))-(a-\bar{a})\nabla P_{L}.
\]
Using the last equality along with Proposition \ref{Produs}, we infer that:%
\begin{align}
\left\Vert \tilde{f}\right\Vert _{\dot{B}_{p,1}^{\frac{n}{p}-1}}  &
\leq\left\Vert a\operatorname{div}(bD(u_{L}))-\bar{a}\operatorname{div}%
(\bar{b}D(u_{L}))\right\Vert _{\dot{B}_{p,1}^{\frac{n}{p}-1}}+\left\Vert
(a-\bar{a})\nabla P_{L}\right\Vert _{\dot{B}_{p,1}^{\frac{n}{p}-1}}\nonumber\\
&  \lesssim(\bar{a}+\left\Vert a-\bar{a}\right\Vert _{\dot{B}_{p,1}^{\frac
{n}{p}}})(\bar{b}+\left\Vert b-\bar{b}\right\Vert _{\dot{B}_{p,1}^{\frac{n}%
{p}}})\left\Vert \nabla u_{L}\right\Vert _{\dot{B}_{p,1}^{\frac{n}{p}}%
}+\left\Vert a-\bar{a}\right\Vert _{\dot{B}_{p,1}^{\frac{n}{p}}}\left\Vert
\nabla P_{L}\right\Vert _{\dot{B}_{p,1}^{\frac{n}{p}-1}}.
\label{estimare_Besov_f}%
\end{align}
Let us estimate the pressure $a\nabla\tilde{P}$. First, we write that%
\[
\left\Vert a\nabla\tilde{P}\right\Vert _{\dot{B}_{p,1}^{\frac{n}{p}-1}}%
\leq\left\Vert \mathcal{Q}\left(  a\nabla\tilde{P}\right)  \right\Vert
_{\dot{B}_{p,1}^{\frac{n}{p}-1}}+\left\Vert \mathcal{P}\left(  a\nabla
\tilde{P}\right)  \right\Vert _{\dot{B}_{p,1}^{\frac{n}{p}-1}}.
\]
Applying the $\mathcal{Q}$ operator in the first equation of $\left(
\text{\ref{StokesModif}}\right)  $ we get that%
\[
\mathcal{Q}\left(  a\nabla\tilde{P}\right)  =\mathcal{Q}\tilde{f}%
+\mathcal{Q}(a\operatorname{div}\left(  bD(\tilde{u})\right)  ).
\]
Thus, we get that:%
\begin{equation}
\left\Vert \mathcal{Q}\left(  a\nabla\tilde{P}\right)  \right\Vert _{\dot
{B}_{p,1}^{\frac{n}{p}-1}}\leq\left\Vert \mathcal{Q}\tilde{f}\right\Vert
_{\dot{B}_{p,1}^{\frac{n}{p}-1}}+\left\Vert \mathcal{Q}\left(
a\operatorname{div}\left(  bD(\tilde{u}\right)  )\right)  \right\Vert
_{\dot{B}_{p,1}^{\frac{n}{p}-1}}. \label{compresibil1}%
\end{equation}
Let write that:%
\begin{align}
\mathcal{Q}\left(  a\operatorname{div}\left(  bD(\tilde{u})\right)  \right)
&  =\mathcal{Q}\left(  D(\tilde{u})\dot{S}_{m}\left(  a\nabla b\right)
\right)  +\mathcal{Q}\left(  \dot{S}_{m}\left(  ab-\bar{a}\bar{b}\right)
\Delta\tilde{u}\right) \label{compresibil2.1}\\
&  +\mathcal{Q}\left(  D(\tilde{u})\left(  Id-\dot{S}_{m}\right)  \left(
a\nabla b\right)  \right) \label{compresibil2.2}\\
&  +\mathcal{Q}\left(  \left(  Id-\dot{S}_{m}\right)  \left(  ab-\bar{a}%
\bar{b}\right)  \Delta\tilde{u}\right)  . \label{compresibil2.3}%
\end{align}
According to Proposition \ref{Produs} we have:%
\begin{equation}
\left\Vert \mathcal{Q}\left(  D\left(  \tilde{u}\right)  \dot{S}_{m}\left(
a\nabla b\right)  \right)  \right\Vert _{\dot{B}_{p,1}^{\frac{n}{p}-1}%
}\lesssim\left\Vert \dot{S}_{m}\left(  a\nabla b\right)  \right\Vert _{\dot
{B}_{p,1}^{\frac{n}{p}-\frac{1}{2}}}\left\Vert \nabla\tilde{u}\right\Vert
_{\dot{B}_{p,1}^{\frac{n}{p}-\frac{1}{2}}}. \label{compresibil3}%
\end{equation}
Owing to the fact that $\tilde{u}$ is divergence free we can write that%
\begin{equation}
\mathcal{Q}\left(  \dot{S}_{m}\left(  ab-\bar{a}\bar{b}\right)  \Delta
\tilde{u}\right)  =[\mathcal{Q},\dot{S}_{m}\left(  ab-\bar{a}\bar{b}\right)
]\Delta\tilde{u}, \label{smecherie1}%
\end{equation}
such that applying Proposition \ref{A7} we get that%
\begin{align}
\left\Vert \mathcal{Q}\left(  \dot{S}_{m}\left(  ab-\bar{a}\bar{b}\right)
\Delta\tilde{u}\right)  \right\Vert _{\dot{B}_{p,1}^{\frac{n}{p}-1}}  &
\lesssim\left\Vert \left(  \dot{S}_{m}\left(  a\nabla b\right)  ,\dot{S}%
_{m}\left(  b\nabla a\right)  \right)  \right\Vert _{\dot{B}_{p,1}^{\frac
{n}{p}-\frac{1}{2}}}\left\Vert \Delta\tilde{u}\right\Vert _{\dot{B}%
_{p,1}^{\frac{n}{p}-\frac{3}{2}}}\nonumber\\
&  \lesssim\left\Vert \left(  \dot{S}_{m}\left(  a\nabla b\right)  ,\dot
{S}_{m}\left(  b\nabla a\right)  \right)  \right\Vert _{\dot{B}_{p,1}%
^{\frac{n}{p}-\frac{1}{2}}}\left\Vert \nabla\tilde{u}\right\Vert _{\dot
{B}_{p,1}^{\frac{n}{p}-\frac{1}{2}}}. \label{compresibil4}%
\end{align}
The last two terms of $\left(  \text{\ref{compresibil2.1}}\right)  $-$\left(
\text{\ref{compresibil2.3}}\right)  $ are estimated as follows:%
\begin{gather}
\left\Vert \mathcal{Q}\left(  \left(  Id-\dot{S}_{m}\right)  \left(  a\nabla
b\right)  D\left(  \tilde{u}\right)  \right)  +\mathcal{Q}\left(  \left(
Id-\dot{S}_{m}\right)  \left(  ab-\bar{a}\bar{b}\right)  \Delta\tilde
{u}\right)  \right\Vert _{\dot{B}_{p,1}^{\frac{n}{p}-1}} \label{comresibil5.1}%
\\
\lesssim\left(  \left\Vert \left(  Id-\dot{S}_{m}\right)  \left(  a\nabla
b\right)  \right\Vert _{\dot{B}_{p,1}^{\frac{n}{p}-1}}+\left\Vert \left(
Id-\dot{S}_{m}\right)  \left(  ab-\bar{a}\bar{b}\right)  \right\Vert _{\dot
{B}_{p,1}^{\frac{n}{p}}}\right)  \left\Vert \nabla\tilde{u}\right\Vert
_{\dot{B}_{p,1}^{\frac{n}{p}}}. \label{comresibil5.3}%
\end{gather}
Thus, putting together relations $\left(  \text{\ref{compresibil1}}\right)
$-$\left(  \text{\ref{comresibil5.3}}\right)  $ we get that:%
\begin{align}
\left\Vert \mathcal{Q}\left(  a\nabla\tilde{P}\right)  \right\Vert _{\dot
{B}_{p,1}^{\frac{n}{p}-1}}  &  \lesssim\left\Vert \mathcal{Q}\tilde
{f}\right\Vert _{\dot{B}_{p,1}^{\frac{n}{p}-1}}+\left\Vert \left(  \dot{S}%
_{m}\left(  a\nabla b\right)  ,\dot{S}_{m}\left(  b\nabla a\right)  \right)
\right\Vert _{\dot{B}_{p,1}^{\frac{n}{p}-\frac{1}{2}}}\left\Vert \nabla
\tilde{u}\right\Vert _{\dot{B}_{p,1}^{\frac{n}{p}-\frac{1}{2}}}\nonumber\\
&  +\left\Vert \nabla\tilde{u}\right\Vert _{\dot{B}_{p,1}^{\frac{n}{p}}%
}\left(  \left\Vert \left(  Id-\dot{S}_{m}\right)  \left(  a\nabla b,b\nabla
a\right)  \right\Vert _{\dot{B}_{p,1}^{\frac{n}{p}-1}}+\left\Vert \left(
Id-\dot{S}_{m}\right)  (ab-\bar{a}\bar{b})\right\Vert _{\dot{B}_{p,1}%
^{\frac{n}{p}}}\right)  . \label{estimare_parte_compresibila}%
\end{align}
Next, we turn our attention towards $\mathcal{P}\left(  a\nabla\tilde
{P}\right)  $. The $2D$ case respectively the $3D$ case have to be treated differently.

\subsubsection{The $3D$ case}

Noticing that%
\[
\mathcal{P}\left(  a\nabla\tilde{P}\right)  =\mathcal{P}\left(  \left(
Id-\dot{S}_{m}\right)  \left(  a-\bar{a}\right)  \nabla\tilde{P}\right)
+[\mathcal{P},\dot{S}_{m}\left(  a-\bar{a}\right)  ]\nabla\tilde{P},
\]
and using again Proposition \ref{A7} combined with Proposition
\ref{Estimare_eliptica1} and Proposition \ref{Estimare_eliptica2} we get that:%

\begin{align}
\left\Vert \nabla\tilde{P}\right\Vert _{\dot{B}_{p,2}^{\frac{3}{p}-\frac{3}%
{2}}}+\left\Vert \mathcal{P}\left(  a\nabla\tilde{P}\right)  \right\Vert
_{\dot{B}_{p,1}^{\frac{3}{p}-1}}  &  \lesssim\left\Vert \nabla\tilde
{P}\right\Vert _{\dot{B}_{p,2}^{\frac{3}{p}-\frac{3}{2}}}+\left\Vert
\mathcal{P}\left(  \left(  Id-\dot{S}_{m}\right)  \left(  a-\bar{a}\right)
\nabla\tilde{P}\right)  \right\Vert _{\dot{B}_{p,1}^{\frac{3}{p}-1}%
}+\left\Vert [\mathcal{P},\dot{S}_{m}\left(  a-\bar{a}\right)  ]\nabla
\tilde{P}\right\Vert _{\dot{B}_{p,1}^{\frac{3}{p}-1}}\label{incompresibil1}\\
&  \lesssim\left\Vert \left(  Id-\dot{S}_{m}\right)  \left(  a-\bar{a}\right)
\right\Vert _{\dot{B}_{p,1}^{\frac{3}{p}}}\left\Vert \nabla\tilde
{P}\right\Vert _{\dot{B}_{p,1}^{\frac{3}{p}-1}}+\left(  1+\left\Vert \dot
{S}_{m}\nabla a\right\Vert _{\dot{B}_{p,2}^{\frac{3}{p}-\frac{1}{2}}}\right)
\left\Vert \nabla\tilde{P}\right\Vert _{\dot{B}_{p,2}^{\frac{3}{p}-\frac{3}%
{2}}}\label{incompresibil1.0}\\
&  \lesssim\left\Vert \left(  Id-\dot{S}_{m}\right)  (a-\bar{a})\right\Vert
_{\dot{B}_{p,1}^{\frac{3}{p}}}\left(  \frac{1}{\bar{a}}+\left\Vert \frac{1}%
{a}-\frac{1}{\bar{a}}\right\Vert _{\dot{B}_{p,1}^{\frac{3}{p}}}\right)
\left\Vert a\nabla\tilde{P}\right\Vert _{\dot{B}_{p,1}^{\frac{3}{p}-1}%
}\label{incompresibil1.2}\\
+  &  \tilde{C}\left(  a\right)  \left(  1+\left\Vert \dot{S}_{m}\nabla
a\right\Vert _{\dot{B}_{p,2}^{\frac{3}{p}-\frac{1}{2}}}\right)  \left(
\left\Vert \tilde{f}\right\Vert _{\dot{B}_{p,2}^{\frac{3}{p}-\frac{3}{2}}%
}+\left\Vert a\operatorname{div}\left(  bD(\tilde{u})\right)  \right\Vert
_{\dot{B}_{p,2}^{\frac{3}{p}-\frac{3}{2}}}\right)  , \label{incompresibil1.3}%
\end{align}
where%
\[
\tilde{C}\left(  a\right)  =\left(  \frac{1}{\bar{a}}+\left\Vert \frac{1}%
{a}-\frac{1}{\bar{a}}\right\Vert _{\dot{B}_{p,1}^{\frac{3}{p}}}\right)
\left(  1+\frac{1}{a_{\star}}\left\Vert a-\bar{a}\right\Vert _{\dot{B}%
_{p,1}^{\frac{3}{p}}}\right)  .
\]
We observe that%
\begin{equation}
\left\Vert a\operatorname{div}\left(  bD(\tilde{u})\right)  \right\Vert
_{\dot{B}_{p,2}^{\frac{3}{p}-\frac{3}{2}}}\lesssim\left(  \bar{a}+\left\Vert
a-\bar{a}\right\Vert _{\dot{B}_{p,1}^{\frac{3}{p}}}\right)  \left(  \bar
{b}+\left\Vert b-\bar{b}\right\Vert _{\dot{B}_{p,1}^{\frac{3}{p}}}\right)
\left\Vert \nabla\tilde{u}\right\Vert _{\dot{B}_{p,1}^{\frac{3}{p}-\frac{1}%
{2}}}. \label{incompresibil2.2}%
\end{equation}
Putting together $\left(  \text{\ref{incompresibil1}}\right)  $-$\left(
\text{\ref{incompresibil1.3}}\right)  $ along with $\left(
\text{\ref{incompresibil2.2}}\right)  $ we get that%
\begin{gather}
\left\Vert \nabla\tilde{P}\right\Vert _{\dot{B}_{p,2}^{\frac{3}{p}-\frac{3}%
{2}}}+\left\Vert \mathcal{P}\left(  a\nabla\tilde{P}\right)  \right\Vert
_{\dot{B}_{p,1}^{\frac{3}{p}-1}}\lesssim\left\Vert \left(  Id-\dot{S}%
_{m}\right)  (a-\bar{a})\right\Vert _{\dot{B}_{p,1}^{\frac{3}{p}}}\left(
\frac{1}{\bar{a}}+\left\Vert \frac{1}{a}-\frac{1}{\bar{a}}\right\Vert
_{\dot{B}_{p,1}^{\frac{3}{p}}}\right)  \left\Vert a\nabla\tilde{P}\right\Vert
_{\dot{B}_{p,1}^{\frac{3}{p}-1}}\nonumber\\
+\tilde{C}\left(  a\right)  \left(  1+\left\Vert \dot{S}_{m}\nabla
a\right\Vert _{\dot{B}_{p,2}^{\frac{3}{p}-\frac{1}{2}}}\right)  \left(
\left\Vert \tilde{f}\right\Vert _{\dot{B}_{p,2}^{\frac{3}{p}-\frac{3}{2}}%
}+\left(  \bar{a}+\left\Vert a-\bar{a}\right\Vert _{\dot{B}_{p,1}^{\frac{3}%
{p}}}\right)  \left(  \bar{b}+\left\Vert b-\bar{b}\right\Vert _{\dot{B}%
_{p,1}^{\frac{3}{p}}}\right)  \left\Vert \nabla\tilde{u}\right\Vert _{\dot
{B}_{p,1}^{\frac{3}{p}-\frac{1}{2}}}\right)  .
\label{estimare_parte_incompresibila_presiune}%
\end{gather}
Combining $\left(  \text{\ref{estimare_parte_compresibila}}\right)  $ with
$\left(  \text{\ref{estimare_parte_incompresibila_presiune}}\right)  $ yields:%
\begin{gather*}
\left\Vert \nabla\tilde{P}\right\Vert _{\dot{B}_{p,2}^{\frac{3}{p}-\frac{3}%
{2}}}+\left\Vert a\nabla\tilde{P}\right\Vert _{\dot{B}_{p,1}^{\frac{3}{p}-1}%
}\lesssim T_{m}^{1}\left(  a,b\right)  \left\Vert a\nabla\tilde{P}\right\Vert
_{\dot{B}_{p,1}^{\frac{3}{p}-1}}\\
+T_{m}^{2}\left(  a,b\right)  \left\Vert \tilde{f}\right\Vert _{\dot{B}%
_{p,2}^{\frac{3}{p}-\frac{3}{2}}\cap\dot{B}_{p,1}^{\frac{3}{p}-1}}+T_{m}%
^{3}\left(  a,b\right)  \left\Vert \nabla\tilde{u}\right\Vert _{\dot{B}%
_{p,1}^{\frac{3}{p}-\frac{1}{2}}}+T_{m}^{4}\left(  a,b\right)  \left\Vert
\nabla\tilde{u}\right\Vert _{\dot{B}_{p,1}^{\frac{3}{p}}}%
\end{gather*}
where%
\begin{align*}
T_{m}^{1}\left(  a,b\right)   &  =\left\Vert \left(  Id-\dot{S}_{m}\right)
(a-\bar{a})\right\Vert _{\dot{B}_{p,1}^{\frac{3}{p}}}\left(  \frac{1}{\bar{a}%
}+\left\Vert \frac{1}{a}-\frac{1}{\bar{a}}\right\Vert _{\dot{B}_{p,1}%
^{\frac{3}{p}}}\right)  ,\\
T_{m}^{2}\left(  a,b\right)   &  =\tilde{C}\left(  a\right)  \left(
1+\left\Vert \dot{S}_{m}\nabla a\right\Vert _{\dot{B}_{p,2}^{\frac{3}{p}%
-\frac{1}{2}}}\right)  ,\\
T_{m}^{3}\left(  a,b\right)   &  =\left\Vert \left(  \dot{S}_{m}\left(
a\nabla b\right)  ,\dot{S}_{m}\left(  b\nabla a\right)  \right)  \right\Vert
_{\dot{B}_{p,1}^{\frac{3}{p}-\frac{1}{2}}}\\
&  +\tilde{C}\left(  a\right)  \left(  1+\left\Vert \dot{S}_{m}\nabla
a\right\Vert _{\dot{B}_{p,2}^{\frac{3}{p}-\frac{1}{2}}}\right)  \left(
\bar{a}+\left\Vert a-\bar{a}\right\Vert _{\dot{B}_{p,1}^{\frac{3}{p}}}\right)
\left(  \bar{b}+\left\Vert b-\bar{b}\right\Vert _{\dot{B}_{p,1}^{\frac{3}{p}}%
}\right)  ,\\
T_{m}^{4}\left(  a,b\right)   &  =\left\Vert \left(  Id-\dot{S}_{m}\right)
\left(  a\nabla b,b\nabla a\right)  \right\Vert _{\dot{B}_{p,1}^{\frac{3}%
{p}-1}}+\left\Vert \left(  Id-\dot{S}_{m}\right)  (ab-\bar{a}\bar
{b})\right\Vert _{\dot{B}_{p,1}^{\frac{3}{p}}}.
\end{align*}
Observe that $m$ could be chosen large enough such that $T_{m}^{1}\left(
a,b\right)  $ and $T_{m}^{4}\left(  a,b\right)  $ can be made arbitrarily
small. Thus, there exists a constant $C_{ab}$ depending on $a$ and $b$ such
that:%
\begin{equation}
\left\Vert \nabla\tilde{P}\right\Vert _{\dot{B}_{p,2}^{\frac{3}{p}-\frac{3}%
{2}}\cap\dot{B}_{p,1}^{\frac{3}{p}-1}}\leq C_{ab}\left(  \left\Vert \tilde
{f}\right\Vert _{\dot{B}_{p,2}^{\frac{3}{p}-\frac{3}{2}}\cap\dot{B}%
_{p,1}^{\frac{3}{p}-1}}+\left\Vert \nabla\tilde{u}\right\Vert _{\dot{B}%
_{p,1}^{\frac{3}{p}-\frac{1}{2}}}\right)  +\eta\left\Vert \nabla\tilde
{u}\right\Vert _{\dot{B}_{p,1}^{\frac{3}{p}}}, \label{Presiune1.1}%
\end{equation}
where $\eta$ can be made arbitrarily small (of course, with the price of
increasing the constant $C_{ab}$). Let us take a look at the $\dot{B}%
_{p,2}^{\frac{3}{p}-\frac{3}{2}}$-norm of $\tilde{f}$; we get that:%
\begin{align}
\left\Vert \tilde{f}\right\Vert _{\dot{B}_{p,2}^{\frac{3}{p}-\frac{3}{2}}}  &
\leq\left\Vert a\operatorname{div}(bD(u_{L}))-\bar{a}\operatorname{div}%
(\bar{b}D(u_{L}))\right\Vert _{\dot{B}_{p,2}^{\frac{3}{p}-\frac{3}{2}}%
}+\left\Vert (a-\bar{a})\nabla P_{L}\right\Vert _{\dot{B}_{p,2}^{\frac{3}%
{p}-\frac{3}{2}}}\nonumber\\
&  \lesssim\left(  \bar{a}+\left\Vert a-\bar{a}\right\Vert _{\dot{B}%
_{p,1}^{\frac{3}{p}}}\right)  \left(  \bar{b}+\left\Vert b-\bar{b}\right\Vert
_{\dot{B}_{p,1}^{\frac{3}{p}}}\right)  \left\Vert \nabla u_{L}\right\Vert
_{\dot{B}_{p,1}^{\frac{3}{p}-\frac{1}{2}}}+\left\Vert a-\bar{a}\right\Vert
_{\dot{B}_{p,1}^{\frac{3}{p}}}\left\Vert \nabla P_{L}\right\Vert _{\dot
{B}_{p,2}^{\frac{3}{p}-\frac{3}{2}}}. \label{estimare_L2_f.}%
\end{align}
As $u_{L}\in C([0,\infty),\dot{B}_{p,1}^{\frac{3}{p}-1})\cap L^{1}%
([0,\infty),\dot{B}_{p,1}^{\frac{3}{p}+1})$ and $\mathcal{Q}$ is continuous
operator on homogeneous Besov spaces from%
\[
\operatorname{div}\left(  u_{L}-R\right)  =0,
\]
we deduce that%
\[
\mathcal{P}\left(  u_{L}-R\right)  =u_{L}-R,
\]
which implies%
\[
\mathcal{Q}u_{L}=\mathcal{Q}R.
\]
By applying the operator $\mathcal{Q}$ in the first equation of System
$\left(  \text{\ref{LiniarStokes}}\right)  $ we get that:%
\begin{align*}
\bar{a}\nabla P_{L}  &  =\mathcal{Q}f-\mathcal{Q}\partial_{t}u_{L}+\bar{a}%
\bar{b}\mathcal{Q}\Delta u_{L}+\bar{a}\bar{b}\nabla\operatorname{div}R\\
&  =\mathcal{Q}f-\mathcal{Q}\partial_{t}R+2\bar{a}\bar{b}\nabla
\operatorname{div}R
\end{align*}
and thus%
\[
\left\Vert \nabla P_{L}\right\Vert _{\dot{B}_{p,2}^{\frac{3}{p}-\frac{3}{2}}%
}\leq\frac{1}{\bar{a}}\left\Vert \mathcal{Q}f\right\Vert _{\dot{B}%
_{p,2}^{\frac{3}{p}-\frac{3}{2}}}+\frac{1}{\bar{a}}\left\Vert \partial
_{t}\mathcal{Q}R\right\Vert _{\dot{B}_{p,2}^{\frac{3}{p}-\frac{3}{2}}}%
+2\bar{b}\left\Vert \nabla\operatorname{div}R\right\Vert _{\dot{B}%
_{p,2}^{\frac{3}{p}-\frac{3}{2}}}.
\]
In view of $\left(  \text{\ref{Presiune1.1}}\right)  $, $\left(
\text{\ref{estimare_Besov_f}}\right)  $, $\left(  \text{\ref{estimare_L2_f.}%
}\right)  $ and interpolation we gather that there exists a constant $C_{ab}$
such that:%
\begin{align}
\left\Vert \nabla\tilde{P}\right\Vert _{\dot{B}_{p,2}^{\frac{3}{p}-\frac{3}%
{2}}\cap\dot{B}_{p,1}^{\frac{3}{p}-1}}  &  \leq C_{ab}\left(  \left\Vert
\nabla u_{L}\right\Vert _{\dot{B}_{p,1}^{\frac{3}{p}-\frac{1}{2}}}+\left\Vert
\nabla P_{L}\right\Vert _{\dot{B}_{p,2}^{\frac{3}{p}-\frac{3}{2}}}+\left\Vert
(\nabla^{2}u_{L},\nabla P_{L})\right\Vert _{\dot{B}_{p,1}^{\frac{3}{p}-1}%
}+\left\Vert \nabla\tilde{u}\right\Vert _{\dot{B}_{p,1}^{\frac{3}{p}-\frac
{1}{2}}}\right)  +\eta\left\Vert \nabla\tilde{u}\right\Vert _{\dot{B}%
_{p,1}^{\frac{3}{p}}}.\label{Presiune_3d_1}\\
&  \leq C_{ab}\left\Vert (\mathcal{Q}f,\partial_{t}\mathcal{Q}R,\nabla
\operatorname{div}R)\right\Vert _{\dot{B}_{p,2}^{\frac{3}{p}-\frac{3}{2}}%
}+C_{ab}\left\Vert u_{L}\right\Vert _{\dot{B}_{p,1}^{\frac{3}{p}-1}%
}\label{Presiune_3d_3}\\
&  \text{ \ \ }+C_{ab}\left\Vert (\nabla^{2}u_{L},\nabla P_{L})\right\Vert
_{\dot{B}_{p,1}^{\frac{3}{p}-1}}+C_{ab}\left\Vert \tilde{u}\right\Vert
_{\dot{B}_{p,1}^{\frac{3}{p}-1}}+2\eta\left\Vert \nabla\tilde{u}\right\Vert
_{\dot{B}_{p,1}^{\frac{3}{p}}}. \label{Presiune_3d_4}%
\end{align}
where, again, at the price of increasing $C_{ab}$, $\eta$ can be made
arbitrarily small.

\subsubsection{The $2D$ case}

In this case, using again Proposition \ref{A7} combined with Proposition with
Proposition \ref{Estimare_eliptica1_2D} and Proposition
\ref{Estimare_eliptica2_2D} we get that:%
\begin{align*}
\left\Vert \nabla\tilde{P}\right\Vert _{\dot{B}_{p,2}^{\frac{2}{p}-1}%
}+\left\Vert \mathcal{P(}a\nabla\tilde{P})\right\Vert _{\dot{B}_{p,1}%
^{\frac{2}{p}-1}}  &  \lesssim\left\Vert \nabla\tilde{P}\right\Vert _{\dot
{B}_{p,2}^{\frac{2}{p}-1}}+\left\Vert \left(  Id-\dot{S}_{m}\right)  \left(
a-\bar{a}\right)  \right\Vert _{\dot{B}_{p,1}^{\frac{2}{p}}}\left\Vert
\nabla\tilde{P}\right\Vert _{\dot{B}_{p,1}^{\frac{2}{p}-1}}+\left\Vert
[\mathcal{P},\dot{S}_{m}\left(  a-\bar{a}\right)  ]\nabla\tilde{P}\right\Vert
_{\dot{B}_{p,1}^{\frac{2}{p}-1}}\\
&  \lesssim\left\Vert \left(  Id-\dot{S}_{m}\right)  \left(  a-\bar{a}\right)
\right\Vert _{\dot{B}_{p,1}^{\frac{2}{p}}}\left\Vert \nabla\tilde
{P}\right\Vert _{\dot{B}_{p,1}^{\frac{2}{p}-1}}+\left(  1+\left\Vert
\nabla\dot{S}_{m}a\right\Vert _{\dot{B}_{p,2}^{\frac{2}{p}}}\right)
\left\Vert \nabla\tilde{P}\right\Vert _{\dot{B}_{p,2}^{\frac{2}{p}-1}}\\
&  \lesssim\left\Vert \left(  Id-\dot{S}_{m}\right)  \left(  a-\bar{a}\right)
\right\Vert _{\dot{B}_{p,1}^{\frac{2}{p}}}\left\Vert \nabla\tilde
{P}\right\Vert _{\dot{B}_{p,1}^{\frac{2}{p}-1}}\\
&  +\tilde{C}\left(  a\right)  \left(  1+\left\Vert \nabla\dot{S}%
_{m}a\right\Vert _{\dot{B}_{p,2}^{\frac{2}{p}}}\right)  \left(  \left\Vert
\tilde{f}\right\Vert _{\dot{B}_{2,2}^{\frac{2}{p}-1}}+\left\Vert
\mathcal{Q(}a\operatorname{div}\left(  bD(\tilde{u})\right)  )\right\Vert
_{\dot{B}_{p,2}^{\frac{2}{p}-1}}\right)
\end{align*}
where, as before%
\[
\tilde{C}\left(  a\right)  =\left(  \frac{1}{\bar{a}}+\left\Vert \frac{1}%
{a}-\frac{1}{\bar{a}}\right\Vert _{\dot{B}_{p,1}^{\frac{2}{p}}}\right)
\left(  1+\frac{1}{a_{\star}}\left\Vert a-\bar{a}\right\Vert _{\dot{B}%
_{p,1}^{\frac{2}{p}}}\right)  .
\]
As we have already estimated $\left\Vert \mathcal{Q(}a\operatorname{div}%
\left(  bD(\tilde{u})\right)  )\right\Vert _{\dot{B}_{p,2}^{\frac{2}{p}-1}}$
in $\left(  \text{\ref{estimare_parte_compresibila}}\right)  $, we gather that:%

\begin{align}
\left\Vert \nabla\tilde{P}\right\Vert _{\dot{B}_{p,2}^{\frac{2}{p}-1}%
}+\left\Vert a\nabla\tilde{P}\right\Vert _{\dot{B}_{p,1}^{\frac{2}{p}}}  &
\lesssim T_{m}^{1}\left(  a,b\right)  \left\Vert a\nabla\tilde{P}\right\Vert
_{\dot{B}_{p,1}^{\frac{2}{p}-1}}+T_{m}^{2}\left(  a,b\right)  \left\Vert
\tilde{f}\right\Vert _{\dot{B}_{p,1}^{\frac{2}{p}-1}}\nonumber\\
&  +T_{m,M}^{3}\left(  a,b\right)  \left\Vert \nabla\tilde{u}\right\Vert
_{\dot{B}_{p,1}^{\frac{2}{p}-\frac{1}{2}}}+T_{m,M}^{4}\left(  a,b\right)
\left\Vert \nabla\tilde{u}\right\Vert _{\dot{B}_{p,1}^{\frac{2}{p}}},
\label{presiune_2d}%
\end{align}
where%
\begin{align*}
T_{m}^{1}\left(  a,b\right)   &  =\left\Vert \left(  Id-\dot{S}_{m}\right)
\left(  a-\bar{a}\right)  \right\Vert _{\dot{B}_{p,1}^{\frac{2}{p}}}\left(
\frac{1}{\bar{a}}+\left\Vert \frac{1}{a}-\frac{1}{\bar{a}}\right\Vert
_{\dot{B}_{p,1}^{\frac{2}{p}}}\right)  ,\\
T_{m}^{2}\left(  a,b\right)   &  =\tilde{C}\left(  a\right)  \left(
1+\left\Vert \nabla\dot{S}_{m}a\right\Vert _{\dot{B}_{p,2}^{\frac{2}{p}}%
}\right)  ,\\
T_{m,M}^{3}\left(  a,b\right)   &  =\left\Vert \left(  \dot{S}_{m}\left(
a\nabla b\right)  ,\dot{S}_{m}\left(  b\nabla a\right)  \right)  \right\Vert
_{\dot{B}_{p,1}^{\frac{2}{p}}}\\
+\tilde{C}\left(  a\right)   &  \left(  1+\left\Vert \nabla\dot{S}%
_{m}a\right\Vert _{\dot{B}_{p,2}^{\frac{2}{p}}}\right)  \left\Vert \left(
\dot{S}_{M}\left(  a\nabla b\right)  ,\dot{S}_{M}\left(  b\nabla a\right)
\right)  \right\Vert _{\dot{B}_{p,1}^{\frac{2}{p}}},\\
T_{m,M}^{4}\left(  a,b\right)   &  =\left\Vert (Id-\dot{S}_{m})(a\nabla
b)\right\Vert _{\dot{B}_{p,1}^{\frac{2}{p}-1}}+\left\Vert (Id-\dot{S}%
_{m})\left(  ab-\bar{a}\bar{b}\right)  \right\Vert _{\dot{B}_{p,1}^{\frac
{2}{p}}}\\
&  +\tilde{C}\left(  a\right)  \left(  1+\left\Vert \nabla\dot{S}%
_{m}a\right\Vert _{\dot{B}_{p,2}^{\frac{2}{p}}}\right)  \left(  \left\Vert
(Id-\dot{S}_{M})(a\nabla b)\right\Vert _{\dot{B}_{p,1}^{\frac{2}{p}-1}%
}+\left\Vert (Id-\dot{S}_{M})\left(  ab-\bar{a}\bar{b}\right)  \right\Vert
_{\dot{B}_{p,1}^{\frac{2}{p}-1}}\right)  .
\end{align*}
First, we fix an $\eta>0$. Let us fix an $m\in\mathbb{N}$ such that $T_{m}%
^{1}\left(  a,b\right)  \left\Vert a\nabla\tilde{P}\right\Vert _{\dot{B}%
_{p,1}^{\frac{2}{p}-1}}$ can be "absorbed" by the LHS of $\left(
\text{\ref{presiune_2d}}\right)  $ and that
\[
\left\Vert (Id-\dot{S}_{m})(a\nabla b)\right\Vert _{\dot{B}_{p,1}^{\frac{2}%
{p}-1}}+\left\Vert (Id-\dot{S}_{m})\left(  ab-\bar{a}\bar{b}\right)
\right\Vert _{\dot{B}_{p,1}^{\frac{2}{p}}}\leq\eta/2.
\]
Next, we see that by choosing $M$ large enough we have%
\[
T_{m,M}^{4}\left(  a,b\right)  \leq\eta.
\]
Thus, using interpolation we can write that:%
\begin{equation}
\left\Vert \nabla\tilde{P}\right\Vert _{\dot{B}_{p,2}^{\frac{2}{p}-1}%
}+\left\Vert a\nabla\tilde{P}\right\Vert _{\dot{B}_{p,1}^{\frac{2}{p}-1}}\leq
C_{ab}\left(  \left\Vert (\nabla^{2}u_{L},\nabla P_{L})\right\Vert _{\dot
{B}_{2,1}^{\frac{2}{p}-1}}+\left\Vert \tilde{u}\right\Vert _{\dot{B}%
_{p,1}^{\frac{2}{p}-1}}\right)  +2\eta\left\Vert \nabla^{2}\tilde
{u}\right\Vert _{\dot{B}_{p,1}^{\frac{2}{p}-1}}. \label{Presiune2d}%
\end{equation}

\subsubsection{End of the proof of Proposition \ref{Propozitia1Stokes}}

Obviously, combining the two estimates $\left(  \text{\ref{Presiune_3d_1}%
}\right)  $-$\left(  \text{\ref{Presiune_3d_4}}\right)  $ and $\left(
\text{\ref{Presiune2d}}\right)  $ we can continue in a unified manner the rest
of the proof of Proposition \ref{Propozitia1Stokes}. First, choose
$m\in\mathbb{N}$ large enough such that%
\[
\bar{a}\bar{b}+\dot{S}_{m}(ab-\bar{a}\bar{b})\geq\frac{a_{\star}b_{\star}}%
{2}.
\]
We apply $\dot{\Delta}_{j}$ to $\left(  \text{\ref{StokesModif}}\right)  $ and
we write that:%
\begin{align*}
\partial_{t}\tilde{u}_{j}-\operatorname{div}\left(  (\bar{a}\bar{b}+\dot
{S}_{m}(ab-\bar{a}\bar{b}))\nabla\tilde{u}_{j}\right)   &  =\tilde{f}_{j}%
-\dot{\Delta}_{j}\left(  a\nabla\tilde{P}\right) \\
&  +\dot{\Delta}_{j}\operatorname{div}\left(  (Id-\dot{S}_{m})(ab-\bar{a}%
\bar{b})\nabla\tilde{u}\right)  +\operatorname{div}\left[  \dot{\Delta}%
_{j},\dot{S}_{m}(ab-\bar{a}\bar{b})\right]  \nabla\tilde{u}\\
&  +\dot{\Delta}_{j}\left(  D\tilde{u}\dot{S}_{m}(b\nabla a)\right)
+\dot{\Delta}_{j}\left(  D\tilde{u}(Id-\dot{S}_{m})(b\nabla a)\right) \\
&  +\dot{\Delta}_{j}\left(  \nabla\tilde{u}\dot{S}_{m}(a\nabla b)\right)
+\dot{\Delta}_{j}\left(  \nabla\tilde{u}(Id-\dot{S}_{m})(a\nabla b)\right)  .
\end{align*}
Multiplying the last relation by $\left\vert \tilde{u}_{j}\right\vert
^{p-1}\operatorname*{sgn}\tilde{u}_{j}$, integrating and using Lemma $8$ from
the Appendix B of \cite{Dan10}, we get that:%
\begin{align*}
&  \left\Vert \tilde{u}_{j}\right\Vert _{L^{p}}+a_{\star}b_{\star}2^{2j}%
C\int_{0}^{t}\left\Vert \tilde{u}_{j}\right\Vert _{L^{p}}\\
&  \lesssim\int_{0}^{t}\left\Vert \tilde{f}_{j}\right\Vert _{L^{p}}+\int
_{0}^{t}\left\Vert \dot{\Delta}_{j}\left(  a\nabla\tilde{P}\right)
\right\Vert _{L^{p}}\\
&  +\int_{0}^{t}\left\Vert \operatorname{div}\left[  \dot{\Delta}_{j},\dot
{S}_{m}(ab-\bar{a}\bar{b})\right]  \nabla\tilde{u}\right\Vert _{L^{p}}%
+\int_{0}^{t}\left\Vert \dot{\Delta}_{j}\operatorname{div}\left(  (Id-\dot
{S}_{m})(ab-\bar{a}\bar{b})\nabla\tilde{u}\right)  \right\Vert _{L^{p}}\\
&  +\int_{0}^{t}\left\Vert \dot{\Delta}_{j}\left(  D\tilde{u}\dot{S}%
_{m}(b\nabla a)\right)  \right\Vert _{L^{p}}+\int_{0}^{t}\left\Vert
\dot{\Delta}_{j}\left(  D\tilde{u}(Id-\dot{S}_{m})(b\nabla a)\right)
\right\Vert _{L^{p}}\\
&  +\int_{0}^{t}\left\Vert \dot{\Delta}_{j}\left(  \nabla\tilde{u}\dot{S}%
_{m}(a\nabla b)\right)  \right\Vert _{L^{p}}+\int_{0}^{t}\left\Vert
\dot{\Delta}_{j}\left(  \nabla\tilde{u}(Id-\dot{S}_{m})(a\nabla b)\right)
\right\Vert _{L^{p}}.
\end{align*}
Multiplying the last relation by $2^{j(\frac{n}{p}-1)}$, performing an
$\ell^{1}\left(  \mathbb{Z}\right)  $-summation and using Proposition
\ref{A66} to deal with $\left\Vert \operatorname{div}\left[  \dot{\Delta}%
_{j},\dot{S}_{m}(ab-\bar{a}\bar{b})\right]  \nabla\tilde{u}\right\Vert
_{\dot{B}_{p,1}^{\frac{n}{p}-1}}$\ along with $\left(
\text{\ref{Presiune_3d_1}}\right)  $-$\left(  \text{\ref{Presiune_3d_4}%
}\right)  $ and $\left(  \text{\ref{presiune_2d}}\right)  $ to deal with the
pressure, we get that:%
\begin{align}
&  \left\Vert \tilde{u}\right\Vert _{L_{t}^{\infty}(\dot{B}_{p,1}^{\frac{n}%
{p}-1})}+a_{\star}b_{\star}C\left\Vert \nabla^{2}\tilde{u}\right\Vert
_{L_{t}^{1}(\dot{B}_{p,1}^{\frac{n}{p}-1})}\nonumber\\
&  \lesssim\left\Vert \tilde{f}\right\Vert _{L_{t}^{1}(\dot{B}_{p,1}^{\frac
{n}{p}-1})}+C\int_{0}^{t}\left\Vert a\nabla\tilde{P}\right\Vert _{\dot
{B}_{p,1}^{\frac{n}{p}-1}}\nonumber\\
&  +\int_{0}^{t}\left\Vert \left(  \dot{S}_{m}(b\nabla a),\dot{S}_{m}(a\nabla
b)\right)  \right\Vert _{\dot{B}_{p,1}^{\frac{n}{p}}}\left\Vert \nabla
\tilde{u}\right\Vert _{\dot{B}_{p,1}^{\frac{n}{p}-1}}+T_{m}\left(  a,b\right)
\left\Vert \nabla^{2}\tilde{u}\right\Vert _{L_{t}^{1}(\dot{B}_{p,1}^{\frac
{n}{p}-1})}\nonumber\\
&  \leq C_{ab}\left(  1+t\right)  \left(  \left\Vert u_{0}\right\Vert
_{\dot{B}_{p,1}^{\frac{n}{p}-1}}+\left\Vert \left(  f,\partial_{t}%
R,\nabla\operatorname{div}R\right)  \right\Vert _{L_{t}^{1}(\dot{B}%
_{p,2}^{\frac{n}{p}-\frac{n}{2}}\cap\dot{B}_{p,1}^{\frac{n}{p}-1})}\right)
\label{viteza1}\\
&  \text{ \ \ \ }+C_{ab}\int_{0}^{t}\left\Vert \tilde{u}\right\Vert _{\dot
{B}_{p,1}^{\frac{n}{p}-1}}+\left(  T_{m}\left(  a,b\right)  +\eta\right)
\left\Vert \nabla^{2}u\right\Vert _{L_{t}^{1}(\dot{B}_{p,1}^{\frac{n}{p}-1})}.
\label{viteza1.2}%
\end{align}
where%
\begin{align*}
T_{m}\left(  a,b\right)   &  =\left\Vert (Id-\dot{S}_{m})(b\nabla
a)\right\Vert _{\dot{B}_{p,1}^{\frac{n}{p}-1}}+\left\Vert (Id-\dot{S}%
_{m})(a\nabla b)\right\Vert _{\dot{B}_{p,1}^{\frac{n}{p}-1}}\\
&  +\left\Vert (Id-\dot{S}_{m})(ab-\bar{a}\bar{b})\right\Vert _{\dot{B}%
_{p,1}^{\frac{n}{p}-1}}.
\end{align*}
Assuming that $m$ is large enough respectively $\eta$ is small enough, we can
"absorb" $\left(  T_{m}\left(  a,b\right)  +\eta\right)  \left\Vert \nabla
^{2}u\right\Vert _{L_{t}^{1}(\dot{B}_{p,1}^{\frac{n}{p}-1})}$ in the LHS of
$\left(  \text{\ref{viteza1}}\right)  $. Thus, we end up with
\begin{align*}
\left\Vert \tilde{u}\right\Vert _{L_{t}^{\infty}(\dot{B}_{p,1}^{\frac{n}{p}%
-1})}+a_{\star}b_{\star}\frac{C}{2}\left\Vert \nabla^{2}\tilde{u}\right\Vert
_{L_{t}^{1}(\dot{B}_{p,1}^{\frac{n}{p}-1})}  &  \leq C_{ab}\left(  1+t\right)
\left(  \left\Vert u_{0}\right\Vert _{\dot{B}_{p,1}^{\frac{n}{p}-1}%
}+\left\Vert \left(  f,\partial_{t}R,\nabla\operatorname{div}R\right)
\right\Vert _{L_{t}^{1}(\dot{B}_{p,2}^{\frac{n}{p}-\frac{n}{2}}\cap\dot
{B}_{p,1}^{\frac{n}{p}-1})}\right) \\
&  +C_{ab}\int_{0}^{t}\left\Vert \tilde{u}\right\Vert _{\dot{B}_{p,1}%
^{\frac{n}{p}-1}}%
\end{align*}
such that using Gronwall's lemma, $\left(  \text{\ref{estimare_uL}}\right)  $
and the classical inequality%
\[
1+t^{\alpha}\leq C_{\alpha}\exp\left(  t\right)
\]
yields:%
\begin{equation}
\left\Vert \tilde{u}\right\Vert _{L_{t}^{\infty}(\dot{B}_{p,1}^{\frac{n}{p}%
-1})}+a_{\star}b_{\star}\frac{C}{2}\left\Vert \nabla^{2}\tilde{u}\right\Vert
_{L_{t}^{1}(\dot{B}_{p,1}^{\frac{n}{p}-1})}\leq C_{ab}\left(  \left\Vert
u_{0}\right\Vert _{\dot{B}_{p,1}^{\frac{n}{p}-1}}+\left\Vert \left(
f,\partial_{t}R,\nabla\operatorname{div}R\right)  \right\Vert _{L_{t}^{1}%
(\dot{B}_{p,2}^{\frac{n}{p}-\frac{n}{2}}\cap\dot{B}_{p,1}^{\frac{n}{p}-1}%
)}\right)  \exp\left(  C_{ab}t\right)  . \label{viteza_tilde}%
\end{equation}
Using the fact that $u=u_{L}+\tilde{u}$ along with $\left(
\text{\ref{estimare_uL}}\right)  $\ and $\left(  \text{\ref{viteza_tilde}%
}\right)  $ gives us:%
\begin{equation}
\left\Vert u\right\Vert _{L_{t}^{\infty}(\dot{B}_{p,1}^{\frac{n}{p}-1}%
)}+a_{\star}b_{\star}\frac{C}{2}\left\Vert \nabla^{2}u\right\Vert _{L_{t}%
^{1}(\dot{B}_{p,1}^{\frac{n}{p}-1})}\leq C_{ab}\left(  \left\Vert
u_{0}\right\Vert _{\dot{B}_{p,1}^{\frac{n}{p}-1}}+\left\Vert \left(
f,\partial_{t}R,\nabla\operatorname{div}R\right)  \right\Vert _{L_{t}^{1}%
(\dot{B}_{p,2}^{\frac{n}{p}-\frac{n}{2}}\cap\dot{B}_{p,1}^{\frac{n}{p}-1}%
)}\right)  \exp\left(  C_{ab}t\right)  . \label{viteza2}%
\end{equation}
Next, using $\left(  \text{\ref{Presiune_3d_1}}\right)  $-$\left(
\text{\ref{Presiune_3d_4}}\right)  $\ and $\left(  \text{\ref{presiune_2d}%
}\right)  $ combined with $\left(  \text{\ref{estimare_uL}}\right)  $ and
interpolation, we infer that:%
\begin{align}
&  \left\Vert \nabla P\right\Vert _{L_{t}^{1}((\dot{B}_{p,2}^{\frac{n}%
{p}-\frac{n}{2}}\cap\dot{B}_{p,1}^{\frac{n}{p}-1})}\leq C_{a}\left\Vert
a\nabla P_{L}\right\Vert _{L_{t}^{1}(\dot{B}_{p,2}^{\frac{n}{p}-\frac{n}{2}%
}\cap\dot{B}_{p,1}^{\frac{n}{p}-1})}+C_{a}\left\Vert a\nabla\tilde
{P}\right\Vert _{L_{t}^{1}(\dot{B}_{p,2}^{\frac{n}{p}-\frac{n}{2}}\cap\dot
{B}_{p,1}^{\frac{n}{p}-1})}\\
&  \leq C_{ab}\left(  \left\Vert u_{0}\right\Vert _{\dot{B}_{p,1}^{\frac{n}%
{p}-1}}+\left\Vert \left(  f,\partial_{t}R,\nabla\operatorname{div}R\right)
\right\Vert _{L_{t}^{1}(\dot{B}_{p,2}^{\frac{n}{p}-\frac{n}{2}}\cap\dot
{B}_{p,1}^{\frac{n}{p}-1})}\right)  \exp\left(  C_{ab}t\right)  .
\label{estimare_viteza3}%
\end{align}
Combing $\left(  \text{\ref{estimare_viteza3}}\right)  $ with $\left(
\text{\ref{viteza2}}\right)  $ we finally get that:%

\begin{gather}
\left\Vert u\right\Vert _{L_{t}^{\infty}(\dot{B}_{p,1}^{\frac{n}{p}-1}%
)}+\left\Vert \nabla^{2}u\right\Vert _{L_{t}^{1}(\dot{B}_{p,1}^{\frac{n}{p}%
-1})}+\left\Vert \nabla P\right\Vert _{L_{t}^{1}((\dot{B}_{p,2}^{\frac{n}%
{p}-\frac{n}{2}}\cap\dot{B}_{p,1}^{\frac{n}{p}-1})}\nonumber\\
\leq\left(  \left\Vert u_{0}\right\Vert _{\dot{B}_{p,1}^{\frac{n}{p}-1}%
}+\left\Vert \left(  f,\partial_{t}R,\nabla\operatorname{div}R\right)
\right\Vert _{L_{t}^{1}(\dot{B}_{p,2}^{\frac{n}{p}-\frac{n}{2}}\cap\dot
{B}_{p,1}^{\frac{n}{p}-1})}\right)  \exp\left(  C_{ab}\left(  t+1\right)
\right)  . \label{estimare_viteza1}%
\end{gather}
Obviously, by obtaining the last estimate we conclude the proof of Proposition
\ref{Propozitia1Stokes}.

Next, let us deal with the existence part of the Stokes problem with the
coefficients having regularity as in Proposition \ref{Propozitia1Stokes}. More
precisely, we have:

\begin{proposition}
\label{Propozitia2Stokes}Let us consider $\left(  a,b,u_{0}%
,f,R\right)  $ as in the statement of Proposition \ref{Propozitia1Stokes}.
Then, there exists a unique solution $\left(  u,\nabla P\right)  \in E_{loc}$
of the Stokes system $\left(  \text{\ref{Stokes2}}\right)  $. Furthermore,
there exists a constant $C_{ab}$ depending on $a$ and $b$ such that:%
\begin{gather}
\left\Vert u\right\Vert _{L_{t}^{\infty}(\dot{B}_{p,1}^{\frac{n}{p}-1}%
)}+\left\Vert \nabla^{2}u\right\Vert _{L_{t}^{1}(\dot{B}_{p,1}^{\frac{n}{p}%
-1})}+\left\Vert \nabla P\right\Vert _{L_{t}^{1}((\dot{B}_{p,2}^{\frac{n}%
{p}-\frac{n}{2}}\cap\dot{B}_{p,1}^{\frac{n}{p}-1})}\nonumber\\
\leq\left(  \left\Vert u_{0}\right\Vert _{\dot{B}_{p,1}^{\frac{n}{p}-1}%
}+\left\Vert \left(  f,\partial_{t}R,\nabla\operatorname{div}R\right)
\right\Vert _{L_{t}^{1}(\dot{B}_{p,2}^{\frac{n}{p}-\frac{n}{2}}\cap\dot
{B}_{p,1}^{\frac{n}{p}-1})}\right)  \exp\left(  C_{ab}\left(  t+1\right)
\right)  .
\end{gather}
for all $t>0$.
\end{proposition}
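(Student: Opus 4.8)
The a priori estimate of Proposition~\ref{Propozitia1Stokes} already contains both the uniqueness and the quantitative bound, so the only thing left to produce is a solution. For uniqueness, if $(u^{1},\nabla P^{1})$ and $(u^{2},\nabla P^{2})$ are two solutions in $E_{loc}$ with the same data, then $(w,\nabla Q):=(u^{1}-u^{2},\nabla P^{1}-\nabla P^{2})$ solves $(\ref{Stokes2})$ with $u_{0}=f=R=0$, so $\operatorname{div}w=0$, $w_{|t=0}=0$ and $(w,\nabla Q)\in E_{T}$ for every $T$; Proposition~\ref{Propozitia1Stokes} then forces $w\equiv 0$, $\nabla Q\equiv 0$. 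Conversely, once a solution $(u,\nabla P)\in E_{loc}$ has been constructed, applying Proposition~\ref{Propozitia1Stokes} on each $[0,T]$ yields the announced inequality with $C_{ab}$ the constant of that proposition.

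To construct a solution I would run a continuation argument in the spirit of \cite{Dan2}. First, following the proof of Proposition~\ref{Propozitia1Stokes}, let $u_{L}$ be the solution of the constant-coefficient system $(\ref{LiniarStokes})$ given by Proposition~\ref{Stokes_constant}; since the data are taken here in the stronger space $L^{1}_{loc}(\dot{B}^{\frac{n}{p}-\frac{n}{2}}_{p,2}\cap\dot{B}^{\frac{n}{p}-1}_{p,1})$ and $\bar{a}\nabla P_{L}=\mathcal{Q}f-\mathcal{Q}\partial_{t}R+2\bar{a}\bar{b}\nabla\operatorname{div}R$, one has $(u_{L},\nabla P_{L})\in E_{loc}$. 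Setting $u=u_{L}+\tilde{u}$ reduces the problem to solving, for the divergence-free field $\tilde{u}$ with $\tilde{u}_{|t=0}=0$, a system of the form $(\ref{StokesModif})$ with a fixed forcing $\tilde{f}\in L^{1}_{loc}(\dot{B}^{\frac{n}{p}-\frac{n}{2}}_{p,2}\cap\dot{B}^{\frac{n}{p}-1}_{p,1})$, the pressure $\nabla\tilde{P}$ being recovered from $\operatorname{div}(a\nabla\tilde{P})=\operatorname{div}(\tilde{f}+a\operatorname{div}(bD(\tilde{u})))$ through the elliptic estimates of Subsection~\ref{Section2}.1 (Propositions~\ref{presiuneL2}, \ref{Estimare_eliptica1}, \ref{Estimare_eliptica2} in the $3D$ case, their $2D$ counterparts otherwise).

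Now for $\theta\in[0,1]$ put $a_{\theta}=\bar{a}+\theta(a-\bar{a})$, $b_{\theta}=\bar{b}+\theta(b-\bar{b})$ and consider the family of problems $\partial_{t}v-a_{\theta}\operatorname{div}(b_{\theta}D(v))+a_{\theta}\nabla Q=\tilde{f}$, $\operatorname{div}v=0$, $v_{|t=0}=0$, each of type $(\ref{StokesModif})$. The crucial point is that $(a_{\theta},b_{\theta})$ satisfies the hypotheses of Proposition~\ref{Propozitia1Stokes} uniformly in $\theta$: $\|a_{\theta}-\bar{a}\|_{\dot{B}^{\frac{n}{p}}_{p,1}}\le\|a-\bar{a}\|_{\dot{B}^{\frac{n}{p}}_{p,1}}$, $\min(\bar{a},a_{\star})\le a_{\theta}\le\max(\bar{a},a^{\star})$, $\|\frac{1}{a_{\theta}}-\frac{1}{\bar{a}}\|_{\dot{B}^{\frac{n}{p}}_{p,1}}$ is bounded by a function of the $\theta=1$ data (product and composition rules), and similarly for $b_{\theta}$; moreover the truncated norms $\|(Id-\dot{S}_{m})(\cdot)\|$ made small in the proof of Proposition~\ref{Propozitia1Stokes} become small for all $\theta$ at once by one choice of $m$. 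Hence the reduced bound $\|(v,\nabla Q)\|_{E_{T}}\le\mathcal{C}\|\tilde{f}\|_{L^{1}_{T}(\dot{B}^{\frac{n}{p}-\frac{n}{2}}_{p,2}\cap\dot{B}^{\frac{n}{p}-1}_{p,1})}$ on which Proposition~\ref{Propozitia1Stokes} rests holds for the whole family with a single constant $\mathcal{C}=\mathcal{C}(a,b)$. Fixing $T$, the set $\Theta$ of parameters $\theta$ for which the $\theta$-problem is solvable in $E_{T}$ then contains $0$ (Proposition~\ref{Stokes_constant}), is open — the $\theta$-problem being a fixed perturbation of the $\theta_{0}$-problem whose operator differs in $\mathcal{L}(E_{T})$-norm by $\lesssim|\theta-\theta_{0}|$, the only contribution that is not of lower order, $(\theta-\theta_{0})(a-\bar{a})\nabla Q$, being controlled in $L^{1}_{T}(\dot{B}^{\frac{n}{p}-\frac{n}{2}}_{p,2}\cap\dot{B}^{\frac{n}{p}-1}_{p,1})$ by $|\theta-\theta_{0}|\,\|a-\bar{a}\|_{\dot{B}^{\frac{n}{p}}_{p,1}}\|(v,\nabla Q)\|_{E_{T}}$, so that a Neumann series solves the $\theta$-problem for $|\theta-\theta_{0}|<\delta_{0}(a,b)$ — and is closed because the bound by $\mathcal{C}\|\tilde{f}\|$ is uniform in $\theta$; therefore $\Theta=[0,1]$ and $\theta=1$ is solvable on $[0,T]$. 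Letting $T\to\infty$ and gluing by uniqueness produces $(u,\nabla P)=(u_{L}+\tilde{u},\nabla P_{L}+\nabla\tilde{P})\in E_{loc}$.

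The main obstacle, as emphasized in Remark~\ref{111}, is that $(a-\bar{a})\nabla P$ is not a lower-order term, so neither the continuation above nor a naive fixed point closes by softness alone. What makes the argument go through is exactly the high-low frequency splitting already built into Proposition~\ref{Propozitia1Stokes} — isolating the genuinely small piece $(Id-\dot{S}_{m})(a-\bar{a})$ and absorbing the merely bounded low-frequency piece through Gronwall's lemma — together with the elliptic pressure bounds of Subsection~\ref{Section2}.1, which keeps every constant along the path $\theta\in[0,1]$ below the single value $\mathcal{C}(a,b)$. As an alternative to the continuation, $\tilde{u}$ can be obtained by a contraction on a short interval $[0,\delta]$ with $\delta=\delta(a,b)$: once $m$ is fixed, the remaining non-constant terms carry a factor $\delta^{1/2}$ coming from $\int_{0}^{t}\|\tilde{u}\|_{\dot{B}^{\frac{n}{p}}_{p,1}}\lesssim t^{1/2}\|\tilde{u}\|_{L^{\infty}_{t}(\dot{B}^{\frac{n}{p}-1}_{p,1})}^{1/2}\|\nabla^{2}\tilde{u}\|_{L^{1}_{t}(\dot{B}^{\frac{n}{p}-1}_{p,1})}^{1/2}$, and the scheme is iterated over $[0,\infty)$ since the datum $u(k\delta)$ stays in $\dot{B}^{\frac{n}{p}-1}_{p,1}$.
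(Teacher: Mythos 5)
Your proposal follows essentially the same route as the paper: uniqueness is extracted directly from the a priori bound of Proposition \ref{Propozitia1Stokes}, and existence is produced by the method of continuity along the path $(a_{\theta},b_{\theta})=(1-\theta)(\bar a,\bar b)+\theta(a,b)$, with the uniform-in-$\theta$ constant verified just as in the paper (using Proposition \ref{compunere} for the bounds on $1/a_{\theta}$, and a single $m$ that controls the high-frequency tails for all $\theta$ at once). The only organizational difference is that you reduce to the divergence-free, zero-data problem with fixed forcing $\tilde f$ once and for all before running the continuation, whereas the paper keeps $(u_{0},f,R)$ in the $\theta$-family and only performs the divergence-free reduction inside the openness step (the operator $S_{\theta\theta'}$ on $E_{T,\operatorname{div}}$); both are equivalent, and your Neumann-series argument for openness is the same estimate as the paper's contraction for $S_{\theta\theta'}$, namely $\|(\theta-\theta_{0})(a-\bar a)\nabla Q\|_{L^{1}_{T}}\lesssim|\theta-\theta_{0}|\,\|(v,\nabla Q)\|_{E_{T}}$ together with the analogous bounds on the diffusion terms. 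One small imprecision: you write the step size as $\delta_{0}(a,b)$, but because the a priori bound carries $\exp(C_{ab}(T+1))$ and the $\dot B^{n/p-n/2}_{p,2}$-estimate of $a_{\theta}\operatorname{div}(b_{\theta}D(v))$ brings in a $T^{3/4}$ (relation \eqref{viteza2.2}), it is really $\delta_{0}(a,b,T)$; this is harmless since you have fixed $T$ beforehand and glue over $T$ at the end, but the notation should reflect it. The short-time contraction sketched at the very end is plausible but underdeveloped — it only closes if the pressure is recovered from the elliptic estimates at each step so that the genuinely critical piece is controlled by $\eta\|\nabla^{2}\tilde u\|$ with $\eta$ made small by the choice of $m$, not by $\delta$ — but since it is offered as an aside it does not affect the correctness of the main argument.
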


The uniqueness property is a direct consequence of the estimates of
Proposition \ref{Propozitia1Stokes}. The proof of existence relies on
Proposition \ref{Propozitia1Stokes} combined with a continuity argument as
used in \cite{Dan2}, see also \cite{Krylov}. Let us introduce
\[
\left(  a_{\theta},b_{\theta}\right)  =\left(  1-\theta\right)  \left(
\bar{a},\bar{b}\right)  +\theta\left(  a,b\right)
\]
and let us consider the following Stokes systems%
\begin{equation}
\left\{
\begin{array}
[c]{r}%
\partial_{t}u-a_{\theta}(\operatorname{div}\left(  b_{\theta}D(u)\right)
-\nabla P)=f,\\
\operatorname{div}u=\operatorname{div}R,\\
u_{|t=0}=u_{0}.
\end{array}
\right.  \tag{$\mathcal{S}_\theta$}\label{Stokes_theta}%
\end{equation}
First of all, a more detailed analysis of the estimates established in
Proposition \ref{Propozitia1Stokes} enables us to conclude that the constant
$C_{a_{\theta}b_{\theta}}$ appearing in $\left(  \text{\ref{estimare_viteza1}%
}\right)  $ is uniformly bounded with respect to $\theta\in\left[  0,1\right]
$ by a constant $c=c_{ab}$. Indeed, when repeating the estimation process
carried out in Proposition \ref{Propozitia1Stokes} with $\left(  a_{\theta
},b_{\theta}\right)  $ instead of $\left(  a,b\right)  $ amounts in replacing
$\left(  a-\bar{a}\right)  $ and $\left(  b-\bar{b}\right)  $ with
$\theta\left(  a-\bar{a}\right)  $ and $\theta\left(  b-\bar{b}\right)  $.
Taking in account Proposition \ref{compunere} and the remark that follows we
get that there exists
\[
c:=\sup_{\theta\in\left[  0,1\right]  }C_{a_{\theta}b_{\theta}}<\infty\text{.}%
\]

Let us take $T>0$ and let us consider $\mathcal{E}_{T}$ the set of those
$\theta\in\left[  0,1\right]  $ such that for any $\left(  u_{0},f,R\right)  $
as in the statement of Proposition \ref{Propozitia1Stokes} Problem $\left(
\text{\ref{Stokes_theta}}\right)  $ admits a unique solution $\left(  u,\nabla
P\right)  \in E_{T}$ which satisfies
\begin{gather}
\left\Vert u\right\Vert _{L_{t}^{\infty}(\dot{B}_{p,1}^{\frac{n}{p}-1}%
)}+\left\Vert \nabla^{2}u\right\Vert _{L_{t}^{1}(\dot{B}_{p,1}^{\frac{n}{p}%
-1})}+\left\Vert \nabla P\right\Vert _{L_{t}^{1}((\dot{B}_{p,2}^{\frac{n}%
{p}-\frac{n}{2}}\cap\dot{B}_{p,1}^{\frac{n}{p}-1})}\nonumber\\
\leq\left(  \left\Vert u_{0}\right\Vert _{\dot{B}_{p,1}^{\frac{n}{p}-1}%
}+\left\Vert \left(  f,\partial_{t}R,\nabla\operatorname{div}R\right)
\right\Vert _{L_{t}^{1}(\dot{B}_{p,2}^{\frac{n}{p}-\frac{n}{2}}\cap\dot
{B}_{p,1}^{\frac{n}{p}-1})}\right)  \exp\left(  c\left(  t+1\right)  \right)
, \label{estimare_solutie}%
\end{gather}
for all $t\in\left[  0,T\right]  $. According to Proposition
\ref{Stokes_constant}, $0\in\mathcal{E}_{T}$.

Let us suppose that $\theta\in\mathcal{E}_{T}$. First of all, we denote by
$\left(  u_{\theta},\nabla P_{\theta}\right)  \in E_{T}$ the unique solution
of $\left(  \text{\ref{Stokes_theta}}\right)  $. We consider the space%
\[
E_{T,\operatorname{div}}=\{\left(  \tilde{w},\nabla\tilde{Q}\right)  \in
E_{T}:\operatorname{div}\tilde{w}=0\}
\]
and let $S_{\theta\theta^{\prime}}$ be the operator which associates to
$\left(  \tilde{w},\nabla\tilde{Q}\right)  \in E_{T,\operatorname{div}}$,
$\left(  \tilde{u},\nabla\tilde{P}\right)  $ the unique solution of
\begin{equation}
\left\{
\begin{array}
[c]{r}%
\partial_{t}\tilde{u}-a_{\theta}(\operatorname{div}\left(  b_{\theta}%
D(\tilde{u})\right)  -\nabla\tilde{P})=g_{\theta\theta^{\prime}}\left(
u_{\theta},\nabla P_{\theta}\right)  +g_{\theta\theta^{\prime}}\left(
\tilde{w},\nabla\tilde{Q}\right)  ,\\
\operatorname{div}u=0,\\
u_{|t=0}=0.
\end{array}
\right.
\end{equation}
where
\begin{equation}
g_{\theta\theta^{\prime}}\left(  u,\nabla P\right)  =\left(  a_{\theta
}-a_{\theta^{\prime}}\right)  \nabla P+a_{\theta^{\prime}}\operatorname{div}%
\left(  b_{\theta^{\prime}}D\left(  u\right)  \right)  -a_{\theta
}\operatorname{div}\left(  b_{\theta}D\left(  u\right)  \right)  .
\label{operator}%
\end{equation}
Obviously, $S_{\theta\theta^{\prime}}$ maps $E_{T,\operatorname{div}}$ into
$E_{T,\operatorname{div}}$. We claim that there exists a positive quantity
$\varepsilon=\varepsilon\left(  T\right)  >0$ such that if $\left\vert
\theta-\theta^{\prime}\right\vert \leq\varepsilon\left(  T\right)  $ then
$S_{\theta\theta^{\prime}}$ has a fixed point $\left(  \tilde{u}^{\star
},\nabla\tilde{P}^{\star}\right)  $ in a suitable ball centered at the origin
of the space $E_{T,\operatorname{div}}$. Obviously,
\[
\left(  \tilde{u}^{\star}+u_{\theta},\nabla\tilde{P}^{\star}+\nabla P_{\theta
}\right)
\]
will solve $\left(  \mathcal{S}_{\theta^{\prime}}\right)  $ in $E_{T}$.

First, we note that, as a consequence of Proposition \ref{Propozitia1Stokes},
we have that:%
\begin{equation}
\left\Vert \left(  \tilde{u},\nabla\tilde{P}\right)  \right\Vert _{E_{T}}%
\leq\left(  \left\Vert g_{\theta\theta^{\prime}}\left(  u_{\theta},\nabla
P_{\theta}\right)  \right\Vert _{L_{T}^{1}(\dot{B}_{p,2}^{\frac{n}{p}-\frac
{n}{2}}\cap\dot{B}_{p,1}^{\frac{n}{p}-1})}+\left\Vert g_{\theta\theta^{\prime
}}\left(  \tilde{w},\nabla\tilde{Q}\right)  \right\Vert _{L_{T}^{1}(\dot
{B}_{p,2}^{\frac{n}{p}-\frac{n}{2}}\cap\dot{B}_{p,1}^{\frac{n}{p}-1})}\right)
\exp\left(  c(T+1)\right)  . \label{estimare_pct_fix}%
\end{equation}
Let us observe that%
\begin{equation}
\left\Vert \left(  a_{\theta}-a_{\theta^{\prime}}\right)  \nabla P\right\Vert
_{L_{T}^{1}(\dot{B}_{p,2}^{\frac{n}{p}-\frac{n}{2}}\cap\dot{B}_{p,1}^{\frac
{n}{p}-1})}\leq\left\vert \theta-\theta^{\prime}\right\vert \left\Vert
a-\bar{a}\right\Vert _{\dot{B}_{p,1}^{\frac{n}{p}}}\left\Vert \nabla
P\right\Vert _{L_{T}^{1}(\dot{B}_{p,2}^{\frac{n}{p}-\frac{n}{2}}\cap\dot
{B}_{p,1}^{\frac{n}{p}-1})}. \label{presiune}%
\end{equation}
Next, we write that:%
\[
a_{\theta^{\prime}}\operatorname{div}\left(  b_{\theta^{\prime}}D\left(
u\right)  \right)  -a_{\theta}\operatorname{div}\left(  b_{\theta}D\left(
u\right)  \right)  =\left(  a_{\theta^{\prime}}-a_{\theta}\right)
\operatorname{div}\left(  b_{\theta^{\prime}}D\left(  u\right)  \right)
+a_{\theta}\operatorname{div}\left(  \left(  b_{\theta^{\prime}}-b_{\theta
}\right)  D\left(  u\right)  \right)  .
\]
The first term of the last identity is estimated as follows:%
\[
\left\Vert \left(  a_{\theta^{\prime}}-a_{\theta}\right)  \operatorname{div}%
\left(  b_{\theta^{\prime}}D\left(  u\right)  \right)  \right\Vert _{L_{T}%
^{1}(\dot{B}_{p,1}^{\frac{n}{p}-1})}\leq\left\vert \theta-\theta^{\prime
}\right\vert \left\Vert a-\bar{a}\right\Vert _{\dot{B}_{p,1}^{\frac{n}{p}}%
}\left(  \bar{b}+\left\Vert b-\bar{b}\right\Vert _{\dot{B}_{p,1}^{\frac{n}{p}%
}}\right)  \left\Vert D(u)\right\Vert _{L_{T}^{1}(\dot{B}_{p,1}^{\frac{n}{p}%
})}.
\]
Regarding the second term, we have that:%
\[
\left\Vert a_{\theta}\operatorname{div}\left(  \left(  b_{\theta^{\prime}%
}-b_{\theta}\right)  D\left(  u\right)  \right)  \right\Vert _{L_{T}^{1}%
(\dot{B}_{p,1}^{\frac{n}{p}-1})}\leq\left\vert \theta-\theta^{\prime
}\right\vert \left\Vert b-\bar{b}\right\Vert _{\dot{B}_{p,1}^{\frac{n}{p}}%
}\left(  \bar{a}+\left\Vert a-\bar{a}\right\Vert _{\dot{B}_{p,1}^{\frac{n}{p}%
}}\right)  \left\Vert D(u)\right\Vert _{L_{T}^{1}(\dot{B}_{p,1}^{\frac{n}{p}%
})}%
\]
and thus:%
\begin{align}
&  \left\Vert a_{\theta^{\prime}}\operatorname{div}\left(  b_{\theta^{\prime}%
}D\left(  u\right)  \right)  -a_{\theta}\operatorname{div}\left(  b_{\theta
}D\left(  u\right)  \right)  \right\Vert _{L_{T}^{1}(\dot{B}_{p,1}^{\frac
{n}{p}-1})}\nonumber\\
&  \leq\left\vert \theta-\theta^{\prime}\right\vert \left(  \bar{a}+\left\Vert
a-\bar{a}\right\Vert _{\dot{B}_{p,1}^{\frac{n}{p}}}\right)  \left(  \bar
{b}+\left\Vert b-\bar{b}\right\Vert _{\dot{B}_{p,1}^{\frac{n}{p}}}\right)
\left\Vert Du\right\Vert _{L_{T}^{1}(\dot{B}_{p,1}^{\frac{n}{p}})}.
\label{viteza2.1}%
\end{align}
The only thing left is to treat the $L_{t}^{1}(\dot{B}_{p,2}^{\frac{3}%
{p}-\frac{3}{2}})$-norm of $a_{\theta^{\prime}}\operatorname{div}\left(
b_{\theta^{\prime}}D\left(  u\right)  \right)  -a_{\theta}\operatorname{div}%
\left(  b_{\theta}D\left(  u\right)  \right)  $ in the case where $n=3$. Using
the fact that $\nabla u\in L_{T}^{4}(\dot{B}_{2,1}^{\frac{3}{p}-\frac{1}{2}})$
write that:%
\begin{align}
\left\Vert \left(  a_{\theta^{\prime}}-a_{\theta}\right)  \operatorname{div}%
\left(  b_{\theta^{\prime}}D\left(  u\right)  \right)  \right\Vert _{L_{T}%
^{1}(\dot{B}_{p,2}^{\frac{3}{p}-\frac{3}{2}})}  &  \leq\left\vert
\theta-\theta^{\prime}\right\vert \left\Vert a-\bar{a}\right\Vert _{\dot
{B}_{p,1}^{\frac{3}{p}}}\left\Vert \operatorname{div}\left(  b_{\theta
^{\prime}}D\left(  u\right)  \right)  \right\Vert _{L_{T}^{1}(\dot{B}%
_{p,1}^{\frac{3}{p}-\frac{3}{2}})}\nonumber\\
&  \leq\left\vert \theta-\theta^{\prime}\right\vert \left\Vert a-\bar
{a}\right\Vert _{\dot{B}_{p,1}^{\frac{3}{p}}}\left(  \bar{b}+\left\Vert
b-\bar{b}\right\Vert _{\dot{B}_{p,1}^{\frac{3}{p}}}\right)  \left\Vert
Du\right\Vert _{L_{T}^{1}(\dot{B}_{p,1}^{\frac{3}{p}-\frac{1}{2}})}\\
&  \leq\left\vert \theta-\theta^{\prime}\right\vert \left\Vert a-\bar
{a}\right\Vert _{\dot{B}_{p,1}^{\frac{3}{p}}}\left(  \bar{b}+\left\Vert
b-\bar{b}\right\Vert _{\dot{B}_{p,1}^{\frac{3}{p}}}\right)  T^{\frac{3}{4}%
}\left\Vert u\right\Vert _{L_{T}^{\infty}(\dot{B}_{p,1}^{\frac{3}{p}-1}%
)}^{\frac{1}{4}}\left\Vert u\right\Vert _{L_{T}^{1}(\dot{B}_{p,1}^{\frac{3}%
{p}+1})}^{\frac{3}{4}}\label{viteza2.20}\\
&  \leq\left\vert \theta-\theta^{\prime}\right\vert C\left(  T,a,b\right)
\left(  \left\Vert u\right\Vert _{L_{T}^{\infty}(\dot{B}_{p,1}^{\frac{3}{p}%
-1})}+\left\Vert \nabla^{2}u\right\Vert _{L_{T}^{1}(\dot{B}_{p,1}^{\frac{3}%
{p}+1})}\right)  \label{viteza2.2}%
\end{align}
and, proceeding in a similar manner we can estimate $\left\Vert a_{\theta
}\operatorname{div}\left(  \left(  b_{\theta^{\prime}}-b_{\theta}\right)
D\left(  u\right)  \right)  \right\Vert _{L_{T}^{1}(\dot{B}_{p,2}^{\frac{3}%
{p}-\frac{3}{2}})}$.

Combining $\left(  \text{\ref{presiune}}\right)  $, $\left(
\text{\ref{viteza2.1}}\right)  $ along with $\left(  \text{\ref{viteza2.2}%
}\right)  $ we get that:%
\begin{equation}
\left\Vert g_{\theta\theta^{\prime}}\left(  u,\nabla P\right)  \right\Vert
_{L_{T}^{1}(\dot{B}_{p,2}^{\frac{3}{p}-\frac{3}{2}}\cap\dot{B}_{p,1}^{\frac
{3}{p}-1})}\leq\left\vert \theta-\theta^{\prime}\right\vert C\left(
T,a,b\right)  \left(  \left\Vert u\right\Vert _{L_{T}^{\infty}(\dot{B}%
_{p,1}^{\frac{3}{p}-1})}+\left\Vert \nabla^{2}u\right\Vert _{L_{T}^{1}(\dot
{B}_{p,1}^{\frac{3}{p}+1})}+\left\Vert \nabla P\right\Vert _{L_{T}^{1}(\dot
{B}_{p,2}^{\frac{3}{p}-\frac{3}{2}}\cap\dot{B}_{p,1}^{\frac{3}{p}-1})}\right)
. \label{estimare_g}%
\end{equation}
Let us replace this into $\left(  \text{\ref{estimare_pct_fix}}\right)  $ to
get that
\[
\left\Vert \left(  \tilde{u},\nabla\tilde{P}\right)  \right\Vert _{E_{T}}%
\leq\left\vert \theta-\theta^{\prime}\right\vert C\left(  T,a,b\right)
\left(  \left\Vert \left(  u_{\theta},\nabla P_{\theta}\right)  \right\Vert
_{E_{T}}+\left\Vert \left(  \tilde{w},\nabla\tilde{Q}\right)  \right\Vert
_{E_{T}}\right)
\]
and by linearity%
\[
\left\Vert \left(  \tilde{u}^{1}-\tilde{u}^{2},\nabla\tilde{P}^{1}%
-\nabla\tilde{P}^{2}\right)  \right\Vert _{E_{T}}\leq\left\vert \theta
-\theta^{\prime}\right\vert C\left(  T,a,b\right)  \left\Vert \left(
\tilde{w}^{1}-\tilde{w}^{2},\nabla\tilde{Q}^{1}-\nabla\tilde{Q}^{2}\right)
\right\Vert _{E_{T}}%
\]
where for $k=1,2$:
\[
\left(  \tilde{u}^{i},\nabla\tilde{P}^{i}\right)  =S_{\theta\theta^{\prime}%
}\left(  \left(  \tilde{w}^{i},\nabla\tilde{Q}^{i}\right)  \right)
\]
Thus one can choose $\varepsilon\left(  T\right)  $ small enough such that
$\left\vert \theta-\theta^{\prime}\right\vert \leq\varepsilon\left(  T\right)
$ gives us a fixed point of the solution operator $S_{\theta\theta^{\prime}}$
in $B_{E_{T,\operatorname{div}}}\left(  0,2\left\Vert \left(  u_{\theta
},\nabla P_{\theta}\right)  \right\Vert _{E_{T}}\right)  $.

Thus, for all $T>0$, $E_{T}=\left[  0,1\right]  $ and owing to the uniqueness
property and to Proposition \ref{Propozitia1Stokes}, we can construct a unique
solution $\left(  u,\nabla P\right)  \in E_{loc}$ to $\left(
\text{\ref{Stokes2}}\right)  $ such that for all $t>0$ the estimate $\left(
\text{\ref{Estimare_Prop1}}\right)  $ is valid. This ends the proof of
Proposition \ref{Propozitia2Stokes}.

\subsection{The proof of Theorem \ref{Teorema_Stokes} in the case $n=3$}

As it was discussed earlier, in dimension $n=3$, Proposition
\ref{Propozitia1Stokes} is weaker than Theorem \ref{Teorema_Stokes} as one
requires additional low frequency informations on the data $\left(
f,\partial_{t}R,\nabla\operatorname{div}R\right)  \in L_{t}^{1}(\dot{B}%
_{p,2}^{\frac{3}{p}-\frac{3}{2}})$. Thus, we have to bring an extra argument
in order to conclude the validity of Theorem \ref{Teorema_Stokes}. This is the
object of interest of this section.

\subsubsection{The existence part}

We begin by taking $m\in\mathbb{N}$ large enough and owing to Proposition
\ref{Stokes_perturb} we can consider $\left(  u^{1},\nabla P^{1}\right)  $ the
unique solution with $u^{1}\in\mathcal{C(}\mathbb{R}^{+};\dot{B}_{p,1}%
^{\frac{3}{p}-1})$ and $\left(  \partial_{t}u^{1},\nabla^{2}u^{1},\nabla
P^{1}\right)  \in L_{loc}^{1}(\dot{B}_{p,1}^{\frac{3}{p}-1})$ of the system
\[
\left\{
\begin{array}
[c]{r}%
\partial_{t}u-\bar{a}\bar{b}\operatorname{div}D(u)+\left(  \bar{a}+\dot
{S}_{-m}\left(  a-\bar{a}\right)  \right)  \nabla P=f,\\
\operatorname{div}u=\operatorname{div}R,\\
u_{|t=0}=u_{0},
\end{array}
\right.
\]
which also satisfies:%
\[
\left\Vert u^{1}\right\Vert _{L_{T}^{\infty}(\dot{B}_{p,1}^{\frac{3}{p}-1}%
)}+\left\Vert \left(  \partial_{t}u^{1},\bar{a}\bar{b}\nabla^{2}u^{1},\bar
{a}\nabla P^{1}\right)  \right\Vert _{L_{T}^{1}(\dot{B}_{p,1}^{\frac{3}{p}%
-1})}\leq C(\left\Vert u_{0}\right\Vert _{\dot{B}_{p,1}^{\frac{3}{p}-1}%
}+\left\Vert \left(  f,\partial_{t}R,\bar{a}\bar{b}\nabla\operatorname{div}%
R\right)  \right\Vert _{L_{T}^{1}(\dot{B}_{p,1}^{\frac{3}{p}-1})}),
\]
for all $T>0$. Let us consider
\[
G\left(  u^{1},\nabla P^{1}\right)  =a\operatorname{div}\left(  bD(u^{1}%
)\right)  -\bar{a}\operatorname{div}(\bar{b}D(u^{1}))-\left(  \left(
Id-\dot{S}_{-m}\right)  \left(  a-\bar{a}\right)  \right)  \nabla P^{1}.
\]
We claim that $G\left(  u^{1},\nabla P^{1}\right)  \in L_{loc}^{1}(\dot
{B}_{p,2}^{\frac{3}{p}-\frac{3}{2}}\cap\dot{B}_{p,1}^{\frac{3}{p}-1})$. Indeed%
\[
a\operatorname{div}\left(  bD(u^{1})\right)  -\bar{a}\operatorname{div}%
(\bar{b}D(u^{1}))=\left(  a-\bar{a}\right)  \operatorname{div}\left(
bD(u^{1})\right)  +\bar{a}\operatorname{div}\left(  \left(  b-\bar{b}\right)
D\left(  u^{1}\right)  \right)
\]
and proceeding as in $\left(  \text{\ref{viteza2.1}}\right)  $ and $\left(
\text{\ref{viteza2.20}}\right)  $ we get that%
\begin{align}
\left\Vert a\operatorname{div}\left(  bD(u^{1})\right)  -\bar{a}%
\operatorname{div}(\bar{b}D(u^{1}))\right\Vert _{L_{t}^{1}(\dot{B}%
_{p,2}^{\frac{3}{p}-\frac{3}{2}}\cap\dot{B}_{p,1}^{\frac{3}{p}-1})}  &  \leq
C_{ab}\left(  1+t^{\frac{3}{4}}\right)  \left(  \left\Vert u^{1}\right\Vert
_{L_{t}^{\infty}(\dot{B}_{p,1}^{\frac{3}{p}-1})}+\left\Vert u^{1}\right\Vert
_{L_{t}^{1}(\dot{B}_{p,1}^{\frac{3}{p}+1})}\right) \nonumber\\
&  \leq\exp\left(  C_{ab}\left(  t+1\right)  \right)  (\left\Vert
u_{0}\right\Vert _{\dot{B}_{p,1}^{\frac{3}{p}-1}}+\left\Vert \left(
f,\partial_{t}R,\nabla\operatorname{div}R\right)  \right\Vert _{L_{T}^{1}%
(\dot{B}_{p,1}^{\frac{3}{p}-1})}). \label{vitez1}%
\end{align}
Next, we obviously have%
\begin{equation}
\left\Vert \left(  \left(  Id-\dot{S}_{-m}\right)  \left(  a-\bar{a}\right)
\right)  \nabla P^{1}\right\Vert _{L_{t}^{1}(\dot{B}_{p,1}^{\frac{3}{p}-1}%
)}\leq C\left\Vert \left(  a-\bar{a}\right)  \right\Vert _{\dot{B}%
_{p,1}^{\frac{3}{p}}}\left\Vert \nabla P^{1}\right\Vert _{L_{t}^{1}(\dot
{B}_{p,1}^{\frac{3}{p}-1})}. \label{pres1}%
\end{equation}
Using the fact that the product maps $\dot{B}_{p,1}^{\frac{3}{p}-\frac{1}{2}%
}\times\dot{B}_{p,1}^{\frac{3}{p}-1}\rightarrow\dot{B}_{p,2}^{\frac{3}%
{p}-\frac{3}{2}}$ we get that:
\begin{equation}
\left\Vert \left(  \left(  Id-\dot{S}_{-m}\right)  \left(  a-\bar{a}\right)
\right)  \nabla P^{1}\right\Vert _{L_{t}^{1}(\dot{B}_{p,2}^{\frac{3}{p}%
-\frac{3}{2}})}\leq C\left\Vert \left(  Id-\dot{S}_{-m}\right)  \left(
a-\bar{a}\right)  \right\Vert _{\dot{B}_{p,1}^{\frac{3}{p}-\frac{1}{2}}%
}\left\Vert \nabla P^{1}\right\Vert _{L_{t}^{1}(\dot{B}_{p,1}^{\frac{3}{p}%
-1})}. \label{pres2}%
\end{equation}
Of course%
\begin{align*}
\left\Vert \left(  Id-\dot{S}_{-m}\right)  \left(  a-\bar{a}\right)
\right\Vert _{\dot{B}_{p,1}^{\frac{3}{p}-\frac{1}{2}}}  &  \leq C\sum
_{j\geq-m}2^{j\left(  \frac{3}{p}-\frac{1}{2}\right)  }\left\Vert \dot{\Delta
}_{j}\left(  a-\bar{a}\right)  \right\Vert _{L^{2}}\leq C2^{\frac{m}{2}}%
\sum_{j\geq-m}2^{\frac{3}{p}j}\left\Vert \dot{\Delta}_{j}\left(  a-\bar
{a}\right)  \right\Vert _{L^{2}}.\\
&  \leq C2^{\frac{m}{2}}\left\Vert a-\bar{a}\right\Vert _{\dot{B}_{p,1}%
^{\frac{3}{p}}}%
\end{align*}
so that the first term in the RHS of $\left(  \text{\ref{pres2}}\right)  $ is
finite. We thus gather from $\left(  \text{\ref{vitez1}}\right)  $, $\left(
\text{\ref{pres1}}\right)  $ and $\left(  \text{\ref{pres2}}\right)  $ that
$G\left(  u^{1},\nabla P^{1}\right)  \in L_{loc}^{1}(\dot{B}_{p,2}^{\frac
{3}{p}-\frac{3}{2}}\cap\dot{B}_{p,1}^{\frac{3}{p}-1})$ and that for all $t>0$
there exists a constant $C_{ab}$ such that
\[
\left\Vert G\left(  u^{1},\nabla P^{1}\right)  \right\Vert _{L_{t}^{1}(\dot
{B}_{p,2}^{\frac{3}{p}-\frac{3}{2}}\cap\dot{B}_{p,1}^{\frac{3}{p}-1})}%
\leq(\left\Vert u_{0}\right\Vert _{\dot{B}_{p,1}^{\frac{3}{p}-1}}+\left\Vert
\left(  f,\partial_{t}R,\nabla\operatorname{div}R\right)  \right\Vert
_{L_{t}^{1}(\dot{B}_{p,1}^{\frac{3}{p}-1})})\exp\left(  C_{ab}\left(
t+1\right)  \right)  .
\]
According to Proposition \ref{Propozitia2Stokes}, there exists a unique
solution $\left(  u^{2},\nabla P^{2}\right)  \in E_{loc}$ of the system:%
\[
\left\{
\begin{array}
[c]{r}%
\partial_{t}u-a\operatorname{div}(bD(u))+a\nabla P=G\left(  u^{1},\nabla
P^{1}\right)  ,\\
\operatorname{div}u=0,\\
u_{|t=0}=0,
\end{array}
\right.
\]
which satisfies the following estimate%
\begin{align*}
\left\Vert u^{2}\right\Vert _{L_{t}^{\infty}(\dot{B}_{p,1}^{\frac{3}{p}-1}%
)}+\left\Vert \left(  \nabla^{2}u^{2},\nabla P^{2}\right)  \right\Vert
_{L_{t}^{1}(\dot{B}_{p,1}^{\frac{3}{p}-1})}  &  \leq\left\Vert G\left(
u^{1},\nabla P^{1}\right)  \right\Vert _{L_{t}^{1}(\dot{B}_{p,2}^{\frac{3}%
{p}-\frac{3}{2}}\cap\dot{B}_{p,1}^{\frac{3}{p}-1})}\exp\left(  C_{ab}\left(
t+1\right)  \right) \\
&  \leq(\left\Vert u_{0}\right\Vert _{\dot{B}_{p,1}^{\frac{3}{p}-1}%
}+\left\Vert \left(  f,\partial_{t}R,\nabla\operatorname{div}R\right)
\right\Vert _{L_{t}^{1}(\dot{B}_{p,1}^{\frac{3}{p}-1})})\exp\left(
C_{ab}\left(  t+1\right)  \right)  .
\end{align*}
We observe that
\[
\left(  u,\nabla P\right)  :=\left(  u^{1}+u^{2},\nabla P^{1}+\nabla
P^{2}\right)
\]
is a solution of $\left(  \text{\ref{Stokes2}}\right)  $ which satisfies%
\begin{equation}
\left\Vert u\right\Vert _{L_{t}^{\infty}(\dot{B}_{p,1}^{\frac{3}{p}-1}%
)}+\left\Vert \left(  \nabla^{2}u,\nabla P\right)  \right\Vert _{L_{t}%
^{1}(\dot{B}_{p,1}^{\frac{3}{p}-1})}\leq(\left\Vert u_{0}\right\Vert _{\dot
{B}_{p,1}^{\frac{3}{p}-1}}+\left\Vert \left(  f,\partial_{t}R,\nabla
\operatorname{div}R\right)  \right\Vert _{L_{t}^{1}(\dot{B}_{p,1}^{\frac{3}%
{p}-1})})\exp\left(  C_{ab}\left(  t+1\right)  \right)  . \label{1}%
\end{equation}
\ Of course, using again the first equation of $\left(  \text{\ref{Stokes2}%
}\right)  $ we get that
\[
\left\Vert \partial_{t}u\right\Vert _{L_{t}^{1}(\dot{B}_{p,1}^{\frac{3}{p}%
-1})}\leq C_{ab}\left\Vert \left(  f,\nabla^{2}u,\nabla P\right)  \right\Vert
_{L_{t}^{1}(\dot{B}_{p,1}^{\frac{3}{p}-1})}%
\]
and thus, we get the estimate%
\begin{equation}
\left\Vert u\right\Vert _{L_{t}^{\infty}(\dot{B}_{p,1}^{\frac{3}{p}-1}%
)}+\left\Vert \left(  \partial_{t}u,\nabla^{2}u,\nabla P\right)  \right\Vert
_{L_{t}^{1}(\dot{B}_{p,1}^{\frac{3}{p}-1})}\leq(\left\Vert u_{0}\right\Vert
_{\dot{B}_{p,1}^{\frac{3}{p}-1}}+\left\Vert \left(  f,\partial_{t}%
R,\nabla\operatorname{div}R\right)  \right\Vert _{L_{t}^{1}(\dot{B}%
_{p,1}^{\frac{3}{p}-1})})\exp\left(  C_{ab}\left(  t+1\right)  \right)  .
\label{2}%
\end{equation}

\subsubsection{Uniqueness}

Next, let us prove the uniqueness property. Let us suppose that there exists a
$T>0$ and a pair $\left(  u,\nabla P\right)  $ that solves
\begin{equation}
\left\{
\begin{array}
[c]{r}%
\partial_{t}u-a\operatorname{div}(bD(u))+a\nabla P=0,\\
\operatorname{div}u=0,\\
u_{|t=0}=0,
\end{array}
\right.  \label{unicitate}%
\end{equation}
with%
\[
u\in C_{T}(\dot{B}_{p,1}^{\frac{3}{p}-1})\text{ and }\left(  \partial
_{t}u,\nabla^{2}u,\nabla P\right)  \in L_{T}^{1}(\dot{B}_{p,1}^{\frac{3}{p}%
-1}).
\]
Observe that we cannot directly conclude to the uniqueness property by
appealing to Proposition \ref{Propozitia2Stokes} because the pressure does not
belong (a priori) to $L_{T}^{1}(\dot{B}_{p,2}^{\frac{3}{p}-\frac{3}{2}})$.
Recovering this low frequency information is done in the following lines. Let
us suppose that $3<p<4$. Applying the operator $\mathcal{Q}$ in the first
equation of $\left(  \text{\ref{unicitate}}\right)  $ we write that:%
\[
\mathcal{Q}((\bar{a}+\dot{S}_{-m}(a-\bar{a}))\nabla P)=Q\mathcal{(}%
a\operatorname{div}(bD(u)))-\mathcal{Q}((Id-\dot{S}_{-m})\left(  a-\bar
{a}\right)  \nabla P)
\]
where $m\in\mathbb{N}$ will be fixed later. We observe that:
\begin{align*}
\left\Vert \mathcal{Q}(\left(  \bar{a}+\dot{S}_{-m}\left(  a-\bar{a}\right)
\right)  \nabla P)\right\Vert _{L_{T}^{1}(\dot{B}_{p,1}^{\frac{3}{p}-\frac
{3}{2}})} &  \lesssim\left\Vert Q\mathcal{(}a\operatorname{div}%
(bD(u)))\right\Vert _{L_{T}^{1}(\dot{B}_{p,1}^{\frac{3}{p}-\frac{3}{2}}%
)}+\left\Vert \mathcal{Q}((Id-\dot{S}_{-m})\left(  a-\bar{a}\right)  \nabla
P)\right\Vert _{L_{T}^{1}(\dot{B}_{p,1}^{\frac{3}{p}-\frac{3}{2}})}\\
&  \lesssim T^{\frac{1}{4}}\left(  \bar{a}+\left\Vert a-\bar{a}\right\Vert
_{\dot{B}_{p,1}^{\frac{3}{p}}}\right)  \left(  \bar{b}+\left\Vert b-\bar
{b}\right\Vert _{\dot{B}_{p,1}^{\frac{3}{p}}}\right)  \left\Vert \nabla
u\right\Vert _{L_{T}^{\frac{4}{3}}(\dot{B}_{p,1}^{\frac{3}{p}-\frac{1}{2}})}\\
&  +\left\Vert \left(  Id-\dot{S}_{-m}\right)  \left(  a-\bar{a}\right)
\right\Vert _{\dot{B}_{p,1}^{\frac{3}{p}-\frac{1}{2}}}\left\Vert \nabla
P\right\Vert _{L_{T}^{1}(\dot{B}_{p,1}^{\frac{3}{p}-1})}.
\end{align*}
Consequently, we get that
\begin{equation}
\mathcal{Q}((\bar{a}+\dot{S}_{-m}(a-\bar{a}))\nabla P)\in L_{T}^{1}(\dot
{B}_{p,1}^{\frac{3}{p}-\frac{3}{2}}).\label{compresibilP}%
\end{equation}
Let us observe that the condition $p\in\left(  3,4\right)  $ ensures that
$\dot{B}_{p,1}^{\frac{3}{p}}$ is contained in the multiplier space of $\dot
{B}_{p^{\prime},2}^{-\frac{3}{p}+1}=\dot{B}_{p^{\prime},2}^{\frac{3}%
{p^{\prime}}-2}$. More precisely, we get

\begin{proposition}
Let us consider $\left(  u,v\right)  \in\dot{B}_{p,1}^{\frac{3}{p}}\times
\dot{B}_{p^{\prime},2}^{-\frac{3}{p}+1}$. Then $uv\in\dot{B}_{p^{\prime}%
,2}^{-\frac{3}{p}+1}$ and
\[
\left\Vert uv\right\Vert _{\dot{B}_{p^{\prime},2}^{-\frac{3}{p}+1}}%
\lesssim\left\Vert u\right\Vert _{\dot{B}_{p,1}^{\frac{3}{p}}}\left\Vert
v\right\Vert _{\dot{B}_{p^{\prime},2}^{-\frac{3}{p}+1}}.
\]

\end{proposition}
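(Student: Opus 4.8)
The plan is to expand the product with Bony's decomposition (Definition \ref{paraprodus_rest}) and bound the three resulting pieces separately, the low--high paraproduct built on $v$ being the only delicate one. Throughout, $p'$ denotes the conjugate exponent of $p$, so that $p'\in\left(\frac43,\frac32\right)$, and I set $s:=-\frac3p+1=\frac3{p'}-2$, which belongs to $\left(0,\frac14\right)$ since $p\in(3,4)$. Writing
\[
uv=\dot{T}_u v+\dot{T}_v u+\dot{R}(u,v),
\]
the term $\dot{T}_u v$ is the easy one: combining the embedding $\dot{B}_{p,1}^{3/p}\hookrightarrow L^\infty$ (Proposition \ref{Embedding}) with the classical continuity $\dot{T}\colon L^\infty\times\dot{B}_{p',2}^{s}\to\dot{B}_{p',2}^{s}$ (see, e.g., \cite{Dan10}) gives $\bigl\|\dot{T}_u v\bigr\|_{\dot{B}_{p',2}^{s}}\lesssim\|u\|_{L^\infty}\|v\|_{\dot{B}_{p',2}^{s}}\lesssim\|u\|_{\dot{B}_{p,1}^{3/p}}\|v\|_{\dot{B}_{p',2}^{s}}$.

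The main obstacle is $\dot{T}_v u$: the regularity index $s$ of $v$ is here \emph{positive}, so the low frequencies of $v$ cannot be controlled in $L^\infty$ and the usual low--high paraproduct rule (which requires a negative, or zero with summability, index on the low-frequency factor) does not apply as stated. The idea I would use is to convert the surplus regularity of $v$ into integrability. Define $\sigma$ by $\frac1\sigma:=\frac1{p'}-\frac1p=1-\frac2p$; since $p\in(3,4)\subset(2,6)$ one has $\sigma\in(2,3)$, in particular $p'\le\sigma<\infty$, and Proposition \ref{Embedding} yields $\dot{B}_{p',2}^{s}\hookrightarrow\dot{B}_{\sigma,2}^{\,s-3/p'+3/\sigma}=\dot{B}_{\sigma,2}^{\,1-6/p}$. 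Because $p<6$ the exponent $1-\frac6p$ is strictly negative, so the standard paraproduct estimate $\dot{T}\colon\dot{B}_{\sigma,\infty}^{\,1-6/p}\times\dot{B}_{p,1}^{3/p}\to\dot{B}_{p',1}^{\,(1-6/p)+3/p}=\dot{B}_{p',1}^{s}$ is available (note $\frac1{p'}=\frac1\sigma+\frac1p$); composing it with $\dot{B}_{p',1}^{s}\hookrightarrow\dot{B}_{p',2}^{s}$ gives $\bigl\|\dot{T}_v u\bigr\|_{\dot{B}_{p',2}^{s}}\lesssim\|v\|_{\dot{B}_{\sigma,2}^{1-6/p}}\|u\|_{\dot{B}_{p,1}^{3/p}}\lesssim\|v\|_{\dot{B}_{p',2}^{s}}\|u\|_{\dot{B}_{p,1}^{3/p}}$.

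For the remainder $\dot{R}(u,v)$ the sum of the two regularity indices equals $\frac3p+s=1>0$ while $\frac1p+\frac1{p'}=1\le1$, so the classical remainder estimate yields $\dot{R}(u,v)\in\dot{B}_{1,1}^{1}$ with $\bigl\|\dot{R}(u,v)\bigr\|_{\dot{B}_{1,1}^{1}}\lesssim\|u\|_{\dot{B}_{p,1}^{3/p}}\|v\|_{\dot{B}_{p',2}^{s}}$; since $1-3=s-\frac3{p'}=-2$ and $1\le p'$, the embedding $\dot{B}_{1,1}^{1}\hookrightarrow\dot{B}_{p',2}^{s}$ (Proposition \ref{Embedding}) disposes of this term. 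Adding the three bounds gives $\|uv\|_{\dot{B}_{p',2}^{s}}\lesssim\|u\|_{\dot{B}_{p,1}^{3/p}}\|v\|_{\dot{B}_{p',2}^{s}}$, which is the assertion. I expect the only genuinely delicate point to be the $\dot{T}_v u$ bound, and that is exactly where the hypothesis is used: one needs $\sigma=(1-2/p)^{-1}$ to be a legitimate Lebesgue exponent in $[p',\infty)$ together with $1-6/p<0$, i.e. $2<p<6$; the sharper window $3<p<4$ is merely the range in which the proposition will be applied below.
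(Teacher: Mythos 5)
Your proposal is correct, and it takes a genuinely different route from the paper. You run the full three-term Bony decomposition $uv=\dot{T}_u v+\dot{T}_v u+\dot{R}(u,v)$ and repair the positive regularity of $v$ in $\dot{T}_v u$ by a Lebesgue-index embedding: with $1/\sigma=1/p'-1/p$ you place $v$ in $\dot{B}_{\sigma,2}^{1-6/p}$, a space of \emph{negative} regularity, so that the black-box paraproduct estimate of Proposition \ref{ParaRem} applies, and you dispatch $\dot{R}(u,v)$ separately through $\dot{B}_{1,1}^{1}$. The paper instead keeps $\dot{T}_v u+\dot{R}(u,v)=\dot{T}'_v u$ together and works at the block level: it sets $1/p'=1/2+1/p^{\star}$, Hölders each block as $\|\dot{S}_{\ell+1}v\|_{L^2}\,\|\dot{\Delta}_\ell u\|_{L^{p^{\star}}}$, and splits the weight $2^{(1-3/p)\ell}=2^{-\ell/2}\cdot 2^{(3/p^{\star})\ell}$ so that $v$ lands in $\dot{H}^{-1/2}$ (via $\dot{B}_{p',2}^{3/p'-2}\hookrightarrow\dot{H}^{-1/2}$) and $u$ in $\dot{B}_{p^{\star},1}^{3/p^{\star}}$, closing with Young in $\ell^2$. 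The two arguments embody the same idea — downgrade $v$'s regularity to a negative index before hitting it with the low-frequency cutoff — but spend different currencies: the paper uses the summability index (second index $2\Rightarrow L^2\Rightarrow\dot{H}^{-1/2}$), you use the Lebesgue index ($p'\to\sigma$). Yours leans purely on the general continuity lemmas and, as you note, covers all $p\in(2,6)$ (more than is needed here), while the paper's hands-on sum avoids treating the remainder as a separate case. Both are complete proofs of the stated estimate.
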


\begin{proof}
Indeed, considering $\left(  u,v\right)  \in\dot{B}_{p,1}^{\frac{3}{p}}%
\times\dot{B}_{p^{\prime},2}^{-\frac{3}{p}+1}$ and using the Bony
decomposition we get that%
\[
\left\Vert \dot{T}_{u}v\right\Vert _{\dot{B}_{p^{\prime},2}^{-\frac{3}{p}+1}%
}\lesssim\left\Vert u\right\Vert _{L^{\infty}}\left\Vert v\right\Vert
_{\dot{B}_{p^{\prime},2}^{-\frac{3}{p}+1}}.
\]
Next, considering
\[
\frac{1}{p^{\prime}}=\frac{1}{2}+\frac{1}{p^{\star}}.
\]
we see that
\begin{align*}
2^{j\left(  -\frac{3}{p}+1\right)  }\left\Vert \dot{\Delta}_{j}\dot{T}%
_{v}^{\prime}u\right\Vert _{L^{p^{\prime}}}  &  \lesssim\sum_{\ell\geq
j-3}2^{\left(  -\frac{3}{p}+1\right)  \left(  j-\ell\right)  }2^{\left(
-\frac{3}{p}+1\right)  \ell}\left\Vert S_{\ell+1}v\right\Vert _{L^{2}%
}\left\Vert \dot{\Delta}_{\ell}u\right\Vert _{L^{p^{\star}}}\\
&  =\sum_{\ell\geq j-3}2^{\left(  -\frac{3}{p}+1\right)  \left(
j-\ell\right)  }2^{-\frac{1}{2}\ell}\left\Vert S_{\ell+1}v\right\Vert _{L^{2}%
}2^{\frac{3}{p^{\star}}\ell}\left\Vert \dot{\Delta}_{\ell}u\right\Vert
_{L^{p^{\star}}},
\end{align*}
and consequently, we get%
\[
\left\Vert \dot{T}_{v}^{\prime}u\right\Vert _{\dot{B}_{p^{\prime},2}%
^{-\frac{3}{p}+1}}\lesssim\left\Vert v\right\Vert _{\dot{H}^{-\frac{1}{2}}%
}\left\Vert u\right\Vert _{\dot{B}_{p^{\star},1}^{\frac{3}{p^{\star}}}%
}\lesssim\left\Vert v\right\Vert _{\dot{B}_{p^{\prime},2}^{-\frac{3}{p}+1}%
}\left\Vert u\right\Vert _{\dot{B}_{p,1}^{\frac{3}{p}}}.
\]

\end{proof}

\begin{proposition}
\label{Elippp}Let us consider $p\in\left(  3,4\right)  $. Furthermore,
consider a constant $\bar{c}>0$ and $c\in\dot{B}_{p,1}^{\frac{3}{p}}$. Then
there exists an universal constant $\eta>0$ such that if
\[
\left\Vert c\right\Vert _{\dot{B}_{p,1}^{\frac{3}{p}}}\leq\eta,
\]
then for any $\psi\in\dot{B}_{p^{\prime},2}^{\frac{3}{p^{\prime}}-\frac{3}{2}%
}\cap\dot{B}_{p^{\prime},2}^{\frac{3}{p^{\prime}}-2}$ there exists a unique
solution $\nabla P\in\dot{B}_{p^{\prime},2}^{\frac{3}{p^{\prime}}-\frac{3}{2}%
}\cap\dot{B}_{p^{\prime},2}^{\frac{3}{p^{\prime}}-2}$ of the eliptic equation%
\[
\operatorname{div}\left(  \left(  \bar{c}+c\right)  \nabla P\right)
=\operatorname{div}\psi.
\]
Moreover, the following estimate holds true%
\[
\left\Vert \nabla P\right\Vert _{\dot{B}_{p^{\prime},2}^{\frac{3}{p^{\prime}%
}-\sigma}}\lesssim\left\Vert \mathcal{Q}\psi\right\Vert _{\dot{B}_{p^{\prime
},2}^{\frac{3}{p^{\prime}}-\sigma}},
\]
where $\sigma\in\left\{  \frac{3}{2},2\right\}  $.

\begin{proof}
The proof is standard. Under some smallness condition on $c\in\dot{B}%
_{p,1}^{\frac{3}{p}}$ the operator
\[
\nabla R\rightarrow\nabla P=\frac{1}{\bar{c}}\mathcal{Q}\left(  \psi-c\nabla
R\right)
\]
has a fixed point in a suitable chosen ball of the space $\dot{B}_{p^{\prime
},2}^{\frac{3}{p^{\prime}}-\frac{3}{2}}\cap\dot{B}_{p^{\prime},2}^{\frac
{3}{p^{\prime}}-2}$.
\end{proof}
\end{proposition}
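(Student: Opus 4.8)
The plan is to recast the elliptic equation as a fixed-point problem for $\nabla P$ and to close it by a contraction argument in the Banach space $X:=\dot B_{p',2}^{\frac3{p'}-\frac32}\cap\dot B_{p',2}^{\frac3{p'}-2}$. Expanding $\operatorname{div}((\bar c+c)\nabla P)=\operatorname{div}\psi$ as $\bar c\,\Delta P=\operatorname{div}(\psi-c\nabla P)$ and applying $\nabla\Delta^{-1}$ — which acts on vector fields exactly as the operator $\mathcal Q$, since $\widehat{\nabla\Delta^{-1}\operatorname{div} g}(\xi)=|\xi|^{-2}\,\xi\,(\xi\cdot\widehat g)$ — the equation becomes equivalent to
\[
\nabla P=\tfrac1{\bar c}\,\mathcal Q\bigl(\psi-c\,\nabla P\bigr).
\]
Hence I would look for a fixed point of the affine map $\Phi(w):=\tfrac1{\bar c}\mathcal Q\psi-\tfrac1{\bar c}\mathcal Q(cw)$ on $X$.

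Two ingredients make this run. First, $\mathcal Q$ is a Fourier multiplier whose symbol is smooth away from the origin and homogeneous of degree $0$, hence bounded on $\dot B_{p',2}^s$ for every $s$ (here $p'\in(\tfrac43,\tfrac32)\subset(1,\infty)$). Second, multiplication by $c\in\dot B_{p,1}^{3/p}$ is bounded on each of $\dot B_{p',2}^{\frac3{p'}-2}$ and $\dot B_{p',2}^{\frac3{p'}-\frac32}$: for the first space this is precisely the multiplier estimate established in the proposition just above (recall $\frac3{p'}-2=1-\frac3p$), and this is where the hypothesis $p>3$ enters crucially — one needs $1-\frac3p>0$ for the summability of the high-frequency part of the $\dot T'_{v}c$ term in the Bony decomposition; for the second space it follows from standard Besov product rules, since $0<\frac3{p'}-\frac32<\frac3p$ when $p\in(3,4)$. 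Combining these, there is a constant $C=C(\bar c,p)$ with
\[
\|\Phi(w_1)-\Phi(w_2)\|_X\le\frac{C}{\bar c}\,\|c\|_{\dot B_{p,1}^{3/p}}\,\|w_1-w_2\|_X .
\]

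Choosing $\eta>0$ small enough that $\tfrac C{\bar c}\,\eta\le\tfrac12$, the assumption $\|c\|_{\dot B_{p,1}^{3/p}}\le\eta$ turns $\Phi$ into a $\tfrac12$-contraction of $X$ into itself (note $\Phi(0)=\tfrac1{\bar c}\mathcal Q\psi\in X$), so by the Banach fixed-point theorem it has a unique fixed point $\nabla P\in X$. Since the range of $\mathcal Q$ consists of gradients, this $\nabla P$ is genuinely a gradient, and since $\operatorname{div}\mathcal Q=\operatorname{div}$ it solves $\operatorname{div}((\bar c+c)\nabla P)=\operatorname{div}\psi$; uniqueness within $X$ follows by applying the same contraction estimate to the difference of two solutions. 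For the quantitative bound, one takes, for $\sigma\in\{\tfrac32,2\}$, the $\dot B_{p',2}^{\frac3{p'}-\sigma}$-norm directly in $\nabla P=\tfrac1{\bar c}\mathcal Q\psi-\tfrac1{\bar c}\mathcal Q(c\nabla P)$, bounds the last term by $\tfrac C{\bar c}\|c\|_{\dot B_{p,1}^{3/p}}\|\nabla P\|_{\dot B_{p',2}^{\frac3{p'}-\sigma}}\le\tfrac12\|\nabla P\|_{\dot B_{p',2}^{\frac3{p'}-\sigma}}$, and absorbs it on the left. The only non-routine step is the multiplier property $\dot B_{p,1}^{3/p}\subset\mathcal M(\dot B_{p',2}^{\frac3{p'}-2})$, which has already been isolated as the preceding proposition, so what remains is a textbook fixed point; as a cosmetic remark, after the rescaling $c\mapsto c/\bar c$, $\psi\mapsto\psi/\bar c$ the threshold $\eta$ becomes genuinely universal (equivalently, one may let $\eta$ depend on $\bar c$ and $p$).
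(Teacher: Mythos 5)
Your proposal is correct and takes essentially the same route as the paper: the paper's proof consists precisely of one sentence asserting that the map $\nabla R\mapsto\tfrac1{\bar c}\mathcal Q(\psi-c\nabla R)$ has a fixed point in a suitably chosen ball of $\dot B_{p',2}^{\frac3{p'}-\frac32}\cap\dot B_{p',2}^{\frac3{p'}-2}$, and you have spelled out exactly the ingredients the author leaves implicit (boundedness of $\mathcal Q$, the multiplier property of $c$ on the two spaces via the preceding proposition and a product estimate, the contraction threshold, and the absorption argument yielding the a priori bound). The observation that $\operatorname{div}\mathcal Q=\operatorname{div}$ closes the equivalence between the fixed-point formulation and the elliptic equation, and the rescaling remark about $\eta$ is a sensible cosmetic point; nothing in the proposal deviates from the paper's intended argument.
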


We choose $m\in\mathbb{N}$ such that $\left\Vert \dot{S}_{-m}\left(  a-\bar
{a}\right)  \right\Vert _{\dot{B}_{p,1}^{\frac{3}{p}}}$ is small enough such
that we can apply Proposition \ref{Elippp} with $\bar{a}$ and $\dot{S}%
_{-m}\left(  a-\bar{a}\right)  $ instead of $\bar{c}$ and $c$. Let us consider
$\psi$ a vector field with coefficients in $\mathcal{S}$. As the Schwartz
class is included in $\dot{B}_{p^{\prime},2}^{\frac{3}{p^{\prime}}-\frac{3}%
{2}}\cap\dot{B}_{p^{\prime},2}^{\frac{3}{p^{\prime}}-2}$, let us consider
$\nabla P_{\psi}\in$ $\dot{B}_{p^{\prime},2}^{\frac{3}{p^{\prime}}-\frac{3}%
{2}}\cap\dot{B}_{p^{\prime},2}^{\frac{3}{p^{\prime}}-2}$ the solution of the
equation
\[
\operatorname{div}\left(  \left(  \bar{a}+\dot{S}_{-m}\left(  a-\bar
{a}\right)  \right)  \nabla P_{\psi}\right)  =\operatorname{div}\psi,
\]
the existence of which is granted by Proposition \ref{Elippp}. Then, using
Proposition \ref{Caracterizare} and Proposition \ref{autoadjunct}, we write
that\footnote{We denote $\tilde{\Delta}_{j}:=\dot{\Delta}_{j-1}+\dot{\Delta
}_{j}+\dot{\Delta}_{j+1}.$}:%
\begin{align}
\left\langle \nabla P,\psi\right\rangle _{\mathcal{S}^{\prime}\times
\mathcal{S}} &  =\sum_{j}\left\langle \dot{\Delta}_{j}\nabla P,\tilde{\Delta
}_{j}\psi\right\rangle =\sum_{j}-\left\langle \dot{\Delta}_{j}P,\tilde{\Delta
}_{j}\operatorname{div}\psi\right\rangle \label{unic1.0}\\
&  =\sum_{j}-\left\langle \dot{\Delta}_{j}P,\tilde{\Delta}_{j}%
\operatorname{div}\left(  \left(  \bar{a}+\dot{S}_{-m}\left(  a-\bar
{a}\right)  \right)  \nabla P_{\psi}\right)  \right\rangle =\sum
_{j}\left\langle \dot{\Delta}_{j}\nabla P,\tilde{\Delta}_{j}\left(  \left(
\bar{a}+\dot{S}_{-m}\left(  a-\bar{a}\right)  \right)  \nabla P_{\psi}\right)
\right\rangle \label{unic1.1}\\
&  =\sum_{j}\left\langle \dot{\Delta}_{j}\left(  \bar{a}+\dot{S}_{-m}\left(
a-\bar{a}\right)  \right)  \nabla P,\tilde{\Delta}_{j}\nabla P_{\psi
}\right\rangle =\sum_{j}\left\langle \dot{\Delta}_{j}\mathcal{Q}\left(
\left(  \bar{a}+\dot{S}_{-m}\left(  a-\bar{a}\right)  \right)  \nabla
P\right)  ,\tilde{\Delta}_{j}\nabla P_{\psi}\right\rangle \label{unic1.2}\\
&  \lesssim\left\Vert \mathcal{Q}\left(  \left(  \bar{a}+\dot{S}_{-m}\left(
a-\bar{a}\right)  \right)  \nabla P\right)  \right\Vert _{\dot{B}_{p,2}%
^{\frac{3}{p}-\frac{3}{2}}}\left\Vert \nabla P_{\psi}\right\Vert _{\dot
{B}_{p^{\prime},1}^{\frac{3}{p^{\prime}}-\frac{3}{2}}}\\
&  \lesssim\left\Vert \mathcal{Q}\left(  \left(  \bar{a}+\dot{S}_{-m}\left(
a-\bar{a}\right)  \right)  \nabla P\right)  \right\Vert _{\dot{B}_{p,2}%
^{\frac{3}{p}-\frac{3}{2}}}\left\Vert \psi\right\Vert _{\dot{B}_{p^{\prime}%
,1}^{\frac{3}{p^{\prime}}-\frac{3}{2}}}.
\end{align}
Taking the supremum over all $\psi\in\mathcal{S}$ with $\left\Vert
\psi\right\Vert _{\dot{B}_{p^{\prime},2}^{\frac{3}{p^{\prime}}-\frac{3}{2}}%
}\leq1$, owing to $\left(  \text{\ref{compresibilP}}\right)  $\ and
Proposition \ref{Caracterizare},\ it follows that $\nabla P\in L_{T}^{1}%
(\dot{B}_{p,2}^{\frac{3}{p}-\frac{3}{2}})$ and that
\[
\left\Vert \nabla P\right\Vert _{L_{T}^{1}(\dot{B}_{p,2}^{\frac{3}{p}-\frac
{3}{2}})}\lesssim\left\Vert \mathcal{Q}\left(  \left(  \bar{a}+\dot{S}%
_{-m}\left(  a-\bar{a}\right)  \right)  \nabla P\right)  \right\Vert
_{L_{T}^{1}(\dot{B}_{p,2}^{\frac{3}{p}-\frac{3}{2}})}.
\]
According to the uniqueness property of Proposition \ref{Propozitia2Stokes} we
conclude that $\left(  u,\nabla P\right)  =\left(  0,0\right)  $.

Let us observe that in the case $p\in\left(  \frac{6}{5},3\right]  $, owing to
the fact that $\dot{B}_{p,1}^{\frac{3}{p}-1}\hookrightarrow$ $\dot{B}%
_{q,1}^{\frac{3}{q}-1}$ for any $q\in\left(  3,4\right)  $ and $u\in
C_{T}(\dot{B}_{p,1}^{\frac{3}{p}-1})$ along with $\left(  \partial_{t}%
u,\nabla^{2}u,\nabla P\right)  \in L_{T}^{1}(\dot{B}_{p,1}^{\frac{3}{p}-1})$
we get that $u\in C_{T}(\dot{B}_{q,1}^{\frac{3}{q}-1})$ along with $\left(
\partial_{t}u,\nabla^{2}u,\nabla P\right)  \in L_{T}^{1}(\dot{B}_{q,1}%
^{\frac{3}{q}-1})$. Thus, owing to the uniqueness property for the case
$q\in\left(  3,4\right)  $ we conclude that $\left(  u,\nabla P\right)  $ is
identically null for $p\in\left(  \frac{6}{5},3\right]  $.

\section{Proof of Theorem \ref{NS1}\label{Section3}}

In the rest of the paper we aim at proving Theorem \ref{NS1}. Thus, from now
on we will work in a $3$ dimensional framework.

\subsection{The linear theory}

Let us introduce the space $F_{T}$ of $\left(  \tilde{w},\nabla\tilde
{Q}\right)  $ with $\tilde{w}\in\mathcal{C}_{T}(\dot{B}_{p,1}^{\frac{3}{p}%
-1})$ and $\left(  \partial_{t}\tilde{w},\nabla^{2}\tilde{w},\nabla\tilde
{Q}\right)  \in L_{T}^{1}(\dot{B}_{p,1}^{\frac{3}{p}-1})$ with the norm
\[
\left\Vert \left(  \tilde{w},\nabla\tilde{Q}\right)  \right\Vert _{F_{T}%
}=\left\Vert \tilde{w}\right\Vert _{L_{T}^{\infty}(\dot{B}_{p,1}^{\frac{3}%
{p}-1})}+\left\Vert \left(  \partial_{t}\tilde{w},\nabla^{2}\tilde{w}%
,\nabla\tilde{Q}\right)  \right\Vert _{L_{T}^{1}(\dot{B}_{p,1}^{\frac{3}{p}%
-1})}.
\]
Before attacking the well-posedness of $\left(  \text{\ref{NavierStokes_modif}%
}\right)  $, we first have to solve the following linear system:
\begin{equation}
\left\{
\begin{array}
[c]{r}%
\rho_{0}\partial_{t}\bar{u}-\operatorname{div}\left(  \mu\left(  \rho
_{0}\right)  A_{\bar{v}}D_{A_{\bar{v}}}\left(  \bar{u}\right)  \right)
+A_{\bar{v}}^{T}\nabla\bar{P}=0,\\
\operatorname{div}\left(  A_{\bar{v}}\bar{u}\right)  =0,\\
\bar{u}_{|t=0}=u_{0}.
\end{array}
\right.  \label{NSL1}%
\end{equation}
where $\bar{v}\in\mathcal{C}_{T}(\dot{B}_{p,1}^{\frac{3}{p}-1})$ with
$\nabla\bar{v}\in L_{T}^{1}(\dot{B}_{p,1}^{\frac{3}{p}})\cap$ $L_{T}^{2}%
(\dot{B}_{p,1}^{\frac{3}{p}-1})$,%
\[
X_{\bar{v}}\left(  t,y\right)  =y+\int_{0}^{t}\bar{v}\left(  \tau,y\right)
d\tau,
\]
with $\det DX_{\bar{v}}=1$ and $A_{\bar{v}}=\left(  DX_{\bar{v}}\right)
^{-1}$. Moreover, we suppose that:%
\begin{equation}
\left\Vert \nabla\bar{v}\right\Vert _{L_{T}^{2}(\dot{B}_{p,1}^{\frac{3}{p}%
-1})}+\left\Vert \nabla\bar{v}\right\Vert _{L_{T}^{1}(\dot{B}_{p,1}^{\frac
{3}{p}})}\leq2\alpha\label{smallness1}%
\end{equation}
for a suitably small $\alpha$. Obviously, this will be achieved using the
estimates of the Stokes system established in the previous section, see
Theorem \ref{Teorema_Stokes}. Let us write $\left(  \text{\ref{NSL1}}\right)
$ in the form
\[
\left\{
\begin{array}
[c]{r}%
\partial_{t}\bar{u}-\frac{1}{\rho_{0}}\operatorname{div}\left(  \mu\left(
\rho_{0}\right)  D\left(  \bar{u}\right)  \right)  +\frac{1}{\rho_{0}}%
\nabla\bar{P}=\frac{1}{\rho_{0}}F_{\bar{v}}\left(  \bar{u},\nabla\bar
{P}\right)  ,\\
\operatorname{div}\bar{u}=\operatorname{div}\left(  \left(  Id-A_{\bar{v}%
}\right)  \bar{u}\right)  ,\\
\bar{u}_{t=0}=u_{0}.
\end{array}
\right.
\]
with%
\[
F_{\bar{v}}\left(  \bar{w},\nabla\bar{Q}\right)  :=\operatorname{div}\left(
\mu\left(  \rho_{0}\right)  A_{\bar{v}}D_{A_{\bar{v}}}\left(  \bar{w}\right)
-\mu\left(  \rho_{0}\right)  D\left(  \bar{w}\right)  \right)  +\left(
Id-A_{\bar{v}}^{T}\right)  \nabla\bar{Q}.
\]
Let us consider $\left(  u_{L},\nabla P_{L}\right)  $ with $u_{L}%
\in\mathcal{C}(\mathbb{R}^{+},\dot{B}_{p,1}^{\frac{3}{p}-1})$ and $\left(
\partial_{t}u_{L},\nabla^{2}u_{L},\nabla P_{L}\right)  \in L_{loc}^{1}(\dot
{B}_{p,1}^{\frac{3}{p}-1})$ the unique solution of
\begin{equation}
\left\{
\begin{array}
[c]{r}%
\partial_{t}u_{L}-\frac{1}{\rho_{0}}\operatorname{div}\left(  \mu\left(
\rho_{0}\right)  D\left(  u_{L}\right)  \right)  +\frac{1}{\rho_{0}}\nabla
P_{L}=0,\\
\operatorname{div}u_{L}=0,\\
u_{L|t=0}=u_{0},
\end{array}
\right.  \label{solution1}%
\end{equation}
for which we know that:%
\[
\left\Vert \left(  u_{L},\nabla P_{L}\right)  \right\Vert _{E_{T}}%
\leq\left\Vert u_{0}\right\Vert _{\dot{B}_{p,1}^{\frac{3}{p}-1}}\exp\left(
C_{\rho_{0}}\left(  T+1\right)  \right)  .
\]
Moreover, $T$ can be chosen small enough such that%
\begin{equation}
\left\Vert \nabla u_{L}\right\Vert _{L_{T}^{2}(\dot{B}_{p,1}^{\frac{3}{p}-1}%
)}+\left\Vert \left(  \partial_{t}u_{L},\nabla^{2}u_{L},\nabla P_{L}\right)
\right\Vert _{L_{T}^{1}(\dot{B}_{p,1}^{\frac{3}{p}-1})}\leq\alpha
\label{smallness2}%
\end{equation}
Following the idea in \cite{Dan5}, and owing to Theorem \ref{Teorema_Stokes}%
,\ we consider the operator $\Phi$ which associates to $\left(  \tilde
{w},\nabla\tilde{Q}\right)  \in F_{T}$, the unique solution $\left(  \tilde
{u},\nabla\tilde{P}\right)  \in F_{T}$ of:
\[
\left\{
\begin{array}
[c]{r}%
\partial_{t}\tilde{u}-\frac{1}{\rho_{0}}\operatorname{div}\left(  \mu\left(
\rho_{0}\right)  D\left(  \tilde{u}\right)  \right)  +\frac{1}{\rho_{0}}%
\nabla\tilde{P}=\frac{1}{\rho_{0}}F_{\bar{v}}\left(  u_{L}+\tilde{w},\nabla
P_{L}+\nabla\tilde{Q}\right)  ,\\
\operatorname{div}\tilde{u}=\operatorname{div}\left(  \left(  Id-A_{\bar{v}%
}\right)  (u_{L}+\tilde{w})\right)  ,\\
\tilde{u}_{|t=0}=0.
\end{array}
\right.
\]
We will show in the following that for a sufficiently small $T>0$, there
exists a fixed point for $\Phi$ in the unit ball centered at the origin of
$F_{T}$. More precisely, according to Theorem \ref{Teorema_Stokes} we get that%
\begin{align}
\left\Vert \Phi\left(  \tilde{w},\nabla\tilde{Q}\right)  \right\Vert _{F_{T}}
&  \leq\left\Vert \frac{1}{\rho_{0}}F_{\bar{v}}\left(  u_{L}+\tilde{w},\nabla
P_{L}+\nabla\tilde{Q}\right)  \right\Vert _{L_{T}^{1}(\dot{B}_{p,1}^{\frac
{3}{p}-1})}+\left\Vert \partial_{t}\left(  Id-A_{\bar{v}}\right)
(u_{L}+\tilde{w})\right\Vert _{L_{T}^{1}(\dot{B}_{p,1}^{\frac{3}{p}-1}%
)}\nonumber\\
&  +\left\Vert \nabla\operatorname{div}\left(  \left(  Id-A_{\bar{v}}\right)
(u_{L}+\tilde{w})\right)  \right\Vert _{L_{T}^{1}(\dot{B}_{p,1}^{\frac{3}%
{p}-1})}. \label{operator_2}%
\end{align}

We begin by treating the first term:%
\begin{equation}
\left\Vert \frac{1}{\rho_{0}}F_{\bar{v}}\left(  u_{L}+\tilde{w},\nabla
P_{L}+\nabla\tilde{Q}\right)  \right\Vert _{L_{T}^{1}(\dot{B}_{p,1}^{\frac
{3}{p}-1})}\lesssim\left(  \frac{1}{\bar{\rho}}+\left\Vert \frac{1}{\rho_{0}%
}-\frac{1}{\bar{\rho}}\right\Vert _{\dot{B}_{p,1}^{\frac{3}{p}}}\right)
\left\Vert F_{\bar{v}}\left(  u_{L}+\tilde{w},\nabla P_{L}+\nabla\tilde
{Q}\right)  \right\Vert _{L_{T}^{1}(\dot{B}_{p,1}^{\frac{3}{p}-1})}.
\label{T1.1}%
\end{equation}
We write that%
\begin{align*}
T_{1}  &  =\operatorname{div}\left(  \mu\left(  \rho_{0}\right)  A_{\bar{v}%
}D_{A_{\bar{v}}}\left(  u_{L}+\tilde{w}\right)  \right)  -\operatorname{div}%
\left(  \mu\left(  \rho_{0}\right)  D\left(  u_{L}+\tilde{w}\right)  \right)
\\
&  =\operatorname{div}\left(  \mu\left(  \rho_{0}\right)  \left(  A_{\bar{v}%
}-Id\right)  D_{A_{\bar{v}}}\left(  u_{L}+\tilde{w}\right)  \right)
+\operatorname{div}\left(  \mu\left(  \rho_{0}\right)  D_{A_{\bar{v}}%
-Id}\left(  u_{L}+\tilde{w}\right)  \right) \\
&  =\operatorname{div}\left(  \mu\left(  \rho_{0}\right)  \left(  A_{\bar{v}%
}-Id\right)  D_{A_{\bar{v}}-Id}\left(  u_{L}+\tilde{w}\right)  \right)
+\operatorname{div}\left(  \mu\left(  \rho_{0}\right)  \left(  A_{\bar{v}%
}-Id\right)  D\left(  u_{L}+\tilde{w}\right)  \right) \\
&  \text{ \ \ \ }+\operatorname{div}\left(  \mu\left(  \rho_{0}\right)
D_{A_{\bar{v}}-Id}\left(  u_{L}+\tilde{w}\right)  \right)  .
\end{align*}
Thus, using $\left(  \text{\ref{A1}}\right)  $ we get the following bound for
$T_{1}$:%
\begin{align}
\left\Vert T_{1}\right\Vert _{L_{T}^{1}(\dot{B}_{p,1}^{\frac{3}{p}-1})}  &
\lesssim C_{\rho_{0}}\left\Vert A_{\bar{v}}-Id\right\Vert _{L_{T}^{\infty
}(\dot{B}_{p,1}^{\frac{3}{p}})}\left(  1+\left\Vert A_{\bar{v}}-Id\right\Vert
_{L_{T}^{\infty}(\dot{B}_{p,1}^{\frac{3}{p}})}\right)  \left(  \left\Vert
\nabla u_{L}\right\Vert _{L_{T}^{1}(\dot{B}_{p,1}^{\frac{3}{p}})}+\left\Vert
\nabla\tilde{w}\right\Vert _{L_{T}^{1}(\dot{B}_{p,1}^{\frac{3}{p}})}\right)
\nonumber\\
&  \lesssim C_{\rho_{0}}\left\Vert \nabla\bar{v}\right\Vert _{L_{T}^{1}%
(\dot{B}_{p,1}^{\frac{3}{p}})}\left(  1+\left\Vert \nabla\bar{v}\right\Vert
_{L_{T}^{1}(\dot{B}_{p,1}^{\frac{3}{p}})}\right)  \left(  \left\Vert \nabla
u_{L}\right\Vert _{L_{T}^{1}(\dot{B}_{p,1}^{\frac{3}{p}})}+\left\Vert
\nabla\tilde{w}\right\Vert _{L_{T}^{1}(\dot{B}_{p,1}^{\frac{3}{p}})}\right)
\nonumber\\
&  \lesssim C_{\rho_{0}}\alpha\left(  \alpha+\left\Vert (\tilde{w}%
,\nabla\tilde{Q})\right\Vert _{F_{T}}\right)  . \label{T1.2}%
\end{align}
The second term is estimated as follows:%
\begin{align}
\left\Vert \left(  Id-A_{\bar{v}}^{T}\right)  (\nabla P_{L}+\nabla\tilde
{Q})\right\Vert _{L_{T}^{1}(\dot{B}_{p,1}^{\frac{3}{p}-1})}  &  \lesssim
\left\Vert \nabla\bar{v}\right\Vert _{L_{T}^{1}(\dot{B}_{p,1}^{\frac{3}{p}}%
)}\left(  \left\Vert \nabla P_{L}\right\Vert _{L_{T}^{1}(\dot{B}_{p,1}%
^{\frac{3}{p}-1})}+\left\Vert \nabla\tilde{Q}\right\Vert _{L_{T}^{1}(\dot
{B}_{p,1}^{\frac{3}{p}-1})}\right) \nonumber\\
&  \lesssim\alpha\left(  \alpha+\left\Vert (\tilde{w},\nabla\tilde
{Q})\right\Vert _{F_{T}}\right)  \label{T1.3}%
\end{align}
such that combining $\left(  \text{\ref{T1.1}}\right)  $, $\left(
\text{\ref{T1.2}}\right)  $ and $\left(  \text{\ref{T1.3}}\right)  $ we get
that:%
\begin{equation}
\left\Vert \frac{1}{\rho_{0}}F_{\bar{v}}\left(  u_{L}+\tilde{w},\nabla
P_{L}+\nabla\tilde{Q}\right)  \right\Vert _{L_{T}^{1}(\dot{B}_{p,1}^{\frac
{3}{p}-1})}\lesssim C_{\rho_{0}}\alpha\left(  \alpha+\left\Vert (\tilde
{w},\nabla\tilde{Q})\right\Vert _{F_{T}}\right)  . \label{T1}%
\end{equation}
In order to treat the second term of $\left(  \text{\ref{operator_2}}\right)
$ we use relations $\left(  \text{\ref{A1}}\right)  $, $\left(  \text{\ref{A2}%
}\right)  $ along with interpolation in order to obtain:%
\begin{align}
\left\Vert \partial_{t}\left(  Id-A_{\bar{v}}\right)  (u_{L}+\tilde
{w})\right\Vert _{L_{T}^{1}(\dot{B}_{p,1}^{\frac{3}{p}-1})}  &  \lesssim
\left\Vert \partial_{t}A_{\bar{v}}(u_{L}+\tilde{w})\right\Vert _{L_{T}%
^{1}(\dot{B}_{p,1}^{\frac{3}{p}-1})}+\left\Vert \left(  Id-A_{\bar{v}}\right)
(\partial_{t}u_{L}+\partial_{t}\tilde{w})\right\Vert _{L_{T}^{1}(\dot{B}%
_{p,1}^{\frac{3}{p}-1})}\nonumber\\
&  \lesssim\left\Vert \partial_{t}A_{\bar{v}}\right\Vert _{L_{T}^{2}(\dot
{B}_{p,1}^{\frac{3}{p}-1})}\left\Vert u_{L}+\tilde{w}\right\Vert _{L_{T}%
^{2}(\dot{B}_{p,1}^{\frac{3}{p}})}+\left\Vert Id-A_{\bar{v}}\right\Vert
_{L_{T}^{\infty}(\dot{B}_{p,1}^{\frac{3}{p}})}\left\Vert \partial_{t}%
u_{L}+\partial_{t}\tilde{w}\right\Vert _{L_{T}^{1}(\dot{B}_{p,1}^{\frac{3}%
{p}-1})}\nonumber\\
&  \lesssim\left\Vert \nabla\bar{v}\right\Vert _{L_{T}^{2}(\dot{B}%
_{p,1}^{\frac{3}{p}-1})}\left(  \alpha+\left\Vert (\tilde{w},\nabla\tilde
{Q})\right\Vert _{F_{T}}\right)  +\alpha\left(  \alpha+\left\Vert (\tilde
{w},\nabla\tilde{Q})\right\Vert _{F_{T}}\right) \nonumber\\
&  \lesssim\alpha\left(  \alpha+\left\Vert (\tilde{w},\nabla\tilde
{Q})\right\Vert _{F_{T}}\right)  . \label{T2}%
\end{align}
Treating the last term of $\left(  \text{\ref{operator_2}}\right)  $ is done
using the following formula:
\[
\operatorname{div}\left(  \left(  Id-A_{\bar{v}}\right)  (u_{L}+\tilde
{w})\right)  =\left(  Du_{L}+D\tilde{w}\right)  :\left(  Id-A_{\bar{v}%
}\right)
\]
which is a consequence of the fact that $\det DX_{\bar{v}}=1$ and Proposition
\ref{formula_fund}. Thus, we may write:
\begin{align}
\left\Vert \nabla\operatorname{div}\left(  \left(  Id-A_{\bar{v}}\right)
(u_{L}+\tilde{w})\right)  \right\Vert _{L_{T}^{1}(\dot{B}_{p,1}^{\frac{3}%
{p}-1})}  &  \lesssim\left\Vert \left(  Du_{L}+D\tilde{w}\right)  :\left(
Id-A_{\bar{v}}\right)  \right\Vert _{L_{T}^{1}(\dot{B}_{p,1}^{\frac{3}{p}}%
)}\nonumber\\
&  \lesssim\left\Vert Id-A_{\bar{v}}\right\Vert _{L_{T}^{1}(\dot{B}%
_{p,1}^{\frac{3}{p}})}\left\Vert Du_{L}+D\tilde{w}\right\Vert _{L_{T}^{1}%
(\dot{B}_{p,1}^{\frac{3}{p}})}\nonumber\\
&  \lesssim\alpha\left(  \alpha+\left\Vert (\tilde{w},\nabla\tilde
{Q})\right\Vert _{F_{T}}\right)  . \label{T3}%
\end{align}
Combining the estimates $\left(  \text{\ref{T1}}\right)  $, $\left(
\text{\ref{T2}}\right)  $ and $\left(  \text{\ref{T3}}\right)  $ we get that:%
\[
\left\Vert \Phi\left(  \tilde{w},\nabla\tilde{Q}\right)  \right\Vert _{E_{T}%
}\lesssim\alpha\left(  \alpha+\left\Vert (\tilde{w},\nabla\tilde
{Q})\right\Vert _{F_{T}}\right)  .
\]
Thus, for a suitably small $\alpha$ the operator $\Phi$ maps the unit ball
centered at the origin of $F_{T}$ into itself. Due to the linearity of $\Phi$
one can repeat the above arguments in order to show that for small values of
$\alpha$, $\Phi$ is a contraction. This concludes the existence of a fixed
point of $\Phi$, say $\left(  \tilde{u}^{\star},\nabla\tilde{P}^{\star
}\right)  \in F_{T}$. Of course,
\[
\left(  \bar{u},\nabla\bar{P}\right)  =\left(  \tilde{u}^{\star},\nabla
\tilde{P}^{\star}\right)  +\left(  u_{L},\nabla P_{L}\right)
\]
is a solution of $\left(  \text{\ref{NSL1}}\right)  $.

\subsection{Proof of Theorem \ref{NS1}}

Let us consider $T$ small enough such that $\left(  u_{L},\nabla P_{L}\right)
$ the solution of $\left(  \text{\ref{solution1}}\right)  $ satisfies
\[
\left\Vert \nabla u_{L}\right\Vert _{L_{T}^{2}(\dot{B}_{p,1}^{\frac{3}{p}-1}%
)}+\left\Vert \nabla u_{L}\right\Vert _{L_{T}^{1}(\dot{B}_{p,1}^{\frac{3}{p}%
})}\leq\alpha
\]
and let us consider the closed set:%
\[
\tilde{F}_{T}\left(  R\right)  =\left\{  \left(  \tilde{v},\nabla\tilde
{Q}\right)  \in F_{T}:\tilde{v}_{|t=0}=0,\det DX_{(u_{L}+\tilde{v})}=1,\text{
}\left\Vert \left(  \tilde{v},\nabla\tilde{Q}\right)  \right\Vert _{F_{T}}\leq
R\right\}
\]
with $R\leq\alpha$ sufficiently small such that:
\begin{equation}
\left\Vert \nabla\tilde{v}\right\Vert _{L_{T}^{2}(\dot{B}_{p,1}^{\frac{3}%
{p}-1})}+\left\Vert \nabla\bar{v}\right\Vert _{L_{T}^{1}(\dot{B}_{p,1}%
^{\frac{3}{p}})}\leq\alpha.
\end{equation}
\ Let us consider the operator $S$ which associates to $\left(  \tilde
{v},\nabla\tilde{Q}\right)  \in\tilde{F}_{T}\left(  R\right)  $, the solution
of:
\[
\left\{
\begin{array}
[c]{r}%
\partial_{t}\tilde{u}-\frac{1}{\rho_{0}}\operatorname{div}\left(  \mu\left(
\rho_{0}\right)  D\left(  \tilde{u}\right)  \right)  +\frac{1}{\rho_{0}}%
\nabla\tilde{P}=\frac{1}{\rho_{0}}F_{(u_{L}+\tilde{v})}\left(  u_{L}+\tilde
{u},\nabla P_{L}+\nabla\tilde{P}\right)  ,\\
\operatorname{div}\left(  A_{(u_{L}+\tilde{v})}(u_{L}+\tilde{u})\right)  =0,\\
\tilde{u}_{|t=0}=0,
\end{array}
\right.
\]
constructed in the previous section. We will show that that for a suitably
small $T$, the operator $S$ maps the closed set $\tilde{F}_{T}\left(
R\right)  $ into itself and that $S$ is a contraction. First of all, recalling
that $\left(  \tilde{u},\nabla\tilde{P}\right)  $ is in fact the fixed point
of the operator $\Phi$ defined above and using the estimates established in
the last section we conclude that
\[
\left\Vert S\left(  \tilde{v},\nabla\tilde{Q}\right)  \right\Vert _{F_{T}}\leq
R
\]
for some small enough $T$. Moreover, because
\[
\det DX_{(u_{L}+\tilde{v})}=1\text{ and }\operatorname{div}\left(
A_{(u_{L}+\tilde{v})}(u_{L}+\tilde{u})\right)  =0
\]
we invoke Proposition \ref{formula_fund} in order to conclude that%
\[
\det DX_{(u_{L}+\tilde{u})}=1
\]
so that $S$ maps $\tilde{F}_{T}\left(  R\right)  $ into itself.

Next, we will deal with the stability estimates. For $i=1,2,$ let us consider
$\left(  \tilde{v}_{i},\nabla\tilde{Q}_{i}\right)  \in\tilde{F}_{T}\left(
R\right)  $ and $\left(  \tilde{u}_{i},\nabla\tilde{P}_{i}\right)  =S\left(
\tilde{v}_{i},\nabla\tilde{Q}_{i}\right)  $. Denoting by%
\begin{align*}
\left(  \delta\tilde{v},\nabla\delta\tilde{Q}\right)   &  =\left(  \tilde
{v}_{1}-\tilde{v}_{2},\nabla\tilde{Q}_{1}-\nabla\tilde{Q}_{2}\right)  ,\\
\left(  \delta\tilde{u},\nabla\delta\tilde{P}\right)   &  =\left(  \tilde
{u}_{1}-\tilde{u}_{2},\nabla\tilde{P}_{1}-\nabla\tilde{P}_{2}\right)  ,
\end{align*}
we see that:%
\[
\left\{
\begin{array}
[c]{r}%
\partial_{t}\delta\tilde{u}-\frac{1}{\rho_{0}}\operatorname{div}\left(
\mu\left(  \rho_{0}\right)  D\left(  \delta\tilde{u}\right)  \right)
+\frac{1}{\rho_{0}}\nabla\delta\tilde{P}=\frac{1}{\rho_{0}}\tilde{F},\\
\operatorname{div}\left(  A_{(u_{L}+\tilde{v}_{1})}\delta\tilde{u})\right)
=\operatorname{div}\tilde{G},\\
\delta\tilde{u}_{|t=0}=0,
\end{array}
\right.
\]
where%
\begin{align*}
\tilde{F}  &  =F_{1}(\delta\tilde{v},u_{L}+\tilde{u}_{1})+F_{1}(u_{L}%
+\tilde{v}_{2},\delta\tilde{u})\\
&  +F_{2}\left(  \delta\tilde{v},\nabla P_{L}+\nabla\tilde{P}_{1}\right)
+F_{2}(u_{L}+\tilde{v}_{2},\nabla\delta\tilde{P}),\\
\tilde{G}  &  =-\left(  A_{(u_{L}+\tilde{v}_{1})}-A_{(u_{L}+\tilde{v}_{2}%
)}\right)  \left(  u_{L}+\tilde{u}_{2}\right)  ,
\end{align*}
and
\begin{align*}
F_{1}(\bar{v},\bar{w})  &  =\operatorname{div}\left(  \mu\left(  \rho
_{0}\right)  A_{\bar{v}}D_{A_{\bar{v}}}\left(  \bar{w}\right)  -\mu\left(
\rho_{0}\right)  D\left(  \bar{w}\right)  \right)  ,\\
F_{2}(\bar{v},\nabla\bar{Q})  &  =\left(  Id-A_{\bar{v}}^{T}\right)
\nabla\bar{Q}.
\end{align*}
According to Theorem \ref{Teorema_Stokes} we get that%
\begin{equation}
\left\Vert \left(  \delta\tilde{u},\nabla\delta\tilde{P}\right)  \right\Vert
_{F_{T}}\lesssim C_{\rho_{0}}\left(  \left\Vert \tilde{F}\right\Vert
_{L_{T}^{1}(\dot{B}_{p,1}^{\frac{3}{p}-1})}+\left\Vert \nabla
\operatorname{div}\tilde{G}\right\Vert _{L_{T}^{1}(\dot{B}_{p,1}^{\frac{3}%
{p}-1})}+\left\Vert \partial_{t}\tilde{G}\right\Vert _{L_{T}^{1}(\dot{B}%
_{p,1}^{\frac{3}{p}-1})}\right)  . \label{Relatie}%
\end{equation}
Proceeding as in relations $\left(  \text{\ref{T1.1}}\right)  $ and $\left(
\text{\ref{T1.2}}\right)  $ we get that%
\begin{align}
\left\Vert \tilde{F}\right\Vert _{L_{T}^{1}(\dot{B}_{p,1}^{\frac{3}{p}-1})}
&  \lesssim\left\Vert \nabla\delta\tilde{v}\right\Vert _{L_{T}^{1}(\dot
{B}_{p,1}^{\frac{3}{p}})}\left\Vert \nabla u_{L}+\nabla\tilde{u}%
_{1}\right\Vert _{L_{T}^{1}(\dot{B}_{p,1}^{\frac{3}{p}})}+\left\Vert \nabla
u_{L}+\nabla\tilde{v}_{2}\right\Vert _{L_{T}^{1}(\dot{B}_{p,1}^{\frac{3}{p}}%
)}\left\Vert \nabla\delta\tilde{u}\right\Vert _{L_{T}^{1}(\dot{B}_{p,1}%
^{\frac{3}{p}})}\nonumber\\
&  +\left\Vert \nabla\delta\tilde{v}\right\Vert _{L_{T}^{1}(\dot{B}%
_{p,1}^{\frac{3}{p}})}\left\Vert \nabla P_{L}+\nabla\tilde{P}_{1}\right\Vert
_{L_{T}^{1}(\dot{B}_{p,1}^{\frac{3}{p}})}+\left\Vert \nabla u_{L}+\nabla
\tilde{v}_{2}\right\Vert _{L_{T}^{1}(\dot{B}_{p,1}^{\frac{3}{p}})}\left\Vert
\nabla\delta\tilde{P}\right\Vert _{L_{T}^{1}(\dot{B}_{p,1}^{\frac{3}{p}}%
)}\nonumber\\
&  \lesssim\alpha\left\Vert \left(  \nabla\delta\tilde{v},\nabla\delta
\tilde{Q}\right)  \right\Vert _{L_{T}^{1}(\dot{B}_{p,1}^{\frac{3}{p}})}%
+\alpha\left\Vert \left(  \nabla\delta\tilde{u},\nabla\delta\tilde{P}\right)
\right\Vert _{L_{T}^{1}(\dot{B}_{p,1}^{\frac{3}{p}})}. \label{F}%
\end{align}
Of course, we will use the smallness of $\alpha$ to absorb $\alpha\left\Vert
\left(  \nabla\delta\tilde{u},\nabla\delta\tilde{P}\right)  \right\Vert
_{L_{T}^{1}(\dot{B}_{p,1}^{\frac{3}{p}})}$ into the LHS of $\left(
\text{\ref{Relatie}}\right)  $.

Next, we treat $\left\Vert \nabla\operatorname{div}\tilde{G}\right\Vert
_{L_{T}^{1}(\dot{B}_{p,1}^{\frac{3}{p}-1})}$. Using Proposition
\ref{formula_fund}, we write that%
\[
\operatorname{div}\tilde{G}=(Du_{L}+D\tilde{u}_{2}):\left(  A_{(u_{L}%
+\tilde{v}_{1})}-A_{(u_{L}+\tilde{v}_{2})}\right)
\]
and thus, using $\left(  \text{\ref{A4}}\right)  $%
\begin{align}
\left\Vert \nabla\operatorname{div}\tilde{G}\right\Vert _{L_{T}^{1}(\dot
{B}_{p,1}^{\frac{3}{p}-1})}  &  =\left\Vert (Du_{L}+D\tilde{u}_{2}):\left(
A_{(u_{L}+\tilde{v}_{1})}-A_{(u_{L}+\tilde{v}_{2})}\right)  \right\Vert
_{L_{T}^{1}(\dot{B}_{p,1}^{\frac{3}{p}-1})}\nonumber\\
&  \lesssim\left(  \left\Vert Du_{L}\right\Vert _{L_{T}^{1}(\dot{B}%
_{p,1}^{\frac{3}{p}})}+\left\Vert D\tilde{u}_{2}\right\Vert _{L_{T}^{1}%
(\dot{B}_{p,1}^{\frac{3}{p}})}\right)  \left\Vert A_{(u_{L}+\tilde{v}_{1}%
)}-A_{(u_{L}+\tilde{v}_{2})}\right\Vert _{L_{T}^{\infty}(\dot{B}_{p,1}%
^{\frac{3}{p}})}\nonumber\\
&  \lesssim\alpha\left\Vert \nabla\delta\tilde{v}\right\Vert _{L_{T}^{1}%
(\dot{B}_{p,1}^{\frac{3}{p}})}. \label{G1}%
\end{align}
Finally, we write that%
\begin{align*}
\left(  A_{(u_{L}+\tilde{v}_{1})}-A_{(u_{L}+\tilde{v}_{2})}\right)  \left(
u_{L}+\tilde{u}_{2}\right)   &  =\left(  \partial_{t}A_{(u_{L}+\tilde{v}_{1}%
)}-\partial_{t}A_{(u_{L}+\tilde{v}_{2})}\right)  \left(  u_{L}+\tilde{u}%
_{2}\right) \\
&  +\left(  A_{(u_{L}+\tilde{v}_{1})}-A_{(u_{L}+\tilde{v}_{2})}\right)
\left(  \partial_{t}u_{L}+\partial_{t}\tilde{u}_{2}\right)  .
\end{align*}
Using $\left(  \text{\ref{A4}}\right)  $, $\left(  \text{\ref{A5}}\right)  $
and $\left(  \text{\ref{A6}}\right)  $ gives us%
\begin{align*}
\left\Vert \left(  \partial_{t}A_{(u_{L}+\tilde{v}_{1})}-\partial_{t}%
A_{(u_{L}+\tilde{v}_{2})}\right)  \left(  u_{L}+\tilde{u}_{2}\right)
\right\Vert _{L_{T}^{1}(\dot{B}_{p,1}^{\frac{3}{p}-1})}  &  \lesssim\left\Vert
\partial_{t}A_{(u_{L}+\tilde{v}_{1})}-\partial_{t}A_{(u_{L}+\tilde{v}_{2}%
)}\right\Vert _{L_{T}^{2}(\dot{B}_{p,1}^{\frac{3}{p}-1})}\left\Vert
u_{L}\right\Vert _{L_{T}^{2}(\dot{B}_{p,1}^{\frac{3}{p}})}\\
&  +\left\Vert \partial_{t}A_{(u_{L}+\tilde{v}_{1})}-\partial_{t}%
A_{(u_{L}+\tilde{v}_{2})}\right\Vert _{L_{T}^{1}(\dot{B}_{p,1}^{\frac{3}{p}}%
)}\left\Vert \tilde{u}_{2}\right\Vert _{L_{T}^{\infty}(\dot{B}_{p,1}^{\frac
{3}{p}-1})}\\
&  \lesssim\left\Vert \nabla\delta\tilde{v}\right\Vert _{L_{T}^{2}(\dot
{B}_{p,1}^{\frac{3}{p}-1})}\left\Vert u_{L}\right\Vert _{L_{T}^{2}(\dot
{B}_{p,1}^{\frac{3}{p}})}+\left\Vert \nabla\delta\tilde{v}\right\Vert
_{L_{T}^{1}(\dot{B}_{p,1}^{\frac{3}{p}})}\left\Vert \tilde{u}_{2}\right\Vert
_{L_{T}^{\infty}(\dot{B}_{p,1}^{\frac{3}{p}-1})}\\
&  \lesssim\alpha\left\Vert \delta\tilde{v}\right\Vert _{L_{T}^{2}(\dot
{B}_{p,1}^{\frac{3}{p}})}+\alpha\left\Vert \nabla\delta\tilde{v}\right\Vert
_{L_{T}^{1}(\dot{B}_{p,1}^{\frac{3}{p}})}.
\end{align*}
Also, using $\left(  \text{\ref{A4}}\right)  $, we have that:%
\begin{align*}
&  \left\Vert \left(  A_{(u_{L}+\tilde{v}_{1})}-A_{(u_{L}+\tilde{v}_{2}%
)}\right)  \left(  \partial_{t}u_{L}+\partial_{t}\tilde{u}_{2}\right)
\right\Vert _{L_{T}^{1}(\dot{B}_{p,1}^{\frac{3}{p}-1})}\\
&  \lesssim\left\Vert A_{(u_{L}+\tilde{v}_{1})}-A_{(u_{L}+\tilde{v}_{2}%
)}\right\Vert _{L_{T}^{\infty}(\dot{B}_{p,1}^{\frac{3}{p}})}\left(  \left\Vert
\partial_{t}u_{L}\right\Vert _{L_{T}^{1}(\dot{B}_{p,1}^{\frac{3}{p}-1}%
)}+\left\Vert \partial_{t}u_{2}\right\Vert _{L_{T}^{1}(\dot{B}_{p,1}^{\frac
{3}{p}-1})}\right) \\
&  \lesssim\alpha\left\Vert \nabla\delta\tilde{v}\right\Vert _{L_{T}^{1}%
(\dot{B}_{p,1}^{\frac{3}{p}})}.
\end{align*}
The conclusion is that
\begin{equation}
\left\Vert \nabla\operatorname{div}\tilde{G}\right\Vert _{L_{T}^{1}(\dot
{B}_{p,1}^{\frac{3}{p}-1})}\lesssim\alpha\left\Vert \delta\tilde{v}\right\Vert
_{L_{T}^{2}(\dot{B}_{p,1}^{\frac{3}{p}})}+\alpha\left\Vert \nabla\delta
\tilde{v}\right\Vert _{L_{T}^{1}(\dot{B}_{p,1}^{\frac{3}{p}})}. \label{G2}%
\end{equation}
Gathering the information of $\left(  \text{\ref{F}}\right)  $, $\left(
\text{\ref{G1}}\right)  $ and $\left(  \text{\ref{G2}}\right)  $ we get that
if $\alpha$ is chosen sufficiently small then%
\begin{equation}
\left\Vert \left(  \left(  \delta\tilde{u},\nabla\delta\tilde{P}\right)
\right)  \right\Vert _{F_{T}}\leq\frac{1}{2}\left\Vert \left(  \left(
\delta\tilde{v},\nabla\delta\tilde{Q}\right)  \right)  \right\Vert _{F_{T}}
\label{contractie}%
\end{equation}
the operator $S$ is also a contraction over $\tilde{F}_{T}\left(  R\right)  $.
Thus, according to Banach's theorem there exists a fixed point $\left(
\bar{u}^{\star},\nabla\bar{P}^{\star}\right)  $ of $S$. Obviously,
\[
\left(  \bar{u},\nabla\bar{P}\right)  =\left(  u_{L},\nabla P_{L}\right)
+\left(  \bar{u}^{\star},\nabla\bar{P}^{\star}\right)
\]
is a solution of
\begin{equation}
\left\{
\begin{array}
[c]{r}%
\rho_{0}\partial_{t}\bar{u}-\operatorname{div}\left(  \mu\left(  \rho
_{0}\right)  A_{\bar{u}}D_{A_{\bar{u}}}\left(  \bar{u}\right)  \right)
+A_{\bar{u}}^{T}\nabla\bar{P}=0,\\
\operatorname{div}\left(  A_{\bar{u}}\bar{u}\right)  =0,\\
\bar{u}_{|t=0}=u_{0}.
\end{array}
\right.  \label{NSL2}%
\end{equation}
The only thing left to prove is the uniqueness property. Let us consider
$\left(  \bar{u}^{1},\nabla\bar{P}^{1}\right)  $, $\left(  \bar{u}^{2}%
,\nabla\bar{P}^{2}\right)  \in F_{T}$, two solutions of $\left(
\text{\ref{NSL2}}\right)  $ with the same initial data $u_{0}\in\dot{B}%
_{p,1}^{\frac{3}{p}-1}$. With $\left(  u_{L},\nabla P_{L}\right)  $ defined
above we let%
\[
\left(  \tilde{u}^{i},\nabla\tilde{P}^{i}\right)  =\left(  \bar{u}^{i}%
,\nabla\bar{P}^{i}\right)  -\left(  u_{L},\nabla P_{L}\right)  \text{ for
}i=1,2
\]
such that the system verified by $\left(  \tilde{u}^{i},\nabla\tilde{P}%
^{i}\right)  $ is
\[
\left\{
\begin{array}
[c]{r}%
\partial_{t}\tilde{u}^{i}-\frac{1}{\rho_{0}}\operatorname{div}\left(
\mu\left(  \rho_{0}\right)  D\left(  \tilde{u}^{i}\right)  \right)  +\frac
{1}{\rho_{0}}\nabla\tilde{P}^{i}=\frac{1}{\rho_{0}}F_{(u_{L}+\tilde{u}^{i}%
)}\left(  u_{L}+\tilde{u}^{i},\nabla P_{L}+\nabla\tilde{P}^{i}\right)  ,\\
\operatorname{div}\left(  A_{(u_{L}+\tilde{u}^{i})}(u_{L}+\tilde{u}%
^{i})\right)  =0,\\
\tilde{u}_{|t=0}=0.
\end{array}
\right.
\]
We are now in the position of performing exactly the same computations as
above such that we obtain a time $T^{^{\prime}}$ sufficiently small such that:%
\[
\left(  \bar{u}^{1},\nabla\bar{P}^{1}\right)  =\left(  \bar{u}^{2},\nabla
\bar{P}^{2}\right)  \text{ on }\left[  0,T^{\prime}\right]  .
\]
It is classical that the above local uniqueness property extends to all
$\left[  0,T\right]  $.

\subsubsection{Proof of Theorem \ref{NS2}}

Finally, we are in the position of proving the result announced in Theorem
\ref{NS2}. Considering $\left(  \rho_{0},u_{0}\right)  \in\dot{B}_{p,1}%
^{\frac{3}{p}}\times\dot{B}_{p,1}^{\frac{3}{p}-1}$ and applying Theorem
\ref{NS1}, there exists a positive $T>0$ such that we may construct a solution
$\left(  \bar{u},\nabla\bar{P}\right)  $ to the system $\left(
\text{\ref{NavierStokes_modif}}\right)  $ in $F_{T}$. Then, considering
$X_{\bar{u}}$, the "flow" of $\bar{u}$ defined by $\left(  \text{\ref{flow}%
}\right)  $ and using Proposition \ref{PropozitieFinal} from the Appendix, one
obtains that for all $t\in\lbrack0,T]$, $X_{\bar{u}}$ is a measure preserving
$C^{1}$-diffeormorphism over $\mathbb{R}^{n}$. Thus we may introduce the
Eulerian variable:%
\[
\rho\left(  t,x\right)  =\rho_{0}\left(  X_{\bar{u}}^{-1}\left(  t,x\right)
\right)  ,\text{ }u\left(  t,x\right)  =\bar{u}\left(  t,X_{\bar{u}}%
^{-1}\left(  t,x\right)  \right)  \text{ and }P\left(  t,x\right)  =\bar
{P}\left(  t,X_{\bar{u}}^{-1}\left(  t,x\right)  \right)  .
\]
Then, Proposition \ref{Formule} assures that $\left(  \rho,u,\nabla P\right)
$ is a solution of $\left(  \text{\ref{NavierStokes}}\right)  $. As
$DX_{\bar{u}}-Id$ belongs to $\dot{B}_{p,1}^{\frac{3}{p}}$ we may conclude
that $\left(  \rho,u,\nabla P\right)  $ has the announced regularity.

The uniqueness property comes from the fact that considering two solutions
$\left(  \rho^{i},u^{i},\nabla P^{i}\right)  $ of $\left(
\text{\ref{NavierStokes}}\right)  $, $i=1,2$, and considering $Y_{u^{i}}$ the
flow of $u^{i}$ we find that $\left(  u^{i}\left(  t,Y_{u^{i}}\left(
t,y\right)  \right)  ,\nabla P^{i}\left(  t,Y_{u^{i}}\left(  t,y\right)
\right)  \right)  $ are solutions of the system $\left(
\text{\ref{NavierStokes_modif}}\right)  $ with the same data. Thus, they are
equal according to the uniqueness property announced in Theorem \ref{NS1}.
Thus, on some nontrivial interval $\left[  0,T^{\prime}\right]  \subset\left[
0,T\right]  $, (chosen such as the condition $\left(  \text{\ref{smallnes12}%
}\right)  $ holds), the solutions $\left(  \rho^{i},u^{i},\nabla P^{i}\right)
$ are equal. This local uniqueness property obviously entails uniqueness on
all $\left[  0,T\right]  $.

\section{Appendix\label{Appendix}}

We present here a few results of Fourier analysis used through the text. The
full proofs along with other complementary results can be found in
\cite{Dan1}, Chapter $2$.

Let us introduce the dyadic partition of the space:

\begin{proposition}
\label{diadic}Let $\mathcal{C}$ be the annulus $\{\xi\in\mathbb{R}^{n}%
:3/4\leq\left\vert \xi\right\vert \leq8/3\}$. There exist a radial function
$\varphi\in\mathcal{D(C)}$ valued in the interval $\left[  0,1\right]  $ and
such that:%
\begin{align}
\forall\xi &  \in\mathbb{R}^{n}\backslash\{0\}\text{, \ }\sum_{j\in\mathbb{Z}%
}\varphi(2^{-j}\xi)=1\text{,}\label{26}\\
2  &  \leq\left\vert j-j^{\prime}\right\vert \Rightarrow\mathrm{Supp}%
(\varphi(2^{-j}\cdot))\cap\mathrm{Supp}(\varphi(2^{-j^{\prime}}\cdot
))=\emptyset. \label{28}%
\end{align}
Also, the following inequality holds true:%
\begin{equation}
\forall\xi\in\mathbb{R}^{n}\backslash\{0\}\text{, \ }\frac{1}{2}\leq\sum
_{j\in\mathbb{Z}}\varphi^{2}(2^{-j}\xi)\leq1\text{.} \label{211}%
\end{equation}

\end{proposition}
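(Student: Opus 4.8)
The plan is the classical one: manufacture $\varphi$ as a dyadic difference of a single smooth radial cut-off, and then read off the three stated properties from a telescoping identity and one elementary scalar inequality.

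First I would fix a radial function $\chi\in\mathcal{D}(\mathbb{R}^n)$, valued in $[0,1]$, with $\chi\equiv1$ on $B(0,3/4)$, $\mathrm{Supp}\,\chi\subset B(0,4/3)$, and non-increasing along rays, i.e. $\chi(\xi)=\theta(|\xi|)$ for a smooth non-increasing $\theta$ with $\theta\equiv1$ on $[0,3/4]$ and $\theta\equiv0$ on $[4/3,\infty)$; such a $\theta$ is obtained by mollifying an indicator. Then set $\varphi(\xi)=\chi(\xi/2)-\chi(\xi)$. Since $\chi(\xi)\neq1$ forces $|\xi|\geq3/4$ while $\chi(\xi/2)\neq0$ forces $|\xi|\leq8/3$, the function $\varphi$ vanishes unless $3/4\leq|\xi|\leq8/3$, so $\varphi$ is a smooth radial function supported in $\mathcal{C}$, hence $\varphi\in\mathcal{D}(\mathcal{C})$; and because $|\xi/2|\leq|\xi|$ and $\theta$ is non-increasing, $0\leq\chi(\xi)\leq\chi(\xi/2)$, so $0\leq\varphi\leq\chi(\xi/2)\leq1$, which gives the range $[0,1]$.

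For \eqref{26}, a telescoping sum yields, for $N,M\in\mathbb{N}$,
\[
\sum_{j=-N}^{M}\varphi(2^{-j}\xi)=\chi(2^{-M-1}\xi)-\chi(2^{N}\xi),
\]
and for fixed $\xi\neq0$ the first term equals $1$ once $M$ is large enough (then $2^{-M-1}\xi\in B(0,3/4)$) while the second vanishes once $N$ is large enough (then $|2^{N}\xi|\geq4/3$); letting $N,M\to\infty$ gives $\sum_{j\in\mathbb{Z}}\varphi(2^{-j}\xi)=1$, and in particular the series is locally finite away from the origin. For \eqref{28} I would note that $\mathrm{Supp}\,\varphi(2^{-j}\cdot)\subset\{\,2^{j}\tfrac34\leq|\xi|\leq2^{j}\tfrac83\,\}$, so if $j'\geq j+2$ the inner radius of the $j'$-shell satisfies $2^{j'}\tfrac34\geq2^{j+2}\tfrac34=3\cdot2^{j}>\tfrac83\cdot2^{j}$, which exceeds the outer radius of the $j$-shell; by symmetry the supports are disjoint whenever $|j-j'|\geq2$. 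This is exactly where the specific radii $3/4$ and $8/3$ are used: the ratio of the outer to the inner radius of $\mathcal{C}$ equals $32/9$, which is $<2^2$.

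Finally, for \eqref{211}, fix $\xi\neq0$ and put $a_j=\varphi(2^{-j}\xi)$. By \eqref{28} at most two of the $a_j$ are nonzero, and they are consecutive; by \eqref{26} together with the range property they are non-negative and sum to $1$. If only one is nonzero it equals $1$, so $\sum_j a_j^2=1$; if two, say $a,b\geq0$ with $a+b=1$, then $ab\leq1/4$ and $\sum_j a_j^2=a^2+b^2=(a+b)^2-2ab=1-2ab\in[1/2,1]$. The argument is entirely routine; the only points needing a little care are the monotonicity of the radial profile $\theta$, which is what guarantees $\varphi\geq0$ (and hence the clean two-term bound in \eqref{211}), and the bookkeeping in \eqref{28} that the outer-to-inner radius ratio of $\mathcal{C}$ is strictly below $2^2$.
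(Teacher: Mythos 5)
Your construction is correct and complete. The paper itself offers no proof of this proposition (it defers to \cite{Dan1}, Chapter~2, for all the Littlewood--Paley preliminaries), so there is nothing in-paper to compare against; what you give is the classical telescoping construction $\varphi(\xi)=\chi(\xi/2)-\chi(\xi)$ from a monotone radial cutoff. The cited reference instead proceeds by normalization: start from any bump $\theta\in\mathcal{D}(\mathcal{C})$ that is strictly positive on a slightly smaller annulus $\tilde{\mathcal C}$ whose dyadic dilates cover $\mathbb{R}^n\setminus\{0\}$, observe that $S(\xi):=\sum_{j}\theta(2^{-j}\xi)$ is a locally finite sum, positive on $\mathbb{R}^n\setminus\{0\}$, and satisfies $S(2\xi)=S(\xi)$, and set $\varphi=\theta/S$. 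Your route has the advantage that \eqref{26} is immediate by telescoping and that nonnegativity of $\varphi$ (and hence the clean two-term bound for \eqref{211}) follows at once from the monotonicity of the radial profile; the normalization route is marginally more flexible, as it does not require a monotone cutoff and generalizes to nonradial bumps. Once \eqref{26}, \eqref{28} and $0\le\varphi\le1$ are established, your reduction of \eqref{211} to the scalar inequality $a^2+b^2\in[1/2,1]$ for $a,b\ge0$ with $a+b=1$ is exactly the standard argument, and the check that the outer-to-inner radius ratio $32/9<4$ is precisely what makes \eqref{28} hold with the threshold $|j-j'|\ge 2$.
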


From now on we fix a functions $\chi$ and $\varphi$ satisfying the assertions
of the above proposition and let us denote by $\tilde{h}$ respectively $h$
their Fourier inverses.

The homogeneous dyadic blocks $\dot{\Delta}_{j}$ and the homogeneous low
frequency cutt-off operators $\dot{S}_{j}$ are defined below:%
\begin{align*}
\dot{\Delta}_{j}u  &  =\varphi\left(  2^{-j}D\right)  u=2^{jn}\int
_{\mathbb{R}^{n}}h\left(  2^{j}y\right)  u\left(  x-y\right)  dy\\
\dot{S}_{j}u  &  =\chi\left(  2^{-j}D\right)  u=2^{jn}\int_{\mathbb{R}^{n}%
}\tilde{h}\left(  2^{j}y\right)  u\left(  x-y\right)  dy
\end{align*}
for all $j\in\mathbb{Z}$.

\begin{definition}
We denote by $\mathcal{S}_{h}^{\prime}$ the space of tempered distributions
such that:%
\[
\lim_{j\rightarrow-\infty}\left\Vert \dot{S}_{j}u\right\Vert _{L^{\infty}}=0.
\]

\end{definition}

Let us now define the homogeneous Besov spaces:

\begin{definition}
Let $s$ be a real number and $\left(  p,r\right)  \in\left[  1,\infty\right]
$. The homogenous Besov space $\dot{B}_{p,r}^{s}$ is the subset of tempered
distributions $u\in S_{h}^{\prime}$ such that:%
\[
\left\Vert u\right\Vert _{\dot{B}_{p,r}^{s}}:=\left\Vert \left(
2^{js}\left\Vert \dot{\Delta}_{j}u\right\Vert _{L^{2}}\right)  _{j\in
\mathbb{Z}}\right\Vert _{\ell^{r}\left(  \mathbb{Z}\right)  }<\infty.
\]

\end{definition}

The next propositions gather some basic properties of Besov spaces.

\begin{proposition}
\label{Banach}Let us consider $s\in\mathbb{R}$ and $p,r\in\left[
1,\infty\right]  $ such that
\begin{equation}
s<\frac{n}{p}\text{ or }s=\frac{n}{p}\text{ and }r=1.
\label{lowfrequencypartcondition}%
\end{equation}
Then $\left(  \dot{B}_{p,r}^{s},\left\Vert \cdot\right\Vert _{\dot{B}%
_{p,r}^{s}}\right)  $ is a Banach space.
\end{proposition}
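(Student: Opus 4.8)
The plan is to verify that $(\dot{B}_{p,r}^{s},\|\cdot\|_{\dot{B}_{p,r}^{s}})$ is a normed vector space and that it is complete. The norm axioms are mostly routine: homogeneity is obvious and the triangle inequality follows from the Minkowski inequality applied first in $L^{p}$ (for each fixed $j$) and then in $\ell^{r}(\mathbb{Z})$; that $\dot{B}_{p,r}^{s}$ is closed under addition then follows, since $\mathcal{S}_{h}^{\prime}$ is itself a vector space. The one point requiring (\ref{lowfrequencypartcondition}) here is the implication $\|u\|_{\dot{B}_{p,r}^{s}}=0\Rightarrow u=0$: the vanishing of the norm gives $\dot{\Delta}_{j}u=0$ for every $j\in\mathbb{Z}$, and since $u\in\mathcal{S}_{h}^{\prime}$ one has $u=\sum_{j\in\mathbb{Z}}\dot{\Delta}_{j}u=0$ in $\mathcal{S}^{\prime}$ — membership in $\mathcal{S}_{h}^{\prime}$ being precisely what excludes nonzero polynomials.

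The heart of the proof is the continuous embedding $\dot{B}_{p,r}^{s}\hookrightarrow\mathcal{S}^{\prime}$, and this is the only place where hypothesis (\ref{lowfrequencypartcondition}) is genuinely used. Writing $u=\sum_{j<0}\dot{\Delta}_{j}u+\sum_{j\geq0}\dot{\Delta}_{j}u$, I would treat the low frequencies by Bernstein's inequality (cf. \cite{Dan1}): $\|\dot{\Delta}_{j}u\|_{L^{\infty}}\lesssim2^{jn/p}\|\dot{\Delta}_{j}u\|_{L^{p}}=2^{j(n/p-s)}c_{j}$ with $(c_{j})_{j}:=(2^{js}\|\dot{\Delta}_{j}u\|_{L^{p}})_{j}\in\ell^{r}$. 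If $s<n/p$, then $\sum_{j<0}\|\dot{\Delta}_{j}u\|_{L^{\infty}}\lesssim\sum_{j<0}2^{j(n/p-s)}c_{j}\lesssim\|u\|_{\dot{B}_{p,r}^{s}}$ by Hölder; if $s=n/p$ and $r=1$, the bound $\sum_{j<0}\|\dot{\Delta}_{j}u\|_{L^{\infty}}\lesssim\sum_{j<0}c_{j}=\|u\|_{\dot{B}_{p,1}^{n/p}}$ is immediate. In either case $\sum_{j<0}\dot{\Delta}_{j}u$ converges absolutely in $L^{\infty}\hookrightarrow\mathcal{S}^{\prime}$. For the high frequencies, given $\phi\in\mathcal{S}$ and an integer $M$, one writes $\langle\dot{\Delta}_{j}u,\phi\rangle=\langle\dot{\Delta}_{j}u,(\dot{\Delta}_{j-1}+\dot{\Delta}_{j}+\dot{\Delta}_{j+1})\phi\rangle$ and estimates $|\langle\dot{\Delta}_{j}u,\phi\rangle|\lesssim\|\dot{\Delta}_{j}u\|_{L^{p}}\,2^{-jM}\nu_{M}(\phi)\lesssim2^{-j(s+M)}(\sup_{k}c_{k})\,\nu_{M}(\phi)$ for a suitable Schwartz seminorm $\nu_{M}$; choosing $M$ large the sum over $j\geq0$ converges. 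Both pieces are thus continuous linear functionals on $\mathcal{S}$ with seminorm bounds linear in $\|u\|_{\dot{B}_{p,r}^{s}}$, which yields the embedding.

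With the embedding in hand, completeness follows from a standard argument. If $(u_{n})$ is Cauchy in $\dot{B}_{p,r}^{s}$, it is Cauchy in $\mathcal{S}^{\prime}$, hence converges there to some $u\in\mathcal{S}^{\prime}$; in particular $\dot{\Delta}_{j}u_{n}\to\dot{\Delta}_{j}u$ in $\mathcal{S}^{\prime}$ for each $j$. Since $2^{js}\|\dot{\Delta}_{j}(u_{n}-u_{m})\|_{L^{p}}\leq\|u_{n}-u_{m}\|_{\dot{B}_{p,r}^{s}}$, the sequence $(\dot{\Delta}_{j}u_{n})_{n}$ is Cauchy in $L^{p}$, hence converges in $L^{p}$, and by uniqueness of limits in $\mathcal{S}^{\prime}$ its limit is $\dot{\Delta}_{j}u$. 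A Fatou-type argument on truncated norms then finishes the job: for every $N$,
\[
\big\|\,(2^{js}\|\dot{\Delta}_{j}u\|_{L^{p}})_{|j|\leq N}\,\big\|_{\ell^{r}}=\lim_{n\to\infty}\big\|\,(2^{js}\|\dot{\Delta}_{j}u_{n}\|_{L^{p}})_{|j|\leq N}\,\big\|_{\ell^{r}}\leq\sup_{n}\|u_{n}\|_{\dot{B}_{p,r}^{s}}<\infty,
\]
so letting $N\to\infty$ gives $u\in\dot{B}_{p,r}^{s}$; applying the same identity to $u_{n}-u_{m}$, letting $n\to\infty$ and then $N\to\infty$, gives $\|u-u_{m}\|_{\dot{B}_{p,r}^{s}}\to0$. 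Finally one checks $u\in\mathcal{S}_{h}^{\prime}$: from $\|\dot{S}_{j}u\|_{L^{\infty}}\leq\sum_{j^{\prime}\leq j}\|\dot{\Delta}_{j^{\prime}}u\|_{L^{\infty}}\lesssim\sum_{j^{\prime}\leq j}2^{j^{\prime}(n/p-s)}(2^{j^{\prime}s}\|\dot{\Delta}_{j^{\prime}}u\|_{L^{p}})$, hypothesis (\ref{lowfrequencypartcondition}) forces this tail to tend to $0$ as $j\to-\infty$, whence $u\in\mathcal{S}_{h}^{\prime}$, completing the proof.

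I expect the delicate point to be the continuous embedding into $\mathcal{S}^{\prime}$, specifically the careful handling of the low-frequency series under the two alternatives allowed by (\ref{lowfrequencypartcondition}). This is exactly the obstruction that makes the hypothesis necessary: without it the low-frequency sum need not converge in $\mathcal{S}^{\prime}$, and the completeness argument breaks down because a limiting distribution could differ from a genuine limit by a polynomial. Everything else — Minkowski, Fatou, and sequential completeness of $\mathcal{S}^{\prime}$ — is soft.
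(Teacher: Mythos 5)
The paper does not actually prove Proposition \ref{Banach}; it is stated in the Appendix as a classical fact, with the reader referred to \cite{Dan1}, Chapter 2. Your argument follows precisely the route taken there — norm axioms via Minkowski, continuous embedding $\dot B_{p,r}^{s}\hookrightarrow\mathcal{S}'$ by splitting into Bernstein-controlled low frequencies and Schwartz-decay high frequencies, then completeness via sequential completeness of $\mathcal{S}'$ and a Fatou argument on truncated norms — so in structure there is no divergence from the intended source.

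There is, however, a genuine gap in the final check that the $\mathcal{S}'$-limit $u$ of the Cauchy sequence lies in $\mathcal{S}_{h}^{\prime}$. You invoke $\|\dot S_{j}u\|_{L^{\infty}}\leq\sum_{j'\leq j}\|\dot\Delta_{j'}u\|_{L^{\infty}}$, but this inequality is false for a general $u\in\mathcal{S}'$: take $u$ a nonzero constant, so that every $\dot\Delta_{j'}u$ vanishes while $\dot S_{j}u=u\neq0$. The underlying identity $\dot S_{j}u=\sum_{j'<j}\dot\Delta_{j'}u$ holds precisely when $u$ carries no polynomial part, i.e.\ when $u\in\mathcal{S}_{h}^{\prime}$, which is what you are trying to establish — so as written the step is circular. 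The repair is short: each $u_{n}$ does lie in $\mathcal{S}_{h}^{\prime}$, hence $\dot S_{j}u_{n}=\sum_{j'<j}\dot\Delta_{j'}u_{n}$; the tail of this series is controlled in $L^{\infty}$ uniformly in $n$ via Bernstein, the $\dot B_{p,r}^{s}$ Cauchy bound, and hypothesis \eqref{lowfrequencypartcondition}; and $\dot\Delta_{j'}u_{n}\to\dot\Delta_{j'}u$ in $L^{\infty}$ (from $L^{p}$ convergence plus Bernstein on the fixed annulus) while $\dot S_{j}u_{n}\to\dot S_{j}u$ in $\mathcal{S}'$. Passing to the limit in the identity then yields $\dot S_{j}u=\sum_{j'<j}\dot\Delta_{j'}u$ for the limit $u$ as well, after which your estimate and the conclusion go through. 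Everything else in the proposal is correct.
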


\begin{proposition}
\label{propA}A tempered distribution $u\in S_{h}^{\prime}$ belongs to $\dot
{B}_{p,r}^{s}\left(  \mathbb{R}^{n}\right)  $ if and only if there exists a
sequence $\left(  c_{j}\right)  _{j}$ such that $\left(  2^{js}c_{j}\right)
_{j}\in\ell^{r}(\mathbb{Z)}$ with norm $1$ and a constant $C=C\left(
u\right)  >0$ such that for any $j\in\mathbb{Z}$ we have%
\[
\left\Vert \dot{\Delta}_{j}u\right\Vert _{L^{p}}\leq Cc_{j}.
\]

\end{proposition}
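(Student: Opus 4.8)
The plan is to note that Proposition \ref{propA} is essentially a reformulation of the definition of $\dot B^s_{p,r}$, so the proof reduces, in each direction, to exhibiting an explicit admissible sequence $(c_j)_j$.

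For the direct implication I would start from $u\in\dot B^s_{p,r}(\mathbb R^n)$ and distinguish two cases. If $\dot\Delta_j u=0$ for every $j$, I would pick once and for all a sequence $(c_j)_{j\in\mathbb Z}$ of nonnegative numbers with $\bigl\Vert(2^{js}c_j)_j\bigr\Vert_{\ell^r(\mathbb Z)}=1$ — such a sequence exists since $\ell^r(\mathbb Z)$ contains unit vectors — and take any $C>0$, so that $\Vert\dot\Delta_j u\Vert_{L^p}=0\le Cc_j$. Otherwise the quantity $N:=\Vert u\Vert_{\dot B^s_{p,r}}=\bigl\Vert\bigl(2^{js}\Vert\dot\Delta_j u\Vert_{L^p}\bigr)_j\bigr\Vert_{\ell^r}$ is finite and strictly positive, and I would set $c_j:=N^{-1}\Vert\dot\Delta_j u\Vert_{L^p}$; by $1$-homogeneity of the $\ell^r$ norm one has $\bigl\Vert(2^{js}c_j)_j\bigr\Vert_{\ell^r}=1$, while $\Vert\dot\Delta_j u\Vert_{L^p}=Nc_j$, which is the desired bound with $C:=N$.

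For the converse I would assume $u\in\mathcal S'_h$ together with the existence of $C>0$ and $(c_j)_j$ satisfying $\bigl\Vert(2^{js}c_j)_j\bigr\Vert_{\ell^r}=1$ and $\Vert\dot\Delta_j u\Vert_{L^p}\le Cc_j$ for all $j\in\mathbb Z$. Multiplying by $2^{js}$ and taking $\ell^r(\mathbb Z)$-norms then gives
\[
\Vert u\Vert_{\dot B^s_{p,r}}=\bigl\Vert\bigl(2^{js}\Vert\dot\Delta_j u\Vert_{L^p}\bigr)_j\bigr\Vert_{\ell^r}\le C\,\bigl\Vert(2^{js}c_j)_j\bigr\Vert_{\ell^r}=C<\infty,
\]
so that $u\in\dot B^s_{p,r}(\mathbb R^n)$.

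I do not anticipate any genuine obstacle: the only points that deserve a word of care are the normalization to unit $\ell^r$-norm, obtained simply by rescaling (with the degenerate case $u=0$ handled separately as above), and the remark that the standing assumption $u\in\mathcal S'_h$ is precisely what makes the membership $u\in\dot B^s_{p,r}$ meaningful on both sides of the equivalence.
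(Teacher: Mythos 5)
Your proof is correct, and it is the natural (essentially unique) argument: the statement is a reformulation of the definition of $\dot B^s_{p,r}$ obtained by normalizing the sequence $\bigl(\Vert\dot\Delta_j u\Vert_{L^p}\bigr)_j$ by the Besov norm, together with the trivial converse. The paper itself gives no proof — this proposition is one of the standard Littlewood--Paley facts gathered in the Appendix with a reference to Bahouri--Chemin--Danchin — so there is nothing to compare against; your treatment of the degenerate case $u=0$ (equivalently $\dot\Delta_j u\equiv0$, since $u\in\mathcal S'_h$) is the right word of care.
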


\begin{proposition}
\label{interpolation} Let us consider $s_{1}$ and $s_{2}$ two real
numbers such that $s_{1}<s_{2}$ and $\theta\in\left(  0,1\right)  $. Then,
there exists a constant $C>0$ such that for all $r\in\left[  1,\infty\right]
$ we have:%
\begin{align*}
\left\Vert u\right\Vert _{\dot{B}_{p,r}^{\theta s_{1}+\left(  1-\theta\right)
s_{2}}}  &  \leq\left\Vert u\right\Vert _{\dot{B}_{p,r}^{s_{1}}}^{\theta
}\left\Vert u\right\Vert _{\dot{B}_{p,r}^{s_{2}}}^{1-\theta}\text{ and}\\
\left\Vert u\right\Vert _{\dot{B}_{p,1}^{\theta s_{1}+\left(  1-\theta\right)
s_{2}}}  &  \leq\frac{C}{s_{2}-s_{1}}\left(  \frac{1}{\theta}+\frac
{1}{1-\theta}\right)  \left\Vert u\right\Vert _{\dot{B}_{p,\infty}^{s_{1}}%
}^{\theta}\left\Vert u\right\Vert _{\dot{B}_{p,\infty}^{s_{2}}}^{1-\theta}%
\end{align*}

\end{proposition}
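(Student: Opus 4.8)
The plan is to reduce both inequalities to elementary manipulations of the dyadic characterization of Besov norms; nothing beyond the definition of $\dot\Delta_j$ is needed. Throughout, write $d_j=\Vert\dot\Delta_j u\Vert_{L^p}$ and $s=\theta s_1+(1-\theta)s_2$, and assume the right-hand sides are finite (otherwise there is nothing to prove).

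For the first inequality the key observation is the pointwise-in-$j$ factorization
\[
2^{js}d_j=\bigl(2^{js_1}d_j\bigr)^{\theta}\bigl(2^{js_2}d_j\bigr)^{1-\theta}.
\]
If $r=\infty$ one bounds the right-hand side by $\Vert u\Vert_{\dot B_{p,\infty}^{s_1}}^{\theta}\Vert u\Vert_{\dot B_{p,\infty}^{s_2}}^{1-\theta}$ for each $j$ and takes the supremum. If $r<\infty$, raise the displayed identity to the power $r$ and apply Hölder's inequality on $\ell^{1}(\mathbb{Z})$ with conjugate exponents $1/\theta$ and $1/(1-\theta)$ to the sequences $\bigl((2^{js_1}d_j)^r\bigr)_j$ and $\bigl((2^{js_2}d_j)^r\bigr)_j$; taking $r$-th roots yields $\Vert u\Vert_{\dot B_{p,r}^{s}}\le\Vert u\Vert_{\dot B_{p,r}^{s_1}}^{\theta}\Vert u\Vert_{\dot B_{p,r}^{s_2}}^{1-\theta}$ with constant $1$ (which in particular shows $u\in\dot B_{p,r}^{s}$).

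For the second inequality set $A=\Vert u\Vert_{\dot B_{p,\infty}^{s_1}}$ and $B=\Vert u\Vert_{\dot B_{p,\infty}^{s_2}}$, so that $d_j\le A2^{-js_1}$ and $d_j\le B2^{-js_2}$ for every $j$, hence $2^{js}d_j\le\min\bigl(A2^{j(1-\theta)\delta},\,B2^{-j\theta\delta}\bigr)$ with $\delta:=s_2-s_1>0$. Fix a real threshold $N$ and split $\sum_j 2^{js}d_j=\sum_{j\le N}+\sum_{j>N}$, estimating the first part by the first bound and the second part by the second; both are geometric series, so
\[
\sum_j 2^{js}d_j\le \frac{A\,2^{N(1-\theta)\delta}}{1-2^{-(1-\theta)\delta}}+\frac{B\,2^{-N\theta\delta}}{1-2^{-\theta\delta}}.
\]
Choosing $N=\tfrac1\delta\log_2(B/A)$ makes both numerators equal to $A^{\theta}B^{1-\theta}$, whence $\sum_j 2^{js}d_j\le A^{\theta}B^{1-\theta}\bigl((1-2^{-(1-\theta)\delta})^{-1}+(1-2^{-\theta\delta})^{-1}\bigr)$.

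It then remains to invoke the elementary estimate $1-2^{-x}\ge \frac{x\ln2}{1+x\ln2}$, i.e. $(1-2^{-x})^{-1}\le 1+\frac{1}{x\ln2}$, applied with $x=(1-\theta)\delta$ and $x=\theta\delta$, and to absorb the resulting additive constants using $\frac1\theta+\frac1{1-\theta}\ge4$, so that the bracket is controlled by $\frac{C}{s_2-s_1}\bigl(\frac1\theta+\frac1{1-\theta}\bigr)$ for the relevant (bounded) range of $s_2-s_1$. This last step — choosing the crossover index optimally and bookkeeping the constant so that it emerges in the announced form — is the only place where any real estimation occurs, and is the main (though mild) point requiring care; everything else is a one-line computation.
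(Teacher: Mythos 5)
Your treatment of the first inequality via H\"older, and your reduction of the second one to the bound
\[
\sum_{j\in\mathbb{Z}}2^{js}\|\dot{\Delta}_j u\|_{L^p}\le \|u\|_{\dot{B}_{p,\infty}^{s_1}}^{\theta}\|u\|_{\dot{B}_{p,\infty}^{s_2}}^{1-\theta}\left(\frac{1}{1-2^{-(1-\theta)(s_2-s_1)}}+\frac{1}{1-2^{-\theta(s_2-s_1)}}\right),
\]
are correct and are exactly the standard dyadic argument; the paper itself does not prove this proposition (it is quoted from \cite{Dan1}), so there is nothing in the paper to compare against.

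The genuine gap is precisely the step you flag at the end, and it cannot be repaired in the form stated. After $(1-2^{-x})^{-1}\le 1+\frac{1}{x\ln 2}$ the bracket is controlled by $2+\frac{1}{(s_2-s_1)\ln 2}\left(\frac{1}{\theta}+\frac{1}{1-\theta}\right)$, and the additive $2$ cannot be absorbed into $\frac{C}{s_2-s_1}\left(\frac{1}{\theta}+\frac{1}{1-\theta}\right)$ uniformly in $s_2-s_1$: the latter tends to $0$ as $s_2-s_1\to\infty$ while the bracket stays $\ge 2$. Your caveat ``for the relevant (bounded) range of $s_2-s_1$'' is therefore not optional --- with a universal constant $C$ the displayed inequality is in fact false. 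Pick $u\neq0$ whose Fourier transform is supported in the sub-annulus of $\mathcal{C}$ on which $\varphi\equiv1$ (nonempty by the partition-of-unity and almost-orthogonality properties of the dyadic decomposition), so that $\dot{\Delta}_j u=0$ for $j\neq0$ and $\dot{\Delta}_0 u=u$; then every Besov norm in the statement equals $\|u\|_{L^p}$ and the second inequality collapses to $1\le\frac{C}{s_2-s_1}\left(\frac{1}{\theta}+\frac{1}{1-\theta}\right)$, which fails for $s_2-s_1$ large (for instance with $\theta=1/2$ once $s_2-s_1>4C$). What your computation actually establishes --- and what is the correct form of the estimate --- is
\[
\|u\|_{\dot{B}_{p,1}^{\theta s_1+(1-\theta)s_2}}\le C\left(1+\frac{1}{s_2-s_1}\left(\frac{1}{\theta}+\frac{1}{1-\theta}\right)\right)\|u\|_{\dot{B}_{p,\infty}^{s_1}}^{\theta}\|u\|_{\dot{B}_{p,\infty}^{s_2}}^{1-\theta}
\]
with $C$ absolute. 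Since the paper only ever interpolates with a fixed, bounded gap $s_2-s_1$, the discrepancy is harmless downstream, but you should not try to force the constant into the shape the proposition announces: no proof can.
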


\begin{proposition}
\label{Embedding}Let $1\leq p_{1}\leq p_{2}\leq\infty$ and $1\leq r_{1}\leq
r_{2}\leq\infty$. Then, for any real number $s$, the space $\dot{B}%
_{p_{1},r_{1}}^{s}$ is continuously embedded in $\dot{B}_{p_{2},r_{2}%
}^{s-n\left(  \frac{1}{p_{1}}-\frac{1}{p_{2}}\right)  }$.
\end{proposition}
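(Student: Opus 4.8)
The plan is to reduce the statement to two classical facts: Bernstein's inequality for functions whose frequencies live in a fixed dyadic annulus, and the elementary inclusion $\ell^{r_1}(\mathbb{Z})\hookrightarrow\ell^{r_2}(\mathbb{Z})$, valid whenever $r_1\le r_2$. Note first that there is nothing to check at the level of tempered distributions: if $u\in\dot B^s_{p_1,r_1}$ then $u\in\mathcal{S}_h^{\prime}$ by definition, so the target space $\dot B^{s-n(1/p_1-1/p_2)}_{p_2,r_2}$ is well defined and only the norm estimate has to be proved.

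First I would recall Bernstein's inequality in the form we need: if $v\in\mathcal{S}_h^{\prime}$ has $\widehat v$ supported in the annulus $2^j\mathcal{C}$, then there is a constant $C$ independent of $j$ with
\[
\|v\|_{L^{p_2}}\le C\,2^{jn\left(\frac1{p_1}-\frac1{p_2}\right)}\|v\|_{L^{p_1}}.
\]
To prove this in one line, pick $\widetilde\varphi\in\mathcal{D}(\mathbb{R}^n\setminus\{0\})$ equal to $1$ on $\mathcal{C}$, so that $v=\widetilde\varphi(2^{-j}D)v=2^{jn}g(2^j\cdot)\star v$ with $g:=\mathcal{F}^{-1}\widetilde\varphi\in\mathcal{S}(\mathbb{R}^n)$; then apply Young's convolution inequality with the exponent $q$ given by $\frac1q=1-\left(\frac1{p_1}-\frac1{p_2}\right)\in[0,1]$, using that $\|2^{jn}g(2^j\cdot)\|_{L^q}=2^{jn(1-1/q)}\|g\|_{L^q}=2^{jn(1/p_1-1/p_2)}\|g\|_{L^q}$.

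Applying this with $v=\dot\Delta_j u$ (whose spectrum lies in $2^j\mathcal{C}$ by Proposition \ref{diadic}) and setting $\sigma:=s-n\left(\frac1{p_1}-\frac1{p_2}\right)$, I get, for every $j\in\mathbb{Z}$,
\[
2^{j\sigma}\|\dot\Delta_j u\|_{L^{p_2}}\le C\,2^{js}\|\dot\Delta_j u\|_{L^{p_1}}=:C\,c_j .
\]
By the definition of the Besov norm together with Proposition \ref{propA}, the sequence $(c_j)_j$ lies in $\ell^{r_1}(\mathbb{Z})$ with $\|(c_j)_j\|_{\ell^{r_1}}\lesssim\|u\|_{\dot B^s_{p_1,r_1}}$. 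Since $r_1\le r_2$ gives $\|(c_j)_j\|_{\ell^{r_2}}\le\|(c_j)_j\|_{\ell^{r_1}}$, taking the $\ell^{r_2}$ norm in the displayed inequality yields
\[
\|u\|_{\dot B^{\sigma}_{p_2,r_2}}=\Big\|\big(2^{j\sigma}\|\dot\Delta_j u\|_{L^{p_2}}\big)_j\Big\|_{\ell^{r_2}}\le C\,\|(c_j)_j\|_{\ell^{r_2}}\le C\,\|u\|_{\dot B^s_{p_1,r_1}},
\]
which is precisely the asserted continuous embedding.

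I do not expect any genuine obstacle: the only substantive input is the frequency localization that produces Bernstein's inequality, and everything else is bookkeeping with $\ell^r$ sums. The single point that deserves a word of care is the use of a slightly fattened cut-off $\widetilde\varphi$ (supported in $\mathbb{R}^n\setminus\{0\}$ and identically $1$ on $\mathcal{C}$) so that $\dot\Delta_j u=\widetilde\varphi(2^{-j}D)\dot\Delta_j u$ and the convolution kernel appearing in Young's inequality is a fixed Schwartz function rescaled by $2^j$; this is what makes the constant $C$ independent of $j$.
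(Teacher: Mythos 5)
Your proof is correct and is the standard argument (Bernstein's inequality for spectrally localized functions plus the monotonicity of $\ell^r$ norms), which is exactly the proof found in the reference \cite{Dan1} that the paper defers to for this proposition; the paper itself gives no proof. One minor remark: the appeal to Proposition \ref{propA} is unnecessary, since the Besov norm is by definition the $\ell^{r}$ norm of the sequence $\bigl(2^{js}\|\dot\Delta_j u\|_{L^p}\bigr)_j$, so the passage from the frequency-wise estimate to the norm estimate is immediate from the definition together with $\ell^{r_1}\hookrightarrow\ell^{r_2}$.
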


\begin{proposition}
\label{Caracterizare}For all $1\leq p,r\leq\infty$ and $s\in\mathbb{R}$,%
\begin{equation}
\left\{
\begin{array}
[c]{c}%
\dot{B}_{p,r}^{s}\times\dot{B}_{p^{\prime},r^{\prime}}^{-s}\rightarrow
\mathbb{R},\\
\left(  u,v\right)  \rightarrow%
%TCIMACRO{\dsum \limits_{j}}%
%BeginExpansion
{\displaystyle\sum\limits_{j}}
%EndExpansion
\left\langle \dot{\Delta}_{j}u,\tilde{\Delta}_{j}v\right\rangle ,
\end{array}
\right.  \label{braket}%
\end{equation}
where $\tilde{\Delta}_{j}:=\dot{\Delta}_{j-1}+\dot{\Delta}_{j}+\dot{\Delta
}_{j+1}$, defines a continuous bilinear functional on $\dot{B}_{p,r}^{s}%
\times\dot{B}_{p^{\prime},r^{\prime}}^{-s}$. Denote by $Q_{p^{\prime
},r^{\prime}}^{-s}$ the set of functions $\phi\in\mathcal{S\cap}\dot
{B}_{p^{\prime},r^{\prime}}^{-s}$ such that $\left\Vert \phi\right\Vert
_{\dot{B}_{p^{\prime},r^{\prime}}^{-s}}\leq1$. If $u\in\mathcal{S}_{h}%
^{\prime}$, then we have%
\[
\left\Vert u\right\Vert _{\dot{B}_{p,r}^{s}}\lesssim\sup_{\phi\in
Q_{p^{\prime},r^{\prime}}^{-s}}\left\langle u,\phi\right\rangle _{\mathcal{S}%
^{\prime}\times\mathcal{S}}.
\]

\end{proposition}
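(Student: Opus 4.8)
The plan is to establish the two assertions separately: the continuity of the bilinear functional is a routine consequence of Hölder's inequality once the frequency localisation is used, whereas the lower bound requires building, for a given $u\in\mathcal{S}_{h}^{\prime}$, a family of genuine Schwartz test functions that almost saturate the bracket.

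First I would record a support observation coming from Proposition \ref{diadic}: since $2\leq|j-j'|$ forces $\operatorname{Supp}\varphi(2^{-j}\cdot)$ and $\operatorname{Supp}\varphi(2^{-j'}\cdot)$ to be disjoint while $\sum_{k}\varphi(2^{-k}\xi)=1$, one has $\varphi(2^{-j}\cdot)\bigl(\varphi(2^{-(j-1)}\cdot)+\varphi(2^{-j}\cdot)+\varphi(2^{-(j+1)}\cdot)\bigr)=\varphi(2^{-j}\cdot)$, that is $\dot{\Delta}_{j}\tilde{\Delta}_{j}=\dot{\Delta}_{j}$. This makes each bracket $\langle\dot{\Delta}_{j}u,\tilde{\Delta}_{j}v\rangle$ a legitimate pairing of a function against an $L^{p'}$-type distribution and, for $u\in\mathcal{S}_{h}^{\prime}$ and $v\in\mathcal{S}$, identifies $\sum_{j}\langle\dot{\Delta}_{j}u,\tilde{\Delta}_{j}v\rangle$ with $\langle u,v\rangle_{\mathcal{S}^{\prime}\times\mathcal{S}}$. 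Hölder's inequality then gives $|\langle\dot{\Delta}_{j}u,\tilde{\Delta}_{j}v\rangle|\leq\|\dot{\Delta}_{j}u\|_{L^{p}}\,\|\tilde{\Delta}_{j}v\|_{L^{p'}}$; since $\tilde{\Delta}_{j}$ couples only indices $k$ with $|k-j|\leq1$, for which $2^{-js}$ and $2^{-ks}$ are comparable, summing in $j$ and using Young's convolution inequality followed by the $\ell^{r}$--$\ell^{r'}$ Hölder inequality produces $\sum_{j}|\langle\dot{\Delta}_{j}u,\tilde{\Delta}_{j}v\rangle|\lesssim\|u\|_{\dot{B}_{p,r}^{s}}\|v\|_{\dot{B}_{p',r'}^{-s}}$, which is the asserted absolute convergence and boundedness.

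For the lower bound I would fix $u\in\mathcal{S}_{h}^{\prime}$ and reduce matters to the truncated quantities $N_{u}(n):=\|(2^{js}\|\dot{\Delta}_{j}u\|_{L^{p}})_{|j|\leq n}\|_{\ell^{r}}$, proving $N_{u}(n)\lesssim\sup_{\phi\in Q_{p',r'}^{-s}}\langle u,\phi\rangle$ with a constant independent of $n$ and then letting $n\to\infty$ (note $\|u\|_{\dot{B}_{p,r}^{s}}=\sup_{n}N_{u}(n)$). For a fixed $n$: by duality for finite sequences pick non-negative $(b_{j})_{|j|\leq n}$ with $\ell^{r'}$-norm at most $1$ realising $\sum_{|j|\leq n}b_{j}2^{js}\|\dot{\Delta}_{j}u\|_{L^{p}}=N_{u}(n)$; by $L^{p}$--$L^{p'}$ duality pick $g_{j}\in\mathcal{S}$ with $\|g_{j}\|_{L^{p'}}\leq1$ and $\langle\dot{\Delta}_{j}u,g_{j}\rangle\geq(1-\delta)\|\dot{\Delta}_{j}u\|_{L^{p}}$; then set $\phi_{n}=\sum_{|j|\leq n}b_{j}2^{js}\dot{\Delta}_{j}g_{j}$. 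This is a finite sum of Schwartz functions, because $\widehat{\dot{\Delta}_{j}g_{j}}=\varphi(2^{-j}\cdot)\widehat{g_{j}}$ is smooth and compactly supported, so $\phi_{n}\in\mathcal{S}$; by self-adjointness of $\dot{\Delta}_{j}$ one gets $\langle u,\phi_{n}\rangle=\sum_{|j|\leq n}b_{j}2^{js}\langle\dot{\Delta}_{j}u,g_{j}\rangle\geq(1-\delta)N_{u}(n)$. On the other hand $\dot{\Delta}_{k}\phi_{n}$ receives contributions only from indices $j$ with $|j-k|\leq1$ and $\|\dot{\Delta}_{k}\dot{\Delta}_{j}g_{j}\|_{L^{p'}}\lesssim\|g_{j}\|_{L^{p'}}\leq1$, so $2^{-ks}\|\dot{\Delta}_{k}\phi_{n}\|_{L^{p'}}\lesssim\sum_{|j-k|\leq1}b_{j}$, and Young's inequality gives $\|\phi_{n}\|_{\dot{B}_{p',r'}^{-s}}\lesssim\|(b_{j})\|_{\ell^{r'}}\leq1$. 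Hence a fixed dilate of $\phi_{n}$ lies in $Q_{p',r'}^{-s}$, which yields the desired uniform bound; letting $n\to\infty$ and $\delta\to0$ finishes the proof.

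The step I expect to be the main obstacle is producing test functions that are genuinely in the Schwartz class with a controlled $\dot{B}_{p',r'}^{-s}$ norm, rather than mere elements of the dual $L^{p'}$. When $p>1$ (so $p'<\infty$) this is immediate from density of $\mathcal{S}$ in $L^{p'}$ together with the frequency-localisation identity $\langle u,\dot{\Delta}_{j}g\rangle=\langle\dot{\Delta}_{j}u,g\rangle$; in the endpoint $p=1$ (so $p'=\infty$, where $\mathcal{S}$ is not dense) one must instead take $g_{j}$ to be a mollified and spatially truncated version of $\operatorname{sgn}(\dot{\Delta}_{j}u)$ and recover $\|\dot{\Delta}_{j}u\|_{L^{1}}$ up to $\delta$ by monotone/dominated convergence, using that $\dot{\Delta}_{j}u$ is a smooth function; the endpoint third indices $r\in\{1,\infty\}$ are dealt with in the same spirit at the level of the finite sequence $(b_{j})$. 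Throughout, the hypothesis $u\in\mathcal{S}_{h}^{\prime}$ is exactly what legitimises passing $\dot{\Delta}_{j}$ across the bracket and identifying the pairing of the statement with the duality bracket $\langle\cdot,\cdot\rangle_{\mathcal{S}^{\prime}\times\mathcal{S}}$.
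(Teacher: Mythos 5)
Your proof is correct and takes essentially the standard route found in the cited reference (\cite{Dan1}, Proposition 2.29), which the paper itself does not reproduce: the continuity of the bilinear form by H\"older on each block plus Young and $\ell^{r}$--$\ell^{r'}$ H\"older on the sequence level, and the lower bound by constructing band-limited Schwartz test functions $\phi_{n}=\sum_{|j|\leq n}b_{j}2^{js}\dot{\Delta}_{j}g_{j}$ from a finite $\ell^{r}$--$\ell^{r'}$ dual sequence and an $L^{p}$--$L^{p'}$ dual family, controlling $\|\phi_{n}\|_{\dot{B}_{p',r'}^{-s}}$ through the near-orthogonality $\dot{\Delta}_{k}\dot{\Delta}_{j}=0$ for $|j-k|\geq 2$. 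Your remarks about the endpoints $p=1$ and $r\in\{1,\infty\}$ (compactly supported mollified signs in place of exact dual elements, and finiteness of the truncated sequence) address exactly the points where care is needed, so the argument is complete.
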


\begin{proposition}
\label{autoadjunct}Let us consider $1<p,r<\infty$ and $s\in\mathbb{R}$.
Furthermore, let $u\in\dot{B}_{p,r}^{s}$, $v\in\dot{B}_{p^{\prime},r^{\prime}%
}^{-s}$ and $\rho\in L^{\infty}\cap\mathcal{M}\left(  \dot{B}_{p,r}%
^{s}\right)  \cap\mathcal{M}\left(  \dot{B}_{p^{\prime},r^{\prime}}%
^{-s}\right)  $. Then, we have that%
\begin{equation}
\left(  \rho u,v\right)  =%
%TCIMACRO{\dsum \limits_{j}}%
%BeginExpansion
{\displaystyle\sum\limits_{j}}
%EndExpansion
\left\langle \dot{\Delta}_{j}(\rho u),\tilde{\Delta}_{j}v\right\rangle =%
%TCIMACRO{\dsum \limits_{j}}%
%BeginExpansion
{\displaystyle\sum\limits_{j}}
%EndExpansion
\left\langle \dot{\Delta}_{j}u,\tilde{\Delta}_{j}(\rho v)\right\rangle
=\left(  u,\rho v\right)  .\label{autoadj1}%
\end{equation}

\end{proposition}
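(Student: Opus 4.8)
The plan is to reduce the identity, by a density argument, to the trivial symmetry of the pointwise product, and then to pass to the limit using that $(\cdot,\cdot)$ is a \emph{bounded} bilinear functional. First I would record the preliminaries. The hypotheses $\rho\in\mathcal{M}(\dot{B}_{p,r}^{s})\cap\mathcal{M}(\dot{B}_{p^{\prime},r^{\prime}}^{-s})$ ensure that $\rho u\in\dot{B}_{p,r}^{s}$ and $\rho v\in\dot{B}_{p^{\prime},r^{\prime}}^{-s}$, so Proposition \ref{Caracterizare} applies and gives, besides the existence of $(\rho u,v)$ and $(u,\rho v)$, the bound $|(f,g)|\lesssim\|f\|_{\dot{B}_{p,r}^{s}}\|g\|_{\dot{B}_{p^{\prime},r^{\prime}}^{-s}}$. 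Since the first and the last members of \eqref{autoadj1} are, by the very definition of the bracket, precisely $(\rho u,v)$ and $(u,\rho v)$, the whole statement reduces to proving $(\rho u,v)=(u,\rho v)$.

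Next I would introduce the high--frequency truncations $u_{N}=\sum_{|j|\le N}\dot{\Delta}_{j}u$ and $v_{N}=\sum_{|k|\le N}\dot{\Delta}_{k}v$. Here the restriction $1<p,r<\infty$ (hence also $1<p^{\prime},r^{\prime}<\infty$) is used twice: it forces $u_{N}\to u$ in $\dot{B}_{p,r}^{s}$ and $v_{N}\to v$ in $\dot{B}_{p^{\prime},r^{\prime}}^{-s}$ — the $\ell^{r}$ and $\ell^{r^{\prime}}$ tails vanish — and it makes available $L^{p}$--$L^{p^{\prime}}$ duality. Note that $u_{N}$, being a finite sum of dyadic blocks, is an $L^{p}$ function whose spectrum lies in an annulus bounded away from the origin, and likewise $v_{N}\in L^{p^{\prime}}$; moreover $\rho u_{N}\in L^{p}\cap\dot{B}_{p,r}^{s}$ and $\rho v_{N}\in L^{p^{\prime}}\cap\dot{B}_{p^{\prime},r^{\prime}}^{-s}$.

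The core step is a lemma stating that the abstract bracket coincides with the honest integral as soon as one factor is spectrally localized away from $0$: if $\phi\in L^{p}\cap\dot{B}_{p,r}^{s}$, $\psi\in L^{p^{\prime}}\cap\dot{B}_{p^{\prime},r^{\prime}}^{-s}$ and at least one of them has Fourier transform supported in an annulus $\{c\le|\xi|\le C\}$, then $(\phi,\psi)=\int_{\mathbb{R}^{n}}\phi\psi\,dx$. I would prove this by observing that then $\dot{\Delta}_{j}$ of the localized factor vanishes outside a finite set of indices $j$, so the series defining $(\phi,\psi)$ is in fact finite; one then transfers the $\dot{\Delta}_{j}$ (and $\tilde{\Delta}_{j}$) operators from one slot to the other — legitimate because their convolution kernels are even Schwartz functions and every object in sight is an $L^{p}$ or $L^{p^{\prime}}$ function — and collapses the finite sum using $\tilde{\Delta}_{j}\dot{\Delta}_{j}=\dot{\Delta}_{j}$ (an immediate consequence of \eqref{26}--\eqref{28}) together with $\sum_{j}\dot{\Delta}_{j}w=w$ for the band--limited $w\in\mathcal{S}_{h}^{\prime}$ involved. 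Applying the lemma with $(\phi,\psi)=(\rho u_{N},v_{N})$ and with $(\phi,\psi)=(u_{N},\rho v_{N})$ (the localized factor being $v_{N}$, resp.\ $u_{N}$), and using that $\rho\in L^{\infty}$ makes $\rho u_{N}v_{N}$ integrable so that $\int\rho u_{N}v_{N}=\int u_{N}\rho v_{N}$ is a triviality, I get $(\rho u_{N},v_{N})=(u_{N},\rho v_{N})$ for every $N$.

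Finally I would let $N\to\infty$. The multiplier hypotheses say exactly that multiplication by $\rho$ is bounded on $\dot{B}_{p,r}^{s}$ and on $\dot{B}_{p^{\prime},r^{\prime}}^{-s}$, so $\rho u_{N}\to\rho u$ and $\rho v_{N}\to\rho v$ in the respective spaces; combining this with $u_{N}\to u$, $v_{N}\to v$ and the boundedness of $(\cdot,\cdot)$, both sides of the previous identity pass to the limit and yield $(\rho u,v)=(u,\rho v)$, which is \eqref{autoadj1}. The only genuinely delicate point is the bracket--equals--integral lemma: one must keep careful track of the spectral localizations so that each pairing written down is a bona fide $L^{p}$--$L^{p^{\prime}}$ pairing and the Littlewood--Paley operators may be moved about freely; everything else is soft functional analysis. (It is also worth recalling the convention $\dot{B}\subset\mathcal{S}_{h}^{\prime}$, which is what guarantees that the homogeneous dyadic decompositions of $u$ and of $v$ actually converge back to $u$ and $v$.)
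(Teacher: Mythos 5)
Your proof is correct and follows essentially the same strategy as the paper's, namely a density argument: establish the symmetry $(\rho u,v)=(u,\rho v)$ on a dense class of ``nice'' functions where the bracket reduces to an ordinary Lebesgue integral (which is trivially symmetric), and then pass to the limit using the multiplier hypothesis on $\rho$ and the boundedness of the bilinear bracket. The paper's version of this argument is extremely terse --- it simply asserts that the identity ``clearly holds for functions from the Schwartz class'' and that $1<p,r<\infty$ ensures density of Schwartz functions in $\dot B^s_{p,r}$. You replace the Schwartz approximants by the dyadic truncations $u_N=\sum_{|j|\le N}\dot\Delta_j u$, $v_N=\sum_{|k|\le N}\dot\Delta_k v$ and prove an explicit ``bracket--equals--integral'' lemma for spectrally localized factors. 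This is a legitimate and, if anything, more careful variant: the paper's ``clearly'' glosses over the fact that $\rho u_n$ need not be Schwartz even when $u_n$ is, whereas your lemma --- requiring only that \emph{one} factor be band-limited --- handles exactly this point. The only price you pay is having to verify $\tilde\Delta_j\dot\Delta_j=\dot\Delta_j$ and the self-adjointness of the dyadic blocks on the $L^p$--$L^{p'}$ pairing, both of which you correctly identify as the delicate steps and both of which are standard. In short, same approach, different (and somewhat more explicit) choice of approximating sequence.
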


The proof of Proposition \ref{autoadjunct} follows from a density argument.
Relation $\left(  \text{\ref{autoadj1}}\right)  $ clearly holds for functions
from the Schwartz class: then we may write%
\[
\int_{\mathbb{R}^{n}}\rho uv=\left(  \rho u,v\right)  =\left(  u,\rho
v\right)  .
\]
The condition $1<p,r<\infty$ and $s\in\mathbb{R}$ ensures that $u$ and $v$ may
be approximated by Schwartz functions.

An important feature of Besov spaces with negative index of regularity is the following:

\begin{proposition}
\label{snegative}Let $s<0$ and $1\leq p,r\leq\infty$. Let $u$ be a
distribution in $S_{h}^{\prime}$. Then, $u$ belongs to $\dot{B}_{p,r}^{s}$ if
and only if%
\[
\left(  2^{js}\left\Vert \dot{S}_{j}u\right\Vert _{L^{p}}\right)
_{j\in\mathbb{Z}}\in\ell^{r}\left(  \mathbb{Z}\right)  .
\]
Moreover, there exists a constant $C$ depending only on the dimension $n$ such
that:%
\[
C^{-\left\vert s\right\vert +1}\left\Vert u\right\Vert _{\dot{B}_{p,r}^{s}%
}\leq\left\Vert \left(  2^{js}\left\Vert \dot{S}_{j}u\right\Vert _{L^{p}%
}\right)  _{j\in\mathbb{Z}}\right\Vert _{\ell^{r}\left(  \mathbb{Z}\right)
}\leq C\left(  1+\frac{1}{\left\vert s\right\vert }\right)  \left\Vert
u\right\Vert _{\dot{B}_{p,r}^{s}}.
\]

\end{proposition}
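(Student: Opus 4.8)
The plan is to exploit the two elementary identities relating the blocks $\dot{\Delta}_{j}$ and the cut-offs $\dot{S}_{j}$. With the conventions of Proposition \ref{diadic} one has $\chi(2^{-j}\cdot)=\sum_{k\leq j-1}\varphi(2^{-k}\cdot)$, whence
\[
\dot{S}_{j}=\sum_{k\leq j-1}\dot{\Delta}_{k}\qquad\text{and}\qquad\dot{\Delta}_{j}=\dot{S}_{j+1}-\dot{S}_{j}.
\]
The first identity is to be read in $\mathcal{S}^{\prime}$: since $u\in\mathcal{S}_{h}^{\prime}$, the truncations $\dot{S}_{N}u$ tend to $0$ in $\mathcal{S}^{\prime}$ as $N\to-\infty$, so the series genuinely converges to $\dot{S}_{j}u$, and when moreover $s<0$ and $u\in\dot{B}_{p,r}^{s}$ it converges absolutely in $L^{p}$ (by H\"older's inequality, since $\sum_{k\leq j-1}2^{-ksr^{\prime}}<\infty$ because $-s>0$).

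For the estimate from above I would start from $\|\dot{S}_{j}u\|_{L^{p}}\leq\sum_{k\leq j-1}\|\dot{\Delta}_{k}u\|_{L^{p}}$, multiply by $2^{js}$, and set $m=j-k\geq1$ to obtain
\[
2^{js}\|\dot{S}_{j}u\|_{L^{p}}\leq\sum_{m\geq1}2^{ms}\left(2^{(j-m)s}\|\dot{\Delta}_{j-m}u\|_{L^{p}}\right).
\]
This displays the left-hand side as the discrete convolution (in the variable $j$) of the sequence $\left(2^{ks}\|\dot{\Delta}_{k}u\|_{L^{p}}\right)_{k}$ with the sequence equal to $2^{ms}$ for $m\geq1$ and to $0$ otherwise. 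Young's convolution inequality on $\ell^{r}(\mathbb{Z})$ then gives
\[
\left\|\left(2^{js}\|\dot{S}_{j}u\|_{L^{p}}\right)_{j}\right\|_{\ell^{r}}\leq\left(\sum_{m\geq1}2^{ms}\right)\|u\|_{\dot{B}_{p,r}^{s}}=\frac{1}{2^{|s|}-1}\,\|u\|_{\dot{B}_{p,r}^{s}},
\]
and the elementary bound $2^{|s|}-1\geq|s|\ln 2$ yields $\tfrac{1}{2^{|s|}-1}\leq C\left(1+\tfrac{1}{|s|}\right)$. Here the sign hypothesis $s<0$ is essential: it is exactly what makes the geometric series $\sum_{m\geq1}2^{ms}$ summable, and it accounts for the blow-up of the constant as $s\to0^{-}$.

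For the reverse estimate I would use the telescoping identity $\dot{\Delta}_{j}u=\dot{S}_{j+1}u-\dot{S}_{j}u$, legitimate because each $\dot{S}_{j}u$ lies in $L^{p}$ (every term of $\left(2^{js}\|\dot{S}_{j}u\|_{L^{p}}\right)_{j}\in\ell^{r}$ is finite). Hence
\[
2^{js}\|\dot{\Delta}_{j}u\|_{L^{p}}\leq 2^{-s}\cdot2^{(j+1)s}\|\dot{S}_{j+1}u\|_{L^{p}}+2^{js}\|\dot{S}_{j}u\|_{L^{p}},
\]
and taking the $\ell^{r}(\mathbb{Z})$-norm (which is invariant under the shift $j\mapsto j+1$) gives $\|u\|_{\dot{B}_{p,r}^{s}}\leq\left(1+2^{|s|}\right)\left\|\left(2^{js}\|\dot{S}_{j}u\|_{L^{p}}\right)_{j}\right\|_{\ell^{r}}$; in particular $u\in\dot{B}_{p,r}^{s}$, and since $1+2^{|s|}\leq 2\cdot2^{|s|}$ this is the asserted lower bound for a suitable exponential base $C$. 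Both estimates hold verbatim for $p=\infty$ and for $r=\infty$. The only genuinely delicate point --- the one I would treat most carefully --- is not an inequality but the identification of the distribution $\dot{S}_{j}u=\chi(2^{-j}D)u$ with the (a priori only $\mathcal{S}^{\prime}$-, resp. $L^{p}$-)convergent series $\sum_{k\leq j-1}\dot{\Delta}_{k}u$; this is precisely where membership in $\mathcal{S}_{h}^{\prime}$, i.e. $\dot{S}_{N}u\to0$ in $L^{\infty}$ as $N\to-\infty$, enters. Everything else reduces to the triangle inequality, H\"older's inequality, and Young's convolution inequality on $\ell^{r}$.
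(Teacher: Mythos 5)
Your proof is correct and is precisely the argument found in Bahouri--Chemin--Danchin (the reference \cite{Dan1} that the paper cites for this proposition; the paper supplies no proof of its own): the identity $\dot{S}_{j}=\sum_{k\leq j-1}\dot{\Delta}_{k}$ together with Young's convolution inequality on $\ell^{r}(\mathbb{Z})$ gives the upper bound with constant $(2^{|s|}-1)^{-1}\lesssim 1+|s|^{-1}$, and the telescoping $\dot{\Delta}_{j}=\dot{S}_{j+1}-\dot{S}_{j}$ gives the lower bound. One minor observation worth recording: the constant you obtain in the lower bound is $(1+2^{|s|})^{-1}\geq 2^{-(|s|+1)}$, which shows that the exponent $-|s|+1$ printed in the statement is a typo for $-(|s|+1)$; indeed $C^{1-|s|}>1$ for $|s|<1$ (with $C>1$) cannot bound $(1+2^{|s|})^{-1}\leq\frac12$ from below.
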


The next proposition tells us how certain multipliers act on Besov spaces.

\begin{proposition}
Let us consider $A$ a smooth function on $\mathbb{R}^{n}\backslash\{0\}$ which
is homogeneous of degree $m$. Then, for any $\left(  s,p,r\right)
\in\mathbb{R\times}\left[  1,\infty\right]  ^{2}$ such that%
\[
s-m<\frac{n}{p}\text{ or }s-m=\frac{n}{p}\text{ and }r=1
\]
the operator\footnote{$A\left(  D\right)  w=\mathcal{F}^{-1}\left(
A\mathcal{F}w\right)  $} $A\left(  D\right)  $ maps $\dot{B}_{p,r}^{s}$
continuously into $\dot{B}_{p,r}^{s-m}$.
\end{proposition}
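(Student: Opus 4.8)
The plan is to reduce everything to a single dyadic‑block estimate and then reassemble by Littlewood--Paley theory. Since $\varphi$ is supported in the annulus $\mathcal{C}$, away from the origin, the function $A\varphi$ (and, more generally, $A\widetilde\varphi$ for any $\widetilde\varphi\in\mathcal{D}(\mathbb{R}^n\setminus\{0\})$) is genuinely smooth and compactly supported, so its inverse Fourier transform lies in $\mathcal{S}\subset L^1$. For $u\in\dot B^{s}_{p,r}$ one \emph{defines} $A(D)u:=\sum_{j\in\mathbb{Z}}A(D)\dot\Delta_j u$, each term $A(D)\dot\Delta_j u:=\mathcal{F}^{-1}\big(A\,\widehat{\dot\Delta_j u}\big)$ being a well‑defined tempered distribution because $\widehat{\dot\Delta_j u}$ is compactly supported away from $0$. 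Two things then have to be checked: (i) the series converges in $\mathcal{S}'$, so that $A(D)u$ is meaningful and is reconstructed from its dyadic blocks; and (ii) the bound $\|A(D)u\|_{\dot B^{s-m}_{p,r}}\lesssim\|u\|_{\dot B^{s}_{p,r}}$.

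First I would prove the block estimate: if $\widehat w$ is supported in $2^{j}\mathcal{C}$ then $\|A(D)w\|_{L^p}\le K_A\,2^{jm}\|w\|_{L^p}$, with $K_A$ independent of $j$. Picking $\widetilde\varphi\in\mathcal{D}(\mathbb{R}^n\setminus\{0\})$ equal to $1$ on $\mathcal{C}$, one has $A(\xi)\widehat w(\xi)=A(\xi)\widetilde\varphi(2^{-j}\xi)\widehat w(\xi)=2^{jm}(A\widetilde\varphi)(2^{-j}\xi)\widehat w(\xi)$ by homogeneity of $A$, hence $A(D)w=2^{jm}\,h_j * w$ with $h_j=2^{jn}\big(\mathcal{F}^{-1}(A\widetilde\varphi)\big)(2^{j}\cdot)$ and $\|h_j\|_{L^1}=\|\mathcal{F}^{-1}(A\widetilde\varphi)\|_{L^1}=:K_A<\infty$; Young's convolution inequality finishes it. Applied to $w=\dot\Delta_j u$ this gives $\|A(D)\dot\Delta_j u\|_{L^p}\le K_A\,2^{jm}\|\dot\Delta_j u\|_{L^p}$.

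The reassembly is then routine. Since $A(D)\dot\Delta_j u$ inherits the spectral localization of $\dot\Delta_j u$, the support condition $(\ref{28})$ yields $\dot\Delta_k\big(A(D)\dot\Delta_j u\big)=0$ for $|k-j|\ge 2$, so, once (i) is known, $\dot\Delta_k(A(D)u)=\sum_{|k-j|\le 1}\dot\Delta_k\big(A(D)\dot\Delta_j u\big)$; using the uniform $L^p$‑boundedness of $\dot\Delta_k$ and the block estimate, $2^{k(s-m)}\|\dot\Delta_k(A(D)u)\|_{L^p}\lesssim\sum_{|k-j|\le 1}2^{js}\|\dot\Delta_j u\|_{L^p}$, and taking the $\ell^{r}(\mathbb{Z})$ norm in $k$ together with Young's inequality for sequences gives (ii). The one delicate point is the convergence (i): the tail $\sum_{j\ge 0}$ converges in $\dot B^{s-m}_{p,r}$, hence in $\mathcal{S}'$, by (ii) applied to partial sums; for $j\le 0$ I would use Bernstein's inequality to write $\|A(D)\dot\Delta_j u\|_{L^\infty}\lesssim 2^{jn/p}\|A(D)\dot\Delta_j u\|_{L^p}\lesssim 2^{j(n/p+m-s)}c_j\,\|u\|_{\dot B^{s}_{p,r}}$, where $c_j:=2^{js}\|\dot\Delta_j u\|_{L^p}/\|u\|_{\dot B^{s}_{p,r}}$ satisfies $\|(c_j)_j\|_{\ell^r}=1$. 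If $s-m<\tfrac{n}{p}$ the exponent $n/p+m-s$ is positive and (since $\sup_j c_j\le 1$) the low‑frequency series converges absolutely in $L^\infty$; if $s-m=\tfrac{n}{p}$ and $r=1$ the general term is $\lesssim c_j\|u\|_{\dot B^{s}_{p,r}}$ with $\sum_{j\le 0}c_j<\infty$, again giving uniform convergence. Either way the series converges in $\mathcal{S}'$. This low‑frequency step is precisely where the hypothesis $(\ref{lowfrequencypartcondition})$ on $s-m$ is used — equivalently, where one needs $\dot B^{s-m}_{p,r}$ to be a Banach space in the sense of Proposition~\ref{Banach} — and I expect it to be the only genuinely subtle part of the argument. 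Finally one notes, using the same block estimates, that the blockwise definition of $A(D)u$ is independent of the chosen dyadic partition and agrees with $\mathcal{F}^{-1}(A\widehat u)$ whenever the latter is classically meaningful.
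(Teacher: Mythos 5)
Your proof is correct and follows exactly the standard Littlewood--Paley route: reduce to a uniform block estimate via the homogeneity $A(\xi)=2^{jm}A(2^{-j}\xi)$ on the annulus, reassemble using the almost-orthogonality $\dot\Delta_k\dot\Delta_j=0$ for $|k-j|\ge 2$ and Young's inequality in $\ell^r$, and justify convergence of the series by splitting high and low frequencies, the low-frequency Bernstein step being the exact place the hypothesis $s-m<n/p$ (or $=n/p$ with $r=1$) enters. This is the same argument as in the cited source \cite{Dan1}, Chapter~2, to which the paper defers; the only small point you leave implicit is that the low-frequency $L^\infty$ estimate also gives $\|\dot S_j(A(D)u)\|_{L^\infty}\to 0$ as $j\to-\infty$, which is needed to place $A(D)u$ in $\mathcal{S}'_h$ and hence in $\dot B^{s-m}_{p,r}$ as defined.
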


The next proposition describes how smooth functions act on homogeneous Besov spaces.

\begin{proposition}
\label{compunere}Let $f$ be a smooth function on $\mathbb{R}$ which vanishes
at $0$. Let us consider $\left(  s,p,r\right)  \in\mathbb{R\times}\left[
1,\infty\right]  ^{2}$ such that%
\[
0<s<\frac{n}{p}\text{ or }s=\frac{n}{p}\text{ and }r=1.
\]
Then for any real-valued function $u\in\dot{B}_{p,r}^{s}\cap L^{\infty}$, the
function $f\circ u\in\dot{B}_{p,r}^{s}\cap L^{\infty}$ and we have%
\[
\left\Vert f\circ u\right\Vert _{\dot{B}_{p,r}^{s}}\leq C\left(  f^{\prime
},\left\Vert u\right\Vert _{L^{\infty}}\right)  \left\Vert u\right\Vert
_{\dot{B}_{p,r}^{s}}.
\]

\end{proposition}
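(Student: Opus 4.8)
The plan is to use Meyer's linearization, after which the estimate reduces to two elementary geometric summations. Since every quantity below only involves the values of $f$ on a bounded interval $I$ depending on $\|u\|_{L^{\infty}}$ (and on the $L^{1}$-norms of the Littlewood--Paley kernels), I would first replace $f$ by a function in $C^{\infty}_{c}(\mathbb{R})$ that agrees with it on $I$ and still vanishes at $0$; this leaves $f\circ u$ unchanged and makes $f$ together with all its derivatives bounded. Because $u\in\mathcal{S}_{h}^{\prime}\cap L^{\infty}$ we have $\dot{S}_{q}u\to 0$ in $L^{\infty}$ as $q\to-\infty$ and $\dot{S}_{q}u\to u$ a.e. as $q\to+\infty$, so, using $f(0)=0$ and continuity of $f$, I would write the telescoping identity
\[
f(u)=\sum_{q\in\mathbb{Z}}\bigl(f(\dot{S}_{q+1}u)-f(\dot{S}_{q}u)\bigr)=\sum_{q\in\mathbb{Z}}m_{q}\,\dot{\Delta}_{q}u,\qquad m_{q}:=\int_{0}^{1}f'\bigl(\dot{S}_{q}u+\tau\dot{\Delta}_{q}u\bigr)\,d\tau,
\]
valid pointwise a.e. (using $\dot{S}_{q+1}u-\dot{S}_{q}u=\dot{\Delta}_{q}u$). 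The argument of $f'$ being a convex combination of $\dot{S}_{q}u$ and $\dot{S}_{q+1}u$, it stays in $I$, so $\|m_{q}\|_{L^{\infty}}\lesssim\|f'\|_{L^{\infty}}$; moreover $\dot{S}_{q}u,\dot{S}_{q+1}u,\dot{\Delta}_{q}u$ are spectrally supported in a ball of radius $\lesssim 2^{q}$ and bounded in $L^{\infty}$, so differentiating under the integral sign and invoking Bernstein's inequalities I would get $\|\nabla^{\ell}m_{q}\|_{L^{\infty}}\lesssim 2^{q\ell}$ for every $\ell$, with constants depending only on $\|u\|_{L^{\infty}}$ and finitely many derivatives of $f$.

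The core estimate is then a splitting of $\dot{\Delta}_{j}f(u)=\sum_{q}\dot{\Delta}_{j}(m_{q}\dot{\Delta}_{q}u)$ at $q=j$. For $q\ge j$ I would simply drop $\dot{\Delta}_{j}$ and use $\|m_{q}\dot{\Delta}_{q}u\|_{L^{p}}\lesssim\|f'\|_{L^{\infty}}\|\dot{\Delta}_{q}u\|_{L^{p}}$. For $q<j$ I would exploit that $\varphi$ vanishes near the origin to write $\varphi(\xi)=\sum_{|\alpha|=N}\xi^{\alpha}\Phi_{\alpha}(\xi)$ with $\Phi_{\alpha}\in\mathcal{D}(\mathcal{C})$, so that $\dot{\Delta}_{j}w=2^{-jN}\sum_{|\alpha|=N}\Phi_{\alpha}(2^{-j}D)\,\partial^{\alpha}w$ and $\|\dot{\Delta}_{j}w\|_{L^{p}}\lesssim 2^{-jN}\|\nabla^{N}w\|_{L^{p}}$; applying Leibniz to $w=m_{q}\dot{\Delta}_{q}u$ and using the Bernstein bounds on the two factors yields $\|\dot{\Delta}_{j}(m_{q}\dot{\Delta}_{q}u)\|_{L^{p}}\lesssim 2^{(q-j)N}\|\dot{\Delta}_{q}u\|_{L^{p}}$. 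Writing $\|\dot{\Delta}_{q}u\|_{L^{p}}=2^{-qs}c_{q}\|u\|_{\dot{B}_{p,r}^{s}}$ with $\|(c_{q})\|_{\ell^{r}}=1$, multiplying by $2^{js}$ and summing gives
\[
2^{js}\|\dot{\Delta}_{j}f(u)\|_{L^{p}}\lesssim\Bigl(\sum_{q\ge j}2^{(j-q)s}c_{q}+\sum_{q<j}2^{(q-j)(N-s)}c_{q}\Bigr)\|u\|_{\dot{B}_{p,r}^{s}}.
\]
Since $s>0$ the first geometric factor is summable, and choosing $N>s$ makes the second one summable too, so both terms are discrete convolutions of $(c_{q})$ with fixed $\ell^{1}(\mathbb{Z})$ sequences; Young's inequality for series then gives $\|f(u)\|_{\dot{B}_{p,r}^{s}}\lesssim\|u\|_{\dot{B}_{p,r}^{s}}$ with a constant of the claimed form.

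The one genuinely delicate point — and the step I expect to be the main obstacle — is not the summations but making the whole scheme meaningful: a priori $f(u)$ is only a bounded measurable function, and one must check that it belongs to $\mathcal{S}_{h}^{\prime}$ (so that $\|f(u)\|_{\dot{B}_{p,r}^{s}}$ makes sense) and that the telescoping series represents it in the topology of $\dot{B}_{p,r}^{s}$. This is precisely where the hypothesis $0<s<\frac{n}{p}$, or $s=\frac{n}{p}$ and $r=1$ — i.e. condition \eqref{lowfrequencypartcondition} — is needed: under it $\dot{B}_{p,r}^{s}$ is a Banach space continuously embedded in $\mathcal{S}_{h}^{\prime}$ (Proposition \ref{Banach}), so the estimate above shows that the partial sums $\sum_{|q|\le Q}m_{q}\dot{\Delta}_{q}u$ are Cauchy in $\dot{B}_{p,r}^{s}$ and hence converge there; their pointwise a.e. limit being $f(u)$, one concludes $f(u)\in\dot{B}_{p,r}^{s}$ with the stated bound. (For the sub-range $0<s<1$ one may alternatively avoid Meyer's decomposition and argue directly with the finite-difference characterisation of $\dot{B}_{p,r}^{s}$, which gives the dependence on $\|f'\|_{L^{\infty}}$ alone; in the larger range of $s$ allowed here a finite jet of $f$ is genuinely needed, and the argument above is the robust route.)
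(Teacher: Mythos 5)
The paper does not prove Proposition \ref{compunere}; it is cited from \cite{Dan1}, Chapter~2, and your proof reproduces essentially the argument given there: Meyer's first linearization $f(u)=\sum_q m_q\dot\Delta_q u$ with $m_q=\int_0^1 f'(\dot S_q u+\tau\dot\Delta_q u)\,d\tau$, spectral bounds $\|\nabla^\ell m_q\|_{L^\infty}\lesssim 2^{q\ell}$, the $q\gtrless j$ split using $s>0$ on one side and an order-$N$ gain from $\varphi$ vanishing near the origin on the other, and Young's inequality for discrete convolutions. This is the standard route, and the estimates are correct. One small imprecision to flag: you assert $\dot S_q u\to u$ a.e.\ as $q\to+\infty$ as if it followed from $u\in\mathcal S_h'\cap L^\infty$, which it does not. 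What one actually uses is that, for $u\in\dot B^s_{p,r}$ with $s>0$, $\|u-\dot S_{Q+1}u\|_{L^p}\le\sum_{q>Q}\|\dot\Delta_q u\|_{L^p}\lesssim 2^{-Qs}\|u\|_{\dot B^s_{p,r}}\to 0$, so $\dot S_{Q+1}u\to u$ in $L^p$, hence $f(\dot S_{Q+1}u)\to f(u)$ in $L^p$ by the Lipschitz bound on $f$, and together with $f(\dot S_{-Q}u)\to f(0)=0$ in $L^\infty$ this identifies the limit of the partial sums with $f(u)$ in $\mathcal S'$; for $r=\infty$ one then invokes weak-$*$ compactness (Fatou for Besov norms) rather than Cauchy-ness. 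With that adjustment the argument is complete and matches the cited source.
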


\begin{remark}
The constant $C\left(  f^{\prime},\left\Vert u\right\Vert _{L^{\infty}%
}\right)  $ appearing above can be taken to be%
\[
\sup_{i\in\overline{1,[s]+1}}\left\Vert f^{(i)}\right\Vert _{L^{\infty}\left(
|[-M\left\Vert u\right\Vert _{L^{\infty}},-M\left\Vert u\right\Vert
_{L^{\infty}}]\right)  }%
\]
where $M$ is a constant depending only on the dimension $n$.
\end{remark}

\subsection{Commutator and product estimates}

Next, we want to see how the product acts in Besov spaces. The Bony
decomposition, introduced in \cite{Bony1} offers a mathematical framework to
obtain estimates of the product of two distributions, when the later is defined.

\begin{definition}
\label{paraprodus_rest}Given two tempered distributions $u,v\in S_{h}^{\prime
}$, the homogeneous paraproduct of $v$ by $u$ is defined as:%
\begin{equation}
\dot{T}_{u}v=\sum_{j\in\mathbb{Z}}\dot{S}_{j-1}u\dot{\Delta}_{j}v.
\label{Paraproduct}%
\end{equation}
The homogeneous remainder of $u$ and $v$ is defined by:%
\begin{equation}
\dot{R}\left(  u,v\right)  =\sum_{j\in\mathbb{Z}}\dot{\Delta}_{j}u\dot{\Delta
}_{j}^{\prime}v \label{Reminder}%
\end{equation}
where%
\[
\dot{\Delta}_{j}^{\prime}=\dot{\Delta}_{j-1}+\dot{\Delta}_{j}+\dot{\Delta
}_{j+1}.
\]

\end{definition}

\begin{remark}
 Notice that at a formal level, one has the following decomposition of
the product of two (sufficiently well-behaved) distributions:%
\[
uv=\dot{T}_{u}v+\dot{T}_{v}u+\dot{R}\left(  u,v\right)  =\dot{T}_{u}v+\dot
{T}_{v}^{\prime}u.
\]

\end{remark}

The next result describes how the paraproduct and remainder behave.

\begin{proposition}
\label{ParaRem}1) Assume that $(s,p,p_{1},p_{2},r)\in\mathbb{R\times}\left[
1,\infty\right]  ^{4}$ such that:%
\[
\frac{1}{p}=\frac{1}{p_{1}}+\frac{1}{p_{2}},\text{ }s<\frac{n}{p}\text{ or
}s=\frac{n}{p}\text{ and }r=1\text{.}%
\]
Then, the paraproduct maps $L^{p_{1}}\times\dot{B}_{p_{2},r}^{s}$ into
$\dot{B}_{p,r}^{s}$ and the following estimates hold true:%
\[
\left\Vert \dot{T}_{f}g\right\Vert _{\dot{B}_{p,r}^{s}}\lesssim\left\Vert
f\right\Vert _{L^{p_{1}}}\left\Vert g\right\Vert _{\dot{B}_{p_{2},r}^{s}}.
\]

2) Assume that $(s,p,p_{1},p_{2},r,r_{1},r_{2})\in\mathbb{R\times}\left[
1,\infty\right]  ^{6}$ and $\nu>0$ such that%
\[
\frac{1}{p}=\frac{1}{p_{1}}+\frac{1}{p_{2}},\text{ }\frac{1}{r}=\frac{1}%
{r_{1}}+\frac{1}{r_{2}}%
\]
and%
\[
s<\frac{n}{p}-\nu\text{ or }s=\frac{n}{p}-\nu\text{ and }r=1\text{.}%
\]
Then, the paraproduct maps $\dot{B}_{p_{1},r_{1}}^{-\nu}\times\dot{B}%
_{p_{2},r_{2}}^{s+\nu}$ into $\dot{B}_{p,r}^{s}$ and the following estimate
holds true:%
\[
\left\Vert \dot{T}_{f}g\right\Vert _{\dot{B}_{p,r}^{s}}\lesssim\left\Vert
f\right\Vert _{\dot{B}_{p_{1},r_{1}}^{-\nu}}\left\Vert g\right\Vert _{\dot
{B}_{p_{2},r_{2}}^{s+\nu}}.
\]

3) Let us consider $\left(  s_{1},s_{2,}p,p_{1},p_{2},r,r_{1},r_{2}\right)
\in\mathbb{R}^{2}\times\left[  1,\infty\right]  ^{6}$ such that
\[
0<s_{1}+s_{2}<\frac{n}{p}\text{ or }s_{1}+s_{2}=\frac{n}{p}\text{ and }r=1.
\]
Then, the remainder maps $\dot{B}_{p_{1},r_{1}}^{s_{1}}\times\dot{B}%
_{p_{2},r_{2}}^{s_{2}}$ into $\dot{B}_{p,r}^{s_{1}+s_{2}}$ and
\[
\left\Vert \dot{R}\left(  f,g\right)  \right\Vert _{\dot{B}_{p,r}^{s_{1}%
+s_{2}}}\leq\left\Vert f\right\Vert _{\dot{B}_{p_{1},r_{1}}^{s_{1}}}\left\Vert
g\right\Vert _{\dot{B}_{p_{2},r_{2}}^{s_{2}}}.
\]

\end{proposition}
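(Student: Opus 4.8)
The plan is to prove all three assertions at once by the classical Littlewood--Paley bookkeeping, the only inputs being Hölder's inequality, Young's convolution inequality (both for functions and for sequences), and the quasi-orthogonality of the dyadic blocks. The preliminary observation I would record is spectral: since $\dot{S}_{j-1}f$ is frequency supported in a ball of radius $\sim 2^{j}$ and $\dot{\Delta}_{j}g$ in the annulus $2^{j}\mathcal{C}$ of Proposition \ref{diadic}, each summand $\dot{S}_{j-1}f\,\dot{\Delta}_{j}g$ of $\dot{T}_{f}g$ is frequency supported in a fixed dilated annulus $2^{j}\widetilde{\mathcal{C}}$, hence there is an absolute $N_{0}$ with $\dot{\Delta}_{k}\!\left(\dot{S}_{j-1}f\,\dot{\Delta}_{j}g\right)=0$ whenever $|k-j|>N_{0}$; by contrast each summand $\dot{\Delta}_{j}f\,\dot{\Delta}_{j}^{\prime}g$ of $\dot{R}(f,g)$ is frequency supported in a \emph{ball} $2^{j}B$, so $\dot{\Delta}_{k}\!\left(\dot{\Delta}_{j}f\,\dot{\Delta}_{j}^{\prime}g\right)=0$ unless $j\ge k-N_{0}$. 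This one-sided versus two-sided localization is exactly what will make the final sums converge.

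For 1) and 2) I would write $\dot{\Delta}_{k}(\dot{T}_{f}g)=\sum_{|k-j|\le N_{0}}\dot{\Delta}_{k}\!\left(\dot{S}_{j-1}f\,\dot{\Delta}_{j}g\right)$ and bound each term by Hölder, $\bigl\|\dot{\Delta}_{k}(\dot{S}_{j-1}f\,\dot{\Delta}_{j}g)\bigr\|_{L^{p}}\le \bigl\|\dot{S}_{j-1}f\bigr\|_{L^{p_{1}}}\bigl\|\dot{\Delta}_{j}g\bigr\|_{L^{p_{2}}}$, using $\tfrac1p=\tfrac1{p_{1}}+\tfrac1{p_{2}}$. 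In case 1) one has $\bigl\|\dot{S}_{j-1}f\bigr\|_{L^{p_{1}}}\lesssim\|f\|_{L^{p_{1}}}$ uniformly in $j$ (Young), while in case 2) one sums the low-frequency blocks of $f$: $\bigl\|\dot{S}_{j-1}f\bigr\|_{L^{p_{1}}}\le\sum_{j'\le j-2}\bigl\|\dot{\Delta}_{j'}f\bigr\|_{L^{p_{1}}}\lesssim 2^{j\nu}d_{j}\,\|f\|_{\dot{B}^{-\nu}_{p_{1},r_{1}}}$ with $(d_{j})\in\ell^{r_{1}}$ of norm $\lesssim 1$, the geometric factor $2^{(j'-j)\nu}$ being summable precisely because $\nu>0$. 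Multiplying by $2^{ks}$ and invoking Proposition \ref{propA} on $g$, one reaches
\[
2^{ks}\bigl\|\dot{\Delta}_{k}(\dot{T}_{f}g)\bigr\|_{L^{p}}\lesssim \mathcal{A}\sum_{|k-j|\le N_{0}}2^{(k-j)s}\gamma_{j},
\]
with $\mathcal{A}=\|f\|_{L^{p_{1}}}\|g\|_{\dot{B}^{s}_{p_{2},r}}$ and $\gamma\in\ell^{r}$ of unit norm in case 1), resp. $\mathcal{A}=\|f\|_{\dot{B}^{-\nu}_{p_{1},r_{1}}}\|g\|_{\dot{B}^{s+\nu}_{p_{2},r_{2}}}$ and $\gamma\in\ell^{r}$ with $\tfrac1r=\tfrac1{r_{1}}+\tfrac1{r_{2}}$ (Hölder) of unit norm in case 2); since the inner sum ranges over a finite window, it is the convolution of $\gamma$ with a compactly supported, hence $\ell^{1}$, sequence, so Young's inequality for sequences closes the estimate.

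For 3), the ball localization gives $\dot{\Delta}_{k}(\dot{R}(f,g))=\sum_{j\ge k-N_{0}}\dot{\Delta}_{k}\!\left(\dot{\Delta}_{j}f\,\dot{\Delta}_{j}^{\prime}g\right)$; Hölder with $\tfrac1p=\tfrac1{p_{1}}+\tfrac1{p_{2}}$ together with the uniform $L^{p}$-boundedness of $\dot{\Delta}_{k}$ bounds each term by $\bigl\|\dot{\Delta}_{j}f\bigr\|_{L^{p_{1}}}\bigl\|\dot{\Delta}_{j}^{\prime}g\bigr\|_{L^{p_{2}}}$. Multiplying by $2^{k(s_{1}+s_{2})}$ and inserting $2^{js_{1}}\|\dot{\Delta}_{j}f\|_{L^{p_{1}}}=a_{j}\|f\|_{\dot{B}^{s_{1}}_{p_{1},r_{1}}}$ and $2^{js_{2}}\|\dot{\Delta}_{j}^{\prime}g\|_{L^{p_{2}}}\lesssim b_{j}\|g\|_{\dot{B}^{s_{2}}_{p_{2},r_{2}}}$ gives
\[
2^{k(s_{1}+s_{2})}\bigl\|\dot{\Delta}_{k}(\dot{R}(f,g))\bigr\|_{L^{p}}\lesssim\|f\|_{\dot{B}^{s_{1}}_{p_{1},r_{1}}}\|g\|_{\dot{B}^{s_{2}}_{p_{2},r_{2}}}\sum_{j\ge k-N_{0}}2^{(k-j)(s_{1}+s_{2})}a_{j}b_{j}.
\]
Here the window is one-sided, so the weight $2^{(k-j)(s_{1}+s_{2})}$ on $\{j\ge k-N_{0}\}$ is summable \emph{precisely} because $s_{1}+s_{2}>0$; combined with $(a_{j}b_{j})\in\ell^{r}$ ($\tfrac1r=\tfrac1{r_{1}}+\tfrac1{r_{2}}$, Hölder) and Young's inequality, this yields the asserted bound on $\|\dot{R}(f,g)\|_{\dot{B}^{s_{1}+s_{2}}_{p,r}}$. \textbf{The step that will require genuine care} is not the inequalities themselves but the fact that the series defining $\dot{T}_{f}g$ and $\dot{R}(f,g)$ actually converge in $\mathcal{S}_{h}^{\prime}$ and that their limits lie in the claimed spaces: this is exactly where the upper thresholds — $s<\tfrac np$ or ($s=\tfrac np$, $r=1$) in 1), the analogous condition in 2), and $s_{1}+s_{2}<\tfrac np$ or ($s_{1}+s_{2}=\tfrac np$, $r=1$) in 3) — are used. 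By Proposition \ref{Banach} they make the target Besov spaces Banach subspaces of $\mathcal{S}'$, so the partial sums, being Cauchy in norm by the displayed bounds, converge there; the only subtle accumulation is the low-frequency one for the paraproduct, which is controlled in the same way. Once this is settled, the estimates above are precisely the claimed ones.
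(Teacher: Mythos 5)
Your proposal is correct and follows the standard Littlewood--Paley argument; the paper itself offers no proof of Proposition \ref{ParaRem}, stating it as background and referring to \cite{Dan1} (Chapter 2), whose proof is essentially the one you reproduce: spectral localization (annulus for $\dot S_{j-1}f\,\dot\Delta_j g$, ball for $\dot\Delta_j f\,\dot\Delta'_j g$), H\"older in $L^p$ blockwise, Proposition \ref{snegative} to trade $\dot S_{j-1}f$ for the $\dot B^{-\nu}_{p_1,r_1}$-norm, and Young's inequality for sequences with the one-sided kernel $2^{m(s_1+s_2)}\mathds{1}_{\{m\le N_0\}}$ requiring $s_1+s_2>0$. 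Two small observations: you correctly identify that the convergence of the partial sums in $\mathcal{S}_h'$ (not the norm inequalities) is where the upper thresholds $s<n/p$, etc., enter via Proposition \ref{Banach}; and you implicitly supply the H\"older relations $\tfrac1p=\tfrac1{p_1}+\tfrac1{p_2}$, $\tfrac1r=\tfrac1{r_1}+\tfrac1{r_2}$ in part 3), which the paper's statement omits but which are clearly intended (they appear in \cite{Dan1}) and are necessary for your — and the standard — argument.
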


As a consequence we obtain the following product rules in Besov space:

\begin{proposition}
\label{Produs}Consider $p\in\left[  1,\infty\right]  $ and the real numbers
$\nu_{1}\geq0$ and $\nu_{2}\geq0$%
\[
\nu_{1}+\nu_{2}<\frac{n}{p}+\min\left\{  \frac{n}{p},\frac{n}{p^{\prime}%
}\right\}  .
\]
Then, the following estimate holds true:%
\[
\left\Vert fg\right\Vert _{\dot{B}_{p,1}^{\frac{n}{p}-\nu_{1}-\nu_{2}}%
}\lesssim\left\Vert f\right\Vert _{\dot{B}_{p,1}^{\frac{n}{p}-\nu_{1}}%
}\left\Vert g\right\Vert _{\dot{B}_{p,1}^{\frac{n}{p}-\nu_{2}}}.
\]

\end{proposition}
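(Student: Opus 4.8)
The plan is to use the Bony decomposition together with Proposition \ref{ParaRem} and the continuous inclusions of Proposition \ref{Embedding}. I would write
\[
fg=\dot{T}_{f}g+\dot{T}_{g}f+\dot{R}(f,g)
\]
and estimate each of the three pieces separately in $\dot{B}_{p,1}^{\frac{n}{p}-\nu_{1}-\nu_{2}}$; adding the bounds both yields the claimed inequality and shows, a posteriori, that the product is well defined in that space. The two paraproducts are routine and only require $\nu_{1},\nu_{2}\geq 0$; the remainder is the term for which the full strength of the hypothesis $\nu_{1}+\nu_{2}<\frac{n}{p}+\min\{\frac{n}{p},\frac{n}{p'}\}$ is needed, and I expect it to be the main point requiring care.

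For $\dot{T}_{f}g$ I would distinguish whether $\nu_{1}$ vanishes. If $\nu_{1}=0$, then $f\in\dot{B}_{p,1}^{\frac{n}{p}}\hookrightarrow L^{\infty}$ by Proposition \ref{Embedding}, and part 1) of Proposition \ref{ParaRem} (with $p_{1}=\infty$, $p_{2}=p$, $r=1$, using $\frac{n}{p}-\nu_{2}\leq\frac{n}{p}$) gives $\|\dot{T}_{f}g\|_{\dot{B}_{p,1}^{\frac{n}{p}-\nu_{2}}}\lesssim\|f\|_{L^{\infty}}\|g\|_{\dot{B}_{p,1}^{\frac{n}{p}-\nu_{2}}}$. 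If $\nu_{1}>0$, then $f\in\dot{B}_{p,1}^{\frac{n}{p}-\nu_{1}}\hookrightarrow\dot{B}_{\infty,\infty}^{-\nu_{1}}$, and part 2) of Proposition \ref{ParaRem} (with $\nu=\nu_{1}$, $p_{1}=\infty$, $r_{1}=\infty$, $p_{2}=p$, $r_{2}=1$, so the output third index equals $1$, and using $\frac{n}{p}-\nu_{1}-\nu_{2}\leq\frac{n}{p}-\nu_{1}$) gives $\|\dot{T}_{f}g\|_{\dot{B}_{p,1}^{\frac{n}{p}-\nu_{1}-\nu_{2}}}\lesssim\|f\|_{\dot{B}_{\infty,\infty}^{-\nu_{1}}}\|g\|_{\dot{B}_{p,1}^{\frac{n}{p}-\nu_{2}}}$. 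In both cases $\|f\|_{L^{\infty}}+\|f\|_{\dot{B}_{\infty,\infty}^{-\nu_{1}}}\lesssim\|f\|_{\dot{B}_{p,1}^{\frac{n}{p}-\nu_{1}}}$ closes the estimate, and $\dot{T}_{g}f$ is treated symmetrically, swapping $(f,\nu_{1})$ with $(g,\nu_{2})$.

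For the remainder, I would set $P:=\max\{p,2\}$ and define $q$ by $\frac{1}{q}:=\frac{2}{P}$, so that $q\in[1,p]$. By Proposition \ref{Embedding}, $\dot{B}_{p,1}^{\frac{n}{p}-\nu_{1}}\hookrightarrow\dot{B}_{P,1}^{\frac{n}{P}-\nu_{1}}$ and $\dot{B}_{p,1}^{\frac{n}{p}-\nu_{2}}\hookrightarrow\dot{B}_{P,\infty}^{\frac{n}{P}-\nu_{2}}$. Applying part 3) of Proposition \ref{ParaRem} with $p_{1}=p_{2}=P$, $r_{1}=1$, $r_{2}=\infty$ (hence output third index $1$), $s_{1}=\frac{n}{P}-\nu_{1}$ and $s_{2}=\frac{n}{P}-\nu_{2}$ is legitimate exactly because $s_{1}+s_{2}=\frac{2n}{P}-\nu_{1}-\nu_{2}>0$, which is precisely the assumption $\nu_{1}+\nu_{2}<\frac{2n}{P}=\frac{n}{p}+\min\{\frac{n}{p},\frac{n}{p'}\}$ (the endpoint case $s_{1}+s_{2}=\frac{n}{q}$ occurs only when $\nu_{1}=\nu_{2}=0$ and is covered by the output third index being $1$). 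This produces $\|\dot{R}(f,g)\|_{\dot{B}_{q,1}^{\frac{2n}{P}-\nu_{1}-\nu_{2}}}\lesssim\|f\|_{\dot{B}_{P,1}^{\frac{n}{P}-\nu_{1}}}\|g\|_{\dot{B}_{P,\infty}^{\frac{n}{P}-\nu_{2}}}$, and a final use of Proposition \ref{Embedding}, namely $\dot{B}_{q,1}^{\frac{2n}{P}-\nu_{1}-\nu_{2}}\hookrightarrow\dot{B}_{p,1}^{\frac{n}{p}-\nu_{1}-\nu_{2}}$ (valid since $q\leq p$), combined with the two inclusions above, bounds $\dot{R}(f,g)$ in $\dot{B}_{p,1}^{\frac{n}{p}-\nu_{1}-\nu_{2}}$ by $\|f\|_{\dot{B}_{p,1}^{\frac{n}{p}-\nu_{1}}}\|g\|_{\dot{B}_{p,1}^{\frac{n}{p}-\nu_{2}}}$.

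Summing the three contributions yields the proposition. As anticipated, the only genuinely delicate point is the choice $P=\max\{p,2\}$ in the remainder step: it is dictated by the requirement that the regularity index obtained after applying the remainder estimate be strictly positive, and it is exactly this requirement that translates into the stated bound on $\nu_{1}+\nu_{2}$; everything else is index bookkeeping and the embeddings of Proposition \ref{Embedding}.
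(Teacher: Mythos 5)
Your proof is correct and is exactly the argument the paper intends: Proposition \ref{Produs} is announced as a direct consequence of Proposition \ref{ParaRem}, and you carry this out carefully via the Bony decomposition, with the key observation being that the choice $P=\max\{p,2\}$ (so that $\frac{n}{p}+\min\{\frac{n}{p},\frac{n}{p'}\}=\frac{2n}{P}$) makes the hypothesis on $\nu_{1}+\nu_{2}$ precisely the positivity condition $s_{1}+s_{2}>0$ needed for the remainder. The case split on $\nu_{1}=0$ versus $\nu_{1}>0$ in the paraproduct, and the endpoint $\nu_{1}=\nu_{2}=0$ in the remainder, are correctly handled by exploiting that the third index remains $1$ throughout.
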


\begin{proposition}
\label{Comutator1}Let us consider $\theta$ a $\mathcal{C}^{1}$ function on
$\mathbb{R}^{n}$ such that $\left(  1+\left\vert \cdot\right\vert \right)
\hat{\theta}\in L^{1}$. Let us also consider $p,q\in\left[  1,\infty\right]  $
such that:%
\[
\frac{1}{r}:=\frac{1}{p}+\frac{1}{q}\leq1.
\]
Then, there exists a constant $C$ such that for any Lipschitz function $a$
with gradient in $L^{p}$, any function $b\in L^{q}$ and any positive $\lambda
$:%
\[
\left\Vert \left[  \theta\left(  \lambda^{-1}D\right)  ,a\right]  b\right\Vert
_{L^{r}}\leq C\lambda^{-1}\left\Vert \nabla a\right\Vert _{L^{p}}\left\Vert
b\right\Vert _{L^{q}}.
\]
In particular, when $\theta=\varphi$ and $\lambda=2^{j}$ we get that:%
\[
\left\Vert \left[  \dot{\Delta}_{j},a\right]  b\right\Vert _{L^{r}}\leq
C2^{-j}\left\Vert \nabla a\right\Vert _{L^{p}}\left\Vert b\right\Vert _{L^{q}%
}.
\]

\end{proposition}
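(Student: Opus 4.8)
The plan is to represent the commutator as an integral operator and to exploit the Lipschitz regularity of $a$ via a first–order Taylor expansion, so that a gradient of $a$ appears and produces the gain of one derivative. Writing $g=\mathcal{F}^{-1}\theta$ for the convolution kernel of $\theta(D)$, the operator $\theta(\lambda^{-1}D)$ has kernel $x\mapsto \lambda^{n}g(\lambda x)$, hence
\[
[\theta(\lambda^{-1}D),a]b(x)=\int_{\mathbb{R}^{n}}\lambda^{n}g(\lambda(x-y))\,(a(y)-a(x))\,b(y)\,dy .
\]
The hypothesis $(1+|\cdot|)\hat\theta\in L^{1}$ translates, up to a reflection of the variable (which does not affect $L^{1}$ norms), into $(1+|\cdot|)g\in L^{1}$; in particular both $g$ and $|\cdot|\,g$ are integrable, which is all that will be needed.

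First I would use that $a$, being Lipschitz, is absolutely continuous along segments, so $a(y)-a(x)=\int_{0}^{1}(y-x)\cdot\nabla a(x+\tau(y-x))\,d\tau$. Inserting this and changing variables $z=x-y$ gives
\[
[\theta(\lambda^{-1}D),a]b(x)=-\int_{0}^{1}\!\!\int_{\mathbb{R}^{n}}\lambda^{n}g(\lambda z)\,z\cdot\nabla a(x-\tau z)\,b(x-z)\,dz\,d\tau .
\]
For fixed $z,\tau$, the map $x\mapsto \nabla a(x-\tau z)\,b(x-z)$ has $L^{r}$ norm at most $\|\nabla a\|_{L^{p}}\|b\|_{L^{q}}$ by H\"older's inequality, using $\tfrac1r=\tfrac1p+\tfrac1q$ and the translation invariance of Lebesgue norms.

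Then I would apply Minkowski's integral inequality to bring the $L^{r}_{x}$ norm inside the $dz\,d\tau$ integration, obtaining
\[
\bigl\|[\theta(\lambda^{-1}D),a]b\bigr\|_{L^{r}}\le \|\nabla a\|_{L^{p}}\|b\|_{L^{q}}\int_{\mathbb{R}^{n}}\lambda^{n}|g(\lambda z)|\,|z|\,dz .
\]
The rescaling $w=\lambda z$ turns the remaining integral into $\lambda^{-1}\int_{\mathbb{R}^{n}}|w|\,|g(w)|\,dw=\lambda^{-1}\,\bigl\||\cdot|\,g\bigr\|_{L^{1}}$, finite by the assumption on $\theta$; this gives the stated bound with $C=\bigl\||\cdot|\,g\bigr\|_{L^{1}}$. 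For the final assertion, taking $\theta=\varphi$ with $\varphi\in\mathcal{D}(\mathcal{C})$ smooth and compactly supported makes $\hat\varphi$ a Schwartz function, so $(1+|\cdot|)\hat\varphi\in L^{1}$ automatically, while $\lambda=2^{j}$ yields the factor $2^{-j}$.

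There is no deep obstacle here — the estimate is a classical commutator bound — and the only points requiring some care are: justifying the Taylor expansion for a merely Lipschitz $a$ (handled by absolute continuity along lines), legitimizing the Fubini/Minkowski interchange (which is precisely the finiteness of the iterated integral of absolute values computed above), and keeping the Fourier–transform conventions straight when passing from the hypothesis on $\hat\theta$ to the integrability of the kernel $g$.
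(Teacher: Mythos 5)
Your proof is correct, and it reproduces the classical argument (this is essentially Lemma 2.97 of the Bahouri--Chemin--Danchin book, which the paper cites for its Littlewood--Paley toolbox; the paper itself does not spell out a proof but refers the reader to the literature). The key steps — writing the commutator as an integral kernel against $a(y)-a(x)$, expanding that difference with the fundamental theorem of calculus along segments (valid a.e.\ by Rademacher, since $a$ is Lipschitz so $\nabla a\in L^{\infty}$ and the pointwise bound $|a(y)-a(x)|\leq \|\nabla a\|_{L^{\infty}}|y-x|$ makes the integral absolutely convergent), then applying H\"older in $x$ for fixed $(z,\tau)$ and Minkowski's integral inequality, and finally rescaling $w=\lambda z$ to extract the factor $\lambda^{-1}\|\,|\cdot|\,g\,\|_{L^{1}}$ — are all correct and are exactly the right ingredients. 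The only cosmetic point worth noting is that one must use the Lipschitz hypothesis (i.e.\ $\nabla a\in L^{\infty}$) and not merely $\nabla a\in L^{p}$ to justify the pointwise representation of the commutator and the applicability of Fubini/Minkowski, which you implicitly do; the final bound then only sees $\|\nabla a\|_{L^{p}}$. No gap.
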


\begin{proposition}
\label{A66}Assume that $s,\nu$ and $p\in\left[  1,\infty\right]  $ are such
that%
\[
0\leq\nu\leq\frac{n}{p}\text{ and }-1-\min\left\{  \frac{n}{p},\frac
{n}{p^{\prime}}\right\}  <s\leq\frac{n}{p}-\nu.
\]
Then, there exists a constant $C$ depending only on $s,\nu,p$ and $n$ such
that for all $l\in\overline{1,n}$ we have for some sequence $\left(
c_{j}\right)  _{j\in\mathbb{Z}}$ with $\left\Vert \left(  c_{j}\right)
_{j\in\mathbb{Z}}\right\Vert _{\ell^{1}\left(  \mathbb{Z}\right)  }=1$:%
\[
\left\Vert \partial_{l}\left[  a,\dot{\Delta}_{j}\right]  w\right\Vert
_{L^{p}}\leq Cc_{j}2^{-js}\left\Vert \nabla a\right\Vert _{\dot{B}%
_{p,1}^{\frac{n}{p}-\nu}}\left\Vert w\right\Vert _{\dot{B}_{p,1}^{s+\nu}}%
\]
for all $j\in\mathbb{Z}$.
\end{proposition}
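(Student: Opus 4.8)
The plan is to first peel off the derivative by rewriting it as a commutator with a compactly supported Fourier multiplier, then run a Bony decomposition and isolate the one piece where the cancellation in the commutator is genuinely exploited. Since $\varphi$ is supported in the annulus $\mathcal{C}$ of Proposition \ref{diadic}, the symbol $\varphi_l(\xi):=i\xi_l\varphi(\xi)$ lies in $\mathcal{D}(\mathcal{C})$ and $\partial_l\dot\Delta_j=2^j\varphi_l(2^{-j}D)$, so a direct computation gives
\[
\partial_l[a,\dot\Delta_j]w=2^j\,[a,\varphi_l(2^{-j}D)]w+(\partial_l a)\,\dot\Delta_j w .
\]
The operator $\varphi_l(2^{-j}D)$ has the same spectral localization as $\dot\Delta_j$ and, being attached to a function in $\mathcal{D}$, is covered by Proposition \ref{Comutator1}. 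It is therefore enough to prove the unit bound $\|[a,\varphi_l(2^{-j}D)]w\|_{L^p}\le C\,c_j\,2^{-j(s+1)}\|\nabla a\|_{\dot B_{p,1}^{n/p-\nu}}\|w\|_{\dot B_{p,1}^{s+\nu}}$ (with $\|(c_j)\|_{\ell^1(\mathbb Z)}=1$), after which the prefactor $2^j$ furnishes the exponent $-js$, together with the cheaper bound $\|(\partial_l a)\dot\Delta_j w\|_{L^p}\le C\,c_j\,2^{-js}\|\nabla a\|_{\dot B_{p,1}^{n/p-\nu}}\|w\|_{\dot B_{p,1}^{s+\nu}}$.

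The second, lower-order term is handled by Bony-decomposing $(\partial_l a)\dot\Delta_j w=\dot T_{\partial_l a}(\dot\Delta_j w)+\dot T_{\dot\Delta_j w}(\partial_l a)+\dot R(\partial_l a,\dot\Delta_j w)$: since $\dot\Delta_j w$ is frequency-localized, the outer two terms are finite sums over indices near $j$ and the middle one a sum over $k\gtrsim j$, all controlled by Bernstein's inequality; the hypothesis $\nu\ge0$ enters through $\|\dot S_{k-1}\nabla a\|_{L^\infty}\lesssim 2^{\nu k}\|\nabla a\|_{\dot B_{p,1}^{n/p-\nu}}$ and $\nu\le n/p$ through the summation of the middle term. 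For the main term I would use $aw=\dot T_a w+\dot T_w a+\dot R(a,w)$ (Definition \ref{paraprodus_rest}) and the analogous splitting of $a\,\varphi_l(2^{-j}D)w$, which gives
\[
[a,\varphi_l(2^{-j}D)]w=\underbrace{[\dot T_a,\varphi_l(2^{-j}D)]w}_{(\mathrm I)}+\underbrace{\dot T_{\varphi_l(2^{-j}D)w}a-\varphi_l(2^{-j}D)\dot T_w a}_{(\mathrm{II})}+\underbrace{\dot R(a,\varphi_l(2^{-j}D)w)-\varphi_l(2^{-j}D)\dot R(a,w)}_{(\mathrm{III})}.
\]

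Term $(\mathrm I)$ is the only one for which the commutator structure matters: spectral bookkeeping collapses it to finitely many commutators $[\dot S_{k-1}a,\varphi_l(2^{-j}D)]\dot\Delta_k w$ with $|k-j|\le N_0$, to which Proposition \ref{Comutator1} applies and yields $2^{-j}\|\nabla\dot S_{k-1}a\|_{L^\infty}\|\dot\Delta_k w\|_{L^p}$; after the bound $\|\nabla\dot S_{k-1}a\|_{L^\infty}\lesssim 2^{\nu k}\|\nabla a\|_{\dot B_{p,1}^{n/p-\nu}}$ the powers of $2^k$ collapse to $2^{-ks}$ and the finite convolution gives $c_j2^{-j(s+1)}(\cdots)$ with no constraint on $s$. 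For $(\mathrm{II})$, both $\dot T_{\varphi_l(2^{-j}D)w}a$ and $\varphi_l(2^{-j}D)\dot T_w a$ reduce to finite sums over $|k-j|\le N_0$ whose sensitive factor is $\|\dot S_{k-1}w\|_{L^\infty}$; summability of the latter forces $s<n/p-\nu$ (the endpoint admissible since the third index equals $1$), and Bernstein does the rest. In $(\mathrm{III})$, $\dot R(a,\varphi_l(2^{-j}D)w)$ is again a finite sum; the delicate term is $\varphi_l(2^{-j}D)\dot R(a,w)=\sum_{k\ge j-N_0}\varphi_l(2^{-j}D)\bigl(\dot\Delta_k a\,\dot{\Delta}_k^{\prime}w\bigr)$, an infinite sum, for which one writes $\|\dot\Delta_k a\|\lesssim 2^{-k}\|\dot\Delta_k\nabla a\|$ and splits the product by Hölder so as to exploit that $\varphi_l(2^{-j}D)$ localizes at frequency $2^j$ — estimating in $L^1$ and gaining $2^{jn/p'}$ when $p\le2$, passing through $L^2$ and gaining $2^{jn(1/2-1/p)}$ when $p\ge2$ — which makes the resulting geometric series over $k\ge j-N_0$ converge precisely when $s>-1-\min\{n/p,n/p'\}$.

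Summing the three contributions gives the unit bound for $[a,\varphi_l(2^{-j}D)]w$; multiplying by $2^j$ and adding the estimate for $(\partial_l a)\dot\Delta_j w$ produces the asserted inequality, the $\ell^1$-character of the final sequence being preserved because each step introduced only a fixed finitely supported convolution or a convergent geometric factor. The main obstacle is exactly $\varphi_l(2^{-j}D)\dot R(a,w)$ in $(\mathrm{III})$ when $s+1\le0$: the naive estimate that keeps one factor in $L^\infty$ no longer closes, and one must instead play off the regularity $\nabla a\in\dot B_{p,1}^{n/p-\nu}$ against the frequency localization of $\varphi_l(2^{-j}D)$ via a sharp choice of Lebesgue exponents — which is precisely what the lower bound $s>-1-\min\{n/p,n/p'\}$ records.
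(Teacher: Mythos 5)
Your overall strategy is the standard one and is essentially the proof of Lemma~A.6 in \cite{Dan2}, to which the paper defers (the paper does not give its own proof of this proposition). Writing $\partial_l[a,\dot\Delta_j]w=(\partial_l a)\dot\Delta_j w+2^j[a,\varphi_l(2^{-j}D)]w$ with $\varphi_l(\xi)=i\xi_l\varphi(\xi)\in\mathcal{D}(\mathcal{C})$, treating the lower-order term by Bony, and Bony-decomposing the commutator so that Proposition~\ref{Comutator1} is applied to $(\mathrm I)=[\dot T_a,\varphi_l(2^{-j}D)]w$ is exactly right, and you correctly identify which hypothesis is used where ($\nu\ge 0$ via $\|\dot S_{k-1}\nabla a\|_{L^\infty}\lesssim 2^{\nu k}\|\nabla a\|_{\dot B_{p,1}^{n/p-\nu}}$, $\nu\le n/p$ via the sum over $k\gtrsim j$ in the lower-order term, $s\le n/p-\nu$ via $\|\dot S_{k-1}w\|_{L^\infty}$).

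Two points need correction. A minor one: $\dot T_{\varphi_l(2^{-j}D)w}a=\sum_k\dot S_{k-1}\bigl(\varphi_l(2^{-j}D)w\bigr)\dot\Delta_k a$ is not a finite sum over $|k-j|\le N_0$; since $\dot S_{k-1}$ is the identity on the $2^j$-localized factor once $k\gg j$, the sum runs over all $k\gtrsim j$. It still converges, because $\|\dot\Delta_k a\|_{L^p}\lesssim 2^{-k}c_k 2^{-k(n/p-\nu)}\|\nabla a\|_{\dot B_{p,1}^{n/p-\nu}}$ and $1+n/p-\nu\ge 1>0$, but the description is wrong.

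The more serious issue is in $(\mathrm{III})$, in the bound of $\varphi_l(2^{-j}D)\dot R(a,w)$ for $p\ge 2$. You propose passing through $L^2$ with gain $2^{jn(1/2-1/p)}$. That gain is not the one needed, and the Hölder--Bernstein bookkeeping required to estimate $\|\dot\Delta_k a\,\dot\Delta'_k w\|_{L^2}$ in terms of $\|\dot\Delta_k a\|_{L^p}$ and $\|\dot\Delta'_k w\|_{L^p}$ forces both Hölder exponents to be $\ge p$, hence $p\le 4$: for $2<p<4$ the $L^2$ route yields the convergence threshold $s>-1+\frac np-\frac n2$, which is strictly worse than the stated $s>-1-\frac np$, and for $p>4$ the decomposition cannot be carried out at all. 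The correct intermediate space is $L^{p/2}$ (not $L^2$): since $p/2\ge 1$, Hölder gives $\|\dot\Delta_k a\,\dot\Delta'_k w\|_{L^{p/2}}\le\|\dot\Delta_k a\|_{L^p}\|\dot\Delta'_k w\|_{L^p}$, and Bernstein from $L^{p/2}$ to $L^p$ on the $2^j$-localized output gives the gain $2^{jn/p}$, after which collecting powers of $2^k$ produces the geometric series $\sum_{k\ge j-N_0}c_k d_k\,2^{-k(1+s+n/p)}$, converging precisely when $s>-1-\frac np$. With this replacement the argument closes for all $p\in[1,\infty]$.
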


For a proof of the above results we refer the reader to the Appendix of
\cite{Dan2}, Lemma $A.5.$ and Lemma $A.6$.

\begin{proposition}
\label{A71}Let us consider a homogeneous function $A:\mathbb{R}^{n}%
\backslash\{0\}\rightarrow\mathbb{R}$ of degree $0$. Let us consider
$s\in\mathbb{R}$, $0<\nu\leq1$ and $p,r,r_{1},r_{2}\in\left[  1,\infty\right]
$ such that%
\[
\frac{1}{r}=\frac{1}{r_{1}}+\frac{1}{r_{2}}%
\]
and%
\begin{equation}
s<\frac{n}{p}-\nu\text{ or }s=\frac{n}{p}-\nu\text{ and }r_{2}=1\text{. }
\label{restrictie_indici}%
\end{equation}
Moreover, assume that $w\in\dot{B}_{p,r_{2}}^{s+\nu}$ and that $a\in
L^{\infty}$ with $\nabla a\in\dot{B}_{\infty,r_{1}}^{-\nu}$. Then, the
following estimate holds true:%
\begin{equation}
\left\Vert \lbrack A\left(  D\right)  ,\dot{T}_{a}]w\right\Vert _{\dot
{B}_{p,r}^{s+1}}\lesssim\left\Vert \nabla a\right\Vert _{\dot{B}_{\infty
,r_{1}}^{-\nu}}\left\Vert w\right\Vert _{\dot{B}_{p,r_{2}}^{s+\nu}}.
\label{estimA71}%
\end{equation}

\end{proposition}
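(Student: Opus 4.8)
The plan is to reduce \eqref{estimA71} to a pointwise kernel estimate for each dyadic block and then carry out the book-keeping in the sequence spaces $\ell^{r_{1}},\ell^{r_{2}},\ell^{r}$, following the classical scheme for paraproduct commutators. First I would decompose
\[
[A(D),\dot{T}_{a}]w=\sum_{j\in\mathbb{Z}}[A(D),\dot{S}_{j-1}a]\dot{\Delta}_{j}w,
\]
using that $A(D)$ commutes with $\dot{\Delta}_{j}$; the restriction \eqref{restrictie_indici} (together with $a\in L^{\infty}$, $0<\nu\le1$) is precisely what guarantees that $\dot{T}_{a}w$, $\dot{T}_{a}(A(D)w)$ and this series are genuine elements of $\mathcal{S}_{h}^{\prime}$, so that the identity is legitimate. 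Then I would record the spectral localization: for each $j$ both $A(D)(\dot{S}_{j-1}a\,\dot{\Delta}_{j}w)$ and $\dot{S}_{j-1}a\,A(D)\dot{\Delta}_{j}w$ have Fourier support in a fixed dilated annulus $2^{j}\mathcal{C}^{\prime}$, so $\dot{\Delta}_{j'}$ of the $j$-th summand vanishes unless $|j-j'|\le N_{0}$ for some universal $N_{0}$, whence
\[
\big\|\dot{\Delta}_{j'}[A(D),\dot{T}_{a}]w\big\|_{L^{p}}\lesssim\sum_{|j-j'|\le N_{0}}\big\|[A(D),\dot{S}_{j-1}a]\dot{\Delta}_{j}w\big\|_{L^{p}}.
\]

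The core step is the elementary commutator bound
\[
\big\|[A(D),\dot{S}_{j-1}a]\dot{\Delta}_{j}w\big\|_{L^{p}}\lesssim 2^{-j}\,\|\nabla\dot{S}_{j-1}a\|_{L^{\infty}}\,\|\dot{\Delta}_{j}w\|_{L^{p}}.
\]
To obtain it I would use that $A$ is homogeneous of degree $0$, hence $A(2^{-j}\xi)=A(\xi)$: choosing $\psi\in\mathcal{D}(\mathbb{R}^{n}\setminus\{0\})$ equal to $1$ on a neighbourhood of $\mathcal{C}^{\prime}$ and setting $G=\psi A\in\mathcal{D}(\mathbb{R}^{n}\setminus\{0\})$, one may replace $A(D)$ by $G(2^{-j}D)$ inside the commutator. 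Writing $G(2^{-j}D)$ as convolution against $2^{jn}(\mathcal{F}^{-1}G)(2^{j}\cdot)$, the commutator becomes an integral against the kernel $2^{jn}(\mathcal{F}^{-1}G)\big(2^{j}(x-y)\big)\big(\dot{S}_{j-1}a(y)-\dot{S}_{j-1}a(x)\big)$; bounding the increment by $\|\nabla\dot{S}_{j-1}a\|_{L^{\infty}}|x-y|$ and applying Young's inequality together with $|\cdot|\,\mathcal{F}^{-1}G\in L^{1}$ (true since $G$ is smooth and compactly supported) gives the claim. This is exactly the mechanism behind Proposition \ref{Comutator1}, adapted to a degree-$0$ multiplier.

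Finally I would feed in the negative-regularity information. Since $\nabla\dot{S}_{j-1}a=\dot{S}_{j-1}(\nabla a)$ and $\nabla a\in\dot{B}_{\infty,r_{1}}^{-\nu}$, writing $\|\dot{\Delta}_{k}\nabla a\|_{L^{\infty}}=2^{k\nu}b_{k}\|\nabla a\|_{\dot{B}_{\infty,r_{1}}^{-\nu}}$ with $\|(b_{k})\|_{\ell^{r_{1}}}\le1$, the hypothesis $\nu>0$ makes $\sum_{k\le j-2}2^{(k-j)\nu}b_{k}$ a convolution of $(b_{k})$ with an $\ell^{1}$ sequence, so $2^{-j\nu}\|\nabla\dot{S}_{j-1}a\|_{L^{\infty}}\le e_{j}\|\nabla a\|_{\dot{B}_{\infty,r_{1}}^{-\nu}}$ with $\|(e_{j})\|_{\ell^{r_{1}}}\lesssim1$. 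Combining this with the two displays above and with $\|\dot{\Delta}_{j}w\|_{L^{p}}=2^{-j(s+\nu)}f_{j}\|w\|_{\dot{B}_{p,r_{2}}^{s+\nu}}$, $\|(f_{j})\|_{\ell^{r_{2}}}\le1$, one gets
\[
2^{j'(s+1)}\big\|\dot{\Delta}_{j'}[A(D),\dot{T}_{a}]w\big\|_{L^{p}}\lesssim\sum_{|j-j'|\le N_{0}}2^{(j'-j)(s+1)}e_{j}f_{j}\,\|\nabla a\|_{\dot{B}_{\infty,r_{1}}^{-\nu}}\|w\|_{\dot{B}_{p,r_{2}}^{s+\nu}},
\]
and since the window $|j-j'|\le N_{0}$ is finite, taking the $\ell^{r}(\mathbb{Z})$-norm in $j'$ and invoking Hölder's inequality $\ell^{r_{1}}\cdot\ell^{r_{2}}\hookrightarrow\ell^{r}$ (for $1/r=1/r_{1}+1/r_{2}$) yields \eqref{estimA71}. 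I expect the main obstacle to be not the estimate itself — which is routine — but the two book-keeping subtleties: controlling $\|\dot{S}_{j-1}\nabla a\|_{L^{\infty}}$ with the correct $\ell^{r_{1}}$-summability out of the \emph{negative}-index norm $\|\nabla a\|_{\dot{B}_{\infty,r_{1}}^{-\nu}}$ (the step that genuinely uses $\nu>0$), and checking, under \eqref{restrictie_indici}, that all the formal manipulations with the paraproduct series take place in $\mathcal{S}_{h}^{\prime}$ so that the output indeed lies in the realized space $\dot{B}_{p,r}^{s+1}$.
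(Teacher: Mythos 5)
Your proposal is correct and follows the same approach as the paper: decompose $[A(D),\dot T_a]w$ dyadically, use $0$-homogeneity of $A$ to replace $A(D)$ on each block by a compactly-supported multiplier (the paper's $(A\tilde\varphi)(2^{-j}D)$, your $G(2^{-j}D)$), invoke the first-order commutator kernel estimate to pull out a factor $2^{-j}\|\nabla\dot S_{j-1}a\|_{L^\infty}$, and then feed the negative-regularity information on $\nabla a$ through the $\ell^{r_1}\cdot\ell^{r_2}\hookrightarrow\ell^r$ H\"older step. The only difference is cosmetic: you re-derive the kernel bound from scratch and spell out the $\ell^r$ book-keeping that the paper dismisses as obvious, whereas the paper simply cites its Proposition \ref{Comutator1}.
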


As this result is of great importance in the analysis of the pressure term, we
present a sketched proof below (see also \cite{Dan1}, Chapter 2, Lemma $2.99$)

\begin{proof}
The fact that $a\in L^{\infty}$ along with relation $\left(
\text{\ref{restrictie_indici}}\right)  $ guarantees that $A\left(  D\right)
w\in\dot{B}_{p,r}^{s+\nu}$ and that the paraproducts $\dot{T}_{a}w$ and
$\dot{T}_{a}A\left(  D\right)  w$ are well-defined. We observe that there
exists a function $\tilde{\varphi}$ supported in some annulus which equals $1$
on the support of $\varphi$ such that one may write (of course it is here that
we use the homogeneity of $A$):%
\[
\lbrack A\left(  D\right)  ,\dot{T}_{a}]w=\sum_{j}\left[  (A\tilde{\varphi
})(2^{-j}D),\dot{S}_{j-1}a\right]  \dot{\Delta}_{j}w.
\]
But according to Lemma \ref{Comutator1} we have%
\[
2^{j(s+1)}\left\Vert \left[  (A\tilde{\varphi})(2^{-j}D),\dot{S}%
_{j-1}a\right]  \dot{\Delta}_{j}w\right\Vert _{L^{p}}\lesssim2^{-j\nu
}\left\Vert \nabla\dot{S}_{j-1}a\right\Vert _{L^{\infty}}2^{j(s+\nu
)}\left\Vert \dot{\Delta}_{j}w\right\Vert _{L^{p}}.
\]
The last relation obviously implies $\left(  \text{\ref{estimA71}}\right)  $.
\end{proof}

As a consequence of the above proposition and Proposition \ref{ParaRem} we get
the following:

\begin{proposition}
\label{A7}Let us consider a homogeneous function $A:\mathbb{R}^{n}%
\backslash\{0\}\rightarrow\mathbb{R}$ of degree $0$, $s\in\mathbb{R}$,
$0<\nu\leq1$ and $p,r,r_{1},r_{2}\in\left[  1,\infty\right]  $ such that%
\[
\frac{1}{r}=\frac{1}{r_{1}}+\frac{1}{r_{2}}%
\]
and%
\begin{equation}
-1-\min\left\{  \frac{n}{p},\frac{n}{p^{\prime}}\right\}  <s<\frac{n}{p}%
-\nu\text{ or }s=\frac{n}{p}-\nu\text{ and }r=r_{2}=1\text{. }%
\end{equation}
assume that $w\in\dot{B}_{p,r_{2}}^{s+\nu}$ and that $a\in L^{\infty}$ with
$\nabla a\in\dot{B}_{\infty,r_{1}}^{-\nu}$. Then, the following estimate hold
true:%
\[
\left\Vert \lbrack A\left(  D\right)  ,a]w\right\Vert _{\dot{B}_{p,r}^{s+1}%
}\lesssim\left\Vert \nabla a\right\Vert _{\dot{B}_{p,r_{1}}^{\frac{n}{p}-\nu}%
}\left\Vert w\right\Vert _{\dot{B}_{p,r_{2}}^{s+\nu}}.
\]

\end{proposition}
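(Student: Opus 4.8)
The plan is to reduce the commutator $[A(D),a]w$ to a combination of three pieces via the Bony decomposition of the product $aw$, handle the paraproduct piece with the previous proposition, and handle the remaining pieces by direct continuity of the operators involved. Concretely, I would first write
\[
aw=\dot{T}_{a}w+\dot{T}_{w}a+\dot{R}(a,w),
\]
so that, since $A(D)$ is linear,
\[
[A(D),a]w=[A(D),\dot{T}_{a}]w+\bigl(A(D)\dot{T}_{w}a-\dot{T}_{w}A(D)a\bigr)+\bigl(A(D)\dot{R}(a,w)-\dot{R}(a,A(D)w)\bigr)+\dot{T}_{A(D)w}a-\dot{T}_{A(D)w}a,
\]
i.e. grouping the terms that are \emph{not} a clean commutator into $\dot{T}_{w}a$-type and $\dot{R}$-type remainders plus a paraproduct $\dot{T}_{A(D)w}a$. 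The key point is that only the first term genuinely needs the commutator structure; the others are estimated by the boundedness of $A(D)$ on Besov spaces together with the paraproduct/remainder estimates of Proposition \ref{ParaRem}.

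Next I would estimate each group. For $[A(D),\dot{T}_{a}]w$, since $A$ is homogeneous of degree $0$, Proposition \ref{A71} applies directly and gives
\[
\bigl\|[A(D),\dot{T}_{a}]w\bigr\|_{\dot{B}_{p,r}^{s+1}}\lesssim\|\nabla a\|_{\dot{B}_{\infty,r_{1}}^{-\nu}}\|w\|_{\dot{B}_{p,r_{2}}^{s+\nu}},
\]
and then I would invoke the embedding $\dot{B}_{p,r_{1}}^{\frac{n}{p}-\nu}\hookrightarrow\dot{B}_{\infty,r_{1}}^{-\nu}$ (Proposition \ref{Embedding}) to replace $\|\nabla a\|_{\dot{B}_{\infty,r_{1}}^{-\nu}}$ by $\|\nabla a\|_{\dot{B}_{p,r_{1}}^{\frac{n}{p}-\nu}}$. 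For the paraproduct terms $\dot{T}_{w}a$ and $\dot{T}_{A(D)w}a$: note that $\nabla a\in\dot{B}_{p,r_{1}}^{\frac{n}{p}-\nu}$ means $a$ itself lies in $\dot{B}_{p,r_{1}}^{\frac{n}{p}+1-\nu}$ (up to the low-frequency subtlety, which is fine since $A(D)$ kills constants anyway), while $w$ and $A(D)w$ lie in $\dot{B}_{p,r_{2}}^{s+\nu}$; applying part 2) of Proposition \ref{ParaRem} with the exponent $\nu':=\nu$ playing the role of the regularity shift, one gets $\dot{T}_{w}a,\ \dot{T}_{A(D)w}a\in\dot{B}_{?,r}^{s+1}$, and then $A(D)$ being order $0$ keeps the target space unchanged. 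The condition $s<\frac{n}{p}-\nu$ (or the endpoint with $r=r_{2}=1$) is exactly what makes the relevant paraproduct and the subsequent action of $A(D)$ licit. For the remainder terms, part 3) of Proposition \ref{ParaRem} combined with the lower bound $s>-1-\min\{n/p,n/p'\}$ controls $\dot{R}(a,w)$ and $\dot{R}(a,A(D)w)$ in $\dot{B}_{p,r}^{s+1}$, after which $A(D)$ preserves that space.

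Summing the three contributions yields the claimed estimate. The step I expect to be the main obstacle is the careful bookkeeping of the low-frequency endpoints and the precise regularity indices in the remainder term: one must check that $s+1+n/p'>0$ type inequalities, which is where the hypothesis $-1-\min\{\frac{n}{p},\frac{n}{p'}\}<s$ enters, and that the third-index arithmetic $\frac1r=\frac1{r_{1}}+\frac1{r_{2}}$ is respected at each use of the paraproduct and remainder bounds — including making sure that the decomposition above does not secretly require $aw$ itself to be well-defined in a strong sense, only the individual Bony pieces. Beyond that, everything reduces to the quoted Propositions \ref{ParaRem}, \ref{Embedding} and \ref{A71}, so no genuinely new estimate is needed.
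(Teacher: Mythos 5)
Your approach is the same as the paper's: the paper derives Proposition \ref{A7} ``as a consequence of the above proposition [\ref{A71}] and Proposition \ref{ParaRem}'', i.e.\ precisely the Bony split you describe, with Proposition \ref{A71} handling the genuine commutator $[A(D),\dot{T}_a]w$ and paraproduct/remainder estimates handling the rest. Three of your bookkeeping steps deserve attention, one of which is a genuine gap as written.

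First, the displayed decomposition contains typos: it should read
\[
[A(D),a]w=[A(D),\dot{T}_a]w+\bigl(A(D)\dot{T}_w a-\dot{T}_{A(D)w}a\bigr)+\bigl(A(D)\dot{R}(a,w)-\dot{R}(a,A(D)w)\bigr);
\]
the term $\dot{T}_w A(D)a$ should not appear, and $\dot{T}_{A(D)w}a-\dot{T}_{A(D)w}a$ is vacuous. Second, in the paraproduct step, $\nu':=\nu$ is not the right shift. To apply Proposition \ref{ParaRem} 2) to $\dot{T}_w a$ with target $\dot{B}_{p,r}^{s+1}$ you must embed $w\in\dot{B}_{p,r_2}^{s+\nu}\hookrightarrow\dot{B}_{\infty,r_2}^{s+\nu-n/p}$ and take $\nu'=\tfrac{n}{p}-s-\nu$, so that $a\in\dot{B}_{p,r_1}^{(s+1)+\nu'}=\dot{B}_{p,r_1}^{\frac{n}{p}+1-\nu}$; the upper bound $s<\tfrac{n}{p}-\nu$ is exactly $\nu'>0$, so your remark about where that hypothesis enters is correct even though the value of $\nu'$ is not. (Note too that the verbatim hypothesis of Proposition \ref{ParaRem} 2), namely $(s+1)<\tfrac{n}{p}-\nu'$, reduces here to $\nu>1$ and is therefore not met for $\nu<1$; the underlying estimate still holds at the dyadic-block level with only $\nu'>0$, but one cannot cite the paper's statement word for word.)

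Third, and this is the substantive gap: you cannot control $\dot{R}(a,w)$ by Proposition \ref{ParaRem} 3) with $a$ in the $L^\infty$ scale and $w$ in the $L^p$ scale, because then $s_1+s_2=(1-\nu)+(s+\nu)=s+1$ and part 3) requires $s_1+s_2>0$, whereas the hypothesis only guarantees $s+1>-\min\{\tfrac{n}{p},\tfrac{n}{p'}\}$, which may be negative. The lower bound in the hypothesis is \emph{not} the condition $s_1+s_2>0$; it is what survives after one extracts an integrability gain via Bernstein. Concretely, for $p\geq2$ take both $a\in\dot{B}_{p,r_1}^{\frac{n}{p}+1-\nu}$ and $w\in\dot{B}_{p,r_2}^{s+\nu}$ at the $L^p$ scale, so $s_1+s_2=\tfrac{n}{p}+s+1$, apply part 3) with output index $p/2$, then embed $\dot{B}_{p/2,r}^{\frac{n}{p}+s+1}\hookrightarrow\dot{B}_{p,r}^{s+1}$; the positivity requirement becomes $s>-1-\tfrac{n}{p}$, which is the hypothesis since $\min=\tfrac{n}{p}$. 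For $p<2$ first embed $a$ into the $L^{p'}$ scale and pass through $L^1$, which yields $s>-1-\tfrac{n}{p'}$. This is where $\min\{\tfrac{n}{p},\tfrac{n}{p'}\}$ actually comes from, and without spelling it out the argument does not close.
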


\subsection{Properties of Lagrangian coordinates}

\begin{proposition}
\label{Comp}Let $X$ be a globally defined bi-lipschitz diffeomorphism of
$\mathbb{R}^{3}$ and $-\frac{3}{p^{\prime}}<s\leq\frac{3}{p}$. Then
$a\rightarrow a\circ X$ is a self map over $\dot{B}_{p,1}^{s}$ whenever%
\begin{align*}
1)\text{ }s  &  \in\left(  0,1\right)  ;\\
2)\text{ }s  &  \geq1\text{ and }\left(  DX-Id\right)  \in\dot{B}_{p,1}%
^{\frac{3}{p}}.
\end{align*}

\end{proposition}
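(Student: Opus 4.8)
The plan is to treat separately the ranges $s\in(0,1)$ (item~1) and $s\ge1$ (item~2): the first by a change of variables, the second by differentiation and a short bootstrap on the first.

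\emph{Item~1.} I would first prove the cleaner fact that, for every $\sigma\in(0,1)$, the map $a\mapsto a\circ X$ is bounded on the homogeneous Sobolev--Slobodeckij space $\dot W^{\sigma,p}=\dot B^{\sigma}_{p,p}$. Since $X$ is bi-Lipschitz one has $|x-y|\asymp|X(x)-X(y)|$ and $|\det DX|\asymp1$ a.e., so the substitution $u=X(x)$, $v=X(y)$ in the Gagliardo seminorm
\[
[a\circ X]_{\dot W^{\sigma,p}}^{p}=\int_{\mathbb{R}^{n}}\int_{\mathbb{R}^{n}}\frac{|a(X(x))-a(X(y))|^{p}}{|x-y|^{n+\sigma p}}\,dx\,dy
\]
immediately gives $[a\circ X]_{\dot W^{\sigma,p}}\lesssim[a]_{\dot W^{\sigma,p}}$. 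The assertion for $\dot B^{s}_{p,1}$, $0<s<1$, then follows by real interpolation: choosing $0<\sigma_{0}<s<\sigma_{1}<1$ with $s=(1-\theta)\sigma_{0}+\theta\sigma_{1}$ one has $(\dot B^{\sigma_{0}}_{p,p},\dot B^{\sigma_{1}}_{p,p})_{\theta,1}=\dot B^{s}_{p,1}$, and an operator bounded on both endpoint spaces is bounded on the interpolation space.

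\emph{Item~2.} Now $s\ge1$, $s\le\frac{3}{p}$ (hence $p\le3$) and $DX-Id\in\dot B^{3/p}_{p,1}$. I would proceed by a finite descent in regularity. Using the equivalence $\|f\|_{\dot B^{s}_{p,1}}\asymp\|\nabla f\|_{\dot B^{s-1}_{p,1}}$ (valid since $s\le\frac{3}{p}$) together with the chain rule $\nabla(a\circ X)=(DX)^{T}\,\bigl((\nabla a)\circ X\bigr)$, write $(DX)^{T}=Id+B$ with $B=(DX)^{T}-Id\in\dot B^{3/p}_{p,1}$; then, by the product law of Proposition~\ref{Produs} (whose hypotheses are met since $s\ge1$ and $s\le\frac{3}{p}$),
\[
\|a\circ X\|_{\dot B^{s}_{p,1}}\lesssim\bigl\|(\nabla a)\circ X\bigr\|_{\dot B^{s-1}_{p,1}}+\bigl\|B\,((\nabla a)\circ X)\bigr\|_{\dot B^{s-1}_{p,1}}\lesssim\bigl(1+\|DX-Id\|_{\dot B^{3/p}_{p,1}}\bigr)\,\bigl\|(\nabla a)\circ X\bigr\|_{\dot B^{s-1}_{p,1}}.
\]
Hence boundedness of $a\mapsto a\circ X$ on $\dot B^{s}_{p,1}$ is reduced to the same property on $\dot B^{s-1}_{p,1}$, at strictly lower regularity. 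When $s\in(1,2)$ the reduction lands on $\dot B^{s-1}_{p,1}$ with $s-1\in(0,1)$, i.e. on item~1; the integer value $s=1$ (where it would reach the awkward space $\dot B^{0}_{p,1}$) is handled instead by interpolating $\dot B^{1-\varepsilon}_{p,p}$ (item~1) with $\dot B^{1+\varepsilon}_{p,p}$, the latter being itself reduced by the chain rule to $\dot B^{\varepsilon}_{p,p}$ (item~1) after noting that $B$ multiplies $\dot B^{\varepsilon}_{p,p}$; and for $s\ge2$ one iterates, lowering $s$ by one unit at each step. Since $s\le\frac{3}{p}\le3$, the descent terminates after at most a couple of steps.

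The crux is item~1 — the only place where the bare metric hypothesis on $X$ enters and where a Littlewood--Paley decomposition is of no direct help; after that, item~2 is bookkeeping with Proposition~\ref{Produs} and with $\|f\|_{\dot B^{s}_{p,1}}\asymp\|\nabla f\|_{\dot B^{s-1}_{p,1}}$. The routine technical points I would still verify are: the $L^{p}$ and Jacobian estimates ensuring that $a\circ X$ is a genuine element of $\mathcal{S}_{h}^{\prime}$ for which the homogeneous norms make sense; the chain rule for $a$ only in $\dot B^{s}_{p,1}$, which follows by density of smooth functions (legitimate since $s\le\frac{3}{p}$); and the standard real-interpolation identities for homogeneous Besov spaces. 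The single borderline configuration is $s=\frac{n}{p}\in\mathbb{Z}$ (that is $n=3$, $p=3$, $s=1$), which one settles by interpolating instead with a space of index slightly above $\frac{n}{p}$, or by a direct estimate on the dyadic blocks.
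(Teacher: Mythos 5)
The paper states Proposition~\ref{Comp} in the Appendix without a proof (it is a result from the Lagrangian-coordinates literature, cf.\ the references to Danchin--Mucha), so there is nothing to compare your argument against directly; I will evaluate it on its own merits.

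Your item~1 is clean and essentially correct: the bi-Lipschitz change of variables in the Gagliardo seminorm gives boundedness of $a\mapsto a\circ X$ on $\dot B^{\sigma}_{p,p}$ for every $\sigma\in(0,1)$, and the real-interpolation identity $(\dot B^{\sigma_0}_{p,p},\dot B^{\sigma_1}_{p,p})_{\theta,1}=\dot B^{s}_{p,1}$ then yields boundedness on $\dot B^{s}_{p,1}$. This is arguably neater than the usual direct Littlewood--Paley estimate. The descent scheme in item~2 --- the equivalence $\|f\|_{\dot B^{s}_{p,1}}\asymp\|\nabla f\|_{\dot B^{s-1}_{p,1}}$ combined with $\nabla(a\circ X)=(DX)^{T}(\nabla a)\circ X$ and Proposition~\ref{Produs} to absorb $B=(DX)^T-Id\in\dot B^{3/p}_{p,1}$ --- is also sound for non-integer $s$, and the indices are indeed within the range of Proposition~\ref{Produs}.

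The genuine gap is the borderline integer case $s=\frac{3}{p}\in\mathbb{Z}$, i.e.\ $p=3$, $s=1$, which you flag but do not resolve. Your proposed fix --- ``interpolating with a space of index slightly above $\frac{n}{p}$'' --- does not obviously go through: $\dot B^{1+\varepsilon}_{3,3}$ with $\varepsilon>0$ lies outside the range of Proposition~\ref{Banach}, so it is not a Banach subspace of $\mathcal{S}^{\prime}_{h}$ but only a space modulo constants, and even granting that $a\mapsto a\circ X$ is still well-defined modulo constants, the product estimate needed there, namely $\|B\,g\|_{\dot B^{1+\varepsilon-1}_{3,3}}\lesssim\|B\|_{L^{\infty}\cap\dot B^{1}_{3,1}}\|g\|_{\dot B^{\varepsilon}_{3,3}}$, sits exactly at the limiting exponent of the paraproduct/remainder estimates (in Proposition~\ref{ParaRem}, the condition $s=\frac{n}{p}-\nu$ forces $r=1$, whereas the left-hand space has $r=3$). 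Your second suggestion, ``a direct estimate on the dyadic blocks,'' is in fact the correct route and is where the hypothesis $DX-Id\in\dot B^{3/p}_{p,1}$ (third index $1$) is really used, but you do not carry it out, and it is precisely the nontrivial part of the proof in this configuration. Since $p=3$ is within the paper's stated range $p\in(\frac{6}{5},4)$ and the composition result is applied at the top regularity $s=\frac{3}{p}$, this case is not peripheral; it should be written out rather than waved at. The rest of the ``routine technical points'' you list (membership in $\mathcal{S}^{\prime}_{h}$, density, chain rule for low-regularity $a$) are indeed routine and your handling of them is fine.
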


\begin{proposition}
\label{Formule}Let $K$ be a $C^{1}$ scalar function over $\mathbb{R}^{3}$ and
$H$ a $C^{1}$ vector field. Let $X$ be a $C^{1}$ diffeomorphism such that
$\det\left(  DX\right)  =1$. Then, the following relations hold true:%
\begin{align*}
(\nabla K)\circ X  &  =\operatorname{div}\left(  DX^{-1}K\circ X\right)  ,\\
(\operatorname{div}H)\circ X  &  =\operatorname{div}\left(  DX^{-1}H\circ
X\right)  .
\end{align*}

\end{proposition}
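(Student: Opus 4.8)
The plan is to prove both identities first in the sense of distributions in the Lagrangian variable $y$, and then to upgrade to pointwise equalities by observing that the left-hand sides $(\nabla K)\circ X$ and $(\operatorname{div}H)\circ X$ are continuous, being compositions of continuous maps, so that a distribution agreeing with them must equal them. Throughout set $A=(DX)^{-1}$; since $X$ is a $C^{1}$ diffeomorphism the inverse function theorem gives that $X^{-1}$ is $C^{1}$, and the standing assumption $\det DX\equiv 1$ makes the substitution $x=X(y)$ measure preserving. Note that $A$ is only continuous, so the divergences on the right-hand sides are a priori only distributions; the proposition asserts precisely that they coincide with the indicated continuous functions.

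First I would handle the second identity. Fixing $\psi\in C_{c}^{\infty}(\mathbb{R}^{3})$ and interpreting $DX^{-1}H\circ X$ as the vector field with components $\sum_{j}A_{ij}(H_{j}\circ X)$, integration by parts gives
\begin{equation*}
\big\langle \operatorname{div}\big(A\,(H\circ X)\big),\psi\big\rangle=-\int_{\mathbb{R}^{3}}\sum_{i,j}A_{ij}\,(H_{j}\circ X)\,\partial_{y_{i}}\psi\,dy .
\end{equation*}
The chain rule, in the form $(\partial_{x_{j}}g)\circ X=\sum_{i}A_{ij}\,\partial_{y_{i}}(g\circ X)$, applied with $g=\psi\circ X^{-1}$, rewrites $\sum_{i}A_{ij}\partial_{y_{i}}\psi$ as $\big(\partial_{x_{j}}(\psi\circ X^{-1})\big)\circ X$, so that the right-hand side above equals $-\int \big(H\cdot\nabla_{x}(\psi\circ X^{-1})\big)\circ X\,dy$. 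Performing the substitution $x=X(y)$ (legitimate for a $C^{1}$ diffeomorphism, with unit Jacobian), integrating by parts in the Eulerian variable, and substituting $x=X(y)$ once more,
\begin{equation*}
-\int_{\mathbb{R}^{3}} H\cdot\nabla_{x}(\psi\circ X^{-1})\,dx=\int_{\mathbb{R}^{3}} (\operatorname{div}_{x}H)\,(\psi\circ X^{-1})\,dx=\int_{\mathbb{R}^{3}}\big((\operatorname{div}H)\circ X\big)\,\psi\,dy .
\end{equation*}
As $\psi$ is arbitrary this yields $\operatorname{div}_{y}(A\,(H\circ X))=(\operatorname{div}H)\circ X$ distributionally, hence pointwise.

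The first identity follows the same scheme, now pairing against vector-valued $\Psi\in C_{c}^{\infty}(\mathbb{R}^{3};\mathbb{R}^{3})$ and reading the divergence of the matrix $(K\circ X)\,A$ column-wise, i.e. its $i$-th component as $\sum_{j}\partial_{y_{j}}\big(A_{ji}\,(K\circ X)\big)$. Integrating by parts against $\Psi_{i}$ and summing in $i$, the chain rule $(\partial_{x_{i}}g)\circ X=\sum_{j}A_{ji}\,\partial_{y_{j}}(g\circ X)$ with $g=\Psi_{i}\circ X^{-1}$ collapses $\sum_{i,j}A_{ji}(K\circ X)\partial_{y_{j}}\Psi_{i}$ into $\big(K\,\operatorname{div}_{x}(\Psi\circ X^{-1})\big)\circ X$; substituting $x=X(y)$, integrating by parts in $x$ to move the derivative onto $K$, and substituting back, one reaches $\int (\nabla_{x}K)(X(y))\cdot\Psi(y)\,dy$. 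Arbitrariness of $\Psi$ together with continuity of $(\nabla K)\circ X$ finishes the argument.

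I expect the main (and essentially only) difficulty to be twofold bookkeeping: fixing the index convention for the divergence of a matrix so that the statement is literally true, and carrying out the manipulations with only the continuity of $A=(DX)^{-1}$ at one's disposal, which is why the distributional formulation is used rather than a naive pointwise product rule. Conceptually, the computation above is the $C^{1}$, distributional form of the Piola identity: $\det DX\equiv 1$ forces $A=(DX)^{-1}=(\mathrm{cof}\,DX)^{T}$, and what is really being exploited is that the columns of $\mathrm{cof}\,DX$ are divergence free in $y$. An alternative route is to prove this last fact by approximating $X$ in $C^{1}_{loc}$ by smooth measure-preserving diffeomorphisms, for which it is the elementary pointwise identity resting on the symmetry of second derivatives, and then passing to the limit using that the cofactors depend continuously on $DX$.
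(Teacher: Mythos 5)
The paper does not actually prove Proposition~\ref{Formule}: it is stated without argument in the Appendix as a standard Lagrangian-coordinate / Piola-type identity (the nearby Proposition~\ref{formula_fund} is the one for which the paper points to Corollary~2 of the Appendix of \cite{Dan5}). Your proof is therefore not competing with an in-paper argument, and it is correct. The distributional formulation is exactly the right way to handle the fact that $A=(DX)^{-1}$ is merely continuous when $X$ is only $C^{1}$; the chain-rule identity $(\partial_{x_j}g)\circ X=\sum_i A_{ij}\,\partial_{y_i}(g\circ X)$, the measure-preserving change of variables $x=X(y)$, and the observation that $\psi\circ X^{-1}$ remains a compactly supported $C^{1}$ test function all check out, and the upgrade from distributional to pointwise equality is legitimate since both sides are continuous. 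Your closing remark that this is the $C^{1}$ distributional form of the Piola identity, with the cofactor columns of $DX$ being divergence free, is the standard conceptual explanation and matches the spirit in which the paper invokes the result.
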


\begin{proposition}
\label{formula_fund}Let us consider $v$ and $w$ two time-dependent vector
fields with coefficients in $L_{T}^{1}(C^{0,1})$. Let us denote by $Y_{v}$ and
$Y_{w}$ their corresponding flows. We denote by $A_{v}=\left(  DY_{v}\right)
^{-1}$ and $A_{w}=\left(  DY_{w}\right)  ^{-1}$. Let us also assume that
\[
\det DY_{v}=1\text{ and }\operatorname{div}\left(  A_{v}w\circ Y_{v}\right)
=0.
\]
Then,%
\[
\det DY_{w}=1
\]
and for any $C^{1}$-vector field $H$ one has%
\[
\operatorname{div}H=\left(  D\left(  H\circ Y_{v}\right)  :A_{v}\right)  \circ
Y_{v}^{-1}=\operatorname{div}\left(  A_{w}H\circ Y_{w}\right)  \circ
Y_{w}^{-1}.
\]

\end{proposition}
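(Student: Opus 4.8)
The strategy is to reduce the whole statement to Proposition \ref{Formule} together with the classical Liouville identity for the Jacobian of a flow, after which everything becomes bookkeeping. I would first isolate two special cases of Proposition \ref{Formule}. Applying its first identity to the constant scalar $K\equiv 1$ and to the map $X=Y_v$ — legitimate since $\det DY_v=1$ by hypothesis — yields $\operatorname{div}(A_v)=0$, i.e. the rows of $A_v$ are divergence free; this is just the Piola identity specialised to maps with Jacobian equal to one. Applying its second identity gives, for every $C^1$ vector field $H$, $(\operatorname{div}H)\circ Y_v=\operatorname{div}(A_vH\circ Y_v)$, and the analogous formula holds for any other measure preserving $C^1$ flow in place of $Y_v$.

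Next I would establish $\det DY_w=1$. Apply the second identity of Proposition \ref{Formule} with $X=Y_v$ to the field $H=w$: since $\det DY_v=1$, this gives $(\operatorname{div}w)\circ Y_v=\operatorname{div}(A_vw\circ Y_v)$, which equals $0$ by the standing hypothesis $\operatorname{div}(A_vw\circ Y_v)=0$. Because $Y_v(t,\cdot)$ is a bi-Lipschitz homeomorphism of $\mathbb{R}^3$, composing with $Y_v^{-1}$ gives $\operatorname{div}w=0$ on $[0,T]\times\mathbb{R}^3$. Feeding this into Liouville's identity $\partial_t\det DY_w=(\operatorname{div}w\circ Y_w)\,\det DY_w$, together with $\det DY_w(0,\cdot)=1$, forces $\det DY_w\equiv1$.

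With this the two identities for $\operatorname{div}H$ follow at once. The formula $\operatorname{div}H=\operatorname{div}(A_wH\circ Y_w)\circ Y_w^{-1}$ is exactly the second identity of Proposition \ref{Formule} applied to the flow $Y_w$, which is now admissible since we have just shown $\det DY_w=1$. For the remaining formula, start from $(\operatorname{div}H)\circ Y_v=\operatorname{div}(A_vH\circ Y_v)$ and expand the right-hand side by the Leibniz rule, $\operatorname{div}(A_vH\circ Y_v)=\sum_{i,k}(\partial_i(A_v)_{ik})(H\circ Y_v)_k+\sum_{i,k}(A_v)_{ik}\,\partial_i(H\circ Y_v)_k$; the first sum vanishes because $\operatorname{div}(A_v)=0$, and the second sum is precisely $D(H\circ Y_v):A_v$ in the notation used earlier in the paper (cf. the identity $\operatorname{div}((Id-A_{\bar v})w)=Dw:(Id-A_{\bar v})$). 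Composing once more with $Y_v^{-1}$ gives $\operatorname{div}H=(D(H\circ Y_v):A_v)\circ Y_v^{-1}$.

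The only genuinely delicate point — and hence the main obstacle — is regularity: Proposition \ref{Formule} and the Liouville identity are transparent for smooth data, but here $v,w$ lie only in $L^1_T(C^{0,1})$, so the flows are merely bi-Lipschitz, $A_v$ is bounded and Lipschitz, and the identities $\operatorname{div}(A_v)=0$ and the Leibniz expansion above hold a priori only in the distributional sense. I would dispose of this exactly as in the references on Lagrangian coordinates used throughout the paper (see Propositions \ref{Comp} and \ref{Formule}): in the regime in which the present proposition is actually invoked one has $\nabla v\in L^1_T(\dot B^{3/p}_{p,1})\hookrightarrow L^1_T(L^\infty)$ and $DX_v-\mathrm{Id}\in L^\infty_T(\dot B^{3/p}_{p,1})$, so the flows are $C^1$ in space with continuous inverse Jacobian, every term appearing above is a product of a locally uniformly bounded factor with a first spatial derivative of the flow, and all three identities then hold classically; the fully general $L^1_T(C^{0,1})$ statement follows by approximating $v$ and $w$ by smooth vector fields and passing to the limit, all the quantities involved being stable under this limit.
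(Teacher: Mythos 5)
The paper does not give a proof of this proposition; it simply refers the reader to Corollary~2 in the Appendix of \cite{Dan5}, so there is no in-paper argument to compare with. Your argument is correct and is the natural one: the Piola identity $\operatorname{div}A_v=0$ drops out of Proposition~\ref{Formule} with $K\equiv 1$; the second identity of Proposition~\ref{Formule} with $X=Y_v$ and $H=w$ converts the hypothesis $\operatorname{div}(A_v\,w\circ Y_v)=0$ into $\operatorname{div}w\equiv 0$; Liouville's identity $\partial_t\det DY_w=(\operatorname{div}w\circ Y_w)\det DY_w$ with $\det DY_w(0,\cdot)=1$ then forces $\det DY_w\equiv 1$; and the two divergence formulas follow by applying Proposition~\ref{Formule} to $Y_w$ and by the Leibniz rule combined with $\operatorname{div}A_v=0$, the remaining sum being exactly $D(H\circ Y_v):A_v$ in the paper's conventions. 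This is almost certainly the same chain of ideas as in the cited reference. Your closing remark on regularity is also the right one to make: Proposition~\ref{Formule} as stated assumes a $C^1$-diffeomorphism while $Y_v$, $Y_w$ are a priori only bi-Lipschitz for $v,w\in L^1_T(C^{0,1})$, and in the regime where the proposition is actually invoked the flows are $C^1$ so the identities hold classically. For the fully general statement I would only add a sentence to the mollification step noting that $A_v$ is Lipschitz (so $\nabla A_v\in L^\infty$), that each term in the Leibniz expansion converges a.e.\ with uniform domination, and hence that the a.e.\ identities persist in the limit.
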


This result interferes in a crucial manner in the proof of the well-posedness
result for the inhomogeneous incompressible Navier-Stokes system. For a proof
and other remarks see Corollary $2$ from the Appendix of \cite{Dan5}.

For any $\bar{v}$ a time dependent vector field we set:
\begin{equation}
X_{\bar{v}}\left(  t,y\right)  =y+\int_{0}^{t}\bar{v}\left(  \tau,y\right)
d\tau\label{flow}%
\end{equation}
and we denote%
\begin{equation}
A_{\bar{v}}=\left(  DX_{\bar{v}}\right)  ^{-1}. \label{inversa}%
\end{equation}

\begin{proposition}
\label{PropozitieFinal}Let us consider $\bar{v}\in C_{b}(\left[  0,T\right]
,\dot{B}_{p,1}^{\frac{3}{p}-1})$ with $\partial_{t}\bar{v},$ $\nabla^{2}%
\bar{v}\in L_{T}^{1}(\dot{B}_{p,1}^{\frac{3}{p}-1})$. Then, there exists a
positive $\alpha$ such that if
\begin{equation}
\left\Vert \nabla\bar{v}\right\Vert _{L_{t}^{1}(\dot{B}_{p,1}^{\frac{3}{p}}%
)}\leq\alpha, \label{smallnes12}%
\end{equation}
then, $X_{\bar{v}}$ introduced in $\left(  \text{\ref{flow}}\right)  $ is a
global $C^{1}$-diffeormorphism over $\mathbb{R}^{3}$. Moreover, if
\[
\operatorname{div}\left(  A_{\bar{v}}\bar{v}\right)  =0
\]
then, $X_{\bar{v}}$ is measure preserving i.e.%
\[
\det DX_{\bar{v}}=1.
\]

\end{proposition}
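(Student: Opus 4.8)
The plan is to establish the two assertions in turn: first the $C^1$-diffeomorphism property, which is elementary, and then the volume preservation, which reduces to a Liouville-type identity combined with the uniqueness theory for linear transport equations.

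For the first part, I would start from $DX_{\bar v}(t,y)=\mathrm{Id}+\int_0^t\nabla\bar v(\tau,y)\,d\tau$. Since $\dot B_{p,1}^{3/p}(\mathbb{R}^3)$ embeds continuously into $L^\infty\cap C^0$ (Proposition \ref{Embedding}) and is an algebra (Proposition \ref{Produs} with $\nu_1=\nu_2=0$), the smallness hypothesis $\left(\text{\ref{smallnes12}}\right)$ gives, for every $t\in[0,T]$, a bound $\|DX_{\bar v}(t,\cdot)-\mathrm{Id}\|_{L^\infty}+\|DX_{\bar v}(t,\cdot)-\mathrm{Id}\|_{\dot B_{p,1}^{3/p}}\le C\alpha$. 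Choosing $\alpha$ small enough then makes $DX_{\bar v}(t,y)$ invertible at every point and, through a Neumann series in the algebra $\dot B_{p,1}^{3/p}$, yields $A_{\bar v}-\mathrm{Id}\in C([0,T];\dot B_{p,1}^{3/p})$ with $\|A_{\bar v}-\mathrm{Id}\|_{L_T^\infty(\dot B_{p,1}^{3/p})}\lesssim\alpha$. Global bijectivity of $X_{\bar v}(t,\cdot)$ comes from the observation that, for each $z$, the map $y\mapsto z-\int_0^t\bar v(\tau,y)\,d\tau$ is a contraction of $\mathbb{R}^3$ with Lipschitz constant $\le C\alpha<1$, hence has a unique fixed point; $C^1$ regularity of $X_{\bar v}$ and of its inverse follows from $\nabla\bar v\in L_T^1(\dot B_{p,1}^{3/p})\hookrightarrow L_T^1(C^0)$ together with the inverse function theorem.

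For the second part, set $J(t,y):=\det DX_{\bar v}(t,y)$; then $J(0,\cdot)\equiv1$ and, expanding the determinant and using the algebra property, $J-1\in C([0,T];\dot B_{p,1}^{3/p})$. Jacobi's formula together with $\partial_tDX_{\bar v}=D\bar v(t,\cdot)$ gives $\partial_tJ=\mathrm{tr}\big(\mathrm{adj}(DX_{\bar v})\,D\bar v\big)$. The algebraic heart of the argument is the Piola identity $\mathrm{div}_y\,\mathrm{cof}(DX_{\bar v})=0$ (divergence taken row by row), valid for any $W_{loc}^{1,1}$ map: using $\mathrm{adj}(DX_{\bar v})=J A_{\bar v}$ and expanding $\mathrm{div}_y(J A_{\bar v}\bar v)$, the Piola identity annihilates the term carrying derivatives of $J A_{\bar v}$ and one is left with the Liouville identity $\partial_tJ=\mathrm{div}_y(J A_{\bar v}\bar v)=J\,\mathrm{div}_y(A_{\bar v}\bar v)+(A_{\bar v}\bar v)\cdot\nabla_yJ$. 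Hence, under the hypothesis $\mathrm{div}_y(A_{\bar v}\bar v)=0$, the function $J-1$ solves the linear transport equation $\partial_t(J-1)-(A_{\bar v}\bar v)\cdot\nabla_y(J-1)=0$ with zero initial datum; since $A_{\bar v}\bar v$ lies in $L_T^1$ of a space embedded in the Lipschitz class, the uniqueness theory for transport equations forces $J-1\equiv0$, that is $\det DX_{\bar v}=1$.

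The routine ingredients are the contraction argument, the inverse function theorem, the elementary Piola identity, and transport uniqueness. The point requiring care — the main obstacle — is checking that, at the stated regularity (only $\bar v\in C_T(\dot B_{p,1}^{3/p-1})$ with $\nabla\bar v\in L_T^1(\dot B_{p,1}^{3/p})$, $\nabla^2\bar v\in L_T^1(\dot B_{p,1}^{3/p-1})$, over the whole range $p\in(6/5,4)$), the products $A_{\bar v}\bar v$, $J A_{\bar v}$ and $J A_{\bar v}\bar v$ are well defined and regular enough (a.e.\ in $t$ in a space embedded in $C^1$, with the correct integrability in $t$) for Jacobi's formula, the Piola identity, and the transport estimates to be legitimately applied; this is handled with the algebra property of $\dot B_{p,1}^{3/p}$, the product laws of Proposition \ref{Produs}, the composition estimate of Proposition \ref{Comp}, and interpolation between the $C_T(\dot B_{p,1}^{3/p-1})$ and $L_T^1(\dot B_{p,1}^{3/p+1})$ bounds on $\bar v$.
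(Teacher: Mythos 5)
You have the right strategy: Jacobi's formula, the Piola identity for $\mathrm{adj}(DX_{\bar{v}})=JA_{\bar{v}}$, and the resulting transport equation for $J:=\det DX_{\bar{v}}$ constitute the standard argument behind this result (it is the Danchin--Mucha approach in \cite{Dan5}), and your treatment of the first assertion (Neumann series for $A_{\bar v}$, contraction for global invertibility, inverse function theorem for $C^1$ regularity) is correct. The gap is in the last step, where you assert that $A_{\bar v}\bar v$ ``lies in $L_T^1$ of a space embedded in the Lipschitz class'' and then invoke transport uniqueness. At the stated regularity this claim fails. Writing $\nabla(A_{\bar v}\bar v)=(\nabla A_{\bar v})\otimes\bar v+A_{\bar v}\nabla\bar v$, the second term is indeed in $L_T^1(L^\infty)$ since $\nabla\bar v\in L_T^1(\dot B_{p,1}^{3/p})\hookrightarrow L_T^1(L^\infty)$, but the first is not: $\nabla A_{\bar v}$ is built from $\nabla^2X_{\bar v}=\int_0^t\nabla^2\bar v$, so lies only in $L_T^\infty(\dot B_{p,1}^{3/p-1})$, and $\dot B_{p,1}^{3/p-1}$ does not embed into $L^\infty$ for any $p$. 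Thus $A_{\bar v}\bar v$ is not Lipschitz a.e.\ in $t$, the Cauchy--Lipschitz theory for its characteristics is not directly available, and your closing paragraph's appeal to the product and composition laws does not actually bridge this.

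The fix is short but genuinely uses the structure of the problem. The characteristics of $-A_{\bar v}\bar v$ are precisely $t\mapsto X_{\bar v}^{-1}(t,x)$ (differentiate $X_{\bar v}(t,X_{\bar v}^{-1}(t,x))=x$ in $t$); these are well defined, unique, and Lipschitz because $\bar v\in L_T^1(C^{0,1})$, irrespective of whether $A_{\bar v}\bar v$ itself is Lipschitz. Equivalently, avoid pointwise $\nabla_yJ$ altogether: set $\tilde J:=J\circ X_{\bar v}^{-1}$ and test $\partial_t\tilde J$ against $\phi\in C_c^\infty((0,T)\times\mathbb R^3)$. Changing variables $x=X_{\bar v}(t,y)$ (Jacobian $J$) and using the identity $\operatorname{div}_y(JA_{\bar v}\bar v)=\partial_tJ$ (Jacobi plus Piola, valid for $W^{1,\infty}_{\mathrm{loc}}$ flows) together with the hypothesis $\operatorname{div}_y(A_{\bar v}\bar v)=0$, one finds
\[
\int\tilde J\,\partial_t\phi\,dx\,dt=-\int J^2\,\operatorname{div}_y(A_{\bar v}\bar v)\,(\phi\circ X_{\bar v})\,dy\,dt=0,
\]
so $\partial_t\tilde J=0$ distributionally; since $\tilde J$ is continuous with $\tilde J(0,\cdot)\equiv 1$, this yields $\tilde J\equiv 1$, hence $J\equiv 1$, without ever requiring $\nabla_yJ$ or $\nabla(A_{\bar v}\bar v)$ to be classical functions.
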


\begin{proposition}
Let us consider $\bar{v}\in E_{T}$ satisfying the smallness condition $\left(
\text{\ref{smallness1}}\right)  $. Let $X_{v}$ be defined by $\left(
\text{\ref{flow}}\right)  $. Then for all $t\in\lbrack0,T]:$%
\begin{align}
\left\Vert Id-A_{\bar{v}}\left(  t\right)  \right\Vert _{\dot{B}_{p,1}%
^{\frac{3}{p}}}  &  \lesssim\left\Vert \nabla\bar{v}\right\Vert _{L_{t}%
^{1}(\dot{B}_{p,1}^{\frac{3}{p}})},\label{A1}\\
\left\Vert \partial_{t}A_{\bar{v}}\left(  t\right)  \right\Vert _{\dot
{B}_{p,1}^{\frac{3}{p}-1}}  &  \lesssim\left\Vert \nabla\bar{v}(t)\right\Vert
_{\dot{B}_{p,1}^{\frac{3}{p}-1}},\label{A2}\\
\left\Vert \partial_{t}A_{\bar{v}}\left(  t\right)  \right\Vert _{\dot
{B}_{p,1}^{\frac{3}{p}}}  &  \lesssim\left\Vert \nabla\bar{v}(t)\right\Vert
_{\dot{B}_{p,1}^{\frac{3}{p}}}. \label{A3}%
\end{align}

\end{proposition}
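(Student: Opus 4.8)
The plan is to use the explicit formula $DX_{\bar v}(t,\cdot)=Id+E(t,\cdot)$ with $E(t,\cdot)=\int_0^t D\bar v(\tau,\cdot)\,d\tau$, together with the algebra structure of $\dot B_{p,1}^{\frac3p}(\mathbb{R}^3)$. By the triangle inequality under the time integral and the continuous embedding $\dot B_{p,1}^{\frac3p}\hookrightarrow L^\infty$ (Proposition \ref{Embedding}) one has $\|E(t,\cdot)\|_{\dot B_{p,1}^{\frac3p}}\le\int_0^t\|D\bar v(\tau,\cdot)\|_{\dot B_{p,1}^{\frac3p}}\,d\tau=\|\nabla\bar v\|_{L_t^1(\dot B_{p,1}^{\frac3p})}\le 2\alpha$. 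First I would fix $\alpha$ small enough (depending only on the norm of the bilinear product on $\dot B_{p,1}^{\frac3p}$ furnished by Proposition \ref{Produs} with $\nu_1=\nu_2=0$) so that the matrix Neumann series $\sum_{k\ge 0}(-E(t,\cdot))^k$ converges in the class of matrices with entries in $\mathbb{R}\,Id\oplus\dot B_{p,1}^{\frac3p}$; this both identifies $A_{\bar v}(t,\cdot)=(Id+E(t,\cdot))^{-1}$ (the $L^\infty$ bound on $E$ guarantees pointwise invertibility, so the summed series is indeed the inverse) and yields $A_{\bar v}(t,\cdot)-Id=-\sum_{k\ge1}(-E(t,\cdot))^k$, hence
\[
\|Id-A_{\bar v}(t,\cdot)\|_{\dot B_{p,1}^{\frac3p}}\lesssim\sum_{k\ge1}\|E(t,\cdot)\|_{\dot B_{p,1}^{\frac3p}}^{k}\lesssim\|E(t,\cdot)\|_{\dot B_{p,1}^{\frac3p}}\lesssim\|\nabla\bar v\|_{L_t^1(\dot B_{p,1}^{\frac3p})},
\]
which is \eqref{A1}. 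In particular $\|Id-A_{\bar v}(t,\cdot)\|_{\dot B_{p,1}^{\frac3p}}\le C\alpha$, so that, writing $A_{\bar v}=Id+(A_{\bar v}-Id)$, the matrix $A_{\bar v}(t,\cdot)$ acts as a multiplier with norm $\lesssim 1$ both on $\dot B_{p,1}^{\frac3p}$ (algebra property) and on $\dot B_{p,1}^{\frac3p-1}$ (Proposition \ref{Produs} with $\nu_1=0$, $\nu_2=1$, which is licit since $1<\frac3p+\min\{\frac3p,\frac3{p'}\}$ for every $p\in(\frac65,4)$).

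For the time derivative, I would differentiate the identity $A_{\bar v}(Id+E)=Id$ in $t$ and use $\partial_tE(t,\cdot)=D\bar v(t,\cdot)$, which produces the algebraic formula
\[
\partial_t A_{\bar v}(t,\cdot)=-A_{\bar v}(t,\cdot)\,D\bar v(t,\cdot)\,A_{\bar v}(t,\cdot).
\]
Then \eqref{A2} follows by applying the multiplier bound on $\dot B_{p,1}^{\frac3p-1}$ twice to the triple product:
\[
\|\partial_t A_{\bar v}(t,\cdot)\|_{\dot B_{p,1}^{\frac3p-1}}\lesssim\bigl(1+\|A_{\bar v}(t,\cdot)-Id\|_{\dot B_{p,1}^{\frac3p}}\bigr)^{2}\|D\bar v(t,\cdot)\|_{\dot B_{p,1}^{\frac3p-1}}\lesssim\|\nabla\bar v(t,\cdot)\|_{\dot B_{p,1}^{\frac3p-1}},
\]
and \eqref{A3} is obtained the same way, now using only that $\dot B_{p,1}^{\frac3p}$ is a Banach algebra to control $A_{\bar v}D\bar v A_{\bar v}$ in $\dot B_{p,1}^{\frac3p}$ by $(1+C\alpha)^2\|D\bar v(t,\cdot)\|_{\dot B_{p,1}^{\frac3p}}$. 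The $E_T$-regularity of $\bar v$ ensures $D\bar v\in L^1_T(\dot B_{p,1}^{\frac3p})$ and $D\bar v(t,\cdot)\in\dot B_{p,1}^{\frac3p-1}$ for a.e. $t$, so all the right-hand sides are finite.

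The computations are routine; the single point that deserves care is the justification of the matrix Neumann series in the Besov framework, namely that products and inverses of $Id+\dot B_{p,1}^{\frac3p}$-valued matrices remain in that class with quantitative control, which is precisely where the smallness of $\alpha$ (hence of $\|\nabla\bar v\|_{L^1_t(\dot B_{p,1}^{\frac3p})}$) enters. Alternatively one may bypass the series and invoke the composition estimate of Proposition \ref{compunere} applied entrywise to the smooth map $M\mapsto(Id+M)^{-1}-Id$, which vanishes at $M=0$, to get \eqref{A1} directly.
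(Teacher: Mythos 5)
Your proof is correct and is, in all essentials, the standard argument: the paper does not prove this proposition itself (it is one of the black-box Lagrangian estimates taken from the Danchin--Mucha line of work, specifically the appendix of \cite{Dan5}), and the argument you give is exactly the one those references use. Writing $DX_{\bar v}=Id+E$ with $E(t,\cdot)=\int_0^tD\bar v(\tau,\cdot)\,d\tau$, using that $\dot B_{p,1}^{3/p}$ is a Banach algebra embedded in $L^\infty$ so that the Neumann series for $(Id+E)^{-1}$ converges both pointwise and in the Besov norm once $\|E\|_{\dot B_{p,1}^{3/p}}$ is small, and then differentiating $A_{\bar v}(Id+E)=Id$ to obtain $\partial_tA_{\bar v}=-A_{\bar v}\,D\bar v\,A_{\bar v}$, is precisely the intended route. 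The product estimates you invoke via Proposition \ref{Produs} with $(\nu_1,\nu_2)=(0,1)$ are licit for the whole range $p\in(6/5,4)$, since $\tfrac3p+\min\{\tfrac3p,\tfrac3{p'}\}>1$ there. Your remark that one could alternatively pass through Proposition \ref{compunere} applied entrywise is also standard, though strictly speaking \ref{compunere} as stated is scalar-valued and one must use its matrix-valued extension; the Neumann series route you actually carry out is self-contained and avoids this. No gaps.
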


In order to establish stability estimates we use the following

\begin{proposition}
Let $\bar{v}_{1},\bar{v}_{2}\in E_{T}$ satisfying the smallness condition
$\left(  \text{\ref{smallnes12}}\right)  $ and $\delta v=\bar{v}_{2}-\bar
{v}_{1}$. Then we have:%
\begin{align}
\left\Vert A_{\bar{v}_{1}}-A_{\bar{v}_{2}}\right\Vert _{L_{T}^{\infty}(\dot
{B}_{p,1}^{\frac{3}{p}})}  &  \lesssim\left\Vert \nabla\delta v\right\Vert
_{L_{T}^{1}(\dot{B}_{p,1}^{\frac{3}{p}})},\label{A4}\\
\left\Vert \partial_{t}A_{\bar{v}_{1}}-\partial_{t}A_{\bar{v}_{2}}\right\Vert
_{L_{T}^{1}(\dot{B}_{p,1}^{\frac{3}{p}})}  &  \lesssim\left\Vert \nabla\delta
v\right\Vert _{L_{T}^{1}(\dot{B}_{p,1}^{\frac{3}{p}})},\label{A5}\\
\left\Vert \partial_{t}A_{\bar{v}_{1}}-\partial_{t}A_{\bar{v}_{2}}\right\Vert
_{L_{T}^{2}(\dot{B}_{p,1}^{\frac{3}{p}-1})}  &  \lesssim\left\Vert
\nabla\delta v\right\Vert _{L_{T}^{2}(\dot{B}_{p,1}^{\frac{3}{p}-1})}.
\label{A6}%
\end{align}

\end{proposition}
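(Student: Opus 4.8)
The plan is to reduce all three bounds to two elementary identities for the inverse Jacobian $A_{\bar v}=(DX_{\bar v})^{-1}$ and then to feed them into the product estimates of Proposition~\ref{Produs}, using the bounds \eqref{A1}--\eqref{A3} already available. Throughout, write $\delta v=\bar v_{2}-\bar v_{1}$ and recall $DX_{\bar v_{i}}(t,\cdot)=Id+\int_{0}^{t}D\bar v_{i}(\tau,\cdot)\,d\tau$; since $\dot B_{p,1}^{3/p}\hookrightarrow L^{\infty}$ and each $\bar v_{i}$ satisfies the smallness assumption \eqref{smallnes12}, the matrices $DX_{\bar v_{i}}$ stay uniformly close to $Id$ (so Proposition~\ref{PropozitieFinal} applies and $A_{\bar v_{i}}$ is a genuine $C^{1}$ field) and \eqref{A1} gives $\Vert A_{\bar v_{i}}-Id\Vert_{L_{T}^{\infty}(\dot B_{p,1}^{3/p})}\lesssim\alpha$. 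The first identity is the resolvent formula
\[
A_{\bar v_{1}}-A_{\bar v_{2}}=A_{\bar v_{1}}\bigl(DX_{\bar v_{2}}-DX_{\bar v_{1}}\bigr)A_{\bar v_{2}}=A_{\bar v_{1}}\Bigl(\int_{0}^{t}D\delta v(\tau,\cdot)\,d\tau\Bigr)A_{\bar v_{2}},
\]
and the second, obtained by differentiating $A_{\bar v_{i}}DX_{\bar v_{i}}=Id$ in time, is $\partial_{t}A_{\bar v_{i}}=-A_{\bar v_{i}}(D\bar v_{i})A_{\bar v_{i}}$, from which a telescoping yields
\[
\partial_{t}A_{\bar v_{1}}-\partial_{t}A_{\bar v_{2}}=-(A_{\bar v_{1}}-A_{\bar v_{2}})(D\bar v_{1})A_{\bar v_{1}}+A_{\bar v_{2}}(D\delta v)A_{\bar v_{1}}-A_{\bar v_{2}}(D\bar v_{2})(A_{\bar v_{1}}-A_{\bar v_{2}}).
\]

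For \eqref{A4} I would only use that $\dot B_{p,1}^{3/p}(\mathbb R^{3})$ is a Banach algebra (a special case of Proposition~\ref{Produs}, valid since $p<\infty$): writing $A_{\bar v_{i}}=Id+(A_{\bar v_{i}}-Id)$ and expanding the resolvent formula, the small factor $\Vert A_{\bar v_{i}}-Id\Vert_{\dot B_{p,1}^{3/p}}\lesssim\alpha$ together with $\bigl\Vert\int_{0}^{t}D\delta v\,d\tau\bigr\Vert_{\dot B_{p,1}^{3/p}}\le\Vert\nabla\delta v\Vert_{L_{t}^{1}(\dot B_{p,1}^{3/p})}$ gives $\Vert A_{\bar v_{1}}(t)-A_{\bar v_{2}}(t)\Vert_{\dot B_{p,1}^{3/p}}\lesssim\Vert\nabla\delta v\Vert_{L_{t}^{1}(\dot B_{p,1}^{3/p})}$, and the supremum over $t\in[0,T]$ is \eqref{A4}. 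For \eqref{A5} I plug the telescoping identity into the same algebra bound: the middle term is $\lesssim\Vert\nabla\delta v\Vert_{L_{T}^{1}(\dot B_{p,1}^{3/p})}$ up to the bounded factors $A_{\bar v_{i}}$, while each of the two extreme terms is $\lesssim\Vert A_{\bar v_{1}}-A_{\bar v_{2}}\Vert_{L_{T}^{\infty}(\dot B_{p,1}^{3/p})}\,\Vert\nabla\bar v_{i}\Vert_{L_{T}^{1}(\dot B_{p,1}^{3/p})}\lesssim\alpha\,\Vert A_{\bar v_{1}}-A_{\bar v_{2}}\Vert_{L_{T}^{\infty}(\dot B_{p,1}^{3/p})}$, hence $\lesssim\Vert\nabla\delta v\Vert_{L_{T}^{1}(\dot B_{p,1}^{3/p})}$ by \eqref{A4}.

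The estimate \eqref{A6} is the delicate one, and I expect it to be the main obstacle, because one has to land exactly in $L_{T}^{2}(\dot B_{p,1}^{3/p-1})$ although $\dot B_{p,1}^{3/p-1}$ has negative regularity and $D\bar v_{i}$ only lies in it after an interpolation. The basic tool is the mixed product rule of Proposition~\ref{Produs}, which in particular gives $\dot B_{p,1}^{3/p}\cdot\dot B_{p,1}^{3/p-1}\hookrightarrow\dot B_{p,1}^{3/p-1}$ for $p\in(\tfrac65,4)$, so that each $A_{\bar v_{i}}=Id+(A_{\bar v_{i}}-Id)$ acts as a bounded multiplier on $\dot B_{p,1}^{3/p-1}$; applied to the middle telescoping term this yields directly $\Vert A_{\bar v_{2}}(D\delta v)A_{\bar v_{1}}\Vert_{L_{T}^{2}(\dot B_{p,1}^{3/p-1})}\lesssim\Vert\nabla\delta v\Vert_{L_{T}^{2}(\dot B_{p,1}^{3/p-1})}$. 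For the two extreme terms I would use that the $E_{T}$-membership of $\bar v_{i}$ gives, by interpolating between $\bar v_{i}\in L_{T}^{\infty}(\dot B_{p,1}^{3/p-1})$ and $\nabla^{2}\bar v_{i}\in L_{T}^{1}(\dot B_{p,1}^{3/p-1})$ (Proposition~\ref{interpolation} together with H\"older in time), that $\nabla\bar v_{i}\in L_{T}^{2}(\dot B_{p,1}^{3/p-1})$; then, placing $A_{\bar v_{1}}-A_{\bar v_{2}}$ in $L_{T}^{\infty}(\dot B_{p,1}^{3/p})$ via \eqref{A4}, $D\bar v_{i}$ in $L_{T}^{2}(\dot B_{p,1}^{3/p-1})$ and the remaining $A_{\bar v_{i}}$ as a multiplier on $\dot B_{p,1}^{3/p-1}$, the product rule plus H\"older in time closes the bound. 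The one genuine difficulty is the bookkeeping — checking at each step that the time exponents sum correctly and that both regularity levels $3/p$ and $3/p-1$ meet the hypotheses of Proposition~\ref{Produs}; along the way a finite power of $T$ may have to be absorbed in the implicit constant when converting $L_{T}^{1}$-type into $L_{T}^{2}$-type bounds, which is immaterial for the uniqueness and contraction arguments of Section~\ref{Section3}, where $T$ is always kept bounded.
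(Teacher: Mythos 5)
The paper does not actually prove this proposition (the appendix merely states it and points to the analogous estimates in the appendix of \cite{Dan5}), so there is nothing to compare against directly; the approach has to stand on its own. Your skeleton --- the resolvent identity $A_{\bar v_1}-A_{\bar v_2}=A_{\bar v_1}\bigl(\int_0^t D\delta v\,d\tau\bigr)A_{\bar v_2}$, the differentiation formula $\partial_t A_{\bar v_i}=-A_{\bar v_i}(D\bar v_i)A_{\bar v_i}$ and the telescoping --- is the right one, and it correctly yields \eqref{A4} and \eqref{A5} using the algebra property of $\dot B_{p,1}^{3/p}$ together with $\left\Vert A_{\bar v_i}-Id\right\Vert_{L_T^\infty(\dot B_{p,1}^{3/p})}\lesssim\alpha$.

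For \eqref{A6}, however, there is a real gap. The middle telescoping term $A_{\bar v_2}(D\delta v)A_{\bar v_1}$ gives $\left\Vert\nabla\delta v\right\Vert_{L_T^2(\dot B_{p,1}^{3/p-1})}$ as you say, but each extreme term, say $(A_{\bar v_1}-A_{\bar v_2})(D\bar v_1)A_{\bar v_1}$, is only bounded by
\[
\left\Vert A_{\bar v_1}-A_{\bar v_2}\right\Vert_{L_T^\infty(\dot B_{p,1}^{3/p})}\left\Vert D\bar v_1\right\Vert_{L_T^2(\dot B_{p,1}^{3/p-1})}\lesssim\left\Vert\nabla\delta v\right\Vert_{L_T^1(\dot B_{p,1}^{3/p})},
\]
and this quantity is \emph{not} controlled by $\left\Vert\nabla\delta v\right\Vert_{L_T^2(\dot B_{p,1}^{3/p-1})}$. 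Your hedge that ``a finite power of $T$'' fixes the mismatch is false: Cauchy--Schwarz in time compares $L_T^1$ and $L_T^2$ only \emph{at the same spatial regularity}, whereas here the spatial spaces $\dot B_{p,1}^{3/p}$ and $\dot B_{p,1}^{3/p-1}$ are homogeneous spaces of different orders, between which no embedding holds in either direction. Attempting the alternative split, $A_{\bar v_1}-A_{\bar v_2}\in L_T^\infty(\dot B_{p,1}^{3/p-1})$ (which would give $T^{1/2}\left\Vert\nabla\delta v\right\Vert_{L_T^2(\dot B_{p,1}^{3/p-1})}$) forces $D\bar v_1$ into $L_T^2(\dot B_{p,1}^{3/p})$, which is not available from $\bar v_1\in E_T$. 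What your argument actually proves is
\[
\left\Vert\partial_t A_{\bar v_1}-\partial_t A_{\bar v_2}\right\Vert_{L_T^2(\dot B_{p,1}^{3/p-1})}\lesssim\left\Vert\nabla\delta v\right\Vert_{L_T^2(\dot B_{p,1}^{3/p-1})}+\left\Vert\nabla\delta v\right\Vert_{L_T^1(\dot B_{p,1}^{3/p})}.
\]
That weaker inequality is in fact all that is used in Section~\ref{Section3}: the extra term is exactly of \eqref{A5}-type and is absorbed by the $F_T$-norm of $(\delta\tilde v,\nabla\delta\tilde Q)$ in the contraction. So the application stands, but you should state the inequality you actually prove and delete the incorrect claim that the $L_T^1(\dot B_{p,1}^{3/p})$ contribution can be traded for the $L_T^2(\dot B_{p,1}^{3/p-1})$ one by shrinking $T$.
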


\end{document}